\documentclass[11pt,reqno,a4paper]{amsart}

\usepackage{amsmath}
\usepackage{amssymb}
\usepackage{amsthm}

\usepackage[margin=1in]{geometry} 

\usepackage{enumitem}

\usepackage{tikz}

\theoremstyle{plain}

\newtheorem{theorem}{Theorem}
\newtheorem{lemma}[theorem]{Lemma}
\newtheorem{proposition}[theorem]{Proposition}
\newtheorem{corollary}[theorem]{Corollary}
\theoremstyle{definition}
\newtheorem{definition}[theorem]{Definition}

\newtheorem*{remark}{Remark}

\numberwithin{theorem}{section}

\numberwithin{equation}{section}

\usepackage{xcolor}
\usepackage{hyperref}

\numberwithin{equation}{section}

\allowdisplaybreaks

\keywords{}

\title{Fluid-Structure interactions with Navier- and full-slip boundary conditions}

\author{Anton\'in \v Ce\v s\'ik}
\address{Mathematics Institute, University of Warwick, Zeeman Building, Coventry CV4 7AL, United Kingdom} \email{antonin.cesik@warwick.ac.uk}

\author {Malte Kampschulte}
\address{Faculty of Mathematics and Physics, Charles University, Sokolovsk\'{a} 83, 18675, Prague, Czech Republic}
\email {kampschulte@karlin.mff.cuni.cz}

\author{Sebastian Schwarzacher}
\address{Department of Mathematics, Uppsala University, Box 480
    751 06 Uppsala \&  Faculty of Mathematics and Physics, Charles University,
Sokolovsk\'{a} 83, 18675, Prague, Czech Republic }
\email{schwarz@karlin.mff.cuni.cz}
\date{\today}

\begin{document}
\begin{abstract}
We show the existence of weak solutions to the fluid-structure interaction problem of a largely deforming viscoelastic bulk solid with a viscous fluid governed by the incompressible Navier-Stokes equations. In contrast to previous works, the fluid is allowed to slip along the solid boundary; namely, the so called Navier-slip boundary conditions are considered. Such boundary conditions naturally involve the time-changing outer normal of the fluid domain. Hence, their dependence on the varying geometry is one degree higher than in the previously considered no-slip case, which makes it necessary to adjust the concept of weak coupled solutions. Two classes of test functions are introduced: test functions that are continuous over the fluid-solid domain, and fluid-only test functions with nonzero tangential component at the boundary. The weak equations are established until the point of contact, and moreover, compatibility with the strong formulation is shown.
\end{abstract}

 \maketitle
%  \tableofcontents
%\noteMalte[inline]{I am not quite sure about the ``...with slip'' in the title. On the other hand ``...with Navier- and full-slip boundary conditions'' is a bit lengthy. I also think the second half of the abstract could be a bit condensed.}

\section{Introduction}
In this work, we introduce and study the existence for a concept of weak solutions for a largely deforming visco-elastic solid interacting with a fluid governed by the Navier-Stokes equations with Navier slip boundary conditions at the interface. The construction and conception of solutions follows the variational approach introduced some years ago \cite{benesovaVariationalApproachHyperbolic2023}. It allows for treatment of largely deforming bulk solids interacting with fluids, when the solid deformation is actually defining the time-changing fluid domain. Slipping at the interface for weak solutions includes the possibility of shearing velocities in the tangential direction of the interface of arbitrary magnitude, while the speed in the normal direction is fully determined by the motion of the solid. Hence, the velocity of the fluid flow and the solid deformation are coupled differently in tangential and normal directions, and thus the {\em difference depends on the time-changing tangent plane of the separating interface}. This higher order dependence made it necessary to relax the concept of weak solutions in comparison to previous works~\cite{benesovaVariationalMethodsFluid2023}.
 The difference however only occurs at the interface. In particular, the local weak formulations in the interior of both fluid and solid, respectively, stay unchanged. Moreover, it is shown in the present work that such weak solutions are strong solutions the moment they possess sufficient regularity.

There are by now numerous works on weak solutions for fluid-structure interaction problems, where the elastic deformation is changing the geometry of the fluid-domain. Many results have been established for lower dimensional structures, such as plates or shells, interacting with fluids. See~\cite{vcanic2021moving} for an overview on analytic and numeric efforts. Further, for existence results of weak solutions with {\em no-slip boundary conditions} see \cite{muhaExistenceWeakSolution2013, lengelerWeakSolutionsIncompressible2014, muhaFluidstructureInteractionIncompressible2015,breitCompressibleFluidsInteracting2018, schwarzacherWeakStrongUniquenessElastic2022, muhaExistenceRegularityWeak2022,  machaExistenceWeakSolution2022, muhaExistenceWeakSolution2013,  kampschulteUnrestrictedDeformationsThin2023, breitNavierStokesFourierFluidsInteracting2023}. And for existence results of weak solutions with slip-boundary conditions see~\cite{gerard2014existence,muha2016existence,
liuCompressibleFluidstructureInteraction2024,tawri2024stochastic}.
%\noteMalte[inline]{Sebastian, you probably know the literature better. Could you insert a few words mentioning why our result is different from the results cited here?}

One motivation to study slip-boundary conditions in the context of fluid-structure interactions is the so-called no contact paradox, also known as the Cox-Brenner paradox. It says that if slipping at the boundary is excluded, rigid solids cannot collide in a viscous incompressible fluid~\cite{hillairet2007lack}. In contrast, if slip boundary conditions are considered, contact happens~\cite{hillairet2009collisions}. {\em This is possible as the tangential fluid velocity can rise to infinity at the contact point.} Even though {\em mathematically} the question on whether collisions for deformable elastic solids are possible {\em without slipping} is at an early stage~\cite{grandmont2016existence}, there are strong numerical indications that the paradox holds also for deforming elastic solids~\cite{gravina2022contactless,fara2024geometric}.

The variational approach devised in  \cite{benesovaVariationalApproachHyperbolic2023} gave rise to several new results in this setting~\cite{breitCompressibleFluidsInteracting2024,benesovaVariationalMethodsFluid2023} including the analysis of contact of solids in the compressible and the fluid-less case~\cite{kampschulteGlobalWeakSolutions2024,cesikInertialEvolutionNonlinear2024}. This paper extends these efforts allowing for \textit{slip} at the fluid-solid interface. As such boundary conditions allow for arbitrary tangential fluid-speeds, it can also be seen as a starting point  for further studies on the effect of the fluid on contact and bouncing of elastic solids.

We employ the physical setup of a viscoelastic bulk solid immersed in an incompressible Navier-Stokes fluid as in \cite{benesovaVariationalApproachHyperbolic2023}. We fix a container $\Omega\subset\mathbb R^d$ and consider both the fluid and the solid to be confined to $\Omega$. The solid is described with respect to a Lagrangian reference configuration $Q\subset\mathbb R^d$. The time-dependent solid deformation is then $\eta\colon (0,T) \times Q\to \Omega$, so that $\eta(t)=\eta(t,\cdot)$ is the deformation of the solid at the time $t\in(0,T)$. The spatial dimension is $d\geq 2$, with $d=3$ and $d=2$ being the most physically relevant ones. We assume that the fluid occupies the rest of the container not occupied by the solid. That is, the fluid at time $t$ is defined in the domain $\Omega(t)=\Omega\setminus \eta(t,Q)$. and is determined by the velocity $v(t)\colon \Omega(t)\to \mathbb R^d$ and the pressure $p(t)\colon\Omega(t)\to \mathbb R$.

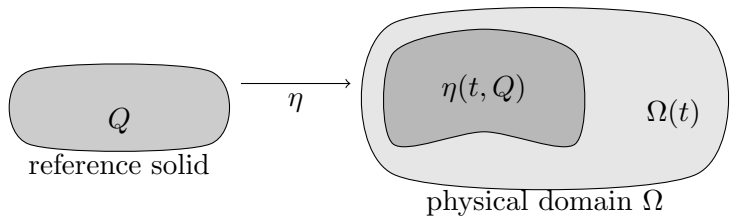
\begin{figure}[h]
\begin{center}
\begin{tikzpicture}[scale=.8]
\draw[black, fill=black, fill opacity=.2] plot [smooth cycle] coordinates {(0,0) (3,0) (3,1.2) (0,1.2)};
\draw[black] (1.5,0) node[above] {$Q$} (1.5,0) node[below] {reference solid};
\draw[black,->] (3.5,1) --  node[below] {$\eta$} (5.3,1);
\draw[black, fill=black, fill opacity=.1] plot [smooth cycle] coordinates  {(6,-.5) (11,-.5) (11,2) (6,2)};
\draw[black] (10,.5) node[right] {$\Omega(t)$};
\draw (8.5,-.6) node[below] {physical domain $\Omega$};
\draw[black, fill=black, fill opacity=0.2] plot [smooth cycle] coordinates {(6,0) (7.5,.2) (9,0) (9,1.5) (7.5,1.9) (6,1.5)};
\draw[black] (7.5,.5) node[above] {$\eta(t,Q)$};
\end{tikzpicture}
\end{center}
\caption{Geometrical depiction of the fluid-structure interaction problem.}
\end{figure}

Now let us describe the equations of motion. We assume, similarly as in previous works \cite{benesovaVariationalApproachHyperbolic2023, cesikInertialEvolutionNonlinear2024, breitCompressibleFluidsInteracting2024}, the hyperelastic case, where the Piola-Kirchhoff stress tensor of the solid can be derived from the energy and dissipation potentials. Thus, the momentum equation for the solid is 
\begin{equation}\label{eqn:solid-intro}
\rho_s\partial_{tt}\eta +DE(\eta)+D_2R(\eta,\partial_t\eta)=\rho_s f \quad \text{in }Q
\end{equation}
where $\rho_s$ is the (reference) solid density and $f\colon (0,T)\times \Omega\to \mathbb R^d$ the external force in the Eulerian configuration. The fluid will be assumed to be satisfy an incompressible Navier-Stokes equation, so that
\begin{equation}
\rho_f(\partial_t v +v\cdot \nabla v)= \nu \Delta v -\nabla p + \rho_f f\quad \text{in } \Omega(t),
\end{equation}
with the incompressibility condition
\begin{equation}
\operatorname{div} v= 0 \quad \text{in }\Omega(t).
\end{equation}
The \emph{kinematic coupling} to the fluid-solid will be through an \emph{impermeability condition}. That is, it holds 
\begin{equation}
v\cdot n=(\partial_t\eta \circ\eta^{-1}) \cdot n \quad \text{on } \partial \eta(t,Q)
\end{equation}
where $n$ is the inner unit normal vector to the fluid-solid interface $\partial \eta(t,Q)$ (i.e. also the outer unit normal vector to $\partial \Omega(t)$ on the interface part). On the outer boundary of the container, we also prescribe the impermeability condition $$v\cdot n = 0 \quad \text{on }\partial\Omega.$$
%\noteMalte[inline]{It does not matter in this equation, as it appears on both sides of the equation, but in general which normal? I think it was nowhere stated in the paper at all. Since this is the first occurence, I fixed it here, in the hope that it is consistent with the later stuff, but please verify.}
In addition to the kinematic coupling condition, there will be equalities of stress with the slipping law, and also of the solid hyperstress (which arises due to the presence of second gradients). If $\sigma_s$ and $\sigma_f$ denote the solid and fluid-stress tensor in Eulerian configuration respectively, then this results in
\begin{equation}
 (\sigma_s n-\sigma_f n) = a (\partial_t \eta \circ \eta^{-1} - v) \text{ in } \partial \Omega(t)
\end{equation}
where $a \geq 0$ is a constant determining the slip. Note that due to the kinematic coupling, the right hand side is pointing strictly in tangential direction, i.e.\ the normal stresses always coincide.

%\noteMalte[inline]{I am not fully happy with this, but we should at least say something here. Also note that the sign above depends on the direction of the normal. I hope I picked the right sign, but please check.}

To fully express these boundary conditions in terms of $\eta$ and $v$ requires careful translation between Eulerian and Lagrangian coordinates. Thus we will fully state them later in the definition of strong solutions, namely Definition \ref{def:strong-solution-full}. The {\em Navier-slip boundary condition} means here that the tangential part of the stress is equal to a friction term depending on the tangential velocity difference between fluid and solid~\eqref{eq:slip}. In the special case of free slipping (no friction) all tangential movement is free and hence the corresponding stresses vanish in the strong formulation.

Further, the solid is presumed to satisfy the \emph{no-interpenetration of matter} in the form of the Ciarlet-Ne\v cas condition 
\begin{equation}
|\eta(Q)|=\int_Q\det\nabla\eta(x)\,dx.
\end{equation}
Finally, we complete the system by prescribing the initial conditions 
\begin{equation}\label{eqn:init-cond-intro}
\eta(0)=\eta_0,\quad \partial_t\eta(0)=\eta_*, \quad v(0)=v_0, \quad \Omega(0)=\Omega \setminus \eta_0(Q).
\end{equation}

In contrast to no-slip boundary conditions, in case of slip the tangential component of the fluid velocity is not fixed at the interface. Hence, as stated above, the kinematic coupling condition reduces to impermeability. In the framework of weak solution the test functions have to be adapted accordingly. Typically for fluid-structure interactions, the space of test function is non-linearly related to the solution. In contrast to the no-slip case, in the case of slip the \emph{normal vector} of the geometry (that comes from the time-changing solid deformation) influences the test-function space. Hence the gradient of the solid deformation directly influences the weak-formulation. In order to construct a solution, this higher order influence is reduced to the fluid equation.

This explains why our weak formulation consists of two types of test functions -- what we call the \emph{coupled} and the \emph{fluid-only} test functions. Coupled test functions are defined to be continuous over $\Omega$, and the corresponding Lagrangian test function for the solid is pulled from $\eta(Q)$ back to $Q$ by $\eta$. Fluid-only test functions are defined on $\Omega(t)$ and have normal component zero on $\partial\eta(\cdot,Q)$. In essence only the tangential part of the fluid-only test function on $\partial\eta(\cdot,Q)$ is important, and this is what gives rise to the slip. This is in contrast to \cite{benesovaVariationalApproachHyperbolic2023} where the only test functions are the continuous coupled ones, which results in matching tangential stresses, corresponding to a no-slip boundary condition. Please note that the splitting of the weak formulation has to be performed on all levels of approximation. This is necessary due to the regularity drop between solids (2nd order) and fluids (1st order). Related to this deviation it turns out suitable to derive the proper a-priori estimates already on the level of discrete in time approximation (see below). Here we rely on our findings in \cite{cesikStabilityConvergenceTime2023}.

We then show the following existence result.

\begin{theorem}
The fluid-structure interaction problem \eqref{eqn:solid-intro}-\eqref{eqn:init-cond-intro} has a weak solution in the sense of Definition \ref{def:weak-solution-full}, until the time of the first solid-solid collision.
\end{theorem}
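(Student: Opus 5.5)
The plan follows the two-level variational scheme of \cite{benesovaVariationalApproachHyperbolic2023}, adapted so that the weak formulation is split into coupled and fluid-only test functions at every level of approximation.

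\textbf{Step 1: quasi-static time-delayed problem.} For a fixed acceleration scale $h>0$ we solve a time-delayed problem. The inertia $\rho_s\partial_{tt}\eta$ and $\rho_f(\partial_t v+v\cdot\nabla v)$ are replaced by difference quotients against a given previous-step trajectory on $[t-h,t]$. For the fluid, this means transporting $v$ along the flow map induced by the (extended) velocity.

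Within this problem we use a time discretisation with step $\tau$. At each step we minimise
\[
E(\eta)+\tau R\Big(\eta_k,\tfrac{\eta-\eta_k}{\tau}\Big)+\tau\nu\int_{\Omega_k}|\varepsilon(v)|^2+\tau\frac a2\int_{\partial\eta_k(Q)}\Big|\tfrac{\eta-\eta_k}{\tau}\circ\eta_k^{-1}-v\Big|^2+\text{inertial and force terms}
\]
over pairs $(\eta,v)$. Admissible pairs satisfy: $v$ is solenoidal in $\Omega_k=\Omega\setminus\eta_k(Q)$, $v\cdot n=0$ on $\partial\Omega$, the impermeability $v\cdot n=\frac{\eta-\eta_k}{\tau}\circ\eta_k^{-1}\cdot n$ on $\partial\eta_k(Q)$, and $\eta$ satisfies Ciarlet--Ne\v cas. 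Here $\eta_k$ is the previous time step and $\Omega_k$ its fluid domain.

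Existence of minimisers follows by the direct method. The main ingredients are the lower semicontinuity and coercivity of $E$ in $W^{2,q}$ with $\det\nabla\eta\ge c>0$ (from the energy assumptions), Korn's inequality on $\Omega_k$ under the impermeability constraint, and the weak closedness of the linear constraint. The constraint is linear because the geometry is frozen at $\eta_k$.

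Deriving Euler--Lagrange equations gives the two families of variations. Coupled variations are $\phi\in C^\infty(\Omega)$ with $\phi\cdot n=0$ on $\partial\Omega$, together with the solid test function $\phi\circ\eta_k$. Fluid-only variations are solenoidal fields on $\Omega_k$ whose normal component vanishes on $\partial\eta_k(Q)$; they produce the slip term. The energy inequality comes from comparing the minimiser with the trivial competitor $(\eta_k,0)$.

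\textbf{Step 2: limit $\tau\to0$.} We pass to the limit in the time discretisation using the discrete a-priori estimates of \cite{cesikStabilityConvergenceTime2023}. These give $\eta\in L^\infty W^{2,q}\cap W^{1,2}W^{1,2}$ and $v\in L^2W^{1,2}$, uniformly, together with a uniform lower bound on $\det\nabla\eta$ and injectivity until collision. Compactness in the moving domains uses Aubin--Lions adapted to variable domains.

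The delicate point is that the fluid-only test functions depend on $n$, hence on $\nabla\eta$. We therefore construct, for a limit test function $\psi$ that is tangential on $\partial\eta(t,Q)$, approximations $\psi_\tau$ tangential on $\partial\eta_\tau(t,Q)$. One way is to correct the normal part via a coupled extension: subtract $\Phi_\tau$, where $\Phi_\tau$ is a divergence-free extension of $(\psi\cdot n_\tau)n_\tau$ built with a Bogovski\u{\i}-type operator on uniformly Lipschitz domains. Then $\psi_\tau\to\psi$ strongly, because $\nabla\eta_\tau\to\nabla\eta$ uniformly, by the compact embedding $W^{2,q}\hookrightarrow C^1$ for $q>d$.

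\textbf{Step 3: limit $h\to0$.} We obtain the inertial problem. The $h$-uniform estimates come from the energy inequality. For the time derivative we argue as in \cite{benesovaVariationalApproachHyperbolic2023}: the key is an Aubin--Lions type argument for the coupled velocity, which gives strong $L^2$ convergence of $\partial_t\eta$ and $v$. The fluid-only equation yields compactness of the tangential part, while the coupled equation handles the rest. Combining both classes of test functions via the same normal-correction construction as in Step 2 gives strong convergence of the kinetic terms.

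Finally, the solution is extended step by step in time as long as $\eta(t)$ remains injective on $\overline Q$, i.e.\ until the first self-contact or contact with $\partial\Omega$.

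\textbf{Main obstacle.} I expect the hardest part to be the construction of recovery test functions for the fluid-only equation, uniformly across both limits, together with the compactness that requires mixing the two test classes. The difficulty is that the test space moves with the normal $n_\tau$, so there is a loss of one derivative relative to the no-slip case. I would first try to handle this using the $C^{1,\alpha}$ convergence of $\eta$ and uniform Lipschitz bounds on the domains before contact.
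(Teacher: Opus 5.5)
There is a genuine gap. Your scheme has no spatial regularization, and two things you take for granted fail at the natural energy regularity $\eta\in L^\infty W^{2,q}$, $v\in L^2W^{1,2}$.

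\emph{First, the flow map.} The time-delayed fluid inertia needs a flow map. With $\Phi_k=(\mathrm{id}+\tau v_k)\circ\Phi_{k-1}$ you need $\tau\|\nabla v_k\|_{L^\infty}\to0$ to get $\det\nabla\Phi_k\approx1$, injectivity, and consistency of $\Phi_k(\Omega_0)$ with $\Omega\setminus\eta_k(Q)$ (Proposition \ref{prop:estimates-of-the-flow-map}). For a merely $W^{1,2}$ velocity, $\mathrm{id}+\tau v_k$ need not be a homeomorphism. The term $\|v\circ\Phi_{k-1}-w\|_{L^2(\Omega_0)}$ in your functional is then not even well defined, and the later manipulations with $\Phi^{(h)}$ (change of variables, composition) break down.

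\emph{Second, the limit in $DE$.} In the $\tau$- and $h$-limits you only have $\eta\overset{*}{\rightharpoonup}\eta$ in $L^\infty W^{2,q}$. But (E.5) lets you pass to the limit in $DE(\eta)$ only under strong $W^{2,p}$ convergence. Aubin--Lions with $\partial_t\eta\in L^2W^{1,2}$ does not give this, and your plan never says how $DE$ is handled.

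The paper avoids both problems with a third level. It adds $\kappa^{a_0}\|\nabla^{k_0+2}\eta\|^2$, $\kappa\|\nabla^{k_0+2}\partial_t\eta\|^2$ and $\kappa\|\nabla^{k_0}v\|^2$, where $W^{k_0,2}\hookrightarrow W^{2,q}$. For fixed $\kappa$ this gives:
\begin{itemize}
\item a Lipschitz velocity, so the flow map is a diffeomorphism and no contact occurs (Corollary \ref{cor:no-contact-regularization});
\item compactness of $\eta$ in $W^{2,q}$, so $DE$ passes trivially in the $\tau$- and $h$-limits;
\item admissibility of $(\partial_t\eta,v)$ as a test pair;
\item an extended normal $\overline n\in W^{k_0,2}$, so the tangential-projection test functions of Proposition \ref{prop:approximation-fluid-only} are admissible in the regularized equation.
\end{itemize}

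The real nonlinear difficulty is then concentrated in the final limit $\kappa\to0$, which has no counterpart in your plan. There the paper gets strong $L^2$ compactness of $v$ from an Aubin--Lions lemma on moving domains. It gets strong $W^{2,q}$ convergence of $\eta$ from the Minty property (E.6), by testing the coupled equation with the following pair:
\begin{itemize}
\item a solid part approximating $\eta^{(\kappa)}-\eta$ (through a mollification $\eta_{\delta_\kappa}$);
\item a fluid field obtained by cutting off its Eulerian push-forward and removing the divergence in $\Omega^{(\kappa)}(t)$ with the universal Bogovskii operator.
\end{itemize}
Only after this does $DE(\eta^{(\kappa)})\to DE(\eta)$ follow. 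An unregularized version of your scheme would need such an admissible test pair already at the $\tau$- and $h$-levels, and you would still have to fix the flow map.

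A smaller point concerns the energy estimate. Comparing with the competitor $(\eta_k,0)$ gives only $h$-dependent bounds. In Lemma \ref{lem:discrete-energy-estimates} the left side has $\tfrac{\tau}{8h}\|v_k\|^2$ against $\tfrac{\tau}{h}\|w\|^2$ on the right, which blows up when iterated over $T/h$ delay intervals. The $h$-uniform energy inequality instead requires testing the Euler--Lagrange equation with $(\tfrac{\eta_k-\eta_{k-1}}{\tau},v_k)$ together with the non-convexity estimate (E.7).

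Your normal-correction construction of fluid-only recovery test functions is a reasonable variant of Proposition \ref{prop:approximation-fluid-only}. It is not where the proof is missing.
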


The paper is organized as follows. In Section \ref{sec:preliminaries} we discuss preliminary material on moving domains and the corresponding function spaces. In Section \ref{sec:strong-and-weak-formulation} we introduce the full weak formulation of the problem, as well as the strong formulation, and show their equivalence for regular solutions. In Section \ref{sec:variational-existence-scheme} we construct the solution by a variational time-stepping scheme by multiple levels of approximation.

\subsection{Assumptions on energy and dissipation}

We work in the context of \textit{nonsimple} elastic materials \cite{kruzikMathematicalMethodsContinuum2019}, where the energy depends on higher gradient. It has been observed by \cite{healeyInjectiveWeakSolutions2009} that under suitable growths of the energy, a well-known issue of obtaining lower bound on the Jacobian \cite{ballOpenProblemsElasticity2002} is circumvented, thus obtaining an Euler-Lagrange equation for finite energy configurations is always possible. As in previous results, we stick to this setting here.

The set of admissible deformations is defined as 

\begin{equation}\label{eqn:admissible-deformations}
\mathcal E:= \left\{\eta\in W^{2,q}(Q;\mathbb R^d):\eta(Q)\subset \Omega, \det\nabla\eta>0, |\eta(Q)|=\int_Q\det\nabla\eta(x)\,dx\right\}
\end{equation}
where $q>d$, so that we have the embedding $W^{2,q}(Q;\mathbb R^d)\rightharpoonup C^{1,1-d/q}(Q;\mathbb R^d)$.
Here we specify the assumptions on the energy $E$ and dissipation $R$. We assume that the \emph{elastic energy potential} $E \colon W^{2,q}(Q; \mathbb R^d)\to (-\infty, \infty]$ has the following properties: 
\begin{enumerate}[label=(E.\arabic*), ref=E.\arabic*]
\item \label{as:E-bounded-below} There is $E_{\min} > - \infty$ such that $E(\eta) \ge E_{\min}$ for all $\eta \in W^{2, q}(Q; \mathbb R^d)$. Further, for $\eta \in W^{2, q}(Q; \mathbb R^d)$ with $\inf_Q\det \nabla \eta > 0$ it holds $E(\eta) < \infty$.
\item \label{as:E-lower-bound-det} For every $E_0  \ge E_{\min}$ there exists $\varepsilon_0 > 0$ 
such that $E(\eta) \le E_0$ implies $\det \nabla \eta \ge \varepsilon_0$ .
\item \label{as:E-bounded-on-w2q} For every $E_0 \ge E_{\min}$ there exists $C$ such that $E(\eta) \leq E_0$ implies 
\(
\|\nabla^2 \eta\|_{L^q} \le C 
\).
\item \label{as:E-wlsc} $E$ is weakly lower semicontinuous. That is, for $\eta_{k} \rightharpoonup \eta$ in $W^{2, q}(Q; \mathbb R^d)$ it holds
\[
E(\eta) \le \liminf_{k \to \infty} E(\eta_k).
\] Further, $E$ is strongly continuous in $W^{2, q}(Q; \mathbb R^d)$.
\item \label{as:E-differentiable} $E$ is differentiable for all $\eta\in\mathcal E$, with the derivative $DE(\eta) \in (W^{2, q}(Q; \mathbb R^d))^*$ given by 
\[
 DE(\eta) \langle \varphi \rangle = \left.\frac{d}{ds} E(\eta + s \varphi))\right|_{s = 0}. 
\]
Furthermore, $DE$ is bounded on sublevel sets of $E$ and 	continuous with respect to strong $W^{2,p}(Q;\mathbb R^d)$ convergence, for some $p<q$.
\item \label{as:E-minty-property} The derivative $DE$ satisfies 
\[
\liminf_{k \to \infty}  (DE(\eta_k) - DE(\eta)) \langle \eta_k - \eta \rangle \ge 0
\]
for all $\eta_k \rightharpoonup \eta$ in $W^{2, q}(Q; \mathbb R^d)$. Further, $DE$ satisfies the following \emph{Minty-type property}: If
\[
\limsup_{k \to \infty} (DE(\eta_k) - DE(\eta))\langle \eta_k - \eta \rangle \le 0,
\]
then $\eta_k \to \eta$ in $W^{2, q}(Q; \mathbb R^d)$.
\item \label{as:E-nonconvexity-estimate} $E$ satisfies the following \textit{non-convexity estimate}: For all $E_0\geq E_{\min}$ there exists $C_1\geq 0$ such that for all $\eta_1,\eta_0\in\mathcal E$ with $E(\eta_1),E(\eta_0)\leq E_0$ it holds $$DE(\eta_1)\langle\eta_1-\eta_0\rangle \geq E(\eta_1)-E(\eta_0)-C_1\|\nabla\eta_1-\nabla\eta_0\|_{L^2}^2.$$
\end{enumerate}

For the \emph{dissipation potential} $R \colon W^{2, q}(Q; \mathbb R^d) \times W^{1, 2}(Q; \mathbb R^d) \to [0, \infty)$, we assume the following properties:

\begin{enumerate}[label=(R.\arabic*), ref=R.\arabic*]
\item \label{as:R-wlsc} $R$ is weakly lower semicontinuous in its second argument. That is, for all $\eta \in W^{2, q}(Q; \mathbb R^d)$ and all $b_{k} \rightharpoonup b$ in $W^{1, 2}(Q; \mathbb R^d)$ it holds
\[
R(\eta, b) \le \liminf_{k \to \infty} R(\eta, b_k)
\]
\item \label{as:R-two-homogeneous} $R$ is $2$-homogeneous in its second argument, that is, 
\[
R(\eta, \lambda b) = \lambda^2 R(\eta, b),\quad \lambda \in \mathbb R.
\]
\item \label{as:R-korn-inequality} $R$ admits the following \emph{Korn-type inequality}: For every $\varepsilon_0 > 0$, there is $K_R$ such that
\[
K_R \|b\|_{W^{1,2}}^2 \le \|b\|_{L^2}^2 +  R(\eta, b)
\]
for all $\eta \in \mathcal{E}$ with $\det \nabla \eta > \varepsilon_0$ and all $b \in W^{1, 2}(Q; \mathbb R^d)$. 
\item \label{as:R-differentiable} $R$ is differentiable in its second argument, with the derivative $D_2R(\eta, b) \in (W^{1, 2}(Q; \mathbb R^d))^*$. 
Further, the map $(\eta, b) \mapsto D_2R(\eta, b)$ is bounded and weakly continuous with respect $\eta$ and $b$. This means that for all $\varphi \in W^{1, 2}(Q; \mathbb R^d)$ and all $\eta_k \rightharpoonup \eta$ in $W^{2, q}(Q; \mathbb R^d)$ and $b_k \rightharpoonup b$ in $W^{1, 2}(Q; \mathbb R^d)$ it holds 
\[
\lim_{k \to \infty}  D_2R(\eta_k, b_k) \langle \varphi \rangle =  D_2R(\eta, b) \langle \varphi \rangle.
\]
\end{enumerate}

The model case which satisfies these assumptions is 
$$
\begin{aligned}
R\left(\eta, \partial_t \eta\right) & :=\int_Q\left|\left(\nabla \partial_t \eta\right)^T \nabla \eta+(\nabla \eta)^T\left(\nabla \partial_t \eta\right)\right|^2 d x=\int_Q\left|\partial_t\left(\nabla \eta^T \nabla \eta\right)\right|^2 d x, \\
E(\eta) & := \begin{cases}\int_Q\left[\frac{1}{8}\left|\nabla \eta^T \nabla \eta-I\right|_{\mathcal C}+\frac{1}{(\operatorname{det} \nabla \eta)^a}+\frac{1}{q}\left|\nabla^2 \eta\right|^q\right] d x & \text { if det } \nabla \eta>0 \text { a.e. in } Q, \\
+\infty & \text { otherwise, }\end{cases}
\end{aligned}
$$
denoting $\left|\nabla \eta^T \nabla \eta-I\right|_{\mathcal C}:=\left(\mathcal C\left(\nabla \eta^T \nabla \eta-I\right)\right) \cdot\left(\nabla \eta^T \nabla \eta-I\right)$ with $\mathcal C$
being a positive definite tensor of elastic constants, and $a>\frac{q d}{q-d}$. For a more detailed discussion of this model see \cite{benesovaVariationalApproachHyperbolic2023}, and in particular for the discussion of the non-convexity estimate \eqref{as:E-nonconvexity-estimate} see \cite{cesikStabilityConvergenceTime2023}.

%\todo{\bf CHECK WHAT NEEDS TO BE DONE IN ORDER TO CLAREFY THE ERROR WITH $p<q$ replaced by $q$ above.}

\subsection{The approximation}

The construction of the solution will go through three levels of approximation that we shall briefly describe here. 

\subsubsection*{Spatial regularization -- $\kappa$ level } 
At the $\kappa$-level we introduce in the problem a $W^{k_0+2,2}$-regularization of the solid energy and dissipation, as well as a $W^{k_0,2}$-regularization of the fluid dissipation. Here $k_0$ is chosen so large that $W^{k_0,2}\hookrightarrow W^{2,q}$. The equations with this regularization are then quadratic in the highest order. Thus using $(\partial_t\eta,v)$ as a test function is admissible as long as $\kappa>0$. The two gradients more for the regularity of $\eta$ are used in the approximation of test functions in Proposition \ref{prop:approximation-coupled-test-functions}, since there we use the spatial extension of the normal $n$ (recall this is the normal to $\partial\eta(\cdot,Q)$) to the fluid domain.

\subsubsection*{Time-delayed equation -- $h$ level} 
The $h$-level corresponds to replacing the equations by a \emph{time-delayed equation} where the second time derivative is discrete with scale $h$, and the first time derivative is continuous. So that then $$\partial_{t t} \eta \approx \tfrac{\partial_t \eta-\partial_t \eta(t-h)}{h}, \quad \partial_t v+v \cdot \nabla v \approx \tfrac{v \circ \Phi_h-v(t-h)}{h}
$$
where $\Phi_h$ is the flow map, defined by $\partial_t\Phi_t=v(t)\circ \Phi_t$ and $\Phi_0=\operatorname{id}$. The flow map has to be constructed along with the solution. 

\subsubsection*{Minimizing movements -- $\tau$ level}
The first level is the minimizing movement approximation of the time-delayed problem with $\tau$-steps. Note that the time-delayed equation can, on an interval of length $h$, be seen as a gradient flow where $\partial_t\eta(\cdot-h)$ and $v(\cdot -h)$ can be seen as a given external force. Indeed, these quantities are already known from the previous $h$-interval. Note that to get the $\tau$-independent estimate, we do this already on the discrete $\tau$ level based on \cite{cesikStabilityConvergenceTime2023}.

The approximation passes to the limits in the following order $\tau\to 0$, $h\to0$, $\kappa\to0$. This then results in the solution of the full problem.

For a more detailed discussion of the scheme we refer the reader to \cite{benesovaVariationalApproachHyperbolic2023}. Keep in mind that in this original paper, the parameters $\kappa$ and $h$ are tied together and $h\to0$, $\kappa\to0$ is performed simultaneously. We chose to separate these two parameters for more clarity and also for future work which will include collisions, since these happen only as a result of the $\kappa\to 0$ limit (see Corollary \ref{cor:no-contact-regularization}).
\section{Preliminaries}\label{sec:preliminaries}

Here we state some notions for time-dependent domains and the respective
function spaces. These will be later used for the Eulerian fluid domain.

\subsection{Moving domains}\label{sec:moving-domains}

Consider a domain variable in time, that is let
\(\Omega(t)\subset \mathbb R^d\) be a Lipschitz domain for each
\(t\in(0,T)\), such that the time-space domain
\[\Omega_T:=(0,T)\times \Omega(t)= \{(t,x)\in (0,T)\times \mathbb R^d: x\in \Omega (t) \} = \bigcup_{t\in (0,T)} \{t\}\times \Omega(t)\]is
open in \(\mathbb R^{d+1}\). We say it is a moving domain, if
\(\Omega_T\) addtionally has Lipschitz boundary in \(\mathbb R^{d+1}\). We
adopt the notation that by \(\partial_x\Omega_T\) we mean only the
lateral (spatial part) of the boundary, that is
\[\partial_x\Omega_T:= \{(t,x)\in(0,T)\times \mathbb R^d: x\in \partial \Omega(t)\} = \bigcup _{t\in(0,T)} \{t\}\times \partial \Omega(t)\]
so that the true time-space boundary in \(\mathbb R^{d+1}\) of
\(\Omega_T\) is
\[\partial\Omega_T= \partial_x\Omega_T \cup \{0\}\times \overline\Omega(0) \cup \{T\}\times \overline\Omega(T)\]
Denote by \(n(t)= n(t,\cdot)\colon \partial \Omega(t)\to \mathbb R^d\)
the outer unit normal to the Lipschitz boundary of \(\Omega(t)\), defined
\(\mathcal H^{d-1}\)-a.e. on \(\partial\Omega(t)\). Since we assume
\(\Omega_T\subset\mathbb R^{d+1}\) to be also a Lipschitz domain, it has
\(\mathcal H^d\)-a.e. defined normal
\(\tilde n\colon \partial\Omega_T\to \mathbb R^{d+1}\) which necessarily
is of the form \[\tilde n (t,x)=\begin{cases}
(-\tilde n_t(t,x), n(t,x)), & t\in(0,T), x\in \partial\Omega(t),\\
(-1,0), & t=0, x\in \Omega(0),\\
(1,0), & t=T, x\in \Omega(T),
\end{cases}\] for some
\(\tilde n_t\colon \partial_x \Omega_T \to \mathbb R\) which we may call the \emph{normal velocity} (note that in all
three cases the value on the right is written as
\((a,b)\in\mathbb R\times \mathbb R^d\)). Note that $\tilde n$ is not a unit vector, rather it is chosen so that the spatial part $n$ is unit. We denote by $dS$ the surface measure on $\partial\Omega(t)$, and $dn$ denotes the vector-valued measure $dn:= n\,dS$.
%picture:
%!{[}{[}2024\_01\_29 11\_13 Office Lens.jpg{]}{]}
The domain \(\Omega(t)\) said is transported by the vector field
\(w\colon \overline{\Omega_T}\to\mathbb R^d\), if
\(w\cdot n=\tilde n_t\).

\subsection{Transport theorem}\label{sec:transport-theorem}

Now let us prove a variant of the Reynolds transport theorem for this type of
moving domain. We do not explicitly refer to a transport field in the
statement. However if such a field exists, i.e.~\(\Omega(t)\) is
transported by \(w\) in the sense above, we recover the standard
Reynolds transport theorem.

\begin{theorem}[Transport theorem]\label{thm:transport-theorem}
 Let \(\Omega(t)\) be a moving domain as above and
\(u\in C^1(\overline \Omega_T)\). Then it holds for almost all \(t\in(0,T)\)
\[\frac{d}{dt}\int_{\Omega(t)}u(t,x)\,dx = \int_{\Omega(t)}\partial_t u(t,x)\,dx + \int_{\partial\Omega(t)
} u(t,x)\tilde n_t (t,x) \,d\mathcal H^{d-1}(x) \] 
\end{theorem}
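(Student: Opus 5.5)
The plan is to apply the divergence theorem in space-time $\mathbb R^{d+1}$ to the truncated domain $\Omega_t:=\Omega_T\cap\{s<t\}$, which is again Lipschitz for almost every $t$, and then differentiate in $t$.

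Fix $t\in(0,T)$ and consider the vector field $U(s,x):=(u(s,x),0)\in\mathbb R\times\mathbb R^d$ on $\overline{\Omega_T}$. Its space-time divergence is $\partial_t u$. The boundary of $\Omega_t$ consists of three parts: the lateral part $\partial_x\Omega_T\cap\{s<t\}$, the bottom $\{0\}\times\Omega(0)$, and the top $\{t\}\times\Omega(t)$. On these the (non-unit) normal $\tilde n$ has the stated form, and its time component is $-\tilde n_t$, $-1$ and $1$ respectively. The Gauss–Green theorem for Lipschitz domains gives
\[
\int_{\Omega(t)}u(t,x)\,dx-\int_{\Omega(0)}u(0,x)\,dx-\int_{\partial_x\Omega_T\cap\{s<t\}} u\,\frac{\tilde n_t}{|\tilde n|}\,d\mathcal H^d=\int_0^t\int_{\Omega(s)}\partial_t u\,dx\,ds .
\]
The unit normal is $\tilde n/|\tilde n|$ with $|\tilde n|=\sqrt{1+\tilde n_t^2}$. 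The minus sign in front of the lateral integral will flip once $\tilde n_t$ is interpreted with the correct orientation. I will double-check the sign convention so that the final formula carries a plus sign, matching transport by $w$ with $w\cdot n=\tilde n_t$.

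Next I rewrite the lateral surface integral as an iterated integral $\int_0^t\int_{\partial\Omega(s)}\cdots\,d\mathcal H^{d-1}\,ds$ using the coarea formula on the Lipschitz $d$-manifold $\partial_x\Omega_T$, applied to the Lipschitz function $\pi(s,x)=s$. The tangential Jacobian of $\pi$ on the lateral boundary is the length of the projection of $e_0$ onto the tangent space, namely $\sqrt{1-\nu_0^2}$, where $\nu_0=-\tilde n_t/|\tilde n|$ is the time component of the unit normal. Hence $J\pi=1/|\tilde n|$. The coarea formula then gives
\[
\int_{\partial_x\Omega_T\cap\{s<t\}} g\,\frac{1}{|\tilde n|}\,d\mathcal H^d=\int_0^t\int_{\partial\Omega(s)}g\,d\mathcal H^{d-1}\,ds .
\]
The factor $|\tilde n|$ cancels exactly because the normal was normalized so that its spatial part is unit. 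This is precisely why $\tilde n$ was chosen non-unit. I also have to check that the slice $\pi^{-1}(s)\cap\partial_x\Omega_T$ agrees $\mathcal H^{d-1}$-a.e. with $\partial\Omega(s)$ for a.e. $s$, which I expect to follow from the Lipschitz structure.

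With these two steps, $t\mapsto\int_{\Omega(t)}u$ equals a constant plus $\int_0^t F(s)\,ds$, where $F(s)=\int_{\Omega(s)}\partial_tu+\int_{\partial\Omega(s)}u\tilde n_t$. Since $u\in C^1$ and the lateral boundary has finite measure, $F\in L^1(0,T)$. Lebesgue's differentiation theorem then yields the claim for a.e. $t$.

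The main obstacle is the first step: for a.e. $t$, the truncated set must be a domain on which Gauss–Green applies, and its top face must be exactly $\{t\}\times\Omega(t)$ up to null sets. To avoid proving that $\Omega_t$ is Lipschitz, I would first try a cutoff instead. Replace the truncation by the field $U_\varepsilon=(\psi_\varepsilon(s)u,0)$, where $\psi_\varepsilon$ is a smooth approximation of $\mathbf 1_{s<t}$, and apply the divergence theorem on all of $\Omega_T$. The term $\int\psi_\varepsilon' u$ then converges to $-\int_{\Omega(t)}u(t)$ at every Lebesgue point of $t\mapsto\int_{\Omega(t)}u$. This gives the integrated identity for a.e. $t$ without any regularity of the truncated domains.
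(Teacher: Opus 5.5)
Your route is the paper's: Gauss--Green in $\mathbb R^{d+1}$ for $U=(u,0)$ on a time-truncated piece of $\Omega_T$, followed by Lebesgue differentiation. The paper simply asserts that $\Omega_T\cap\bigl((t,s)\times\mathbb R^d\bigr)$ is Lipschitz for every $t<s$, and it writes the lateral term directly as $\int_t^s\int_{\partial\Omega(r)}u\,\tilde n_t\,d\mathcal H^{d-1}\,dr$. Your coarea computation ($J\pi=1/|\tilde n|$, which is exactly what the spatial normalization of $\tilde n$ is for) and your time cutoff $\psi_\varepsilon$ supply the two steps the paper skips. Your slicing worry is settled by the setup itself: the paper defines $\partial_x\Omega_T=\bigcup_t\{t\}\times\partial\Omega(t)$ and assumes $\partial\Omega_T$ is the union of this set with the top and bottom faces.

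Two small points need fixing. First, no sign flip is needed. Moving the lateral term in your display to the right-hand side already gives $+\int u\,\tilde n_t$ with the paper's convention $\tilde n=(-\tilde n_t,n)$.

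Second, and more substantively, both of your variants (truncation for a.e.\ $t$, or the cutoff) only give $g(t)=g(0)+\int_0^tF$ for a.e.\ $t$, where $g(t):=\int_{\Omega(t)}u(t,x)\,dx$. Lebesgue's theorem then differentiates the absolutely continuous function $G(t)=g(0)+\int_0^tF$, not $g$ itself. Agreement almost everywhere says nothing about the difference quotients of $g$. You can close this by showing that $g$ is continuous. For $t_k\to t$ one has $\chi_{\Omega(t_k)}(x)\to\chi_{\Omega(t)}(x)$ for every $x\notin\partial\Omega(t)$; this follows from the openness of $\Omega_T$ and of the complement of $\overline{\Omega_T}$, together with the boundary decomposition. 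Moreover, $\partial\Omega(t)$ is $\mathcal L^d$-null because $\Omega(t)$ is Lipschitz, so dominated convergence applies. Hence $g=G$ everywhere, and $g'=F$ at every Lebesgue point of $F$.
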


\begin{proof}
 Denote
\(U=(u,0)\colon \Omega_T\to \mathbb R\times \mathbb R^d\). Pick
\(0\leq t<s\leq T\) and invoke the divergence theorem for \(U\) on the
Lipschitz domain \(\Omega_{s,t}:=\Omega_T \cap (s,t)\times \mathbb R^d\)
to obtain
\[\int_{\Omega(s)} u(s,x)\,dx-\int_{\Omega(t)}u(t,x)\,dx = \int_t^s \int_{\Omega(r)} \partial_t u(r,x)\,dx\,dr + \int_t^s \int_{\partial\Omega(r)} u(r,x) \tilde n_t(r,x)\,d\mathcal H^{d-1}(x)\,dr.\]
Let us now pick
\(t\in(0,T)\) a Lebesgue point of
\(t\mapsto \int_{\Omega(t)} u(t,x)\,dx\) and of \(t\mapsto\tilde n_t\).
Then we compute, by above,
\[\begin{aligned}\frac{d}{dt}\int_{\Omega(t)}u(t,x)\,dx = \lim_{s\to t} \frac{\int_{\Omega(s)}u(s,x)\,dx-\int_{\Omega(t)}u(t,x)\,dx}{s-t}  \\ = \int_{\Omega(t)} \partial_t u(t,x)\,dx +\int_{\partial \Omega(t)}u(t,x)\tilde n_t(t,x)\,d\mathcal H^{d-1}(x), \end{aligned}\]
which finishes the proof.
\end{proof}

\begin{remark}
 It is enough that \(u\) differentiable in time and
having a trace, so that all manipulations in the preceding proof go
through. However we will use it only for \(C^1\) strong solutions in Theorem \ref{thm:weak-strong-compatibility}.
\end{remark}

\subsection{Spaces on a moving domain}\label{sec:spaces-on-a-moving-domain}
For the fluid, we will need to deal with Sobolev spaces on a changing domain and talk about the relevant convergences, in the absence of a reference configuration. While far enough away from the reference configuration this can be done by imposing an artificial reference state and using so called ALE (arbitrary Lagrangian-Eulerian) methods, we want our results to be extendable up to collisions. So instead we adopt the framework of zero-extension convergence (see \cite{evseevZeroextensionConvergenceSobolev2024} and a further upcoming work by the same authors).

For the reader's convenience we will summarize the basic concepts in this section.
%\noteMalte[inline]{I still want to shorten this section a bit, but probably not by much.}
Assume now \(\Omega(t)\) is a moving domain.

We now rigorously describe the spaces \[\begin{aligned}
L^2((0,T);W^{1,2}(\Omega(t);\mathbb R^d)),\\
L^2((0,T);W_n^{1,2}(\Omega(t);\mathbb R^d)),\\
L^2((0,T);W_{\operatorname{div},n}^{1,2}(\Omega(t);\mathbb R^d)).
\end{aligned}\]
First, classically we can identify the space
\(L^2((0,T);L^2(\Omega(t);\mathbb R^d))\) with
\(L^2(\Omega_T;\mathbb R^d)\). That is, we consider measurable functions
\(u\colon \Omega_T\to \mathbb R^d\) which are square integrable:
\[\int_{\Omega_T}|u|^2\,dx\,dt<\infty\] so that by Fubini theorem, we
have for a.e. \(t\in(0,T)\) that \(u(t)\in L^2(\Omega(t);\mathbb R^d)\)
and it holds
\[ \| u\|_{L^2((0,T);L^2(\Omega(t);\mathbb R^d))} :=\int_0^T \int_{\Omega(t)}|u|^2 \,dx\,dt<\infty\]
Let us henceforth assume \(\Omega(t)\subset \Omega\), \(t\in(0,T)\), for
some fixed Lipschitz domain \(\Omega\subset\mathcal R^d\). Clearly we can consider the zero
extension to \(u_0\colon (0,T)\times \Omega\to \mathbb R^d\) by
\[u_0(t,x)= \begin{cases}u(t,x), &x\in \Omega(t), \\ 0, & x\in \Omega\setminus\Omega(t)\end{cases}\]
so that then \(u_0\in L^2((0,T);L^2(\Omega;\mathbb R^d))\). In this
sense, we can understand the embedding
\[L^2((0,T);L^2(\Omega(t);\mathbb R^d))\hookrightarrow L^2((0,T);L^2(\Omega;\mathbb R^d)).\]
Now we say that \(u\in L^2((0,T);W^{1,2}(\Omega(t);\mathbb R^d))\) if it
holds

\begin{itemize}
\item
  \(u\in  L^2((0,T);L^2(\Omega(t);\mathbb R^d))\)
\item
  for a.e. \(t\in (0,T)\) we have
  \(u(t)\in W^{1,2}(\Omega(t);\mathbb R^d)\)
\item
  \(\nabla u\in L^2((0,T);L^2((\Omega(t);\mathbb R^{d\times d}))\),
  where the function
  \(\nabla u\colon \Omega_T\to \mathbb R^{d\times d}\) is a.e.\ defined
  by the previous point.
\end{itemize}

The norm on this space is defined as
\[\|u\|_{L^2((0,T);W^{1,2}(\Omega(t);\mathbb R^d))}:=\| u\|_{L^2((0,T);L^2(\Omega(t);\mathbb R^d))} +\| \nabla u\|_{L^2((0,T);L^2(\Omega(t);\mathbb R^{d\times d}))} \]
We now can consider the mapping \(u\mapsto (u_0, (\nabla u)_0)\) to be
an embedding
\[L^2((0,T);W^{1,2}(\Omega(t);\mathbb R^d))\hookrightarrow L^2((0,T);L^2(\Omega;\mathbb R^d))\times L^2((0,T);L^2(\Omega;\mathbb R^{d\times d})).\]
We now define the weak convergence in
\(L^2((0,T);W^{1,2}(\Omega(t);\mathbb R^d))\) by weak convergence in the
space on the right. Explicitly, we say that
\[ u_n\rightharpoonup u \quad \text{in }L^2((0,T);W^{1,2}(\Omega(t);\mathbb R^d))\]
if it holds \[\begin{aligned}
(u_n)_0 \rightharpoonup u_0 \quad \text{in }L^2((0,T);L^2(\Omega;\mathbb R^d)), \\
(\nabla u_n)_0 \rightharpoonup (\nabla u)_0 \quad \text{in }L^2((0,T);L^2(\Omega;\mathbb R^{d\times d})).
\end{aligned}\] It is easy to see that the Banach-Alaoglu theorem
continues to hold: If
\[\|u_n\|_{L^2((0,T);W^{1,2}(\Omega(t);\mathbb R^d))} \leq C\] then we
can choose subsequence (not relabeled) such that 
\[\begin{aligned}
(u_n)_0 \rightharpoonup w \quad \text{in }L^2((0,T);L^2(\Omega;\mathbb R^d)), \\
(\nabla u_n)_0 \rightharpoonup A \quad \text{in }L^2((0,T);L^2(\Omega;\mathbb R^{d\times d})).
\end{aligned}\]
Clearly, \(\operatorname{supp} w\subset \Omega_T\), so
that \(w=u_0\) for some \(u\in L^2((0,T);L^2(\Omega(t);\mathbb R^d))\).
By choosing \(\Phi\in C^\infty_c(\Omega_T;\mathbb R^{d\times d})\) we
find that
\[\int_0^T \int_{\Omega(t)} u_n\cdot \operatorname{div}\Phi \,dx\,dt \to \int_0^T\int_{\Omega(t)} u\cdot\operatorname{div} \Phi\,dx\,dt, \quad \text{as }n\to\infty,
\]
where by integration by parts on the left this is equal to 
\[\int_0^T \int_{\Omega(t)} u_n\cdot \operatorname{div}\Phi \,dx\,dt= -\int_0^T \int_{\Omega(t)}\nabla u_n :\Phi \,dx \to -\int_0^T \int_{\Omega(t)} A:\Phi \,dx\,dt,\]
showing that \(A=(\nabla u)_0\).

We define the subspace of functions that have zero normal trace as
\[\begin{aligned}
L^2((0,T);&W^{1,2}_n(\Omega(t);\mathbb R^d)) \\ := &\{u\in L^2((0,T);W^{1,2}(\Omega(t);\mathbb R^d)): u(t)|_{\partial \Omega(t)} \cdot n(t)=0 \text{ on }\partial\Omega(t)\text{ for a.e. }t\in(0,T)\} 
\end{aligned}\]where
\(u(t)|_{\partial\Omega(t)}\in W^{1/2,2}(\partial\Omega(t);\mathbb R^d)\)
is the trace of \(u(t)\). It is clear that this subspace is closed under
the weak convergence in \(L^2((0,T);W^{1,2}(\Omega(t);\mathbb R^d))\),
by compactness of the trace operator.

Finally, we consider also the space of divergence-free functions:
\[\begin{aligned}
L^2((0,T);W^{1,2}_{\operatorname{div}}(\Omega(t);\mathbb R^d)):= \{u\in L^2((0,T);W^{1,2}(\Omega(t);\mathbb R^d)): \operatorname{div} u=0\}, \\
L^2((0,T);W^{1,2}_{\operatorname{div},n}(\Omega(t);\mathbb R^d)):= \{u\in L^2((0,T);W_n^{1,2}(\Omega(t);\mathbb R^d)): \operatorname{div} u=0\}.
\end{aligned}\]

We call the moving domain admissible if there exists
\(w\in L^2((0,T);W^{1,2}_{\operatorname{div}}(\Omega(t);\mathbb R^d))\)
with \(w\cdot n =\tilde n_t\).

To define spaces with time derivative
\[W^{1,2}((0,T);W^{1,2}(\Omega(t);\mathbb R^d)),\] we proceed as
follows. We say that
\[u\in W^{1,2}((0,T);W^{1,2}(\Omega(t);\mathbb R^d)),\] if there exists
\(\partial_t u\in L^2((0,T);W^{1,2}(\Omega(t);\mathbb R^d)),\) which is
a time derivative of \(u\) in the sense that for any
\(\varphi\in C^1_c(\Omega_T)\) it holds
\[\int_0^T\int_{\Omega(t)} \partial_t u \cdot \varphi \,dx\,dt = -\int_0^T\int_{\Omega(t)} u \cdot \partial_t\varphi \,dx\,dt.\]
Spaces with higher derivatives or different integrability shall be
defined analogously.

\subsubsection{Convergence of domains}\label{sec:convergence-of-domains}

Now consider a sequence of moving domains. That is, for 
\(i\in\mathbb N\) each \(\Omega_T^{(i)}\) is a time dependent domain, as
well as \(\Omega_T\). We say that \(\Omega_T^{(i)}\to \Omega_T\) if both the domains and their boundaries
converge in the Hausdorff metric in space, uniformly in time, that is
\[\sup_{t\in(0,T)} \mathcal H(\Omega_T^{(i)}, \Omega_T) \to 0 \quad \text{ and } \quad \sup_{t\in(0,T)} \mathcal H(\partial \Omega_T^{(i)}, \partial \Omega_T) \to 0\]
\[\text{where }\mathcal H(\Omega_T^{(i)}, \Omega_T) = \max\left(\sup_{x\in\Omega_T^{(i)}}\operatorname{dist}(x, \Omega_T), \sup_{y\in\Omega_T}\operatorname{dist}(y, \Omega_T^{(i)})\right)\]
as \(i\to\infty\). It is easy to see that it implies the convergence of
their characteristic functions in \(L^1\):
\[\chi_{\Omega_T^{(i)}}\to\chi_{\Omega_T}\quad \text{in } L^1((0,T)\times\Omega)\]
and in fact for any any \(L^q\), \(1\leq q<\infty\), as can be seen by
\[\left\|\chi_{\Omega_T^{(i)}}-\chi_{\Omega_T}\right\|_{L^q((0,T)\times \Omega)} = \left| \chi_{\Omega_T} \vartriangle  \chi_{\Omega_T^{(i)}}\right|^{1/q} = \left\|\chi_{\Omega_T^{(i)}}-\chi_{\Omega_T}\right\|_{L^1((0,T)\times \Omega)}^{1/q},\]
where \(\vartriangle\) denotes the symmetric difference. Observe also
that whenever \(K\subset \Omega_T\) is a compact set, then
\(K\subset \Omega_T^{(i)}\) for all \(i\) large enough. This will later
be in particular important for us when \(K\) is the support of our
chosen test function.

\subsubsection{Convergence of functions on different domains}\label{convergence-of-functions-on-different-domains}

Let
\(u^{(i)} \in L^2((0,T);W^{1,2}(\Omega^{(i)}(t);\mathbb R^d))\) and
\(u\in L^2((0,T);W^{1,2}(\Omega(t);\mathbb R^d))\) where we have a
sequence of converging time dependent domains
\(\Omega_T^{(i)}\to \Omega_T\). The space normals to these domains are
denoted by \(n\) the normal to \(\Omega(t)\), \(n^{(i)}\) the normal to
\(\Omega^{(i)}\).

In this context we can define the weak convergences by zero extension,
that is we say that
\[u^{(i)}\overset\eta\rightharpoonup u\quad\text{in }L^2((0,T);W^{1,2}(\Omega(t);\mathbb R^d))\]
if it holds 
\[\begin{aligned}
u^{(i)}_0 \rightharpoonup u_0 &\quad \text{in }L^2((0,T);L^2(\Omega;\mathbb R^d)), \\
(\nabla u^{(i)})_0 \rightharpoonup (\nabla u)_0 &\quad \text{in }L^2((0,T);L^2(\Omega;\mathbb R^{d\times d})).
\end{aligned}\]
We can see that the Banach-Alaoglu theorem continues to
hold: If
\[\|u^{(i)}\|_{L^2((0,T);W^{1,2}(\Omega^{(i)}(t);\mathbb R^d))} \leq C\]
then we can choose subsequence (not relabeled) such that
\[\begin{aligned}
(u^{(i)})_0 \rightharpoonup w \quad \text{in }L^2((0,T);L^2(\Omega;\mathbb R^d)) \\
(\nabla u^{(i)})_0 \rightharpoonup A \quad \text{in }L^2((0,T);L^2(\Omega;\mathbb R^{d\times d}))
\end{aligned}\] By the convergence \(\Omega_T^{(i)}\to \Omega_T\) we can
easily see \(\operatorname{supp} w\subset \Omega_T\), so that \(w=u_0\)
for some \(u\in L^2((0,T);L^2(\Omega(t);\mathbb R^d))\). By choosing
\(\Phi\in C^\infty_c(\Omega_T;\mathbb R^{d\times d})\) we find that for
any \(i\) large enough enough it holds
\(\Phi\in C^\infty_c(\Omega_T^{(i)};\mathbb R^{d\times d})\), so that
\[\int_0^T \int_{\Omega^{(i)}(t)} u^{(i)}\cdot \operatorname{div}\Phi \,dx\,dt\to \int_0^T\int_{\Omega(t)} u\cdot\operatorname{div} \Phi\,dx\,dt \]
is equal to
\[= -\int_0^T \int_{\Omega^{(i)}(t)}\nabla u^{(i)} :\Phi \,dx \to -\int_0^T \int_{\Omega(t)} A:\Phi \,dx\,dt\]
(remember that \(\chi_{\Omega^{(i)}}(t)\) converges strongly) showing
that \(A=(\nabla u)_0\).

\begin{proposition}
If
\(u^{(i)} \in L^2((0,T);W_n^{1,2}(\Omega^{(i)}(t);\mathbb R^d))\) and
\(u^{(i)}\overset\eta\rightharpoonup u\), then the limit satisfies
\(u\in L^2((0,T);W_n^{1,2}(\Omega(t);\mathbb R^d))\). In other words,
zero normal trace is preserved under weak convergence.
\end{proposition}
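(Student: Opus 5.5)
The plan is to characterize zero normal trace weakly, through the divergence theorem, so that it becomes a linear condition that passes to the limit under the zero-extension convergence $\overset\eta\rightharpoonup$. For $u(t)\in W^{1,2}(\Omega(t);\mathbb R^d)$ on a Lipschitz domain, Green's formula gives
\[\int_{\Omega(t)} u\cdot\nabla\psi + \psi\operatorname{div} u\,dx = \int_{\partial\Omega(t)} \psi\, u\cdot n\,dS\]
for every $\psi\in C^\infty_c(\mathbb R^d)$. Hence $u(t)\cdot n=0$ on $\partial\Omega(t)$ if and only if the left-hand side vanishes for all such $\psi$; this uses that traces of $C^\infty_c$ functions are dense in $L^2(\partial\Omega(t))$. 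Integrating in time, $u\in L^2((0,T);W^{1,2}_n(\Omega(t);\mathbb R^d))$ is equivalent to
\[\int_0^T\int_{\Omega(t)} u\cdot\nabla_x\psi+\psi\operatorname{div}u\,dx\,dt=0\quad\text{for all }\psi\in C^\infty_c((0,T)\times\mathbb R^d).\]
Test functions of product form $\zeta(t)\phi(x)$ with $\zeta$, $\phi$ ranging over countable dense sets are enough to recover the statement for a.e.\ $t$.

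Next I write this condition with zero extensions:
\[\int_0^T\int_\Omega u^{(i)}_0\cdot\nabla\psi+\psi\operatorname{tr}(\nabla u^{(i)})_0\,dx\,dt=0 .\]
This holds for each $i$ because $u^{(i)}(t)\in W^{1,2}_n(\Omega^{(i)}(t))$ for a.e.\ $t$. The key point is that $\psi$ is a fixed function on all of $(0,T)\times\Omega$, and in particular it is not compactly supported in $\Omega_T$. The integrand is linear in $(u^{(i)}_0,(\nabla u^{(i)})_0)$, and it is tested against $(\nabla\psi,\psi I)\in L^2((0,T)\times\Omega)$. By the definition of $\overset\eta\rightharpoonup$, these pairs converge weakly in $L^2((0,T);L^2(\Omega))$ to $(u_0,(\nabla u)_0)$, so the identity passes directly to the limit. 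Since $u_0$ and $(\nabla u)_0$ vanish outside $\Omega_T$, the limit identity is exactly the Green-type characterization for $u$ on $\Omega(t)$. Therefore $u(t)\cdot n(t)=0$ for a.e.\ $t$.

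The main obstacle is the characterization step: making sure that identifying the limit's gradient on $\Omega_T$ (already established above via compactly supported $\Phi$) is enough for the boundary term. Here the zero-extension convention is what helps. Because the weak limit of $(\nabla u^{(i)})_0$ is $(\nabla u)_0$ on the whole of $(0,T)\times\Omega$, no mass concentrates on the boundary, so no extra boundary term can appear in the limit. One could worry that $(\nabla u^{(i)})_0$ might converge to something carrying a singular part on $\partial\Omega(t)$. That cannot happen, since weak $L^2$ limits are functions and the paper has already identified the limit as $(\nabla u)_0$. The remaining technicalities are the a.e.-in-$t$ selection by a countable dense family, and the fact that $u(t)\in W^{1,2}(\Omega(t))$ for a.e.\ $t$, which is part of $u$ lying in $L^2((0,T);W^{1,2}(\Omega(t)))$.
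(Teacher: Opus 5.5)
Your proof is correct and follows essentially the same route as the paper: you characterize zero normal trace through the divergence theorem with test functions in $C^1_c((0,T)\times\mathbb R^d)$, which are not compactly supported in $\Omega_T$, and you pass to the limit using the weak convergence of the zero extensions $u^{(i)}_0$ and $(\operatorname{div} u^{(i)})_0$. You also fill in details the paper leaves implicit, namely the density argument on $\partial\Omega(t)$ and the selection of a.e.\ $t$ via a countable family, and you correctly note that weak convergence of the zero extensions already suffices, so the strong convergence $\chi_{\Omega_T^{(i)}}\to\chi_{\Omega_T}$ that the paper also invokes is not actually needed.
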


\begin{proof}
Let us take a test function
\(\xi\in C^1_c((0,T)\times \mathbb R^d)\). Then we compute for a.e.
\(t\in(0,T)\) by the divergence theorem
\[\begin{aligned}0=\int_0^T\int_{\partial\Omega^{(i)}(t)} \xi u^{(i)} \cdot dn^{(i)}\,dt= \int_0^T\int_{\Omega^{(i)(t)}} \nabla\xi \cdot u^{(i)} + \xi\operatorname{div} u^{(i)} \,dx\,dt \\ \to \int_0^T \int_{\Omega(t)}\nabla\xi \cdot u+\xi \operatorname{div} u\,dx\,dt = \int_0^T \int_{\partial\Omega(t)}\xi u \cdot dn\,dt\end{aligned}\]
and since \(\xi\) was arbitrary, this shows \(u\cdot n=0\) on
\(\partial_x\Omega_T\). The convergence for \(i\to\infty\) goes through,
as it holds \[\begin{aligned}
u^{(i)}_0 \rightharpoonup u_0 \quad \text{in }L^2((0,T);L^2(\Omega;\mathbb R^d)), \\
(\operatorname{div} u^{(i)})_0 \rightharpoonup (\operatorname{div} u)_0 \quad \text{in }L^2((0,T);L^2(\Omega;\mathbb R^d)), \\
\chi_{\Omega_T^{(i)}}\to\chi_{\Omega_T}\quad \text{in } L^2((0,T);L^2(\Omega;\mathbb R^d)).
\end{aligned}\]
\end{proof}
In fact, we can also treat converging normal boundary values.
\begin{lemma}[Convergence of normal trace] Let
\(\Omega^{(i)} \to \Omega(t)\), \(u^{(i)}\overset\eta\rightharpoonup u\)
in \(L^2((0,T);W^{1,2}(\Omega(t);\mathbb R^d))\) and
\(u^{(i)}\cdot n^{(i)}|_{\partial \Omega^{(i)}}= \phi^{(i)}\) with
\(\phi^{(i)}\colon \partial\Omega^{(i)}\to \mathbb R\) given such that
\(\phi^{(i)}\rightharpoonup \phi\) in the sense that
\(\phi^{(i)}\mathcal H^{d-1}|_{\partial \Omega^{(i)}} \overset\ast\rightharpoonup \phi \mathcal H^{d-1}|_{\partial\Omega(t)}\)
as measures in \(M([0,T]\times \mathbb R^d)\), i.e. for all
\(\psi\in C_c([0,T]\times \mathbb R^d)\) it holds
\[\int_0^T\int_{\partial\Omega^{(i)}} \psi\phi^{(i)}\, dS\,dt \to \int_0^T\int_{\partial\Omega(t)}\psi\phi\,dS\,dt.\]
Then it holds \(u\cdot n|_{\partial\Omega(t)}=\phi\). In other words
\[u^{(i)}\overset\eta\rightharpoonup u \implies u^{(i)}\cdot n^{(i)}\mathcal H^{d-1}|_{\partial \Omega^{(i)}} \overset\ast\rightharpoonup u\cdot n \mathcal H^{d-1}|_{\partial\Omega(t)}\]
\end{lemma}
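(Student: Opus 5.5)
I would prove this exactly as in the preceding Proposition, which is the special case \(\phi^{(i)}=0\). The idea is to express the normal trace weakly through the divergence theorem on each \(\Omega^{(i)}(t)\), and then pass to the limit in the volume integrals, where only weak convergences are needed.

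Fix \(\psi\in C^1_c((0,T)\times\mathbb R^d)\). For a.e. \(t\), applying the divergence theorem to \(\psi u^{(i)}(t)\in W^{1,2}(\Omega^{(i)}(t))\) on the Lipschitz domain \(\Omega^{(i)}(t)\) and integrating in time gives
\[
\int_0^T\int_{\partial\Omega^{(i)}(t)}\psi\,\phi^{(i)}\,dS\,dt=\int_0^T\int_{\partial\Omega^{(i)}(t)}\psi\, u^{(i)}\cdot dn^{(i)}\,dt=\int_0^T\int_{\Omega^{(i)}(t)}\nabla\psi\cdot u^{(i)}+\psi\operatorname{div}u^{(i)}\,dx\,dt .
\]
By assumption, the left-hand side converges to \(\int_0^T\int_{\partial\Omega(t)}\psi\phi\,dS\,dt\).

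For the right-hand side, I would rewrite it as \(\int_0^T\int_\Omega \nabla\psi\cdot u^{(i)}_0+\psi(\operatorname{div}u^{(i)})_0\,dx\,dt\). Here \(\nabla\psi\) and \(\psi\) are fixed \(L^2((0,T)\times\Omega)\) functions. Moreover \((\operatorname{div}u^{(i)})_0=\operatorname{tr}(\nabla u^{(i)})_0\rightharpoonup(\operatorname{div}u)_0\) weakly, because \(\overset\eta\rightharpoonup\) convergence gives weak convergence of the zero-extended gradients. The right-hand side therefore converges to \(\int_0^T\int_{\Omega(t)}\nabla\psi\cdot u+\psi\operatorname{div}u\,dx\,dt\). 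Applying the divergence theorem again, this time on \(\Omega(t)\), this equals \(\int_0^T\int_{\partial\Omega(t)}\psi\,u\cdot n\,dS\,dt\). (If \(\partial\Omega^{(i)}\) touches \(\partial\Omega\), then \(\psi\) should be taken in \(C^1_c\) of the whole space, and the zero extension is only to \(\Omega\); this causes no problem since everything is computed on \(\Omega^{(i)}(t)\) itself.)

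Equating the two limits, we obtain for all such \(\psi\)
\[
\int_0^T\int_{\partial\Omega(t)}\psi\,(u\cdot n-\phi)\,dS\,dt=0 .
\]
The measure \((u\cdot n-\phi)\mathcal H^{d-1}|_{\partial\Omega(t)}\,dt\) on \(\partial_x\Omega_T\) therefore vanishes. This follows because \(C^1_c\) is dense in \(C_c\) in the sup norm, and the relevant measures are finite. Finiteness holds because \(u\in L^2W^{1,2}\) has an \(L^2\) trace, and because \(\phi\mathcal H^{d-1}\) is a limit of measures which I would check to be bounded in total variation; this boundedness follows from the uniform trace bounds, or is simply part of the hypothesis. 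Hence \(u\cdot n=\phi\) a.e. on \(\partial\Omega(t)\) for a.e. \(t\). The second formulation in the statement is the same computation applied to arbitrary \(u^{(i)}\), setting \(\phi^{(i)}:=u^{(i)}\cdot n^{(i)}\) and reading the identity above backwards.

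The main obstacle is the last implication. There, \(\phi^{(i)}\) is not given, so one must show convergence against all \(\psi\in C_c\) rather than only \(C^1_c\). For this I would use uniform bounds on the total variation of \(u^{(i)}\cdot n^{(i)}\mathcal H^{d-1}\). Such bounds require a uniform trace inequality, i.e. uniform Lipschitz character of the domains \(\Omega^{(i)}(t)\), which in the application is supplied by the uniform \(C^{1,\alpha}\) bounds on \(\eta\). The measure-density argument in the previous step also needs this justification. The remaining steps are routine.
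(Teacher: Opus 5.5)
Your proposal is correct and follows the paper's proof: you apply the divergence theorem on each $\Omega^{(i)}(t)$, pass to the limit in the volume integrals using the weak convergence of the zero-extended $u^{(i)}$ and $\nabla u^{(i)}$ (hence of $\operatorname{div}u^{(i)}$), and apply the divergence theorem again on $\Omega(t)$. Your extra remarks fill in a point the paper glosses over, since it states the identity for $\psi\in C([0,T]\times\mathbb R^d)$ while using $\nabla\psi$: the argument directly gives equality only against $C^1$ test functions, so the density step, and especially the weak-$*$ measure reformulation, need finiteness and uniform total-variation bounds from a uniform trace inequality, i.e.\ uniform Lipschitz character of the domains $\Omega^{(i)}(t)$.
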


\begin{proof}
 It is sufficient to show that for all
\(\psi\in C([0,T]\times \mathbb R^d)\) it holds that
\[\int_0^T\int_{\partial\Omega(t)} \psi u\cdot dn = \int_0^T \int_{\partial\Omega(t)}\psi \phi\, dS.\]
We know that
\[\int_0^T \int_{\partial \Omega^{(i)}(t)} \psi u^{(i)}\cdot dn^{(i)}\,dt = \int_0^T\int _{\partial\Omega^{(i)}(t)}  \psi\phi^{(i)} \,dS\,dt\to \int_0^T\int_{\partial\Omega(t)}\psi\phi\,dS\,dt\]
Rewrite the left side by the divergence theorem
\[\begin{aligned}\int_0^T\int_{\partial \Omega^{(i)}(t)} \psi u^{(i)}\cdot dn^{(i)}\,dt = \int_0^T \int_{\Omega^{(i)}(t)}\nabla \psi \cdot u^{(i)} + \psi \operatorname{div} u^{(i)} \,dx\,dt \\ \to \int_0^T \int_{\Omega(t)}\nabla\psi \cdot u+\psi \operatorname{div} u\,dx\,dt = \int_0^T \int_{\partial \Omega(t)}\psi u\cdot dn\,dt\end{aligned}\]
and we are finished.
\end{proof}

\subsection{Approximation of test functions}\label{approximation-of-test-functions}

\subsubsection{Eulerian to Lagrangian and back}\label{eulerian-to-lagrangian-and-back}

We will have the deformation \(\eta\) mapping the Lagrangian solid \(Q\)
to the deformed configuration \(\eta(Q)\). Here we shall see that under
the \(W^{2,q}\) regularity, one can switch between Lagrangian and
Eulerian domains at no cost. This statement is made precise below.

\begin{lemma}\label{lem:eulerian-lagrangian-w2q}
Let
\(\eta\in W^{1,2}((0,T);W^{1,2}(Q;\mathbb R^d)\cap L^\infty((0,T); W^{2,q}(Q;\mathbb R^d))\)
with \(\det \nabla \eta\geq \varepsilon_0\) and \(\eta(t,\cdot)\)
injective for all \(t\). The mapping \(\xi\mapsto\xi\circ\eta\) is
linear bounded operator between the spaces
\[\begin{aligned}
L^1((0,T);W^{2,q}(\eta(Q);\mathbb R^d))&\cap W^{1,1}_0((0,T);L^2(\eta(Q);\mathbb R^d))\\ &\to L^1((0,T);W^{2,q}(Q;\mathbb R^d)) \cap W^{1,1}_0((0,T);L^2(Q;\mathbb R^d))\end{aligned}\]
with the bound \(C_\eta\) depending only on \(\varepsilon_0\),
\(\|\eta \|_{L^\infty((0,T);W^{2,q}(Q;\mathbb R^d))}\) and
\(\|\partial_t\eta\|_{L^\infty((0,T);L^2(Q;\mathbb R^d))}\).
\end{lemma}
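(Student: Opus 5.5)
We treat the two components of the target space separately. Linearity of $\xi\mapsto\xi\circ\eta$ is clear, so only the bounds need work. Throughout we use the embedding $W^{2,q}(Q)\hookrightarrow C^{1,1-d/q}(\overline Q)$ with $q>d$. It gives $\|\nabla\eta(t)\|_{L^\infty}\le C\|\eta\|_{L^\infty W^{2,q}}$ uniformly in $t$. Together with $\det\nabla\eta\ge\varepsilon_0$ and the cofactor formula, it also gives $\|(\nabla\eta)^{-1}\|_{L^\infty}\le C(\varepsilon_0,\|\eta\|_{L^\infty W^{2,q}})$. By injectivity, $\eta(t)\colon Q\to\eta(t,Q)$ is then a $C^1$-diffeomorphism whose inverse has bounded gradient, so the change of variables $y=\eta(t,x)$ has Jacobian bounded above and below.

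\emph{Spatial part.} For fixed $t$, apply the chain rule twice:
\[
\nabla(\xi\circ\eta)=(\nabla\xi\circ\eta)\nabla\eta,\qquad
\nabla^2(\xi\circ\eta)=(\nabla^2\xi\circ\eta)[\nabla\eta,\nabla\eta]+(\nabla\xi\circ\eta)\nabla^2\eta .
\]
We estimate the first term of $\nabla^2(\xi\circ\eta)$ in $L^q$ by $\|\nabla\eta\|_\infty^2\varepsilon_0^{-1/q}\|\nabla^2\xi\|_{L^q(\eta(Q))}$, using the change of variables. For the second term we use that $W^{2,q}(\eta(Q))\hookrightarrow C^1$, so that $\|\nabla\xi\circ\eta\|_\infty\le C\|\xi\|_{W^{2,q}(\eta(Q))}$, and multiply by $\|\nabla^2\eta\|_{L^q}$. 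The lower-order terms are handled the same way. One point needs care: the embedding constant for the moving domain $\eta(t,Q)$ must be uniform in $t$. We avoid this issue by pulling the embedding back to $Q$. Since $\xi\circ\eta$ and $\nabla\xi\circ\eta=\nabla(\xi\circ\eta)(\nabla\eta)^{-1}$ live on the fixed domain $Q$, we can use the embedding on $Q$ together with the bounds on $(\nabla\eta)^{-1}$. Alternatively, we can note that $\eta(t,Q)$ are uniform bi-Lipschitz images of $Q$. Integrating in $t$ then gives the $L^1W^{2,q}$ bound.

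\emph{Time part.} This is the step I expect to be the main obstacle, because $\eta$ has only $\partial_t\eta\in L^\infty L^2$ (or $L^2W^{1,2}$), while $\nabla\xi$ is controlled only in $L^1L^\infty$. For smooth $\xi$ the chain rule gives
\[
\partial_t(\xi\circ\eta)=(\partial_t\xi)\circ\eta+(\nabla\xi\circ\eta)\,\partial_t\eta .
\]
The first term is bounded in $L^1L^2(Q)$ by $\varepsilon_0^{-1/2}\|\partial_t\xi\|_{L^1L^2(\eta(Q))}$, again by change of variables. For the second term we estimate
\[
\|\nabla\xi\circ\eta\|_{L^\infty(Q)}\,\|\partial_t\eta\|_{L^2(Q)}\le C\|\xi(t)\|_{W^{2,q}}\|\partial_t\eta\|_{L^\infty L^2},
\]
and this is integrable in time by the spatial part. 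Here $\partial_t\xi$ is understood in the Eulerian sense on the space-time set $\bigcup_t\{t\}\times\eta(t,Q)$, as in Section~\ref{sec:spaces-on-a-moving-domain}.

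To make this rigorous for non-smooth $\xi$ and $\eta$, we first prove the formula for $\xi$ smooth on a neighbourhood of the space-time set. Since $\eta\in W^{1,2}(0,T;W^{1,2})$ and $\eta(t)$ is continuous in $x$, the chain rule for a smooth function composed with a Sobolev map applies. We then pass to general $\xi$ by density, which is justified by the bounds already obtained. The vanishing time-boundary values (the subscript $0$) are preserved because $(\xi\circ\eta)(t)$ vanishes whenever $\xi(t)$ does. The resulting constant depends only on $\varepsilon_0$, $\|\eta\|_{L^\infty W^{2,q}}$ and $\|\partial_t\eta\|_{L^\infty L^2}$, as claimed.
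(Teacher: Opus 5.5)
Your proposal is correct and follows the paper's route. Both arguments use the chain rule for $\nabla^2(\xi\circ\eta)$ and $\partial_t(\xi\circ\eta)$, a change of variables with $\det\nabla\eta\ge\varepsilon_0$ for the terms composed with $\eta$, and Morrey's embedding to bound $\|\nabla\xi\|_{L^\infty}$ against $\|\nabla^2\eta\|_{L^q}$ and $\|\partial_t\eta\|_{L^2}$. Your extra care about a $t$-uniform embedding constant and the density argument goes beyond what the paper writes. Note that your ``pull back to $Q$'' variant is circular as phrased, because it needs $\|\xi\circ\eta\|_{W^{2,q}(Q)}$, the quantity being bounded. The clean fix is to apply Morrey on $Q$ directly to $g=\nabla\xi\circ\eta$, whose gradient $(\nabla^2\xi\circ\eta)\nabla\eta$ is already controlled by the change of variables.
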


\begin{proof}
 The linearity is clear. As in
\cite[A.4]{benesovaVariationalApproachHyperbolic2023}, we calculate
\[\begin{aligned}
\|\nabla^2 (\xi\circ\eta)\|_{L^q(Q)}= \|(\nabla^2 \xi \circ\eta\cdot \nabla\eta)\cdot \nabla\eta + \nabla\xi\circ\eta\cdot \nabla^2\eta\|_{L^q(Q)}\\ \leq \|\nabla\eta\|^2_{L^\infty(Q)}\|\nabla^2 \xi\circ\eta\|_{L^q(Q)}+\|\nabla\xi\|_{L^\infty(\eta(Q))}\|\nabla^2\eta\|_{L^q(Q)}
\end{aligned}\]
and use
\[\|\nabla^2\xi\circ\eta\|_{L^q(Q)}^q\leq \int_Q|\nabla^2\xi\circ\eta|^q\tfrac{\det \nabla\eta}{\varepsilon_0}\,dx = \frac{1}{\varepsilon_0} \|\nabla^2\xi\|_{L^q(\eta(Q))}^q\]
which then proves
\[\|\xi\circ\eta\|_{L^1((0,T);W^{2,q}(Q;\mathbb R^d))} \leq C_\eta\|\xi\|_{L^1((0,T);W^{2,q}(\eta(t,Q);\mathbb R^d))}.\]
Now for the time derivative, compute
\[\partial_t(\xi\circ\eta)=\partial_t\xi\circ\eta+(\nabla\xi\circ\eta)\partial_t\eta.\]
In the first term, we compute
\[\|\partial_t\xi\circ\eta\|_{L^2(Q;\mathbb R^d)}^2 \leq \int_Q|\partial_t\xi\circ \eta|^2 \tfrac{\det\nabla\eta}{\varepsilon_0}\,dx =\frac{1}{\varepsilon_0} \|\partial_t\xi\|_{L^2(\eta(Q);\mathbb R^d)}^2.\]
In the second term, with \(C\) coming from the Morrey inequality, we have
\[\begin{aligned}
\|(\nabla\xi\circ\eta)\partial_t\eta\|_{L^2(Q;\mathbb R^d)}\leq \|\nabla\xi\circ\eta\|_{L^\infty(Q;\mathbb R^d)}\|\partial_t\eta\|_{L^2(Q;\mathbb R^d)}&=\|\nabla\xi\|_{L^\infty(\eta(Q);\mathbb R^d)}\|\partial_t\eta\|_{L^2(Q;\mathbb R^d)}\\&\leq C\|\nabla^2\xi\|_{L^q(\eta(Q);\mathbb R^d)}\|\partial_t\eta\|_{L^2(Q;\mathbb R^d)}.
\end{aligned}\]
In total, this shows that
\[\|\partial_t(\xi\circ\eta)\|_{L^1((0,T);L^2(Q))} \leq \frac{1}{\varepsilon_0}\|\partial_t\xi\|_{L^1((0,T);L^2(Q))} + C\|\partial_t\eta\|_{L^\infty((0,T);L^2(Q))} \|\xi\|_{L^1((0,T);W^{2,q}(\eta(Q)))}\]
which concludes the proof.
\end{proof}

\begin{lemma}\label{lem:eulerian-lagrangian-wk02}
Let
\(\eta\in W^{2,q}(Q;\mathbb R^d)\cap W^{k_0,2}(Q;\mathbb R^d)\) with
\(\det\nabla\eta\geq \varepsilon_0\) be globally injective. Then the
mapping \[\xi\mapsto \xi\circ\eta\] is an isomorphism
\(W^{k_0,2}(\eta(Q);\mathbb R^d) \to W^{k_0,2}(Q;\mathbb R^d)\) with
norm depending only on \(\|\eta\|_{W^{2,q}(Q;\mathbb R^d)}\) and
\(\varepsilon_0\).
\end{lemma}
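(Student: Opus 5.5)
The plan is to prove boundedness of $\xi\mapsto\xi\circ\eta$ from $W^{k_0,2}(\eta(Q);\mathbb R^d)$ to $W^{k_0,2}(Q;\mathbb R^d)$ directly by the chain rule. Invertibility then follows from the same argument applied to $\eta^{-1}\colon\eta(Q)\to Q$, since $\zeta\mapsto\zeta\circ\eta^{-1}$ is the inverse operator. The setup is as follows. Since $q>d$, $\eta\in C^{1,1-d/q}(\overline Q)$ with $\|\nabla\eta\|_{L^\infty}\le C\|\eta\|_{W^{2,q}}$. Together with $\det\nabla\eta\ge\varepsilon_0$ and global injectivity, this makes $\eta$ a $C^1$-diffeomorphism onto $\eta(Q)$ with $\nabla(\eta^{-1})=(\nabla\eta)^{-1}\circ\eta^{-1}=\operatorname{cof}(\nabla\eta)^T/\det\nabla\eta\circ\eta^{-1}$. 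Hence $\|\nabla\eta^{-1}\|_{L^\infty}\le C(\|\eta\|_{W^{2,q}},\varepsilon_0)$. As in the proof of Lemma \ref{lem:eulerian-lagrangian-w2q}, the change of variables $\int_Q|g\circ\eta|^2\le\varepsilon_0^{-1}\int_{\eta(Q)}|g|^2$ handles every term of the form $(\nabla^j\xi)\circ\eta$ in $L^2$.

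The key computation is the Fa\`a di Bruno formula. For $1\le m\le k_0$, $\nabla^m(\xi\circ\eta)$ is a sum of terms
\[
(\nabla^j\xi\circ\eta)\,\nabla^{\alpha_1}\eta\otimes\cdots\otimes\nabla^{\alpha_j}\eta,\qquad \alpha_1+\dots+\alpha_j=m,\ \alpha_i\ge1 .
\]
I will split these terms into three groups. First, the term with all $\alpha_i=1$ is bounded by $\|\nabla\eta\|_\infty^m\varepsilon_0^{-1/2}\|\nabla^m\xi\|_{L^2}$. Second, the mixed terms with $1<\alpha_i<m$ are estimated by H\"older's inequality. Since $k_0$ is chosen so large that $W^{k_0,2}\hookrightarrow W^{2,q}$, and indeed $W^{k_0,2}\hookrightarrow C^{1}$ on $\eta(Q)$ for the lower-order factors of $\xi$, we put the lower derivatives of $\xi$ in $L^\infty$. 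The intermediate derivatives of $\eta$ are then controlled by Gagliardo--Nirenberg interpolation between $\|\nabla^2\eta\|_{L^q}$ and the top-order norm. Third, the extreme term $j=1$, $\alpha_1=m$, is $(\nabla\xi\circ\eta)\nabla^m\eta$, estimated by $\|\nabla\xi\|_{L^\infty(\eta(Q))}\|\nabla^m\eta\|_{L^2}$. Summing over $m$ gives $\|\xi\circ\eta\|_{W^{k_0,2}}\le C\|\xi\|_{W^{k_0,2}(\eta(Q))}$. The reverse inequality comes from the symmetric argument with $\eta^{-1}$, where the higher derivatives of $\eta^{-1}$ are expressed through the inverse-matrix formula above. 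That expression is polynomial in $\nabla^{\le k}\eta$ and $(\det\nabla\eta)^{-1}\le\varepsilon_0^{-1}$.

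The main obstacle is the extreme term $(\nabla\xi\circ\eta)\nabla^{k_0}\eta$ and the interpolated mixed terms. These involve $\|\nabla^{k_0}\eta\|_{L^2}$, which is not controlled by $\|\eta\|_{W^{2,q}}$ and $\varepsilon_0$ alone. My first attempt will be to absorb these contributions into $\xi$-terms by interpolation, putting as many derivatives as possible on $\xi$. If that fails, I will read the stated dependence as valid for the finite quantity $\|\eta\|_{W^{k_0,2}}$ guaranteed by the hypothesis $\eta\in W^{k_0,2}$. That quantity is bounded along the $\kappa$-level approximations, where the lemma is used, by the regularized energy. The $W^{2,q}$ norm and $\varepsilon_0$ then govern everything else in the constant, in particular the $L^\infty$ bounds on $\nabla\eta$ and $(\nabla\eta)^{-1}$.
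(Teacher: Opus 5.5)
Your route is the paper's: its proof is a one-line appeal to the chain rule for $\nabla^{k_0}(\xi\circ\eta)$, the embedding $W^{2,q}\hookrightarrow W^{1,\infty}$ and interpolation in the resulting products, with details deferred to \cite[Lemma A.3]{benesovaVariationalApproachHyperbolic2023}. Your Fa\`a di Bruno decomposition, the change of variables using $\det\nabla\eta\ge\varepsilon_0$, and the treatment of the inverse through $\eta^{-1}$ carry out exactly that argument.

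Your diagnosis of the obstacle is correct, and you should drop the hedge. The ``first attempt'' of absorbing the extreme term into norms of $\xi$ cannot succeed, because the dependence asserted in the statement is false. Note first that $k_0\ge 3$, since $W^{2,2}\not\hookrightarrow W^{2,q}$ for $q>d\ge 2$. Now take $\xi(y)=y$ and $\eta_\epsilon(x)=x+\epsilon^2\sin(x_1/\epsilon)e_1$. Then $\det\nabla\eta_\epsilon=1+\epsilon\cos(x_1/\epsilon)\ge\frac12$, and $\eta_\epsilon$ is globally injective, being strictly increasing in $x_1$ with the other coordinates fixed. Both $\|\eta_\epsilon\|_{W^{2,q}(Q)}$ and $\|\xi\|_{W^{k_0,2}(\eta_\epsilon(Q))}$ stay bounded. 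However, $\|\xi\circ\eta_\epsilon\|_{W^{k_0,2}(Q)}\ge\|\nabla^{k_0}\eta_\epsilon\|_{L^2(Q)}\sim\epsilon^{2-k_0}\to\infty$.

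So the constant must involve $\|\eta\|_{W^{k_0,2}}$. What the interpolation actually gives is the tame bound
\[
\|\nabla^m(\xi\circ\eta)\|_{L^2(Q)}\le C\bigl(\|\eta\|_{W^{2,q}},\varepsilon_0\bigr)\bigl(\|\xi\|_{W^{m,2}(\eta(Q))}+\|\nabla\xi\|_{L^\infty(\eta(Q))}\|\eta\|_{W^{m,2}(Q)}\bigr),
\]
i.e.\ an operator norm linear in $\|\eta\|_{W^{k_0,2}}$. This corrected form is also how the paper uses the lemma: see Proposition~\ref{prop:approximation-fluid-only}(ii), whose constants are allowed to depend on $\|\eta\|_{W^{k_0,2}}$.

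One repair is needed in your mixed terms. Putting $\nabla^j\xi$ in $L^\infty$ requires $k_0-j>d/2$, which fails for $j$ close to $m$. There you should interpolate the $\xi$-factor as well. Use Gagliardo--Nirenberg between $\|\nabla\xi\|_{L^\infty}$ and $\|\xi\|_{W^{m,2}}$, with H\"older exponents $2(m-1)/(j-1)$ for the $\xi$-factor and $2(m-1)/(\alpha_i-1)$ for the $\eta$-factors; their reciprocals sum to $\frac12$, and Young's inequality then yields the bound above. Alternatively, in the critical case, pair $\nabla^{m-1}\xi\in W^{1,2}\hookrightarrow L^{2q/(q-2)}$ with $\nabla^2\eta\in L^q$, which works because $q>d$.
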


\begin{proof}
The proof follows from application of the chain rule to
\(\nabla^{k_0}(\xi\circ\eta)\), the embedding
\(W^{2,q}\hookrightarrow W^{1,\infty}\) and interpolation in the product
which results from this chain rule; see
\cite[Lemma A.3]{benesovaVariationalApproachHyperbolic2023}.
\end{proof}

\subsubsection{Extending the divergence-free domain}\label{extending-the-divergence-free-domain}

We recall here \cite[Proposition 2.22]{benesovaVariationalApproachHyperbolic2023}, which will be useful for convergences in the
coupled equation.

\begin{proposition}[Approximation of coupled test functions]\label{prop:approximation-coupled-test-functions} Fix a function \[
\eta \in L^{\infty}([0, T] ; \mathcal{E}) \cap W^{1,2}([0, T] ; W^{1,2}(Q ; \mathbb{R}^d)) \quad \text { with } \sup _{t \in T} E(\eta(t))<\infty
\] such that \(\eta(t) \notin \partial \mathcal{E}\), i.e.\ away from (self-)collision, for all
\(t \in[0, T]\). As before, set
\(\Omega(t)=\Omega \backslash \eta(t, Q)\). Let \(\mathcal{T}_\eta\) be
the set of admissible coupled test functions, defined as \[
\begin{aligned}
& \mathcal{T}_\eta:=\left\{(\phi, \xi) \in W^{1,2}\left([0, T] ; W^{1,2}\left(Q ; \mathbb{R}^d\right)\right) \times L^2\left([0, T] ; W_0^{1,2}\left(\Omega ; \mathbb{R}^d\right)\right) :\right. \\
& \phi=\xi \circ \eta \text { on }[0, T] \times Q \text { and } \operatorname{div} \xi(t)=0 \text { in } \Omega(t)\} .
\end{aligned}
\]

Then the set \[
\tilde{\mathcal{T}}_\eta:=\left\{(\phi, \xi) \in \mathcal{T}_\eta \mid \xi \in C^{\infty}\left([0, T] ; C_0^{\infty}\left(\Omega ; \mathbb{R}^d\right)\right), \phi = \xi \circ \eta, \operatorname{supp} \operatorname{div} \xi(t, y) \subset \subset \eta(t,Q)\right\}
\] for all \(t \in[0, T]\) and all \(y\) with
\(\operatorname{dist}(y, \Omega(t))<\delta\) for some
\(\delta>0\) is dense in \(\mathcal{T}_\eta\) in the following
sense: For every \(\delta\) sufficiently small there exists a
linear map
\((\phi, \xi) \mapsto\left(\phi_{\delta}, \xi_{\delta}\right) \in \tilde{\mathcal{T}}_\eta\)
such that \[
\operatorname{div}\left(\xi_{\delta}(t, y)\right)=0 \quad \text { for all } y \in \Omega \text { with } \operatorname{dist}(y, \Omega(t)) \leq \delta \text {. }
\] Moreover, if \(\xi\in L^2((0,T);W^{k_0,2}(\Omega(t);\mathbb R^d)\)
then
\[\xi_\delta\to\xi \quad\text{in }L^2((0,T);W^{k_0,2}(\Omega(t);\mathbb R^d)\quad \text{as }\delta\to 0\]and
if \(\eta\in L^\infty((0,T);W^{k_0+2,2}(Q;\mathbb R^d))\), then
\[\phi_\delta\to\phi \quad\text{in }L^\infty((0,T);W^{k_0+2,2}(Q;\mathbb R^d))\cap W^{1,2}((0,T);W^{1,2}(Q;\mathbb R^d))\]
Moreover the following bounds hold \[
\begin{aligned}
\left\|\xi_{\delta}(t)\right\|_{W^{1,2}(\Omega(t);\mathbb R^d)} & \leq c\|\xi(t)\|_{W^{1,2}\left(\Omega (t); \mathbb{R}^d\right)} \\
\left\|\xi_{\delta}(t)-\xi(t)\right\|_{L^2(\Omega(t))} & \leq c \delta^{\frac{2}{d+2}}\|\xi(t)\|_{W^{1,2}(\Omega(t))} \\
\left\|\xi_{\delta}(t)\right\|_{W^{k, a}(\Omega(t))} & \leq c(\delta)\|\xi(t)\|_{L^2\left(\Omega(t) ; \mathbb{R}^n\right)} \\
\left\|\phi_{\delta}(t)\right\|_{W^{k, a}(Q)} & \leq c\|\xi(t)\|_{C^k(\Omega)}\|\eta(t)\|_{W^{k, a}(Q)} \leq c(\delta)\|\xi(t)\|_{L^2(\Omega)}\|\eta(t)\|_{W^{k, a}(Q)}
\end{aligned}
\]
\end{proposition}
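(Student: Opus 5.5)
The plan follows the construction of \cite[Proposition 2.22]{benesovaVariationalApproachHyperbolic2023}. The idea is to modify $\xi$ only inside the solid $\eta(t,Q)$, and only near its boundary, so that the divergence defect of the extension is pushed deep into the solid. Then we mollify.

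\emph{Step 1 (reduction).} Since $\operatorname{div}\xi(t)=0$ in $\Omega(t)$, the defect $g(t):=\operatorname{div}\xi(t)$ is supported in $\overline{\eta(t,Q)}$. Moreover, $\int_{\eta(t,Q)} g\,dx=0$, because $\xi\in W^{1,2}_0(\Omega)$. Because $\eta(t)\notin\partial\mathcal E$, we have uniform-in-time bounds: $\det\nabla\eta\ge\varepsilon_0$, $\eta(t)\in C^{1,1-d/q}$, and $\eta$ is injective, with a uniform positive distance between $\eta(t,Q)$ and $\partial\Omega$ and no self-contact. Hence, for $\delta$ small, the inner layer
\[
S_\delta(t)=\{y\in\eta(t,Q):\operatorname{dist}(y,\Omega(t))<2\delta\}
\]
is a uniformly Lipschitz collar, and the core $\eta(t,Q)\setminus S_{\delta}(t)$ contains a fixed ball $B_t$ (up to a translation depending continuously on $t$).

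\emph{Step 2 (correction).} Take a cutoff $\chi_\delta$ equal to $1$ on $S_{\delta}(t)$, vanishing outside $S_{2\delta}(t)$, with $|\nabla\chi_\delta|\lesssim\delta^{-1}$. Set
\[
\tilde\xi=\xi-\mathcal B_t\bigl(\operatorname{div}(\chi_\delta\xi)-m_t\bigr)\quad\text{inside }\eta(t,Q),
\]
where $\mathcal B_t$ is a Bogovskii operator on the Lipschitz set $\eta(t,Q)$ and $m_t$ is a correction supported in $B_t$ that makes the mean vanish. This works with constants uniform in $t$, because the Lipschitz character is uniform and $\mathcal B$ can be obtained by pushing forward by $\eta(t)$ from $Q$, using Lemma \ref{lem:eulerian-lagrangian-w2q}. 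Cleaner still: construct the corrector in the Lagrangian frame on $Q$ and pull back. The resulting $\tilde\xi$ is divergence free in a $\delta$-neighbourhood of $\Omega(t)$ intersected with $\eta(t,Q)$. Next, mollify in space at scale $\sim\delta$. This is done after a slight inward "shift" in the direction of a smooth transversal field, so that the divergence-free region is not spoiled at its outer edge; the shift is needed because mollification commutes with $\operatorname{div}$ but enlarges supports. After that, mollify in time. The result is $\xi_\delta\in C^\infty([0,T];C^\infty_0(\Omega))$, and we define $\phi_\delta:=\xi_\delta\circ\eta$. The whole map is linear by construction.

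\emph{Step 3 (estimates).} The $W^{1,2}$ bound follows from $W^{1,2}$-boundedness of Bogovskii and of mollification. For the $L^2$ rate, the correction is controlled by $\|\xi\|_{L^2(S_{2\delta})}$-type quantities. Using the thin-layer (Hardy/Sobolev) estimate, $\|\xi\|_{L^2(S_{2\delta})}\lesssim|S_{2\delta}|^{1/d-1/2+1/2^*}$, interpolated with the mollification error $\delta\|\nabla\xi\|_{L^2}$, gives the power $\delta^{2/(d+2)}$. The $c(\delta)$-bounds in $W^{k,a}$ come from mollification against $L^2$. The bound on $\phi_\delta$ follows from the chain rule as in Lemma \ref{lem:eulerian-lagrangian-wk02}. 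Convergence in $W^{k_0,2}(\Omega(t))$ and, for $\phi_\delta$, in $L^\infty W^{k_0+2,2}\cap W^{1,2}W^{1,2}$ follows from strong convergence of mollifiers on the fluid part. This is combined with Lemma \ref{lem:eulerian-lagrangian-wk02} (with $k_0+2$ derivatives on $\eta$) and with the time regularity $\partial_t\eta\in L^2W^{1,2}$ for $\partial_t\phi_\delta=\partial_t\xi_\delta\circ\eta+\nabla\xi_\delta\circ\eta\,\partial_t\eta$.

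\emph{Main obstacle.} I expect the hardest part to be uniformity in $t$ of the Bogovskii constants and of the shifted mollification near a boundary that is only $C^{1,\alpha}$ and moves in time with only $W^{1,2}$ regularity. I would handle this by doing all corrections in the Lagrangian frame on the fixed $Q$ and pulling back through $\eta(t)$, so that only the bounds of Lemma \ref{lem:eulerian-lagrangian-w2q} enter.
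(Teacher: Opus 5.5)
Step 2 fails as written. You feed Bogovskii the divergence of the cut-off \emph{vector field}, $\operatorname{div}(\chi_\delta\xi)=\nabla\chi_\delta\cdot\xi+\chi_\delta\operatorname{div}\xi$, instead of a cut-off of the \emph{scalar} defect $\operatorname{div}\xi$.

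On $\partial\eta(t,Q)$ we have $\chi_\delta=1$, and $\xi\cdot n$ there is the normal velocity of the interface, which is generically nonzero. So $\nabla\chi_\delta\cdot\xi$ has size $\delta^{-1}$ on a layer of width $\sim\delta$, and $\|\operatorname{div}(\chi_\delta\xi)\|_{L^2}$ is of order $\delta^{-1/2}$ once $\xi\cdot n\not\equiv0$. Set $w:=\mathcal B_t(\operatorname{div}(\chi_\delta\xi))$. Since $\operatorname{div}w=\operatorname{div}(\chi_\delta\xi)$ and $|\operatorname{div}w|\le\sqrt d\,|\nabla w|$, the bound $\|\xi_\delta\|_{W^{1,2}}\le c\|\xi\|_{W^{1,2}}$ fails.

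Worse, $w$ does not tend to $0$ in $L^2$. The field $z:=w-\chi_\delta\xi$ is divergence free in $\eta(t,Q)$ with normal trace $-\xi\cdot n$. Continuity of the normal trace then gives $\|w\|_{L^2}\ge c\|\xi\cdot n\|_{W^{-1/2,2}(\partial\eta(t,Q))}-C\delta^{1/2}\|\xi\|_{W^{1,2}}$. For example, take $\xi\equiv e_1$ on a ball $\eta(t,Q)$: a rigid translation, already divergence free, so nothing should change. Your map perturbs it by $O(1)$ in $L^2$ and by $O(\delta^{-1/2})$ in $W^{1,2}$.

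Your $L^2$-rate argument implicitly uses $\|\mathcal B(\operatorname{div}F)\|_{L^2}\lesssim\|F\|_{L^2}$. That estimate requires $F\cdot n=0$ on $\partial\eta(t,Q)$, which is exactly what fails for $F=\chi_\delta\xi$. Two smaller points: $m_t=0$ in your construction, since $\int_{\eta(t,Q)}\operatorname{div}(\chi_\delta\xi)\,dx=\int_{\partial\eta(t,Q)}\xi\cdot n\,dS=0$; and the exponent $\tfrac{1}{d}-\tfrac{1}{2}+\tfrac{1}{2^*}$ you write equals $0$.

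The fix is to localize the scalar defect. Let $m_t$ be a unit-mass bump in the core and $c_\delta(t):=\int_{\eta(t,Q)}\chi_\delta\operatorname{div}\xi\,dx$. Put $\tilde\xi:=\xi-\mathcal B_t(\chi_\delta\operatorname{div}\xi-c_\delta m_t)$ in $\eta(t,Q)$ and $\tilde\xi:=\xi$ in $\Omega(t)$. This gives:
\begin{itemize}
\item $\operatorname{div}\tilde\xi=(1-\chi_\delta)\operatorname{div}\xi+c_\delta m_t$, which vanishes on $S_\delta(t)$, and continuity across the interface is kept;
\item $\|\tilde\xi-\xi\|_{W^{1,2}}\lesssim\|\operatorname{div}\xi\|_{L^2(S_{2\delta}(t))}+|c_\delta|$, which is bounded by $\|\nabla\xi\|_{L^2}$ and tends to $0$;
\item $\|\tilde\xi-\xi\|_{L^2}\lesssim\delta^{1/2}\|\xi\|_{W^{1,2}}$, which implies the stated $\delta^{2/(d+2)}$. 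This follows from the negative-norm Bogovskii estimate $\|\mathcal B_tf\|_{L^2}\lesssim\|f\|_{(W^{1,2}(\eta(t,Q)))^*}$ for mean-free $f$ \cite{geissertEquationDivBogovskii2006}, together with the thin-layer bound $\|\varphi\|_{L^2(S_{2\delta}(t))}\lesssim\delta^{1/2}\|\varphi\|_{W^{1,2}}$.
\end{itemize}

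Spatial mollification at scale $\le\delta/2$ then keeps $\operatorname{div}\xi_\delta=0$ on the $\delta$-neighbourhood of $\Omega(t)$, with no shift needed at the interface. A shift is needed only at $\partial\Omega$, to get compact support in $\Omega$. There it must be done by local translations with a partition of unity plus a further Bogovskii correction, with the mass compensated in the solid core, because transport along a non-constant field does not commute with $\operatorname{div}$.

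If you build $\mathcal B_t$ on $Q$, transfer it with the Piola transform $W\mapsto(\nabla\eta\,W/\det\nabla\eta)\circ\eta^{-1}$. Plain composition as in Lemma~\ref{lem:eulerian-lagrangian-w2q} does not intertwine divergences.

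Finally, Step 3 does not justify $\partial_t\phi_\delta\to\partial_t\phi$. The field $\xi$ has no time regularity in the fluid, and spatial mollification feeds fluid values into $\phi_\delta$ near $\partial Q$. Mollifying in time is incompatible with the pointwise-in-$t$ bounds. This time regularity has to come from $\partial_t\phi$ and $\partial_t\eta$.
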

For a proof, see
\cite[Proposition 2.22]{benesovaVariationalApproachHyperbolic2023}.

\subsubsection{Universal Bogovskii operator}\label{sec:universal-bogovskii}

We state here a version of the universal Bogovskii operator. It is
universal in the sense that we have the very same operator for domains
which are similar enough, in a suitable sense. Classically, similar
enough means star shaped with respect to the same ball
\cite{geissertEquationDivBogovskii2006}, and graph domains in the
same coordinates
\cite{kampschulteUnrestrictedDeformationsThin2023}. We shall
further extend this to Lipschitz domains which correspond to close
deformations.

\begin{theorem}[Universal Bogovskii]\label{thm:bogovskii-unviersal} Let \(\Omega\) be a bounded
\(L\)-Lipschitz domain. Fix a finite covering \(\mathcal G\) of its
boundary by \(L\)-Lipschitz graphs; by this we mean that \(\mathcal G\)
consists of open rectangles such that for each \(G\in\mathcal G\),
\(G\cap \Omega\) is a subgraph of an \(L\)-Lipschitz function (in some
direction). Let \(b\in  C^\infty_c([0,T]\times\Omega)\) with
\(\int_\Omega  b(t)=1\) for all \(t\in[0,T]\). Then there exists a
Bogovskii operator
\(\mathcal B \colon C^\infty_c (\Omega)\to C^\infty_c(\Omega;\mathbb R^d)\)
which satisfies
\[\operatorname{div} \mathcal B f = f- b\int_{\Omega} f\]and which is universal in the sense that
for any \(L\)-Lipschitz domain \(\widetilde\Omega\) such that \(\mathcal G\)
is also a covering of its boundary by \(L\)-Lipschitz graphs (in the
same sense as above) it also satisfies
\(\mathcal B \colon C^\infty_c (\widetilde\Omega)\to C^\infty_c(\widetilde\Omega;\mathbb R^d)\)
and extends to a bounded operator
\[\mathcal B\colon W^{k,p}_0(\widetilde\Omega)\to W^{k+1,p}_0(\widetilde\Omega;\mathbb R^d)\]
with the norm of this operator depending on $\mathcal G$ and $\Omega$ but independent of \(\widetilde\Omega\).
\end{theorem}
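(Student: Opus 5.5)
We build the universal operator from local Bogovskii operators on the graph patches, glued by a partition of unity. Fix the covering $\mathcal G=\{G_1,\dots,G_N\}$, and complete it by finitely many open balls $B_{N+1},\dots,B_M$ with $\overline{B_j}\subset\Omega$ so that $\{G_i\}\cup\{B_j\}$ covers $\overline\Omega$. Take a subordinate partition of unity $\{\theta_i\}$. For each graph patch $G_i$ the set $G_i\cap\widetilde\Omega$ is the subgraph of an $L$-Lipschitz function in a fixed direction $e_i$. Such a set is star-shaped with respect to any ball $B'_i\subset G_i$ placed deep enough below the graph, uniformly in all $L$-Lipschitz graphs crossing the rectangle (this is the graph-domain case of \cite{kampschulteUnrestrictedDeformationsThin2023}). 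We may have to shrink the rectangles slightly, keeping the same graph property, so that $B'_i$ lies in $\widetilde\Omega$ for every admissible $\widetilde\Omega$. On each $G_i\cap\widetilde\Omega$ we use the classical Bogovskii operator $\mathcal B_i$ of \cite{geissertEquationDivBogovskii2006}, built with a weight $b_i\in C^\infty_c(B'_i)$, $\int b_i=1$. This formula does not depend on $\widetilde\Omega$ at all; it is an explicit integral operator whose support properties follow from star-shapedness. It maps $W^{k,p}_0\to W^{k+1,p}_0$ with constants depending only on $L$ and the geometry of $G_i,B'_i$.

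Given $f\in C^\infty_c(\widetilde\Omega)$, we split $f=\sum_i\theta_i f$ and first correct the means. Each $\theta_i f$ has nonzero mean $m_i=\int\theta_i f$. We use a fixed chain of overlapping sets connecting every $B'_i$ to the support of $b$ in the interior of $\Omega$, chosen inside the common region $\bigcap_{\widetilde\Omega}\widetilde\Omega$. Concretely, shrinking $\mathcal G$ so that the part "below the graphs" contains a fixed collar is legitimate, since $b$ is compactly supported in $\Omega$ and every admissible $\widetilde\Omega$ must contain $\Omega$ minus the graph rectangles. Along this chain we transport the mass $m_i$ from $b_i$ to $b$ using differences $m_i(b_{j}-b_{j+1})$ of smooth bumps with support in star-shaped overlaps. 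We then define
\[
\mathcal B f=\sum_i \mathcal B_i\bigl(\theta_i f-m_i b_i\bigr)+\sum_i m_i\sum_{\text{chain}}\mathcal B_{j,j+1}(b_j-b_{j+1}).
\]
Here every argument has zero mean and is supported in a star-shaped set independent of $\widetilde\Omega$, or in $G_i\cap\widetilde\Omega$. A direct computation gives $\operatorname{div}\mathcal B f=f-b\int f$.

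For the bounds, $f\mapsto\theta_i f$ is bounded on $W^{k,p}_0$, and $|m_i|\le C\|f\|_{L^1}\le C\|f\|_{W^{k,p}}$, which also holds for negative-order $k$ if one wants it. The chain terms are fixed smooth fields times $m_i$. Hence the operator norm depends only on $\mathcal G$, $\Omega$, the chosen bumps and $L$, but not on $\widetilde\Omega$. Density of $C^\infty_c(\widetilde\Omega)$ in $W^{k,p}_0(\widetilde\Omega)$ then gives the extension.

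The main obstacle is the uniform star-shapedness of the patches $G_i\cap\widetilde\Omega$, namely finding a single ball $B'_i$ that works for all $L$-Lipschitz graphs in $G_i$. I would try to obtain it by requiring the rectangles to have height large relative to $L$ times their width, so that a small ball near the bottom sees every graph point within the Lipschitz cone. A second, related difficulty is making sure the mass-transport chain stays inside every admissible $\widetilde\Omega$.
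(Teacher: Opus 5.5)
The gap is at the step you flag as the main obstacle, and it is not a technicality. For a rectangle $G$ of the given covering, $G\cap\widetilde\Omega$ is in general not star-shaped with respect to any ball, uniformly or otherwise. Your first-paragraph claim, that a ball ``deep enough below the graph'' works for all $L$-Lipschitz graphs crossing $G$, is false when $G$ is wide compared with its depth.

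Take a zigzag graph with slopes $\pm L$ inside a wide, flat rectangle. A tooth tip $p$ can only be seen from points in its downward cone $\{p_d-y_d>L|y'-p'|\}$. The cones at two tips a horizontal distance $D$ apart do not meet inside $G$ once $LD/2$ exceeds the depth of $G$ below the tips, so no point sees both.

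Uniform star-shapedness with respect to a bottom cube is guaranteed only when the depth is $\gtrsim L\times$ width. Since $\mathcal G$ is fixed by the hypothesis, you cannot ``require'' this aspect ratio, and shrinking the rectangles destroys the covering property.

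The missing idea is the whole content of the paper's proof, taken from \cite{kampschulteUnrestrictedDeformationsThin2023}. Subdivide each $G$ in the base variables only, keeping full height in the graph direction, into thin slabs of width $\lesssim h_0/L$. Here $h_0$ is a uniform lower bound for the depth of $G\cap\widetilde\Omega$ below the graph. Each slab is again a graph rectangle for every admissible $\widetilde\Omega$, and it is uniformly star-shaped with respect to a small cube near its bottom. Two neighbouring slabs are star-shaped with respect to a common cube. Slabs at the edges of adjacent rectangles overlap, and this links the rectangles to one another.

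Once your patches $G_i\cap\widetilde\Omega$ are replaced by these slabs, the rest of your plan works: the partition of unity, subtracting $m_ib_i$, the telescoping transport of the masses along overlaps with common cubes to $\operatorname{supp} b$, and the uniform Bogovskii bounds on star-shaped sets. It is simply a more explicit, one-shot way of organizing the paper's slice-by-slice extension.

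You also need the uniform depth $h_0>0$ so that the bottom cubes, and your mass-transport chain, lie in every admissible $\widetilde\Omega$. This cannot be obtained by ``shrinking $\mathcal G$'', because that changes the class of admissible $\widetilde\Omega$. It has to be part of the hypotheses; in the paper it comes from the closeness assumption in Theorem~\ref{thm:bogovskii-close-deformations}.
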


 Restricted to one rectangular subdomain \(G\in \mathcal G\), this
is shown in \cite[Corollary 3.4]{kampschulteUnrestrictedDeformationsThin2023}. While there the 2d-case is studied, the same idea applies verbatim to higher dimensions. In the proof therein, the rectangle is covered by slices
such that two neighboring slices are star-shaped with respect with the
same cube and through a subordinate partition of unity the operator is
extended from one slice to the next. In our case, the slices at the
edges of the neighboring rectangles overlap, and in this way the
construction can be extended from one rectangle to the next. For more
details of the construction see
\cite{kampschulteUnrestrictedDeformationsThin2023}, proof of Theorem
3.3 and Corollary 3.4. and~\cite{BreMenSchSu23} Theorem~2.11.

\begin{theorem}[Bogovskii for close deformations]\label{thm:bogovskii-close-deformations}
 Let
\(\eta\in L^\infty((0,T);\mathcal E)\cap W^{1,2}((0,T);W^{1,2}(Q;\mathbb R^d))\).
Then for \(\delta>0\) small enough the following holds: Fix
\(b\in  C^\infty_c((0,T)\times\Omega_\eta)\) with $\int_{\Omega_\eta}b=1$ and
\(\operatorname{dist}(\operatorname{supp}b,\partial\Omega_\eta)\geq \delta\).
Then exists a universal Bogovskii operator \(\mathcal B\) for domains
\(\Omega_\eta(t)\), that is
\[\mathcal B \colon C^{\infty}_c((0,T)\times\Omega_\eta)\to C^{\infty}_c((0,T)\times\Omega_\eta;\mathbb R^d)\]
which satisfies
\[\operatorname{div} \mathcal B f = f- b\int_{\Omega_\eta} f\] and for
every
\(\tilde\eta\in L^\infty((0,T);\mathcal E)\cap W^{1,2}((0,T);W^{1,2}(Q;\mathbb R^d))\)
with
\begin{equation}\label{eqn:close-defromation-l2}
\|\eta(t)-\tilde\eta(t)\|_{W^{2,q}(Q;\mathbb R^d)}\leq \gamma(\delta)
\end{equation}
the operator \(\mathcal B\) maps
\(\mathcal B \colon C^{\infty}_c((0,T)\times\Omega_{\tilde\eta})\to C^{\infty}_c((0,T)\times\Omega_{\tilde\eta};\mathbb R^d)\)
and is, moreover, extended to a bounded operator for $1\leq r\leq \infty$ and $1<p<\infty$
\[\mathcal B\colon L^r((0,T);W^{k,p}_0(\Omega_{\tilde\eta}))\to L^r((0,T); W^{k+1,p}_0(\Omega_{\tilde\eta};\mathbb R^d))\]
with norm independent of \(\tilde\eta\).
\end{theorem}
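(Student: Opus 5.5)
The plan is to reduce Theorem \ref{thm:bogovskii-close-deformations} to the universal Bogovskii operator of Theorem \ref{thm:bogovskii-unviersal}, applied on finitely many time subintervals and glued in time with a partition of unity.

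\textbf{Step 1: a fixed graph covering for each frozen time.} Fix $t_0\in[0,T]$. Since $\eta(t_0)\in\mathcal E$ is injective with $\det\nabla\eta(t_0)\geq\varepsilon_0$, and $W^{2,q}\hookrightarrow C^{1,1-d/q}$, the set $\Omega_\eta(t_0)=\Omega\setminus\eta(t_0,Q)$ is a Lipschitz (in fact $C^1$) domain. We therefore choose a finite covering $\mathcal G_{t_0}$ of $\partial\Omega_\eta(t_0)$ by open rectangles in which the domain is the subgraph of an $L$-Lipschitz function, with some slack in the Lipschitz constant. The part of the covering for $\partial\Omega$ is fixed once and for all. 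Near $\eta(t_0,\partial Q)$, each rectangle is adapted to the normal $n(t_0)$ at some boundary point, and the graph direction is $n(t_0)$.

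\textbf{Step 2: stability of the covering under $W^{2,q}$-perturbation.} We show there is $\gamma>0$ such that any $\tilde\eta\in\mathcal E$ with $\|\tilde\eta-\eta(t_0)\|_{W^{2,q}}\le\gamma$ has $\mathcal G_{t_0}$ as a covering of $\partial\Omega_{\tilde\eta}$ by $L$-Lipschitz graphs. This follows because $\|\tilde\eta-\eta(t_0)\|_{C^1}\lesssim\gamma$. The boundary $\tilde\eta(\partial Q)$ is then a $C^1$-small perturbation of $\eta(t_0,\partial Q)$. Its tangent planes stay within a small angle of those of $\eta(t_0,\partial Q)$, so inside each rectangle it is still a graph in the same direction with Lipschitz constant at most $L$. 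It also stays inside the union of the rectangles and does not cross the rectangle sides through which the graph must not exit.

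By time continuity of $\eta$, the same holds for $\eta(t)$ with $t$ near $t_0$. This continuity comes from $\eta\in W^{1,2}((0,T);W^{1,2})\cap L^\infty(W^{2,q})$ together with interpolation, which gives $\eta\in C([0,T];W^{2,p})$ for $p<q$, and hence $C^1$ continuity. It also holds for all $\tilde\eta(t)$ with $\|\tilde\eta(t)-\eta(t)\|_{W^{2,q}}\le\gamma(\delta)$. The condition $\operatorname{dist}(\operatorname{supp}b,\partial\Omega_\eta)\ge\delta$, together with a choice $\gamma(\delta)\ll\delta$, ensures that $\operatorname{supp}b(t)\subset\Omega_{\tilde\eta}(t)$, so $b$ is admissible for the perturbed domains as well.

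\textbf{Step 3: compactness and gluing in time.} Compactness of $[0,T]$ gives finitely many times $t_1,\dots,t_N$, open intervals $I_j\ni t_j$ covering $[0,T]$, and operators $\mathcal B_j$ from Theorem \ref{thm:bogovskii-unviersal} for $\mathcal G_{t_j}$ (with $b(t)$ as the mean-correcting function). Each $\mathcal B_j$ acts pointwise in time, and its norm is uniform over all admissible domains. We take a smooth partition of unity $\{\theta_j\}$ subordinate to $\{I_j\}$ and define
\[\mathcal Bf(t):=\sum_j\theta_j(t)\,\mathcal B_j f(t).\]
Then
\[\operatorname{div}\mathcal Bf=\sum_j\theta_j\Bigl(f-b\int f\Bigr)=f-b\int f.\]
Smoothness in time follows from smoothness of $\theta_j$ and linearity of $\mathcal B_j$, since $\mathcal B_j$ commutes with $\partial_t$ when $b$ is handled by the product rule. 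Compact support in $\Omega_{\tilde\eta}$ comes from Step 2. The $L^r(W^{k,p}_0)\to L^r(W^{k+1,p}_0)$ bound is immediate from the pointwise-in-time bound, with constant $\max_j\|\mathcal B_j\|$, which is independent of $\tilde\eta$.

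\textbf{Main obstacle.} The delicate point is Step 2. We need a single choice of rectangles whose ``lateral walls'' are never crossed by the perturbed boundary, and the Lipschitz constant must survive the perturbation. I would handle this by building the rectangles for $\eta(t_0)$ with Lipschitz constant $L/2$ and heights much larger than the $C^0$-perturbation size. Then the $C^1$-closeness guaranteed by $W^{2,q}\hookrightarrow C^1$ closes the argument.
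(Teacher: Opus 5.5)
Your proposal is correct and is essentially the paper's own argument: Lipschitz-graph coverings at frozen times that survive $W^{2,q}$- (hence $C^1$-) small perturbations, finitely many overlapping time intervals coming from the $C([0,T];C^{1,\alpha})$ continuity of $\eta$, Theorem~\ref{thm:bogovskii-unviersal} on each interval, and a partition of unity in time to glue the operators $\mathcal B_j$. Two small corrections. First, $W^{1,2}((0,T);W^{1,2})\cap L^\infty((0,T);W^{2,q})$ does not embed into $C([0,T];W^{2,p})$; interpolation only gives continuity into $W^{s,r}$ with $s<2$ close to $2$ and $r$ close to $q$, which still embeds into $C^{1,\alpha}$, so nothing breaks. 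Second, membership in $\mathcal E$ does not rule out boundary self-contact or contact with $\partial\Omega$, so the Lipschitz regularity of $\Omega_\eta(t)$ rests on the same implicit no-collision assumption that the paper makes.
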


\begin{proof}
 By the \(C([0,T];C^{1,\alpha}(Q;\mathbb R^d))\) regularity
of \(\eta\), we can find a partition of the time interval
\(0=t_1<t_2<\dots<t_M=T\), such that on each interval
\((t_{i},t_{i+2})\) the assumptions of Theorem \ref{thm:bogovskii-unviersal} are satisfied.
Namely that there is a covering \(\mathcal G_i\) which covers the
boundary of \(\Omega_\eta(t)\), \(t\in(t_{i-1},t_i)\) by \(L\)-Lipschitz
graphs and for any given \(\tilde \eta\) as in the statement,
\(\mathcal G_i\) also covers the boundary of \(\Omega_{\tilde\eta}(t)\),
\(t\in(t_{i-1},t_i)\) by \(L\)-Lipschitz graphs (because \(W^{2,q}\)
embeds into Lipschitz functions and the closeness \eqref{eqn:close-defromation-l2}) so that Theorem
\ref{thm:bogovskii-unviersal} above gives us the Bogovskii operator \(\mathcal B_i\). Consider a
partition of unity \(\{\psi_i\}\) on \((0,T)\) subordinate to the
covering \((t_i,t_{i+2})\) we define in total the sought Bogovskii
operator by
\[\mathcal B(f)(t) = \sum_{i=1}^{M-2}\psi_i(t)\mathcal B_i(f(t)).\] All
the desired properties of \(\mathcal B\) follow.
\end{proof}

\subsubsection{Approximating fluid-only test functions}\label{approximating-fluid-only-test-functions}

Here, analogously to above, we state an approximation result for the
test functions, this time to be used in the fluid-only equation.
This will go by extending the normals and tangents to the deformed
domains, by means of extending the deformation.

Throughout this section, assume that we have the deformation
\[\eta\in L^\infty((0,T);W^{k_0+2,2}(Q;\mathbb R^d))\cap W^{1,2}((0,T);W^{k_0+2,2}(Q;\mathbb R^d))\text{ with }E(\eta(t))\leq E_0.\]
We now consider for a.e.\ \(t\in(0,T)\) an extension of \(\eta(t)\colon Q\to\mathcal R^d\) to
\(\overline \eta(t)\colon \mathbb R^d\to\mathbb R^d\) such that
\[\|\overline \eta(t)\|_{W^{k_0+2,2}(\mathbb R^d)}\leq C\|\eta\|_{W^{k_0+2,2},(Q)}\]
which can be done using a bounded linear extension operator
\(\overline {\cdot}\colon W^{k_0+2,2}(Q;\mathbb R^d)\to W^{k_0+2,2}(\mathbb R^d;\mathbb R^d)\). Then we have 
\[\begin{aligned}
\|\overline \eta\|_{L^\infty((0,T);W^{k_0+2,2}(\mathbb R^d;\mathbb R^d))} &\leq C\|\eta\|_{L^\infty((0,T);W^{k_0+2,2}(Q;\mathbb R^d))},\\
\|\partial_t \overline \eta\|_{L^2((0,T);W^{k_0+2,2}(\mathbb R^d;\mathbb R^d))} &\leq C\|\partial_t \eta\|_{L^2((0,T);W^{k_0+2,2}(Q;\mathbb R^d))}.
\end{aligned}\]
Further, for some \(\delta>0\) depending only on these norms, we have on \(Q_\delta\) (the
\(\delta\)-neighborhood of \(Q\)) that
\[\det\nabla\overline \eta\geq \varepsilon_0/2\quad \text{in }(0,T)\times Q_\delta,\]
so that \(\overline \eta\) is locally injective on \(Q_\delta\). This means it is also globally injective for small enough \(\delta\), as \(\eta\)
is away from self-contact. Moreover, $\delta$ depends only on $E_0$.

We now treat the normals and tangets to the deformed configuration $\eta(Q)$.
Assume the reference domain \(Q\) has \(C^\infty\) boundary we
have the reference unit normal \(n_Q\in C^\infty(\partial Q;\mathbb R^d)\),
and then the deformed unit normal \(n\colon \partial \eta(Q)\to\mathbb R^d\)
to the deformed configuration \(\eta(Q)\) is
\[n(y)=\frac{[\operatorname{cof}\nabla\eta(\eta^{-1}(y))]n_Q(\eta^{-1}(y))}{|[\operatorname{cof}\nabla\eta(\eta^{-1}(y))]n_Q(\eta^{-1}(y))|}\]
Similarly, for a vector \(v\in \mathbb R^d\), its tangential component is (as $I-n\otimes n$ is the projection to the tangent plane)
\[[v]_\tau=[I-n\otimes n] v = v- (v\cdot n)n.\]

\begin{lemma}[Extension of normal and tangent]\label{lem:ext-normal-tangent}
The unit normal \(n\colon \partial \eta(Q)\to\mathbb R^d\) to the deformed configuration \(\eta(Q)\) admits an extension 
\(\overline n \in W^{k_0,2}(U;\mathbb R^d)\), where $\tilde Q$ is some $\delta$-neighborhood of $\eta(Q)$, with $\delta$ depending on $E_0$. Similarly, the tangent $\tau$ admits an extension of the same regularity.
\end{lemma}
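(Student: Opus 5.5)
The plan is to build the extension of $n$ from the extension $\overline\eta$ of the deformation constructed just before the lemma, by pushing forward a smooth extension of the reference normal. Since $Q$ has $C^\infty$ boundary, the reference normal $n_Q$ extends to a smooth field $\overline n_Q\in C^\infty(\overline{Q_{\delta_0}};\mathbb R^d)$ on a tubular neighborhood $Q_{\delta_0}$ of $\partial Q$; for instance $\overline n_Q=\nabla d_Q$ with $d_Q$ the signed distance, which is smooth near $\partial Q$. We may also cut it off away from $\partial Q$ and take any smooth extension inside. Taking $\delta\le\delta_0$ and using the bound $\det\nabla\overline\eta\ge\varepsilon_0/2$ on $Q_\delta$ together with global injectivity of $\overline\eta$ there, we define on $U:=\overline\eta(Q_\delta)$ the field
\[
\overline n := \frac{N\circ\overline\eta^{-1}}{|N\circ\overline\eta^{-1}|},\qquad N:=[\operatorname{cof}\nabla\overline\eta]\,\overline n_Q .
\]
On $\partial\eta(Q)$ this reduces to the formula for $n$ above, because $\overline\eta=\eta$ on $Q$. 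The tangent projection is then extended as $\overline{[\,\cdot\,]_\tau}:=I-\overline n\otimes\overline n$. Any tangent frame can be handled the same way, pushing forward a reference tangent field by $\nabla\overline\eta$ and normalizing.

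Next we check that $U$ contains a $\delta'$-neighborhood of $\eta(Q)$, with $\delta'$ depending only on $E_0$. Since $\overline\eta$ is a $C^1$ diffeomorphism on $Q_\delta$, and $\nabla\overline\eta^{-1}$ is bounded through $\varepsilon_0$ and the Lipschitz bound, the inverse function theorem with quantitative constants shows that $\overline\eta(Q_\delta)\supset(\eta(Q))_{\delta/L}$, where $L$ bounds $\|\nabla\overline\eta^{-1}\|_\infty$. Both $\varepsilon_0$ and $L$ are controlled by $E_0$, via (E.2), (E.3) and the Morrey embedding.

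For the regularity, $\nabla\overline\eta\in W^{k_0+1,2}$, and because $k_0$ is large this space is a Banach algebra. Hence $\operatorname{cof}\nabla\overline\eta$, a polynomial in the entries of $\nabla\overline\eta$, lies in $W^{k_0+1,2}$, and so does $N$ after multiplying by the smooth $\overline n_Q$. The normalization also preserves this regularity. We need $|N|$ bounded below, which follows from $|\operatorname{cof}A\,e|\ge \det A/|A|^{d-1}$ and the bound $\det\nabla\overline\eta\ge\varepsilon_0/2$. Then $1/|N|$ is a smooth function of $N$ on the range of $N$, so it stays in $W^{k_0+1,2}$ by the chain rule for smooth functions of Sobolev-algebra functions. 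Finally we compose with $\overline\eta^{-1}$ and apply Lemma~\ref{lem:eulerian-lagrangian-wk02} on $Q_\delta$, with $k_0$ or $k_0+1$ as appropriate, to get $\overline n\in W^{k_0,2}(U)$. The same algebra argument gives $I-\overline n\otimes\overline n\in W^{k_0,2}$.

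The main obstacle is bookkeeping rather than any single estimate. We must make sure the extension operator, the injectivity radius $\delta$, and the constants in Lemma~\ref{lem:eulerian-lagrangian-wk02} all depend only on $E_0$ and the $W^{k_0+2,2}$ norm. We must also check that Lemma~\ref{lem:eulerian-lagrangian-wk02}, stated on $Q$, applies on the smooth domain $Q_\delta$ with $\overline\eta$ in place of $\eta$. Losing one derivative in the composition step is harmless, since we have two derivatives to spare.
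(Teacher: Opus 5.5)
Your construction is the paper's: extend $\eta$ to $\overline\eta$, push an extension $\overline n_Q$ of the reference normal forward by $\operatorname{cof}\nabla\overline\eta$, normalize, compose with $\overline\eta^{-1}$, get regularity from Lemma~\ref{lem:eulerian-lagrangian-wk02}, and extend the tangential projection as $I-\overline n\otimes\overline n$. Your Banach-algebra and chain-rule bookkeeping and the quantitative inverse-function step spell out what the paper leaves implicit. One step, however, fails as you wrote it. The paper's proof, which asserts $\overline n\in W^{k_0,2}(\overline\eta(Q_\delta))$ with $Q_\delta$ the $\delta$-neighbourhood of $Q$, is equally loose at this point.

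\emph{Where the argument breaks.} Your lower bound on $|N|$ needs $|\overline n_Q|$ bounded below wherever you normalize. Yet you normalize on $U=\overline\eta(Q_\delta)$ and claim $U\supset(\eta(Q))_{\delta/L}$, so $U$ contains all of $\eta(Q)$. A nonvanishing extension of $n_Q$ to $\overline Q$ need not exist. For $Q$ a ball, $n_Q|_{\partial Q}$ is a degree-one map onto $S^{d-1}$, and a degree-one map of the sphere does not extend continuously to the ball as a map into $S^{d-1}$. Hence every extension of $n_Q$, in particular your cut-off one, vanishes somewhere in $Q$. Since $\operatorname{cof}\nabla\overline\eta$ is invertible, $N$ vanishes there too, and $N/|N|$ is undefined.

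\emph{The fix.} Work on a collar: take $U:=\overline\eta(\{x:\operatorname{dist}(x,\partial Q)<\delta\})$, where $\overline n_Q=\nabla d_Q$ is a unit field. Your inverse-function argument then shows that $U$ contains the $\delta/L$-neighbourhood of $\partial\eta(Q)$. This is all that Proposition~\ref{prop:approximation-fluid-only} uses, since $\overline n$ and $\overline\tau$ enter there only multiplied by $\psi_\delta$. If you want the literal statement on a neighbourhood of $\eta(Q)$, multiply $(N/|N|)\circ\overline\eta^{-1}$ by a smooth cutoff equal to one near $\partial\eta(Q)$; the lemma asks for an extension, not a unit one.

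\emph{A smaller error.} The inequality $|\operatorname{cof}A\,e|\ge\det A/|A|^{d-1}$ is false: the two sides have different homogeneity, and $A=sI$ with small $s>0$, $d=3$, is a counterexample. The correct bound for unit $e$ is $|\operatorname{cof}A\,e|=\det A\,|A^{-T}e|\ge\det A/|A|$. Together with $\det\nabla\overline\eta\ge\varepsilon_0/2$ and the Lipschitz bound, this gives the lower bound on $|N|$ you need on the collar.
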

\begin{proof}
We define the extension of the deformed normal by setting
\[\overline n(y)=\frac{[\operatorname{cof}\nabla\overline\eta(\overline\eta^{-1}(y))]\overline n_Q(\overline \eta^{-1}(y))}{|[\operatorname{cof}\nabla\overline\eta(\overline\eta^{-1}(y))]\overline n_Q(\overline \eta^{-1}(y))|}\]
for \(y\) in \(\delta'\)-neighborhood of \(\eta(Q)\), where
\(\delta'\) is given by \(\delta\) and the energy bound \(E_0\).

By Lemma \ref{lem:eulerian-lagrangian-wk02} we see that \(\overline n\) inherits all the
regularity of \(\overline n_Q\) up to \(W^{k_0,2}\). In particular that
\(\overline n \in W^{k_0,2}(\overline\eta(Q_\delta);\mathbb R^d)\).

For the tangent, we consider its extended tangential component of a vector field $\xi$ by
\[[\xi]_{\overline\tau}(y)= [\xi(y)]_{\overline \tau}=[I-\overline n(y)\otimes \overline n(y)] v(y) = v(y)- (v(y)\cdot \overline n(y))\overline n(y).\]
Here it is apparent that \([\xi]_{\overline\tau}\) again inherits the
regularity of \(\overline n\) up to \(W^{k_0,2}\) due to Lemma \ref{lem:eulerian-lagrangian-wk02}.
\end{proof}

\medskip
\noindent\textit{The approximations.}

\begin{proposition}[Approximation of fluid-only test functions]\label{prop:approximation-fluid-only}
Let $\eta^{(i)}\in L^\infty((0,T);\mathcal E)\cap W^{1,2}((0,T);W^{1,2}(Q;\mathbb R^d))$ with $E(\eta^{(i)}(t))\leq E_0$ and $$\begin{aligned}\eta^{(i)}&\overset\ast\rightharpoonup \eta &\quad\text{in }&L^\infty((0,T);W^{2,q}(Q;\mathbb R^d)), \\
\partial_t\eta^{(i)}&\rightharpoonup \partial_t\eta &\quad\text{in }&L^{2}((0,T);W^{1,2}(Q;\mathbb R^d))  \end{aligned},$$
so that also  $\Omega\setminus\eta^{(i)}(t,Q)=\Omega^{(i)}(t)\to\Omega(t) = \Omega\setminus\eta(t,Q)$. Let $\xi\in C^\infty(\Omega(t);\mathbb R^d)$, $\xi\cdot n =0$ on $\partial\Omega(t)$, and $\operatorname{div}\xi=0$ in $\Omega(t)$.

\begin{enumerate}
\item[(i)]
 Then for every $\delta>0$ there is $\xi_\delta^{(i)}$ such that
$$\begin{aligned}
\xi^{(i)}_\delta &\overset\eta\to \xi_\delta &\quad \text{in }&L^2((0,T);W^{1,2}(\Omega;\mathbb R^d)),\\
\partial_t\xi^{(i)}_\delta &\overset\eta\to \partial_t\xi_\delta &\quad \text{in }&L^2((0,T);L^2(\Omega;\mathbb R^d))
\end{aligned}$$
with $\xi_\delta^{(i)}(t)\in W^{1,2}(\Omega^{(i)}(t);\mathbb R^d)$, $\operatorname{div} \xi_\delta^{(i)}=0$ in $\Omega^{(i)}(t)$, $\xi_\delta^{(i)} \cdot \overline n^{(i)} =0$ on a $\delta$-neighborhood of $\partial\Omega(t)$ (which includes $\partial\Omega^{(i)}(t)$ for $i$ large enough) and  $\xi_\delta(t)\in W^{1,2}(\Omega(t);\mathbb R^d)$, $\xi_\delta\cdot \overline n =0$ on a $\delta$-neighborhood of $\partial\Omega(t)$ with
$$\begin{aligned}
\xi_\delta &\overset\eta\to \xi&\quad \text{in }&L^2((0,T);W^{1,2}(\Omega;\mathbb R^d)),\\
\partial_t\xi_\delta &\overset\eta\to \partial_t\xi &\quad \text{in }&L^2((0,T);L^2(\Omega;\mathbb R^d)).
\end{aligned}$$ Moreover, the following estimates hold: $$\begin{aligned}
%\|\xi_\delta^{(i)}(t)\|_{W^{1,2}}&\leq c\|\xi (t)\|_{W^{1,2}}\\
%\|\xi_\delta^{(i)}(t)\|_{W^{k_0,2}}&\leq c(\delta)\|\xi (t)\|_{W^{1,2}}\\
\|\xi_\delta^{(i)}(t) - \xi_\delta(t)\|_{W^{1,2}}&\leq c\delta^{-2q(q-2)}\|\eta (t)-\eta^{(i)}(t)\|_{W^{2,q}} ,\\
\|\xi_\delta(t)-\xi(t)\|_{W^{1,2}}&\leq c\delta^\frac12\|\xi (t)\|_{W^{1,\infty}}\\
\end{aligned}$$
with $c$ depending only on $E_0$.

%\noteMalte[inline]{We seem to be jumping between $\delta$ and $\varepsilon$-neighborhoods in this paper. Can we settle down on one of them?}

\item[(ii)] If additionally $$\eta^{(i)}\rightharpoonup \eta \quad\text{in }W^{1,2}((0,T);W^{k_0+2,2}(Q;\mathbb R^d)),$$ 
then it also holds $$\xi^{(i)}_\delta \overset\eta\to \xi_\delta\quad \text{in }L^2((0,T);W^{k_0,2}(\Omega;\mathbb R^d))$$ 
with the estimates $$\begin{aligned}
%\|\xi_\delta^{(i)}(t)\|_{W^{k_0,2}}&\leq c\|\xi (t)\|_{W^{k_0,2}}\\
\|\xi_\delta^{(i)}(t) - \xi_\delta(t)\|_{W^{k_0,2}}&\leq c\delta^{-k_0}\|\xi (t)\|_{W^{k_0,2}},
\end{aligned}$$
where $c$ depends only on $E_0$ and $\|\eta\|_{W^{k_0,2}}$.

If, moreover, $ \nabla^\ell(\xi\cdot n)=0$ for $\ell=1,\dots,k_0$, then
$$\xi_\delta \overset\eta\to \xi\quad \text{in }L^2((0,T);W^{k_0,2}(\Omega;\mathbb R^d))$$
with the estimate
$$\begin{aligned}
\|\xi_\delta(t)-\xi(t)\|_{W^{k_0,2}}&\leq c\delta\|\xi (t)\|_{W^{k_0+1,2}},
\end{aligned}$$
where $c$ depends only on $E_0$ and $\|\eta\|_{W^{k_0,2}}$.
\end{enumerate}
\end{proposition}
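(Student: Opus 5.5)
The plan is to build $\xi_\delta$ and $\xi^{(i)}_\delta$ in three steps: cut off near the interface, replace by a tangential field there, and correct the divergence with the universal Bogovskii operator of Theorem~\ref{thm:bogovskii-close-deformations}. Fix a smooth cutoff $\chi_\delta$ in space-time that equals $1$ on the $\delta$-neighbourhood of $\partial\eta(t,Q)$, vanishes outside the $2\delta$-neighbourhood, and satisfies $|\nabla^\ell\chi_\delta|\lesssim\delta^{-\ell}$. Since $\eta$ is $C([0,T];C^{1,\alpha})$ and away from collision, $\chi_\delta$ can be chosen to depend smoothly on $t$. Using the extended normal $\overline n$ of Lemma~\ref{lem:ext-normal-tangent} (and $\overline n^{(i)}$ for $\eta^{(i)}$), define the preliminary fields
\[
\tilde\xi_\delta=(1-\chi_\delta)\xi+\chi_\delta[\xi]_{\overline\tau},\qquad \tilde\xi^{(i)}_\delta=(1-\chi_\delta)\xi+\chi_\delta[\xi]_{\overline\tau^{(i)}},
\]
where $\xi$ is first extended smoothly across $\partial\Omega(t)$ so that $\tilde\xi^{(i)}_\delta$ makes sense on $\Omega^{(i)}(t)$. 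By construction $\tilde\xi\cdot\overline n=0$ on the $\delta$-neighbourhood. For $i$ large, $\partial\Omega^{(i)}(t)$ lies inside this neighbourhood by the Hausdorff convergence of Section~\ref{sec:convergence-of-domains}, which is guaranteed by the uniform $C^{1,\alpha}$ convergence of $\eta^{(i)}$.

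Next I correct the divergence. The error $\operatorname{div}\tilde\xi_\delta$ is supported in the annulus $\{\chi_\delta\neq0\}$, where it equals $-\operatorname{div}(\chi_\delta(\xi\cdot\overline n)\overline n)$. Since $\xi\cdot n=0$ on $\partial\Omega(t)$ and $\xi$ is smooth, $|\xi\cdot\overline n|\lesssim\operatorname{dist}\cdot\|\xi\|_{W^{1,\infty}}$, so $\chi_\delta\,\xi\cdot\overline n$ is $O(\delta)$ in $L^\infty$ with gradient $O(1)$, supported in a set of measure $O(\delta)$. This gives $\|\tilde\xi_\delta-\xi\|_{W^{1,2}}\lesssim\delta^{1/2}\|\xi\|_{W^{1,\infty}}$. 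To correct the error without destroying the tangency, I apply $\mathcal B$ only to the part of $\operatorname{div}\tilde\xi_\delta$ supported away from $\partial\Omega(t)$ (the region where $\chi_\delta<1$), and I move the compensating mean via the fixed bump $b$, whose support lies at distance at least $\delta$. Concretely, I use a Bogovskii operator on the subdomain $\{\operatorname{dist}(\cdot,\partial\Omega(t))>\delta/2\}$, whose boundary is covered by the same Lipschitz graphs for all close deformations. Setting $\xi_\delta=\tilde\xi_\delta-\mathcal B(\operatorname{div}\tilde\xi_\delta)$, the field is divergence-free and tangential on the $\delta/2$-neighbourhood; relabelling $\delta$ gives the claim. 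The mean is compatible: $\int\operatorname{div}\tilde\xi_\delta=\int_{\partial}\tilde\xi_\delta\cdot n=0$ on each component. Bogovskii boundedness $W^{0,2}_0\to W^{1,2}_0$ yields the $\delta^{1/2}$ estimate. The same construction with $\eta^{(i)}$ uses the same universal $\mathcal B$ (Theorem~\ref{thm:bogovskii-close-deformations}), which is what makes comparison possible.

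For the difference $\xi^{(i)}_\delta-\xi_\delta$, I note that it equals
\[
\chi_\delta\big[(\xi\cdot\overline n)\overline n-(\xi\cdot\overline n^{(i)})\overline n^{(i)}\big]
\]
minus $\mathcal B$ of its divergence. So everything reduces to bounding $\overline n-\overline n^{(i)}$ in $W^{1,2}$ of the annulus. From the cofactor formula, the extension operator and Lemma~\ref{lem:eulerian-lagrangian-w2q}, $\|\overline n-\overline n^{(i)}\|_{W^{1,q}}\lesssim\|\eta-\eta^{(i)}\|_{W^{2,q}}$, up to composition with $\overline\eta^{-1}$ versus $(\overline\eta^{(i)})^{-1}$, which is again controlled by the $C^1$ distance. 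Each derivative landing on $\chi_\delta$ costs a factor $\delta^{-1}$, and Hölder between $L^q$ and $L^2$ on the annulus gives the stated negative power of $\delta$. The strong convergence $\xi^{(i)}_\delta\overset\eta\to\xi_\delta$ follows because $\eta^{(i)}\to\eta$ strongly in $C([0,T];C^{1,\alpha})$ (Aubin–Lions from the assumed bounds) and the characteristic functions converge. The time derivatives are handled the same way, using $\partial_t\overline n$, which is controlled by $\partial_t\eta\in L^2W^{1,2}$ only after mollification. I therefore expect to mollify $\eta$ in time at scale tied to $\delta$, and this is the step I expect to be most delicate.

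Part (ii) repeats the argument in $W^{k_0,2}$. Under the additional assumption, Lemma~\ref{lem:eulerian-lagrangian-wk02} and Lemma~\ref{lem:ext-normal-tangent} give $\overline n\in W^{k_0,2}$, and the Bogovskii operator is bounded $W^{k_0-1,2}_0\to W^{k_0,2}_0$. Up to $k_0$ derivatives may fall on $\chi_\delta$, giving $\delta^{-k_0}$. If additionally $\nabla^\ell(\xi\cdot n)=0$ for $\ell\le k_0$, then $\xi\cdot\overline n$ vanishes to order $k_0+1$ at the boundary in the sense that $|\nabla^j(\xi\cdot\overline n)|\lesssim\delta^{k_0+1-j}\|\xi\|_{W^{k_0+1,2}}$-type Taylor bounds hold. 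These absorb every $\delta^{-j}$ from the cutoff, leaving $c\delta\|\xi\|_{W^{k_0+1,2}}$.

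The main obstacle is the quantitative step in the third paragraph: controlling $\overline n^{(i)}-\overline n$ across the changing inverses $(\overline\eta^{(i)})^{-1}$. This requires uniform injectivity of the extensions on a fixed $\delta$-neighbourhood, which I would obtain from $\det\nabla\overline\eta\ge\varepsilon_0/2$ and the no-collision assumption. The second concern is ensuring that the Bogovskii correction does not reintroduce a normal component on the $\delta$-neighbourhood.
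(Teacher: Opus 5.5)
Up to the divergence correction, your construction is the paper's: a cutoff, replacement of $\xi$ by its $\overline n^{(i)}$-tangential part near the interface, and the universal Bogovskii operator of Theorem~\ref{thm:bogovskii-close-deformations}. The correction step, however, fails as you set it up.

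On the strip $\{\chi_\delta=1\}$ you have $\operatorname{div}\tilde\xi_\delta=-\operatorname{div}\bigl((\xi\cdot\overline n)\overline n\bigr)$, and this is not zero. For example, take $\Gamma=\partial\eta(t,Q)$ the unit circle and $\overline n=e_r$. Using $\operatorname{div}\xi=0$ and $\xi_r=0$ on $\Gamma$, one gets $\operatorname{div}\tilde\xi_\delta=\partial_\theta\xi_\theta$ on $\Gamma$. A Bogovskii operator on $\{\operatorname{dist}(\cdot,\partial\Omega(t))>\delta/2\}$ only acts on the part of the divergence inside that set. Hence your $\xi_\delta$ still has divergence $\operatorname{div}\tilde\xi_\delta$ on the strip $\{\operatorname{dist}(\cdot,\partial\Omega(t))<\delta/2\}$, so it is not divergence-free there and is not an admissible test function.

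This cannot be repaired while keeping $\xi_\delta\cdot\overline n=0$ on a whole strip. In the same example, $u_r\equiv0$ and $\operatorname{div}u=0$ on a collar force $\partial_\theta u_\theta=0$. So the trace on $\Gamma$ of any such $u$ is a constant multiple of $e_\theta$. Such fields cannot converge in $W^{1,2}$ to a divergence-free $\xi$ whose tangential trace is non-constant, for instance $\xi=\nabla^\perp\bigl((r-1)\cos\theta\bigr)$ cut off away from $\Gamma$.

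The missing point is that tangency on the strip is not needed. Admissibility in the fluid-only equation only requires $\xi^{(i)}_\delta\cdot n^{(i)}=0$ on $\partial\Omega^{(i)}(t)$. The paper therefore applies the universal operator to the entire divergence: $\xi^{(i)}_\delta=\tilde\xi^{(i)}_\delta-\mathcal B(\operatorname{div}\tilde\xi^{(i)}_\delta)$ on $\Omega^{(i)}(t)$, and likewise $\xi_\delta$ on $\Omega(t)$. Your compatibility computation $\int_{\Omega^{(i)}(t)}\operatorname{div}\tilde\xi^{(i)}_\delta\,dx=\int_{\partial\Omega^{(i)}(t)}\tilde\xi^{(i)}_\delta\cdot n^{(i)}\,dS=0$ is exactly what this requires. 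Since $\mathcal B$ maps into $W^{1,2}_0(\Omega^{(i)}(t))$, the correction has zero trace, so tangency on $\partial\Omega^{(i)}(t)$ survives. For $i$ large, $\partial\Omega^{(i)}(t)$ lies in $\{\chi_\delta=1\}$, and that is the only role of the strip. The $\delta^{1/2}$ estimate then follows from $\|\mathcal B(\operatorname{div}\tilde\xi_\delta)\|_{W^{1,2}}\lesssim\|\tilde\xi_\delta-\xi\|_{W^{1,2}}$.

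One further caution concerns your bound on $\xi^{(i)}_\delta-\xi_\delta$ in $W^{1,2}$. It runs through $\|\overline n^{(i)}-\overline n\|_{W^{1,q}}\lesssim\|\eta^{(i)}-\eta\|_{W^{2,q}}$, as does the paper's, because $\nabla\overline n^{(i)}$ contains $\nabla^2\eta^{(i)}$. Uniform $C^{1,\alpha}$ convergence from Aubin--Lions does not make this small. The strong convergence in (i) therefore needs $\eta^{(i)}\to\eta$ strongly in $L^2((0,T);W^{2,q}(Q;\mathbb R^d))$. In (ii) this comes from the compact embedding $W^{k_0+2,2}\hookrightarrow W^{2,q}$, but it is not implied by the weak-$*$ hypotheses of (i).
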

\begin{proof}
 Let now \(\delta>0\) be fixed. Consider a smooth
cutoff \(\psi_\delta\in C^\infty(\mathbb R^d;\mathbb R^d)\) such that
\(\psi_\delta(y)=1\) if
\(\operatorname{dist}(y,\partial \eta(Q))\leq \delta\) and
\(\psi_\delta(y)=0\) if
\(\operatorname{dist}(y,\partial\eta(Q))\geq 2\delta\).

Now we define $\tilde\xi^{(i)}_\delta$ and $\tilde\xi_\delta$ by
\begin{align*}
 \tilde\xi^{(i)}_\delta&= \psi_\delta\left( (\xi\cdot\overline\tau)\overline\tau^{(i)} \right) + (1-\psi_\delta)\xi\quad &\text{in }&\Omega^{(i)}(t),\\
\qquad\tilde\xi_\delta&= \psi_\delta\left( (\xi\cdot\overline\tau)\overline\tau\right) + (1-\psi_\delta)\xi\quad &\text{in }&\Omega(t).
\end{align*}
These satisfy all the conditions except for being divergence free.

Using the universal Bogovskii operator $\mathcal B$ from Theorem \ref{thm:bogovskii-close-deformations} we put
\[\xi_\delta^{(i)}=\tilde\xi_\delta^{(i)} - \mathcal B (\operatorname{div} \tilde\xi_\delta^{(i)}),\]
so that we get
\[\operatorname{div}\xi_\delta^{(i)} = \varphi\int_{\Omega^{(i)}}\operatorname{div}\tilde\xi_\delta^{(i)}\,dx = \varphi \int_{\partial \Omega^{(i)}}\tilde\xi_\delta^{(i)}\cdot dn^{(i)}=0,\]
where the last integral is zero due to
\(\tilde\xi_\delta^{(i)}\cdot n^{(i)}=0\) on
\(\partial\Omega^{(i)}\). Define also
\[\xi_\delta=\tilde\xi_\delta - \mathcal B (\operatorname{div} \tilde\xi_\delta),\]
so that then
\[\xi_\delta-\xi_\delta^{(i)} = \tilde\xi_\delta-\tilde\xi_\delta^{(i)} - \mathcal B(\operatorname{div}\tilde\xi_\delta) +\mathcal B (\operatorname{div}\tilde\xi^{(i)}_\delta). \]
Now \(\xi_\delta\) and \(\xi_\delta^{(i)}\) satisfy both the given
boundary conditions and the divergence free condition, as required. It is left to check the convergences and estimates.

\noindent\textit{(i)}: 
Since it holds $$\tilde \xi_\delta-\tilde\xi_\delta^{(i)} =\psi_\delta (\xi\cdot \overline\tau)(\overline\tau - \overline\tau^{(i)}) ,$$
we compute
$$\begin{aligned}\|\nabla(\tilde\xi_\delta-\tilde\xi_\delta^{(i)})\|_{L^2}= \left\| \nabla(\psi_\delta(\xi\cdot \overline\tau)) (\overline\tau-\overline\tau^{(i)})+\psi_\delta(\xi\cdot \overline\tau) \nabla (\overline\tau-\overline\tau^{(i)})\right\|_{L^2}
\\ \leq \|\nabla(\psi_\delta ( \xi\cdot \overline\tau))\|_{L^{2q/(q-2)}} \|\overline\tau-\overline\tau^{(i)}\|_{L^q}+ \|\psi_\delta ( \xi\cdot \overline\tau)\|_{L^{2q/(q-2)}} \|\nabla(\overline\tau-\overline\tau^{(i)})\|_{L^q}
\\ \leq C\delta^{-2q/(q-2)} \|\eta-\eta^{(i)}\|_{W^{2,q}}.
\end{aligned}$$
Consequently, by the continuity of the Bogovskii operator from Theorem \ref{thm:bogovskii-close-deformations} we have the same estimates for $\xi_\delta$ and $\xi_\delta^{(\tau)}$$$\begin{aligned}\|\nabla(\xi_\delta-\xi_\delta^{(i)})\|_{L^2} \leq C\delta^{-2q/(q-2)} \|\eta-\eta^{(i)}\|_{W^{2,q}},
\end{aligned}$$
which is the desired estimate.

For the limit passage \(\delta \to 0\), we compute
\[\xi-\xi_\delta = \xi -\tilde\xi_\delta+\mathcal B(\operatorname{div}\tilde\xi_\delta) = \psi_\delta(\xi\cdot \overline n)\overline n + \mathcal B(\operatorname{div}\tilde\xi_\delta).\]
Thus
\[\nabla(\tilde \xi -\tilde\xi_\delta)=\nabla(\psi_\delta(\xi\cdot \overline n)\overline n)= \nabla\psi_\delta (\xi\cdot \overline n)  \overline n +\psi_\delta \nabla(\xi\cdot \overline n)  \overline n +\psi_\delta (\xi\cdot \overline n) \nabla \overline n .\]
To estimate this in \(L^2(\Omega(t);\mathbb R^d)\), we note that the cutoff
\(\psi_\delta\) satisfies \[
\|\nabla\psi_\delta\|_{L^\infty}\leq \frac{c}{\delta}, \]
the function \(\xi\cdot \overline n\) has zero trace on
\(\partial\eta(Q)\), so that
\[\|\xi\cdot\overline n\|_{L^\infty}\leq c\delta,\]
and finally \(\|\nabla \overline n\|_{L^q}\leq C\|\eta\|_{W^{2,q}}\). Altogether
with the fact that
\(\left|\operatorname{supp}\psi_\delta\right|\leq c\delta\) we obtain
\[\|\nabla(\tilde \xi -\tilde\xi_\delta)\|_{L^2}= \|\nabla\psi_\delta (\xi\cdot \overline n)  \overline n +\psi_\delta \nabla(\xi\cdot \overline n)  \overline n +\psi_\delta (\xi\cdot \overline n) \nabla \overline n\|_{L^2}\leq C\delta^\frac12 \|\eta\|_{W^{2,q}}\|\xi\|_{W^{1,\infty}}\]
and thus 
$$\|\xi_\delta-\xi_\delta^{(\tau)}\|_{W^{1,2}}\leq c\delta^\frac12.$$
Further, since \(\operatorname{div}\tilde\xi_\delta \to 0\) in
\(L^2((0,T);L^2(\Omega(t)))\) we also have, by the continuity of the operator $\mathcal B$, that
\(\mathcal B(\operatorname{div}\tilde\xi_\delta) \to 0\) in
\(L^2((0,T);L^2(\Omega(t);\mathbb R^d))\) and the same inequality holds for $\xi$ and $\xi_\delta$.

For the convergence of the time derivative with $i\to\infty$, we write 
$$\partial_t(\tilde\xi_\delta^{(i)}-\tilde\xi_\delta) = \partial_t (\psi_\delta(\xi\cdot \overline\tau)(\overline\tau -\overline\tau^{(i)}))=\partial_t\psi_\delta(\xi\cdot \overline\tau)(\overline\tau -\overline\tau^{(i)})+\psi_\delta\partial_t(\xi\cdot \overline\tau)(\overline\tau -\overline\tau^{(i)})+\psi_\delta(\xi\cdot \overline\tau)\partial_t(\overline\tau -\overline\tau^{(i)})$$and use the estimates $$\|\partial_t\psi_\delta\|_{L^\infty} \leq \frac c \delta, \quad \|\overline\tau -\overline \tau ^{(i)}\|_{L^\infty}\leq c \|\eta-\eta^{(i)}\|_{W^{2,q}},\quad \|\partial_t(\overline\tau -\overline\tau^{(i)})\|_{L^2}\leq c \|\partial_t\eta-\partial_t\eta^{(i)}\|_{L^2}$$ along with $|\operatorname{supp}\psi_\delta|\leq c \delta$ to obtain the estimate $$\|\partial_t(\tilde\xi_\delta^{(i)}-\tilde\xi_\delta)\|_{L^2} \leq \frac C \delta \|\eta\|_{W^{2,q}} \|\eta-\eta^{(i)}\|_{W^{2,q}} + \|\partial_t\xi\|_{L^2}\|\eta-\eta^{(i)}\|_{W^{2,q}}+\delta\|\xi\|_{L^\infty}\|\eta\|_{W^{2,q}}\|\partial_t\eta-\partial_t\eta^{(i)}\|.$$
Finally, for the time derivative with $\delta\to 0$ we write $$\partial_t(\tilde\xi_\delta-\tilde\xi) = \partial_t(\psi_\delta(\xi\cdot \overline n) \overline n)=\partial_t \psi_\delta(\xi\cdot \overline n) \overline n+\psi_\delta\partial_t(\xi\cdot \overline n) \overline n+\psi_\delta(\xi\cdot \overline n) \partial_t \overline n,$$ use the estimates (the second one following from $\xi\cdot n=0$ on $\partial\Omega(t)$) $$\|\partial_t\psi_\delta\|_{L^\infty} \leq \frac c \delta, \quad \|\xi\cdot \overline n\|_{L^\infty}\leq c\delta, \quad \|\overline n\|_{L^\infty}\leq c \|\eta\|_{W^{2,q}},\quad \|\partial_t \overline n\|_{L^2}\leq c \|\partial_t\eta\|_{L^2},$$ 
as well as $|\operatorname{supp}\psi_\delta|\leq c \delta$, to obtain the estimate $$\|\partial_t(\tilde\xi_\delta-\tilde\xi)\|_{L^2} \leq C \delta (\|\eta\|_{W^{2,q}} + \|\partial_t\xi\|_{L^2}\|\eta\|_{W^{2,q}} + \|\partial_t\eta\|_{L^2}).$$

\noindent\textit{(ii)}:
For \(i\to\infty\), compute
\[\tilde \xi_\delta-\tilde\xi_\delta^{(i)} =\psi_\delta (\xi\cdot \overline\tau)(\overline\tau - \overline\tau^{(i)}). \]

 Since $\psi_\delta(\xi\cdot \overline\tau)\in C^{k_0}(\Omega;\mathbb R^d)$, we can calculate 
 $$\|\nabla^\ell(\tilde\xi_\delta-\tilde\xi_\delta^{(i)})\|_{L^2}= \left\| \sum_{j=0}^\ell \nabla^{\ell-j} (\psi_\delta(\xi\cdot \overline\tau)) \nabla^j(\overline\tau-\overline\tau^{(i)})\right\|_{L^2}\leq\|\nabla^j \psi_\delta\|_{L^\infty}\|\nabla^{\ell-j} \xi\cdot \overline\tau\|_{C^{k_0}} \|\nabla(\overline\tau-\overline\tau^{(i)})\|_{L^2}$$ $$\leq C\delta^{-\ell}\|\xi\cdot \overline\tau\|_{C^{k_0}}\|\eta-\eta^{(i)}\|_{W^{k_0+1,2}}.$$
 As before, due to the continuity of the Bogovskii operator, the same estimates hold for $\xi_\delta$ and $\xi_\delta^{(\tau)}$.
 
For the limit passage $\delta \to 0$, we compute $$\xi-\xi_\delta = \xi -\tilde\xi_\delta+\mathcal B(\operatorname{div}\tilde\xi_\delta) = \psi_\delta(\xi\cdot \overline n)\overline n + \mathcal B(\operatorname{div}\tilde\xi_\delta)$$
%\malte{
$$\|\nabla^\ell(\psi_\delta(\xi\cdot \overline n)\overline n)\|_{L^2(\Omega(t);\mathbb R^d)} \leq C\left\|\sum_{i=0}^\ell\nabla^{\ell-i}\psi_\delta\sum_{j=0}^i \nabla^{i-j}(\xi\cdot \overline n) \nabla^j \overline n\right\|_{L^2(\Omega(t);\mathbb R^d)}.$$
%}
%\noteMalte[inline]{Without the norm there is all kinds of different cases here depending in which order the derivatives hit the functions, but I think now it is correct.}
To estimate this, the cutoff $\psi_\delta$ is chosen so that $$
\|\nabla^{\ell-i}\psi_\delta\|_{L^\infty}\leq \frac{c}{\delta^{\ell-i}}, $$ the function $\nabla^\ell(\xi\cdot \overline n)$ has zero traces on $\partial\eta(Q)$ up to order $k_0$, so that $$\|\nabla^{i-j}(\xi\cdot\overline n)\|_{L^\infty}\leq c\delta^{k_0-i+j},$$ and finally $\|\nabla^j \overline n\|_{L^\infty}\leq C$. Altogether with the fact that $|\operatorname{supp}\psi_\delta|\leq c\delta$ we obtain
$$\|\nabla^\ell(\psi_\delta(\xi\cdot \overline n)\overline n)\|_{L^2}\leq C\delta.$$
This shows $$\|\tilde\xi(t)-\tilde \xi_\delta(t)\|_{L^2}\leq C\delta \|\eta(t)\|_{W^{k_0,2}}$$
and, again by the continuity of the Bogovskii operator, the same holds for $\xi$ and $\xi_\delta$.

\end{proof}

\section{Strong and weak formulation}\label{sec:strong-and-weak-formulation}

In this section, we formulate the fluid-structure interaction in both the weak and strong formulations. We further verify the consistency of the weak and strong
solutions, namely that any sufficiently regular weak solution is also strong.

\subsection{Weak formulation}\label{sec:weak-formulation}

\begin{definition}[Weak solution]\label{def:weak-solution-full}
The deformation
\(\eta \in L^\infty((0,T),\mathcal E)\cap W^{1,2}((0,T);W^{1,2}(Q;\mathbb R^d))\)
and the velocity field \[
\begin{aligned}
v\in &L^2((0,T);W_{\operatorname{div},\eta}^{1,2}(\Omega(t);\mathbb R^d)) \\ &:= \{ u\in L^2((0,T);W^{1,2}(\Omega(t);\mathbb R^d)): \operatorname{div} u = 0 \text{ in }\Omega(t), u\cdot n = (\partial_t\eta \circ\eta^{-1}) \cdot n \text{ on }\partial \eta(t,Q) \}
\end{aligned}
\] where \(\Omega(t)=\Omega\setminus \eta(t,Q)\), are called a \emph{weak
solution to the fluid-structure interaction problem \eqref{eqn:solid-intro}-\eqref{eqn:init-cond-intro}} if the following two weak equations hold:

\subsubsection*{Fluid-only equation} For all
\(\xi\in C^\infty([0,T] \times \overline \Omega(t);\mathbb R^d)\),
\(\xi\cdot n=0\) on \(\partial \Omega(t)\), \(\operatorname{div}\xi=0\),
with \(\xi(T)=0\) it holds
\begin{equation}\label{eqn:weak-fluid-only}
\begin{aligned}
\int_0^T &-\rho_f\langle v,\partial_t \xi\rangle_{\Omega(t)} - \rho_f \langle v,v\cdot \nabla\xi \rangle_{\Omega(t)}
\, dt + \nu \langle \varepsilon v, \varepsilon \xi\rangle_{\Omega(t)} 
\\
&+ a \langle (v-\partial_t \eta\circ\eta^{-1})\cdot \tau , \xi\cdot \tau\rangle_{\partial\Omega(t)} \,dt = \int_0^T \rho_f\langle f,\xi\rangle_{\Omega(t)}\,dt+\rho_f \langle v_0, \xi(0)\rangle_{\Omega(0)}.
\end{aligned}
\end{equation}
\subsubsection*{Coupled equation}

For all
\(\phi\in L^\infty((0,T); W^{2,q}(Q;\mathbb R^d))\cap W^{1,2}((0,T);W^{1,2}(Q;\mathbb R^d))\),
\(\xi \in C^\infty ([0,T]\times \overline \Omega;\mathbb R^d)\) with
\(\phi = \xi\circ \eta\) in \(Q\),
\(\xi \cdot n = 0\) on
\(\partial \Omega\),
\(\operatorname{div} \xi=0\) in \(\Omega(t)\),
\(\xi(T)=0\),
\(\phi(T)=0\)
 it holds 
 \begin{equation}\label{eqn:weak-coupled}
 \begin{aligned} 
&-\int_0^T \rho_s\langle\partial_t\eta,\partial_t\phi\rangle\,dt+\int_0^T  DE(\eta)\langle\phi\rangle +D_2R(\eta,\partial_t\eta)\langle \phi\rangle dt
\\
&+\int_0^T -\rho_f\langle v,\partial_t \xi\rangle _{\Omega(t)} + \rho_f \langle v,v\cdot \nabla\xi \rangle_{\Omega(t)} + \nu \langle \varepsilon v, \varepsilon \xi\rangle_{\Omega(t)}  \,dt 
\\ 
&\quad=\int_0^T\rho_s\langle f,\phi\rangle dt + \int_0^T \rho_f\langle f,\xi\rangle_{\Omega(t)}\,dt+\rho_s\langle \eta_*,\phi(0)\rangle+\rho_f \langle v_0, \xi(0)\rangle_{\Omega(0)}. 
\end{aligned}
\end{equation}
\end{definition}
Observe that in the {\em coupled equation} above no friction term at the boundary appears, as here the test function is continuous over the interface, which means in particular that their tangential components coincide. 

We shall see now that from this weak formulation, the pressure can be reconstructed.

\begin{proposition}[Pressure reconstruction]\label{prop:pressure-reconstruction}
Let
\(\eta \in L^\infty((0,T);\mathcal E)\cap W^{1,2}((0,T);W^{1,2}(Q;\mathbb R^d))\),
\(v\in L^2((0,T);W_{\operatorname{div},\eta}^{1,2}(\Omega(t);\mathbb R^d))\)
 be a weak solution to the fluid-structure interaction problem as in Definition \ref{def:weak-solution-full}. Then there exists a pressure $p\in\mathcal D'((0,T)\times\Omega)$ with $\operatorname{supp}p\subset (0,T)\times\overline{\Omega(t)}$ such that 
\begin{equation}\label{eqn:weak-formulation-with-pressure}
\begin{aligned}
\int_0^T -\rho_s\langle \partial_t \eta, \partial_t (\xi\circ\eta)\rangle +\langle DE(\eta)+D_2 R(\eta,\partial_t\eta), \xi\circ\eta\rangle-\rho_s\langle f\circ\eta,\xi\circ\eta\rangle \\ 
-\rho_f\langle v, \partial_t\xi \rangle -\rho_f \langle v\otimes v,\nabla \xi\rangle+\nu\langle \varepsilon v,\varepsilon \xi\rangle -\rho_f\langle f,\xi\rangle\,dt-\langle p, \operatorname{div}\xi\rangle =0
\end{aligned}
\end{equation}
holds for all $\xi\in C^\infty_0((0,T)\times\Omega;\mathbb R^d)$. Moreover, $p$ has the regularity 
\begin{equation}\label{eqn:pressure-regularity}
p\in L^\infty((0,T);W^{-1,q'}(\Omega))+W^{-1,\infty}((0,T);W^{1,2}_0(\Omega))+L^a((0,T);L^{b}(\Omega))
\end{equation}
for $1<a<\infty$ and $b=\frac{ad}{ad-2}$.
\end{proposition}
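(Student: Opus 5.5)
We define the residual functional $\mathcal R$ by the left-hand side of \eqref{eqn:weak-formulation-with-pressure} without the pressure term. That is, for $\xi\in C^\infty_0((0,T)\times\Omega;\mathbb R^d)$, $\mathcal R\langle\xi\rangle$ is the sum of the solid terms tested with $\xi\circ\eta$ and the fluid terms tested with $\xi$. The plan is to show that $\mathcal R$ vanishes on divergence-free fields and then invoke a de Rham type argument. The key observation is that any $\xi\in C^\infty_0((0,T)\times\Omega;\mathbb R^d)$ with $\operatorname{div}\xi=0$ in $\Omega(t)$, paired with $\phi=\xi\circ\eta$, is an admissible coupled test function in \eqref{eqn:weak-coupled}. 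For such $\xi$ the initial terms drop out, since $\xi$ is compactly supported in time, and hence $\mathcal R\langle\xi\rangle=0$. We do not need the fluid-only equation for this. Note that the constraint $\operatorname{div}\xi=0$ is imposed only in $\Omega(t)$, while inside $\eta(t,Q)$ the divergence is free. Consequently $\mathcal R$ annihilates every field whose divergence vanishes on $\Omega(t)$. Equivalently, $\mathcal R$ only depends on $\operatorname{div}\xi$ restricted to $\Omega(t)$, which explains the support property $\operatorname{supp}p\subset(0,T)\times\overline{\Omega(t)}$.

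Next we construct $p$ via the Bogovskii operator, which avoids an abstract de Rham theorem with poor time regularity. For a test function $g\in C^\infty_0((0,T)\times\Omega)$ we set
\[\langle p,g\rangle := -\mathcal R\big\langle \mathcal B\big(g-b\textstyle\int g\big)\big\rangle,\]
where $\mathcal B$ is the universal Bogovskii operator on $\Omega$ (Theorem \ref{thm:bogovskii-unviersal} with $\widetilde\Omega=\Omega$ fixed, time-independent) and $b$ is a fixed mean-one bump. More carefully, we need $\langle p,\operatorname{div}\xi\rangle=-\mathcal R\langle\xi\rangle$ for all $\xi$. Writing $\xi=\mathcal B(\operatorname{div}\xi)+(\xi-\mathcal B\operatorname{div}\xi)$, the second summand is divergence free in all of $\Omega$, so $\mathcal R$ vanishes on it by the previous step. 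This gives \eqref{eqn:weak-formulation-with-pressure}. Since $\operatorname{div}\xi$ has zero mean, the $b$-correction is harmless. For the support property, take $g$ supported in the open set $\bigcup_t\{t\}\times\eta(t,Q)$, i.e.\ the interior of the solid. Then $\mathcal B g$ is divergence free on $\Omega(t)$, so $\mathcal R\langle\mathcal Bg\rangle=0$ and $p$ vanishes there.

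Finally, we read off the regularity \eqref{eqn:pressure-regularity} term by term from the bound $|\langle p,g\rangle|\le|\mathcal R\langle \mathcal B g\rangle|$ together with the continuity of $\mathcal B\colon W^{k,p}_0\to W^{k+1,p}_0$ and the chain-rule estimates of Lemma \ref{lem:eulerian-lagrangian-w2q}.
\begin{itemize}
\item The elastic part gives $|DE(\eta)\langle\mathcal Bg\circ\eta\rangle|\lesssim\|\mathcal Bg\|_{W^{2,q}}\lesssim\|g\|_{W^{1,q}}$. Using boundedness of $DE$ on energy sublevel sets, this lands in $L^\infty_tW^{-1,q'}$.
\item The time-derivative terms $\langle\partial_t\eta,\partial_t(\xi\circ\eta)\rangle$ and $\langle v,\partial_t\xi\rangle$ are controlled by $\|\partial_t\mathcal Bg\|_{L^2}$ plus lower-order pieces. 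Hence they form a time derivative of an $L^\infty_t L^2$-type object, giving the $W^{-1,\infty}((0,T);W^{1,2}_0)$ part.
\item The dissipative terms $D_2R$ and $\nu\langle\varepsilon v,\varepsilon\xi\rangle$ are $L^2_tL^2$ and belong to the $L^aL^b$ part.
\item The convective term $v\otimes v\in L^a_tL^b_x$ follows by interpolating $L^\infty_tL^2\cap L^2_tW^{1,2}$, with $b=ad/(ad-2)$.
\end{itemize}

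The main obstacle is this bookkeeping of the time-derivative terms. Specifically, $\partial_t(\mathcal Bg\circ\eta)$ involves $\nabla\mathcal Bg\circ\eta\,\partial_t\eta$, and we must check that the resulting functional really fits into the stated negative-order-in-time space. The first approach to try is to integrate by parts in time against $\eta$ and $v$, which lie in $L^\infty_tL^2$, and to absorb the transport piece using $\partial_t\eta\in L^2_tW^{1,2}$ and Morrey's embedding, as in Lemma \ref{lem:eulerian-lagrangian-w2q}.
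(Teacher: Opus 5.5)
Your route is the paper's: the residual (the paper's $\Pi$) is annihilated by every $\xi$ with $\operatorname{div}\xi=0$ in $\Omega(t)$ using only the coupled equation, and $p$ is obtained by testing the residual with a Bogovskii field on the fixed container $\Omega$. The identity then comes from the splitting $\xi=\mathcal B(\operatorname{div}\xi)+(\xi-\mathcal B(\operatorname{div}\xi))$, and the regularity is read off from the bounds (the paper phrases this as $\Pi\in X^*$, $\mathcal B\colon Y\to X$, hence $p\in Y^*$).

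\textbf{The support property fails as written.} For $g$ supported in the solid you have $\operatorname{div}\mathcal B\bigl(g-b\int_\Omega g\bigr)=g-b\int_\Omega g$, which on $\Omega(t)$ equals $-b\int_\Omega g$. This is nonzero as soon as $b(t)$ charges the fluid region and $\int_\Omega g(t)\,dx\neq 0$. Your sentence ``$\mathcal Bg$ is divergence free on $\Omega(t)$'' drops exactly this correction; indeed no compactly supported field has divergence $g$ when $\int_\Omega g\neq0$. The defect is real, not cosmetic. The pressure is determined only up to adding a function $c(t)$ of time, and your construction fixes it by $\langle p,\varphi(t)b\rangle=0$ for all $\varphi$. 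If $b$ lives in the fluid, this selects the pressure of the statement minus its $b$-weighted fluid average $c(t)$. That average is generally nonzero, and then $p=-c(t)$ inside $\eta(t,Q)$.

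\textbf{The missing idea is where to place the mean correction.} As in the paper, take $b$ with $\operatorname{supp}b(t)\subset\eta(t,Q)$ and $\int b(t)\,dx=1$, and set $\mathcal B\psi(t):=\mathcal B_\Omega\bigl(\psi(t)-b(t)\int_\Omega\psi(t)\,dx\bigr)$. Then $\operatorname{div}\mathcal B\psi=\psi$ on $\Omega(t)$ for every $\psi$, not only for mean-zero ones. Consequently $\langle p,\psi\rangle:=\langle\Pi,\mathcal B\psi\rangle$ depends only on $\psi|_{\Omega(t)}$, and $\operatorname{supp}p\subset\overline{\Omega(t)}$ follows. This is your (correct) observation that the residual only sees $\operatorname{div}\xi$ on $\Omega(t)$, now transferred to $p$.

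Because the solid moves, such a $b$ cannot in general be time independent. You can build it from fixed bumps $\beta_i$, each supported in a ball contained in $\eta(t,Q)$ for $t$ in a short interval $I_i$. This is possible by the continuity of $t\mapsto\eta(t)$ in $C^1$ and compactness of $[0,T]$. Glue the $\beta_i$ with a smooth partition of unity in time. This only adds the harmless term $\partial_t b\int_\Omega\psi$ to $\partial_t\mathcal B\psi$.

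Two smaller points:
\begin{itemize}
\item Equation \eqref{eqn:weak-formulation-with-pressure} reads $\langle p,\operatorname{div}\xi\rangle=+\mathcal R\langle\xi\rangle$, so your definition produces $-p$.
\item For the regularity, it is safer to bound every term by the norm of the intersection space whose dual is the stated sum, as the paper does, rather than assigning each term to a single summand. The $L^2_tL^2_x$ dissipative and forcing terms need not lie in $L^a_tL^b_x$ when $a<4/d$.
\end{itemize}
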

\begin{proof}
 The following weak equation is satisfied \[\begin{aligned} 
-\int_0^T \rho_s\langle\partial_t\eta,\partial_t(\xi \circ \eta)\rangle\,dt+\int_0^T  DE(\eta)\langle\xi \circ \eta\rangle +D_2R(\eta,\partial_t\eta)\langle \xi \circ \eta\rangle dt
\\
+\int_0^T -\rho_f\langle v,\partial_t \xi\rangle _{\Omega(t)} + \rho_f \langle v,v\cdot \nabla\xi \rangle_{\Omega(t)} + \nu \langle \varepsilon v, \varepsilon \xi\rangle_{\Omega(t)}  \,dt 
\\ 
=\int_0^T\rho_s\langle f\circ\eta,\xi \circ \eta\rangle dt + \int_0^T \rho_f\langle f,\xi\rangle_{\Omega(t)}\,dt
\end{aligned}\] for
\(\xi\in C^\infty_{0}((0,T)\times \Omega;\mathbb R^d)\) with
\(\operatorname{div}\xi|_{\Omega(t)}=0\), as defined above.

Define the functional \(\Pi\) by  \[\begin{aligned}
\langle\Pi, \xi\rangle := \int_0^T -\rho_s\langle \partial_t \eta, \partial_t (\xi\circ\eta)\rangle +\langle DE(\eta)+D_2 R(\eta,\partial_t\eta), \xi\circ\eta\rangle-\rho_s\langle f\circ\eta,\xi\circ\eta\rangle \\ 
-\rho_f\langle v, \partial_t\xi \rangle -\rho_f \langle v\otimes v,\nabla \xi\rangle+\nu\langle \varepsilon v,\varepsilon \xi\rangle -\rho_f\langle f,\xi\rangle\,dt,\quad \forall \xi\in C^\infty_0((0,T)\times \Omega;\mathbb R^d).
\end{aligned}\] By the weak equation above, \(\Pi\) vanishes for
\(\xi\in C^\infty_{0}((0,T)\times \Omega;\mathbb R^d)\) with
\(\operatorname{div}\xi|_{\Omega(t)}=0\). In particular \(\Pi\) vanishes
for \(\xi\) with \(\operatorname{supp}\xi\subset \eta(\cdot, Q)\), which
means that \(\operatorname{supp}\Pi\subset \Omega(t)\) (as a support of
distribution \(\Pi\in\mathcal D'((0,T)\times\Omega)^d\)).

By interpolation of $|v|^2\in L^\infty((0,T);L^1(\Omega(t)))\cap L^2((0,T);L^{\frac{d}{d-2}}(\Omega(t)))$ we have a bound on $v\otimes v$ in $L^a((0,T);L^b(\Omega(t);\mathbb R^{d\times d}))$.
So from this we have for
\(\xi\in C^\infty_0([0,T]\times\overline \Omega;\mathbb R^d)\) the
estimate \[\begin{aligned}
|\langle\Pi, \xi\rangle| &\leq \rho_s\| \partial_t \eta\|_{L^\infty((0,T);L^2(Q))}\| \partial_t (\xi\circ\eta)\|_{L^1((0,T);L^2(\eta(Q)))} \\&+\| DE(\eta)\|_{L^\infty((0,T);W^{-2,q'}(Q))}\|\xi\circ\eta \|_{L^1((0,T);W^{2,q}(Q))} \\&+\|D_2 R(\eta,\partial_t\eta)\|_{L^2((0,T);W^{-1,2}(Q))} \|\xi\circ\eta \|_{L^2((0,T);W^{1,2}(\eta(Q)))} \\&-\rho_s\| f\circ\eta\|_{L^2((0,T);L^2(Q;\mathbb R^d))} \|\xi\circ\eta\|_{L^2((0,T);L^2(Q;\mathbb R^d))} \\ 
-\rho_f\|& v\|_{L^\infty((0,T);L^2(\Omega(t))} \| \partial_t\xi \|_{L^1((0,T);L^2(\Omega(t)))} -\rho_f \| v\otimes v\|_{L^a((0,T);L^b(\Omega(t)))}\|\nabla \xi\|_{L^{a'}((0,T);L^{b'}(\Omega(t)))} \\+\nu\| \varepsilon v&\|_{L^2((0,T);L^2(\Omega(t)))}\|\varepsilon \xi\|_{L^2((0,T);L^2(\Omega(t)))} -\rho_f\| f\|_{L^2((0,T);L^2(\Omega(t);\mathbb R^d))} \|\xi\|_{L^2((0,T);L^2(\Omega(t);\mathbb R^d))}\,dt.
\end{aligned}\]

Recall that by Lemma \ref{lem:eulerian-lagrangian-w2q} we have that \(\xi\mapsto\xi\circ\eta\) is an
isomorphism in the spaces above. This shows that indeed \(\Pi\) is
defined (can be extended) for
\[\xi\in X:=L^1((0,T);W^{2,q}_0(\Omega;\mathbb R^d))\cap W^{1,1}_0((0,T);L^2(\Omega;\mathbb R^d))\cap L^{a'}((0,T);W^{1,b'}(\Omega;\mathbb R^d))\]
and is bounded on this space. In other words, \(\Pi\) satisfies
\[\Pi\in X^*=L^\infty((0,T);W^{-2,q'}(\Omega;\mathbb R^d))+W^{-1,\infty}((0,T);L^2(\Omega;\mathbb R^d))+L^a((0,T);W^{-1,b'}(\Omega;\mathbb R^d)).\]

Consider now the divergence operator in \(\Omega\) as a mapping
\[\operatorname{div}\colon X \to Y=L^1((0,T);W^{1,q}(\Omega))\cap W^{1,1}((0,T);W^{-1,2}(\Omega))\cap L^{a'}((0,T); L^{b'}(\Omega)).\]

We construct the fluid-structure Bogovskii operator $${\mathcal B}\colon C^\infty_0((0,T)\times\Omega)\to C^\infty_0((0,T)\times \Omega;\mathbb R^d)$$
as follows. Fix $b\in C^\infty_0((0,T)\times \eta(\cdot,Q))$ with $\int_{\eta(t,Q)} b(t)\,dx=1$, $t\in(0,T)$. Then define $${\mathcal B} (\psi)(t)=\mathcal B_{\Omega}\left(\psi(t)-b(t)\int_\Omega\psi(t)\,dx\right), \quad \psi\in C^\infty_0([0,T]\times \Omega)$$ where $\mathcal B_\Omega$ is the Bogovskii operator for the fixed domain $\Omega$. From this we see that $$\operatorname{div}( {\mathcal B}\psi)= b \int_\Omega\psi \,dx,\quad \psi\in C^\infty_0([0,T]\times \Omega).$$
In fact we can see that it holds that it can be extended as a bounded operator ${\mathcal B}\colon Y \to X$.

Define \(p\in Y^*\) by setting
\[\langle p,\psi\rangle:= \langle \Pi,\mathcal B\psi\rangle, \quad \psi\in Y.\]It holds by the Bogovskii estimates
that \(\mathcal B \colon Y\to X\) is bounded, and we already know that
\(\Pi\in X^*\), verifying that indeed $p\in Y^*$. It remains to show that \(\Pi=\nabla p\). Note that \(\nabla\) is a dual operator to \(\operatorname{div}\). For this, we
compute
\[\langle p, \operatorname{div}\psi\rangle = \langle \Pi, \mathcal B (\operatorname{div}\psi)\rangle= \langle\Pi,\psi\rangle + \langle\Pi, \mathcal B(\operatorname{div}\psi)-\psi\rangle = \langle\Pi,\psi\rangle,\quad \psi\in Y\]
where the last inequality holds since
\(\operatorname{div}(\mathcal B(\operatorname{div}\psi)-\psi)=0\) in
\(\Omega(t)\) and thus it is annihilated by \(\Pi\). This verifies that
\(\Pi=\nabla p\). To conclude the regularity, we have
\[p\in Y^*=L^\infty((0,T);W^{-1,q'}(\Omega))+W^{-1,\infty}((0,T);W^{1,2}_0(\Omega))+L^a((0,T);L^{b}(\Omega)).\]
By the definition of \(p\), we directly verified the weak equation
with pressure, namely 
\[\begin{aligned}
\int_0^T -\rho_s\langle \partial_t \eta, \partial_t (\xi\circ\eta)\rangle +\langle DE(\eta)+D_2 R(\eta,\partial_t\eta), \xi\circ\eta\rangle-\rho_s\langle f\circ\eta,\xi\circ\eta\rangle \\ 
-\rho_f\langle v, \partial_t\xi \rangle -\rho_f \langle v\otimes v,\nabla \xi\rangle+\nu\langle \varepsilon v,\varepsilon \xi\rangle -\rho_f\langle f,\xi\rangle\,dt-\langle p, \operatorname{div}\xi\rangle =0,
\end{aligned}\]
holds for all \(\xi\in C^\infty_0((0,T)\times\Omega;\mathbb R^d)\).
\end{proof}

\subsection{Strong formulation}

Here we state the strong formulation of our problem. For this assume the
solid energy and dissipation have a density, that is they can be written
as
\begin{equation}\label{eqn:ER-density}
E(\eta)=\int_Q e\left(\nabla \eta, \nabla^2 \eta\right) \,d x, \quad R\left(\eta, \partial_t \eta\right)=\int_Q r\left(\nabla \eta, \partial_t \nabla \eta\right) \,d x,
\end{equation}
for some
\(e\in C^3(\mathbb R^{d\times d}_{\det>0}\times \mathbb R^{d^3})\) and 
\(r\in C^2(\mathbb R^{d\times d}_{\det>0}\times \mathbb R^{d\times d})\).
Further, assume the boundary \(\partial Q\) is \(C^{1,1}\), so that the
normal to the boundary \(n\in C^{0,1}(\partial Q;\mathbb R^d)\) has
bounded mean curvature
\(\operatorname{div}_S n\in L^\infty(\partial Q)\).

\begin{definition}[Strong solution]\label{def:strong-solution-full}
We say that
\(\eta\in C^4([0,T]\times \overline Q;\mathbb R^d)\cap L^\infty((0,T);\mathcal E)\),
\(v\in C^2([0,T]\times \overline \Omega(t);\mathbb R^d)\) and
\(p\in C^1([0,T]\times \overline \Omega(t))\) is a \emph{strong solution} if it
satisfies the following equations:
\\ \textit{Bulk solid equation:} 
\begin{equation}\label{eqn:strong-bulk-solid}
\begin{aligned}
  \rho_s \partial_{tt}\eta-\operatorname{div}_x\nabla_\xi e(\nabla\eta,\nabla^2\eta) +\operatorname{div}_x^2 \nabla_w e(\nabla\eta,\nabla^2\eta)&-\operatorname{div}_x \nabla_\xi r(\nabla\eta,\partial_t\nabla\eta)\\ & -\operatorname{div}_x\nabla_z r(\nabla\eta,\partial_t\nabla\eta) &=\rho_s f \quad \text{in }(0,T)\times Q
  \end{aligned}
\end{equation}
\\
  \textit{Bulk fluid equation:} 
\begin{equation}\label{eqn:strong-bulk-fluid}
\begin{aligned}
  \rho_f(\partial_t v +v\cdot \nabla v)&= \nu \Delta v -\nabla p + \rho_f f\quad \text{in }(0,T)\times \Omega(t) \\
  \operatorname{div}v&=0 \quad \text{in } (0,T)\times \Omega(t)
  \end{aligned}
\end{equation}
\\
  \textit{Initial conditions:} 
  \begin{equation}\label{eqn:strong-initial-cond}
\begin{aligned}
  v(0,\cdot)&=v_0 \quad\text{in }\Omega(0)
  \\
  \eta(0)&=\eta_0 
  \\
  \partial_t\eta(0)&=\eta_*
  \end{aligned}
  \end{equation}
\\
  \textit{Kinematic coupling on the interface:}
  \begin{equation}\label{eqn:strong-coupling}
  v\cdot n = (\partial_t\eta\circ\eta^{-1})\cdot n\quad\text{on } (0,T)\times\partial \eta(t,Q) 
  \end{equation}
\\
  \textit{Normal-stress equality the interface:} 
  \begin{equation}\label{eqn:strong-normal-stress}
\begin{aligned}
\nabla_\xi e(\nabla\eta,\nabla^2\eta) - \operatorname{div}_x\nabla_w e(\nabla \eta,\nabla^2\eta) +\nabla_\xi r(\nabla\eta,\partial_t\nabla\eta)+& \\\nabla_z r(\nabla\eta,\partial_t\nabla\eta) -\operatorname{div}_S \nabla_w e(\nabla\eta,\nabla^2\eta) - \operatorname{div}_S n \nabla_w e(\nabla\eta,\nabla^2\eta) n\otimes n]_n & 
   = [[\varepsilon v]n + pn]_n\circ\eta \\ &\quad \text{on }(0,T)\times \partial Q
  \end{aligned}
  \end{equation}
 \textit{ Slipping:} 
  \begin{equation}
  \label{eq:slip} 
  \begin{aligned}
&\nabla_\xi e(\nabla\eta,\nabla^2\eta) - \operatorname{div}_x\nabla_w e(\nabla \eta,\nabla^2\eta) +\nabla_\xi r(\nabla\eta,\partial_t\nabla\eta) \\
&\quad +\nabla_z r(\nabla\eta,\partial_t\nabla\eta) -\operatorname{div}_S \nabla_w e(\nabla\eta,\nabla^2\eta) - (\operatorname{div}_S n) \nabla_w e(\nabla\eta,\nabla^2\eta) n\otimes n]_\tau 
\\
&= -a( \partial_t \eta -v\circ\eta)\cdot \tau |\operatorname{cof}(\nabla \eta)n| \quad \text{on }(0,T)\times \partial Q
  \\
  ([\varepsilon v]n)_\tau  &= -a( v-\partial_t \eta \circ\eta^{-1})\cdot \tau  \quad\text{on } (0,T)\times\partial \Omega(t)
  \end{aligned}
  \end{equation} %\noteMalte[inline]{Please check in particular the signs of $a$}\todo{fix, should be jump}
  
  \textit{Solid hyperstress:}
  \begin{equation}\label{eqn:strong-hyperstress-bc}
   \nabla_w e(\nabla\eta,\nabla^2\eta): n\otimes  n =0 \quad \text{on }(0,T)\times \partial Q
   \end{equation}
\end{definition}

\begin{theorem}[Weak-strong compatibility]\label{thm:weak-strong-compatibility}
Assume that we have a weak solution as in Definition \ref{def:weak-solution-full} with
\[\eta\in C^4([0,T]\times \overline Q;\mathbb R^d)\cap L^\infty((0,T);\mathcal E),\quad v\in C^2([0,T]\times \overline \Omega(t);\mathbb R^d),\quad p\in C^1([0,T]\times \overline \Omega(t)).\] Under
this regularity, it is also a strong solution as in Definition \ref{def:strong-solution-full}.
\end{theorem}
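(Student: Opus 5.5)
The plan is the standard localization argument: test the weak equations with increasingly general smooth test functions and integrate by parts back to the pointwise equations, using the regularity of $\eta,v,p$ to justify every step. The pressure $p$ is the one given by Proposition \ref{prop:pressure-reconstruction}, now assumed $C^1$. We proceed in four steps.

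\textbf{Step 1 (bulk equations).} First take fluid-only test functions $\xi$ compactly supported in $(0,T)\times\Omega(t)$ and divergence free. After integrating by parts in space and time in \eqref{eqn:weak-fluid-only}, the residual $\rho_f(\partial_t v+v\cdot\nabla v)-\nu\operatorname{div}\varepsilon v-\rho_f f$ annihilates all divergence-free test fields. By de Rham it is a gradient, and we identify it with $-\nabla p$ using \eqref{eqn:weak-formulation-with-pressure}. Together with $\operatorname{div}v=0$, which is built into the function space, this gives \eqref{eqn:strong-bulk-fluid}; note that $\operatorname{div}\varepsilon v=\frac12\Delta v$ there. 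Next take coupled test functions with $\xi$ supported inside $\eta(t,Q)$, so that only the solid terms remain. Any $\phi\in C^\infty_c((0,T)\times Q)$ arises this way as $\phi=\xi\circ\eta$ with $\xi=\phi\circ\eta^{-1}$; the divergence constraint only concerns $\Omega(t)$, so it is empty here. Writing $DE$ and $D_2R$ through the densities \eqref{eqn:ER-density} and integrating by parts twice in the hyperstress term yields \eqref{eqn:strong-bulk-solid}. The kinematic condition \eqref{eqn:strong-coupling} holds by definition of $W^{1,2}_{\operatorname{div},\eta}$. For the initial conditions, take test functions that do not vanish at $t=0$; after integrating by parts in time, the terms $\rho_s\langle\eta_*,\phi(0)\rangle$ and $\rho_f\langle v_0,\xi(0)\rangle$ must match $\rho_s\langle\partial_t\eta(0),\phi(0)\rangle$ and $\rho_f\langle v(0),\xi(0)\rangle$, giving \eqref{eqn:strong-initial-cond}.

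\textbf{Step 2 (fluid slip condition).} Use fluid-only test functions whose tangential trace on $\partial\eta(t,Q)$ is arbitrary. Since the bulk equations now hold, integrating by parts in \eqref{eqn:weak-fluid-only} leaves only a boundary integral. The pressure term drops out because $\xi\cdot n=0$, and Reynolds' Theorem \ref{thm:transport-theorem} produces no boundary term for the same reason. What remains is $\int_{\partial\Omega(t)}(\nu[\varepsilon v]n+a(v-\partial_t\eta\circ\eta^{-1}))\cdot\xi_\tau\,dS=0$. Tangential traces of divergence-free fields with $\xi\cdot n=0$ are dense among tangential fields: take any smooth tangential field, extend it, and correct the divergence with a Bogovskii operator supported away from the boundary, as in Proposition \ref{prop:approximation-fluid-only}. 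This gives the second line of \eqref{eq:slip}, with $\nu$ absorbed.

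\textbf{Step 3 (solid boundary conditions).} Use general coupled test functions $\xi$ that are continuous across the interface, and integrate by parts on both sides. The fluid side gives $\int_{\partial\Omega(t)}(\nu[\varepsilon v]n-pn)\cdot\xi\,dS$, which we transform to $\partial Q$ with the surface Jacobian $|\operatorname{cof}(\nabla\eta)n_Q|$ via Nanson's formula. On the solid side, the second-gradient term needs the surface divergence decomposition on $\partial Q$. We split $\nabla\phi=\nabla_S\phi+\partial_n\phi\otimes n$ and integrate tangentially by parts on the $C^{1,1}$ surface. This produces the $\operatorname{div}_S$ and mean curvature $\operatorname{div}_S n$ terms, plus a term pairing $\nabla_w e:n\otimes n$ with $\partial_n\phi$. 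Since $\phi|_{\partial Q}$ and $\partial_n\phi|_{\partial Q}$ can be prescribed independently (with $\xi$ extended smoothly into the fluid and made divergence free there), the $\partial_n\phi$ term gives \eqref{eqn:strong-hyperstress-bc}. The remaining term is a balance of solid traction against fluid traction. Its normal component gives \eqref{eqn:strong-normal-stress}. For its tangential component, we substitute the fluid tangential traction from Step 2, which turns it into $-a(\partial_t\eta-v\circ\eta)\cdot\tau$ times the Jacobian; this is the first line of \eqref{eq:slip}.

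\textbf{Main obstacle.} I expect the hardest step to be this last one. The difficulty is constructing enough coupled test functions with prescribed $\phi$ and $\partial_n\phi$ on $\partial Q$ while keeping $\operatorname{div}\xi=0$ in $\Omega(t)$ and $\xi\cdot n=0$ on $\partial\Omega$. I would extend $\phi\circ\eta^{-1}$ into the fluid region and remove the divergence with the Bogovskii operator of Theorem \ref{thm:bogovskii-close-deformations}, compactly supported in $\Omega(t)$. The mean-zero condition holds after adding a fixed correction field with prescribed boundary flux. On top of this, the surface calculus for the second-gradient term needs careful bookkeeping, including the curvature term.
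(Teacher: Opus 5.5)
Your Steps 1 and 2 are fine in outline, apart from a bookkeeping point noted at the end; using solenoidal fluid-only fields plus density of tangential traces is a legitimate alternative to the paper's use of the pressure form there. Step 3, however, rests on a claim that is false for the test functions the weak formulation provides, and the repair you propose does not work. An admissible coupled pair has $\xi\in C^\infty([0,T]\times\overline\Omega;\mathbb R^d)$ with $\operatorname{div}\xi=0$ in $\Omega(t)$, so by continuity $\operatorname{div}\xi=0$ also on $\partial\eta(t,Q)$. Writing $\nabla\xi=\nabla_S\xi+\partial_n\xi\otimes n$ there, this says $n\cdot\partial_n\xi=-\operatorname{div}_S\xi$. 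Since $\nabla\eta\,n_Q=\alpha n+t$ with $\alpha>0$ and $t$ tangent, the component of $\partial_{n_Q}\phi=(\nabla\xi\circ\eta)\nabla\eta\,n_Q$ along $n\circ\eta$ equals $-\alpha\operatorname{div}_S\xi+n\cdot(\nabla_S\xi)t$, so it is fixed by $\phi|_{\partial Q}$. Thus $\phi|_{\partial Q}$ and $\partial_{n_Q}\phi|_{\partial Q}$ are \emph{not} independent. The $\partial_n\phi$-term yields at best that the part of $\nabla_w e:n_Q\otimes n_Q$ orthogonal to $n\circ\eta$ vanishes. Its normal component stays inside the traction balance, so you obtain neither \eqref{eqn:strong-hyperstress-bc} nor \eqref{eqn:strong-normal-stress} in the stated form. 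The Bogovskii correction cannot fix this. Since $\mathcal B(\operatorname{div}\tilde\xi)$ has zero trace, on the interface $n\cdot\partial_n\mathcal B(\operatorname{div}\tilde\xi)=\operatorname{div}\tilde\xi\neq0$. The corrected $\xi$ therefore has a kink across $\partial\eta(t,Q)$ and is not an admissible coupled test function. Nor is it a limit of admissible ones: any convergence $\phi_k\to\phi$ strong enough to pass to the limit in $DE(\eta)\langle\phi_k\rangle$ also makes the traces of $\nabla\phi_k$ on $\partial Q$ converge, so the constraint $\operatorname{div}(\phi_k\circ\eta^{-1})=0$ on $\partial\eta(t,Q)$ survives.

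The missing idea is the one the paper uses: give up solenoidality and test the pressure formulation \eqref{eqn:weak-formulation-with-pressure} of Proposition \ref{prop:pressure-reconstruction} with arbitrary smooth $\xi$, as in \eqref{eqn:pressure-coupled-eqn-test}. This is exactly where the hypothesis $p\in C^1([0,T]\times\overline{\Omega(t)})$ is needed. It gives $\langle p,\operatorname{div}\xi\rangle=\int_{\partial\Omega(t)}p\,\xi\cdot n\,dS-\int_{\Omega(t)}\nabla p\cdot\xi\,dx$, which contains no normal derivative of $\xi$. Now $\partial_{n_Q}\phi$ is genuinely free, and you obtain the full vector identity \eqref{eqn:strong-hyperstress-bc}. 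The remaining $\phi|_{\partial Q}$-terms then give \eqref{eqn:strong-normal-stress}, and together with your Step 2 the first line of \eqref{eq:slip}. Without such regularity, the reconstructed pressure may carry a single layer on $\partial\eta(t,Q)$ that absorbs precisely the normal component of $\nabla_w e:n_Q\otimes n_Q$.

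The smaller bookkeeping error is in Step 2. The Reynolds boundary term $\int_{\partial\Omega(t)}(v\cdot\xi)\tilde n_t\,dS$ does not vanish because $\xi\cdot n=0$, since $v\cdot\xi$ is the product of tangential components. It instead cancels against the term $\int_{\partial\Omega(t)}(v\cdot\xi)(v\cdot n)\,dS$ produced by integrating the convective term by parts, using $\tilde n_t=v\cdot n$. Your final boundary identity is correct only because both terms were dropped.
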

\begin{proof}

%\noteMalte[inline]{I tried to remove duplicate calculations in the following proof (everything was first formal, then in coordintes). I left the removed parts as comments in the file for now. I also tried to include the Navier-Slip term. Please check.}

%Throughout this proof, to aid reading, we include the calculations in
%vector notation, as well as written in (spatial) components.
%Component-wise calculations always use the Einstein summation
%convention, that is any repeating index (labelled \(i,j,k,l\)) is summed
%over \(1,\dots,d\).

\emph{Fluid-only test function.}
By the weak formulation for the fluid-only test function, we have that
for \(\xi\in C^\infty([0,T] \times \overline \Omega(t);\mathbb R^d)\),
\(\xi\cdot n=0\) on \(\partial \Omega(t)\), with \(\xi(T)=0\) it holds by \eqref{eqn:weak-formulation-with-pressure}
\[\begin{aligned}&\int_0^T -\rho_f\langle v,\partial_t \xi\rangle_{\Omega(t)}  + \rho_f \langle v,v\cdot \nabla\xi \rangle_{\Omega(t)}+ \nu \langle \varepsilon v, \varepsilon \xi\rangle_{\Omega(t)} \,dt -\langle p,\operatorname{div}\xi\rangle \\
&= \int_0^T a \langle \partial_t \eta \circ \eta^{-1} -v, \xi\rangle_{\partial \Omega(t)} + \rho_f\langle f,\xi\rangle_{\Omega(t)}\,dt+\rho_f \langle v_0, \xi(0)\rangle_{\Omega(0)}.\end{aligned}\]
First, we focus on the time derivative. By the transport Theorem \ref{thm:transport-theorem}, we compute
\[\frac{d}{dt}\int_{\Omega(t)}v\cdot\xi\,dx = \int_{\Omega(t)}\partial_t v\cdot \xi \,dx+\int_{\Omega(t)}v\cdot\partial_t \xi \,dx +\int_{\partial\Omega(t)} (v\cdot \xi) \tilde n_t \,dS.\]
Integrating this in time (recall \(\xi(T)=0\)) we obtain
\[- \langle v(0),\xi(0)\rangle_{\Omega(0)}= \int_0^T\left( \langle\partial_t v, \xi \rangle_{\Omega(t)}+\langle v,\partial_t \xi \rangle _{\Omega(t)} +\int_{\partial\Omega(t)} (v\cdot \xi)\tilde n_t \,dS\right) dt.\]
For the convective term, integrating by parts gives, thanks to
\(\operatorname{div} v=0\) and
\(v\cdot n=\tilde n_t\) on
\(\partial\Omega(t)\), 
\[
\langle v,v\cdot \nabla\xi\rangle_{\Omega(t)} =  -\langle v\otimes v,\nabla \xi\rangle_{\Omega(t)} = \int_{\partial\Omega(t)}(v\cdot\xi) \underbrace{v\cdot dn}_{=\tilde n_t\,dS}-\int_{\Omega(t)} v\cdot \nabla v\cdot \xi\, dx
% \int_{\Omega(t)}v_i v_j\partial_j\xi_i\,dx &= \int_{\partial\Omega(t)}v_i\xi_i \underbrace{v_j\,dn_j}_{=\tilde n_t\,dS} - \int_{\Omega(t)}\partial_j v_i v_j\xi_i\,dx
\]
For the viscosity term, we can again use \(\operatorname{div}v=0\) \[\begin{aligned}
\langle \varepsilon v,\varepsilon \xi\rangle_{\Omega(t)} &= \int_{\partial\Omega(t)} [\varepsilon v]\xi\cdot dn - \int_{\Omega(t)} \frac12  \Delta v \xi \,dx
\\ 
% \int_{\Omega(t)}\frac14 (\partial_j v_i+\partial_i v_j)(\partial_j\xi_i+\partial_i\xi_j)\,dx&= \int_{\partial \Omega(t)} \frac14(\partial_j v_i +\partial_i v_j)(\xi_i n_j+\xi_j n_i)\,dS \\ &\quad\quad-\int_{\Omega(t)}\frac14 \partial_{jj} v_i\xi_i + \frac14\partial_{ii} v_j\xi_j\,dx
\end{aligned}\]
Finally, for the pressure we have
\[\begin{aligned}-\int_0^T \int_{\Omega(t)}  p\operatorname{div}\xi \,dx\,dt=  \int_0^T -\int_{\partial\Omega(t)} p \xi \cdot  dn  +\int_{\Omega(t)} \nabla p \cdot \xi \,dx\,dt=  \int_0^T \int_{\Omega(t)} \nabla p \cdot \xi \,dx\,dt, \\
% -\int_0^T \int_{\Omega(t)}  p \partial_i \xi_i \,dx\,dt=  \int_0^T - \int_{\partial\Omega(t)} p \xi_i   dn_i  +\int_{\Omega(t)} \partial_i p  \xi_i \,dx\,dt=  \int_0^T \int_{\Omega(t)} \partial_i p  \xi_i \,dx\,dt.
\end{aligned}\]
where the boundary term vanishes since \(\xi\cdot n=0\).
Altogether, we obtain
\[
\begin{aligned}\langle v(0),\xi(0)\rangle_{\Omega(0)}+\int_0^T \langle \partial_t v,\xi\rangle_{\Omega(t)} + \rho_f \int_{\Omega(t)} v\cdot \nabla v\cdot \xi\, dx + \nu \left(\int_{\partial\Omega(t)}[\varepsilon v]\xi\cdot dn - \frac12\int_{\Omega(t)}  \Delta v \xi \,dx\right)\\+\int_{\Omega(t)} \nabla p \cdot \xi \,dx \,dt =  \langle v_0,\xi(0)\rangle _{\Omega(0)}+ \int_0^Ta \langle \partial_t \eta \circ \eta^{-1} -v, \xi\rangle_{\partial \Omega(t)} + \rho_f\langle f,\xi\rangle_{\Omega(t)}\,dt\end{aligned}
\]
for all \(\xi\in C^\infty([0,T]\times \overline\Omega(t);\mathbb R^d)\),
\(\xi\cdot n=0\) on \(\partial\Omega(t)\), with \(\xi(T)=0\). In the
boundary term, we use that
\([\varepsilon v]\xi\cdot n=\xi \cdot [\varepsilon v]n\) and that
\(\xi\) is an arbitrary tangential field. Thus in particular, we see
that the tangential normal stress is the slip term, i.e.
\[([\varepsilon v]n)_\tau  = a (\partial_t  \eta \circ \eta^{-1}-v) \cdot \tau \quad\text{on } (0,T)\times\partial \Omega(t).\]

\noindent\emph{Continuous test function}
We have for 
\(\phi\in L^\infty((0,T); W^{2,q}(Q;\mathbb R^d))\cap W^{1,2}((0,T);W^{1,2}(Q;\mathbb R^d))\),
\(\xi \in C^\infty ([0,T]\times \overline \Omega)\) with
\(\phi = \xi\circ \eta\) in \(Q\), \(\xi \cdot n = 0\) on
\(\partial \Omega\),
\(\xi(T)=0\), \(\phi(T)=0\) that it holds by \eqref{eqn:weak-formulation-with-pressure} 

\begin{equation}\label{eqn:pressure-coupled-eqn-test}
\begin{aligned} 
-\int_0^T \rho_s\langle\partial_t\eta,\partial_t\phi\rangle\,dt+\int_0^T  DE(\eta)\langle\phi\rangle +D_2R(\eta,\partial_t\eta)\langle \phi\rangle dt
\\
+\int_0^T -\rho_f\langle v,\partial_t \xi\rangle _{\Omega(t)} + \rho_f \langle v,v\cdot \nabla\xi \rangle_{\Omega(t)} + \nu \langle \varepsilon v, \varepsilon \xi\rangle_{\Omega(t)}   \,dt  -\langle p,\operatorname{div}\xi\rangle
\\ 
=\int_0^T\rho_s\langle f,\phi\rangle dt + \int_0^T \rho_f\langle f,\xi\rangle_{\Omega(t)}\,dt+\rho_s\langle \eta_*,\phi(0)\rangle+\rho_f \langle v_0, \xi(0)\rangle_{\Omega(0)} 
\end{aligned}
\end{equation}
 So let us take such \(\xi\) and \(\phi\).

For the fluid terms, perform the manipulations as above to obtain \[\begin{aligned}
\int_0^T &-\rho_f\langle v,\partial_t \xi\rangle_{\Omega(t)}  + \rho_f \langle v,v\cdot \nabla\xi \rangle_{\Omega(t)}+ \nu \langle \varepsilon v, \varepsilon \xi\rangle_{\Omega(t)} \,dt -\langle p,\operatorname{div}\xi\rangle  \\ &- \int_0^T \rho_f\langle f,\xi\rangle_{\Omega(t)}\,dt - \rho_f \langle v_0, \xi(0)\rangle_{\Omega(0)}
=\langle v(0),\xi(0)\rangle_{\Omega(0)} \\&+\int_0^T \langle \partial_t v,\xi\rangle_{\Omega(t)} + \rho_f \int_{\Omega(t)} v\cdot \nabla v\cdot \xi\, dx + \nu \left(\int_{\partial\Omega(t)}[\varepsilon v]\xi\cdot dn - \frac12\int_{\Omega(t)}  \Delta v \xi \,dx\right) \,dt \\ &+ \int_{\Omega(t)}\nabla p\cdot \xi \,dx\,dt -\int_{\partial\Omega(t)} p \xi \cdot  dn- \langle v_0,\xi(0)\rangle _{\Omega(0)}- \int_0^T \rho_f\langle f,\xi\rangle_{\Omega(t)}\,dt\end{aligned}.\]

Now consider the solid terms from \eqref{eqn:pressure-coupled-eqn-test} , namely \[\begin{aligned} 
-\int_0^T \rho_s\langle\partial_t\eta,\partial_t\phi\rangle\,dt+\int_0^T  DE(\eta)\langle\phi\rangle +D_2R(\eta,\partial_t\eta)\langle \phi\rangle dt
-\int_0^T\rho_s\langle f,\phi\rangle dt - \rho_s\langle \eta_*,\phi(0)\rangle.
\end{aligned}\]

We calculate (writing \(e=e(\xi,w)\), \(r=r(\xi,z)\)) for a test
function \(\phi\in C^\infty([0,T]\times \overline Q;\mathbb R^d)\) with
\(\phi(T)=0\), from \eqref{eqn:ER-density} 
\[\begin{aligned}
DE(\eta)\langle \phi\rangle &=\int_Q \nabla_\xi e(\nabla\eta,\nabla^2\eta): \nabla \phi + \nabla_w e(\nabla\eta,\nabla^2\eta):\nabla^2\phi\,dx, \\
D_2R(\eta,\partial_t\eta)\langle \phi\rangle &=\int_Q \nabla_\xi r(\nabla\eta,\partial_t\nabla\eta):\nabla\phi+\nabla_z r(\nabla\eta,\partial_t\nabla\eta):\nabla\phi\,dx.
\end{aligned}\]
% in components: \[\begin{aligned}
% DE(\eta)\langle \phi\rangle &=\int_Q \partial_{\xi_{ij}} e(\nabla\eta,\nabla^2\eta)\partial_j \phi_i + \partial_{w_{ijk}} e(\nabla\eta,\nabla^2\eta)\partial_{jk}\phi_i\,dx \\
% D_2R(\eta,\partial_t\eta)\langle \phi\rangle &=\int_Q \partial_{\xi_{ij}} r(\nabla\eta,\partial_t\nabla\eta)\partial_j\phi_i+\partial_{z_{ij}} r(\nabla\phi,\partial_t\nabla\eta)\partial_j\phi_i\,dx
% \end{aligned}\]

We integrate by parts and obtain \[\begin{aligned}
DE(\eta)\langle\phi\rangle&=\int_{\partial Q}\nabla_\xi e(\nabla\eta,\nabla^2\eta)\cdot \phi\cdot dn - \int_Q \operatorname{div}_x\nabla_\xi e(\nabla\eta,\nabla^2\eta)\cdot\phi\,dx \\ &+ \int_{\partial Q} \nabla_w e(\nabla\eta,\nabla^2\eta):\nabla \phi\cdot dn-\int_Q \operatorname{div}_x\nabla_w e(\nabla\eta,\nabla^2\eta):\nabla\phi\,dx.
\\
% DE(\eta)\langle\phi\rangle&=\int_{\partial Q}\partial_{\xi_{ij}} e(\nabla\eta,\nabla^2\eta) \phi_i dn_j - \int_Q \partial_j\partial_{\xi_{ij}} e(\nabla\eta,\nabla^2\eta)\phi_i\,dx \\ &+ \int_{\partial Q} \partial_{w_{ijk}} e(\nabla\eta,\nabla^2\eta)\partial_j \phi_i  dn_k-\int_Q \partial_k\nabla_{w_{ijk}} e(\nabla\eta,\nabla^2\eta)\partial_j\phi_i\,dx
\end{aligned}\] 
Integration by parts in the last term yields
\[\begin{aligned}
-\int_Q \operatorname{div}_x\nabla_w e(\nabla\eta,\nabla^2\eta):\nabla\phi\,dx&= -\int_{\partial Q} \!\! \operatorname{div}_x\nabla_w e(\nabla \eta,\nabla^2\eta)\cdot \phi\cdot dn + \int_Q \operatorname{div}_x^2 \nabla_w e(\nabla\eta,\nabla^2\eta)\cdot \phi\,dx. \\
% -\int_Q \partial_k\nabla_{w_{ijk}} e(\nabla\eta,\nabla^2\eta)\partial_j\phi_i\,dx&=-\int_{\partial Q} \partial_k \partial_{w_{ijk}} e(\nabla \eta,\nabla^2\eta)\phi_i\, dn_j + \int_Q \partial_j\partial_k \partial_{w_{ijk}} e(\nabla\eta,\nabla^2\eta)\phi_i \,dx
\end{aligned}\]
Now, the second-to-last term (namely
\(\int_{\partial Q} \nabla_w e(\nabla\eta,\nabla^2\eta):\nabla \phi\cdot dn\))
can be further rewritten as follows. We
denote \[\begin{aligned}
\nabla &= (n\otimes n + (I-n\otimes n))\nabla=: n\otimes\partial_n  +\nabla_S. \\
% \partial_i &= n_in_j\partial_j + (\partial_i-n_in_j\partial_j)
\end{aligned}\] 
In other words, for \(u\colon \mathbb R^d\to \mathbb R\) we have
\[\begin{aligned}
\nabla u &= (n\cdot \nabla u) n + (\nabla u - (n\cdot \nabla u)n) = \partial_n u n +\nabla_S u, \\
% \partial_i u &=  n_j\partial_ju n_i + (\partial_i u-n_j\partial_ju n_i)
\end{aligned}\]
and for \(\eta\colon \mathbb R^d\to \mathbb R^d\)
\[\begin{aligned}
\nabla \eta &= (n\cdot \nabla \eta) n + (\nabla \eta - (n\cdot \nabla \eta)n) = \partial_n \eta \otimes n +\nabla_S \eta, \\
% \partial_j \eta_i &=  n_k\partial_k\eta_i n_j + (\partial_j \eta_i-n_k\partial_k \eta_i n_j) \\
\operatorname{div}_S \eta &= \operatorname{Tr} \nabla_S\eta .%= \partial_i\eta_i-n_k\partial_k\eta_i n_i
\end{aligned}\]
Rewriting the $\nabla\phi$ in this form,
we obtain \[\begin{aligned}
 \int_{\partial Q} \nabla_w e(\nabla\eta,\nabla^2\eta):\nabla \phi\cdot dn &= \int_{\partial Q} \nabla_w e(\nabla\eta,\nabla^2\eta): n\otimes \partial_n \phi \cdot  dn +\int_{\partial Q} \nabla_w e(\nabla\eta,\nabla^2\eta):\nabla_S \phi\cdot dn  \\
%  \int_{\partial Q} \partial_{w_{ijk}} e(\nabla\eta,\nabla^2\eta)\partial_j \phi_i  dn_k &= \int_{\partial Q} \partial_{w_{ijk}} e(\nabla\eta,\nabla^2\eta) n_l \partial_l \phi_i n_j  dn_k  \\ &\quad +\int_{\partial Q} \partial_{w_{ijk}} e(\nabla\eta,\nabla^2\eta) (\partial_j\phi_i- n_l \partial_l \phi_i n_j)  dn_k
\end{aligned}\] Using the Gauss-Green formula on the surface
\(\partial Q\) (which has empty relative boundary)
gives us
\[\begin{aligned}\int_{\partial Q} \nabla_w e(\nabla\eta,\nabla^2\eta):\nabla_S \phi\cdot dn = &-\int_{\partial Q} \operatorname{div}_S \nabla_w e(\nabla\eta,\nabla^2\eta)\cdot  \phi\cdot dn \\&-\int_{\partial Q} (\operatorname{div}_S n) \nabla_w e(\nabla\eta,\nabla^2\eta): n \otimes  \phi\cdot dn. \\
% \int_{\partial Q} \partial_{w_{ijk}} e(\nabla\eta,\nabla^2\eta) (\partial_j\phi_i- n_l \partial_l \phi_i n_j)  dn_k =&-\int_{\partial Q} \!\! ( \partial_i \nabla_{w_{ijk}} e(\nabla\eta,\nabla^2\eta)-n_l\partial_l\nabla_{w_{ijk}} e(\nabla\eta,\nabla^2\eta) n_i )  \phi_j\, dn_k \\&-\int_{\partial Q} (\partial_i n_i-n_k\partial_k n_i n_i) \nabla_{w_{ijk}} e(\nabla\eta,\nabla^2\eta)) n_i   \phi_k\, dn_j
\end{aligned}\]
Thus in total we get for the energy term 
\[\begin{aligned}
DE(\eta)\langle\phi\rangle &=\int_{\partial Q}\hspace{-1em}\nabla_\xi e(\nabla\eta,\nabla^2\eta)\cdot \phi\cdot dn - \int_Q\, \operatorname{div}_x\nabla_\xi e(\nabla\eta,\nabla^2\eta)\cdot\phi\,dx + \int_{\partial Q} \hspace{-1em} \nabla_w e(\nabla\eta,\nabla^2\eta): n\otimes \partial_n \phi \cdot  dn \\
&-\int_{\partial Q} \operatorname{div}_S \nabla_w e(\nabla\eta,\nabla^2\eta)\cdot  \phi\cdot dn -\int_{\partial Q} (\operatorname{div}_S n) \nabla_w e(\nabla\eta,\nabla^2\eta): n\otimes n \cdot  \phi\cdot dn \\
&-\int_{\partial Q} \operatorname{div}_x\nabla_w e(\nabla \eta,\nabla^2\eta)\cdot \phi\cdot dn + \int_Q \operatorname{div}_x^2 \nabla_w e(\nabla\eta,\nabla^2\eta)\cdot \phi\,dx.
%\\
%DE(\eta)\langle\phi\rangle&=\int_{\partial Q}\partial_{\xi_{ij}} e(\nabla\eta,\nabla^2\eta) \phi_i dn_j - \int_Q \partial_j\partial_{\xi_{ij}} e(\nabla\eta,\nabla^2\eta)\phi_i\,dx + \int_{\partial Q} \partial_{w_{ijk}} e(\nabla\eta,\nabla^2\eta)\partial_j \phi_i  dn_k\\
%&-\int_{\partial Q} \partial_k \partial_{w_{ijk}} e(\nabla \eta,\nabla^2\eta)\phi_i\, dn_j + \int_Q \partial_j\partial_k \partial_{w_{ijk}} e(\nabla\eta,\nabla^2\eta)\phi_i \,dx
\end{aligned}\] 
For the dissipation, integrating by parts in space we get
\[\begin{aligned}
D_2R(\eta,\partial_t\eta)\langle\phi\rangle&= \int_{\partial Q}\nabla_\xi r(\nabla\eta,\partial_t\nabla\eta)\cdot\phi\cdot dn-\int_Q \operatorname{div}_x \nabla_\xi r(\nabla\eta,\partial_t\nabla\eta)\cdot\phi\,dx \\ 
&+ \int_{\partial Q} \nabla_z r(\nabla\eta,\partial_t\nabla\eta)\cdot \phi \cdot dn-\int_Q \operatorname{div}_x\nabla_z r(\nabla\eta,\partial_t\nabla\eta)\cdot \phi \,dx.
\\
% D_2R(\eta,\partial_t\eta)\langle\phi\rangle&=\int_{\partial Q}\partial_{\xi_{ij}} r(\nabla\eta,\partial_t\nabla\eta)\phi_i\, dn_j-\int_Q \partial_j \partial_{\xi_{ij}} r(\nabla\eta,\partial_t\nabla\eta)\phi_i\,dx \\
% &+ \int_{\partial Q} \partial_{z_{ij}} r(\nabla\eta,\partial_t\nabla\eta)\phi_i \, dn_j-\int_Q \partial_j\partial_{z_{ij}} r(\nabla\eta,\partial_t\nabla\eta) \phi_i \,dx
\end{aligned}\]
For the inertial term, integrate by parts in time to obtain (recall
that \(\phi(T)=0\)) 
\[
-\int_0^T \rho_s\langle\partial_t\eta,\partial_t\phi\rangle\,dt = \rho_s\langle \partial_t\eta(0),\phi(0)\rangle+ \int_0^T \rho_s\langle\partial_{tt}\eta ,\phi\rangle\,dt.
\]
Therefore, in total we get that 
\[\begin{aligned}
&-\int_0^T \rho_s\langle\partial_t\eta,\partial_t\phi\rangle\,dt+\int_0^T  DE(\eta)\langle\phi\rangle +D_2R(\eta,\partial_t\eta)\langle \phi\rangle dt
-\int_0^T\rho_s\langle f,\phi\rangle dt - \rho_s\langle \eta_*,\phi(0)\rangle=
\\
&\int_0^T \rho_s\langle\partial_{tt}\eta,\phi\rangle+ 
\int_{\partial Q}\nabla_\xi e(\nabla\eta,\nabla^2\eta)\cdot \phi\cdot dn \\&- \int_Q \operatorname{div}_x\nabla_\xi e(\nabla\eta,\nabla^2\eta)\cdot\phi\,dx +\int_{\partial Q} \nabla_w e(\nabla\eta,\nabla^2\eta): n\otimes \partial_n \phi \cdot  dn \\
&-\int_{\partial Q} \operatorname{div}_S \nabla_w e(\nabla\eta,\nabla^2\eta)\cdot  \phi\cdot dn -\int_{\partial Q} (\operatorname{div}_S n) \nabla_w e(\nabla\eta,\nabla^2\eta): n\otimes n \cdot  \phi\cdot dn  \\&
-\int_{\partial Q} \operatorname{div}_x\nabla_w e(\nabla \eta,\nabla^2\eta)\cdot \phi\cdot dn  + \int_Q \operatorname{div}_x^2 \nabla_w e(\nabla\eta,\nabla^2\eta)\cdot \phi\,dx \int_{\partial Q}\nabla_\xi r(\nabla\eta,\partial_t\nabla\eta)\cdot\phi\cdot dn \\&
-\int_Q \operatorname{div}_x \nabla_\xi r(\nabla\eta,\partial_t\nabla\eta)\cdot\phi\,dx \, dt + \int_{\partial Q} \nabla_z r(\nabla\eta,\partial_t\nabla\eta)\cdot \phi \cdot dn-\int_Q \operatorname{div}_x\nabla_z r(\nabla\eta,\partial_t\nabla\eta)\cdot \phi \,dx\\&
-\int_0^T\rho_s\langle f,\xi\rangle dt+\rho_s\langle \partial_t\eta(0),\phi(0)\rangle -\rho_s\langle\partial_t\eta(0),\phi(0)\rangle.
\end{aligned}\]

Altogether, with the solid and fluid terms, it holds
\[\begin{aligned}
\langle v(0),\xi(0)\rangle_{\Omega(0)}+\int_0^T \langle \partial_t v,\xi\rangle_{\Omega(t)} + \rho_f \int_{\Omega(t)} v\cdot \nabla v\cdot \xi\, dx + \nu \left(\int_{\partial\Omega(t)}[\varepsilon v]\xi\cdot dn - \frac12\int_{\Omega(t)}  \Delta v \xi \,dx\right) \,dt 
\\
+ \int_{\Omega(t)}\nabla p\cdot \xi \,dx\,dt-\int_{\partial\Omega(t)} p \xi \cdot  dn - \langle v_0,\xi(0)\rangle _{\Omega(0)}- \int_0^T \rho_f\langle f,\xi\rangle_{\Omega(t)}\,dt +
\int_0^T \rho_s\langle\partial_{tt}\eta,\phi\rangle+ 
\\
\int_{\partial Q}\nabla_\xi e(\nabla\eta,\nabla^2\eta)\cdot \phi\cdot dn - \int_Q \operatorname{div}_x\nabla_\xi e(\nabla\eta,\nabla^2\eta)\cdot\phi\,dx +\int_{\partial Q} \nabla_w e(\nabla\eta,\nabla^2\eta): n\otimes \partial_n \phi \cdot  dn 
\\
-\int_{\partial Q} \operatorname{div}_S \nabla_w e(\nabla\eta,\nabla^2\eta)\cdot  \phi\cdot dn -\int_{\partial Q} (\operatorname{div}_S) n \nabla_w e(\nabla\eta,\nabla^2\eta): n\otimes   \phi\cdot dn  
\\
-\int_{\partial Q} \operatorname{div}_x\nabla_w e(\nabla \eta,\nabla^2\eta)\cdot \phi\cdot dn  + \int_Q \operatorname{div}_x^2 \nabla_w e(\nabla\eta,\nabla^2\eta)\cdot \phi\,dx \int_{\partial Q}\nabla_\xi r(\nabla\eta,\partial_t\nabla\eta)\cdot\phi\cdot dn 
\\
-\int_Q \operatorname{div}_x \nabla_\xi r(\nabla\eta,\partial_t\nabla\eta)\cdot\phi\,dx \, dt + \int_{\partial Q} \nabla_z r(\nabla\eta,\partial_t\nabla\eta)\cdot \phi \cdot dn-\int_Q \operatorname{div}_x\nabla_z r(\nabla\eta,\partial_t\nabla\eta)\cdot \phi \,dx
\\
-\int_0^T\rho_s\langle f,\xi\rangle dt+\rho_s\langle \partial_t\eta(0),\phi(0)\rangle -\rho_s\langle\partial_t\eta(0),\phi(0)\rangle=0
\end{aligned}\] 
for all
\(\phi\in L^\infty((0,T); W^{2,q}(Q;\mathbb R^d))\cap W^{1,2}((0,T);W^{1,2}(Q;\mathbb R^d))\),
\(\xi \in C^\infty ([0,T]\times \overline \Omega;\mathbb R^d)\) with
\(\phi = \xi\circ \eta\) in \(Q\), \(\xi \cdot n = 0\) on
\(\partial \Omega\),
\(\xi(T)=0\), \(\phi(T)=0\).

From this, we can see that the strong formulation as in Definition \ref{def:strong-solution-full} holds. The only non-trivial part is the term of the slip-friction of the solid. For this we note that by the above tells us that it needs to be equal and opposite to the respective stress of the fluid, which we already derived using the fluid-only equation.
\end{proof}

\section{Variational existence scheme}\label{sec:variational-existence-scheme}

Let us fix the regularization parameter $\kappa>0$. We use the regularized energy and dissipation, defined as 
\[E_\kappa(\eta) := E(\eta)+\kappa^{a_0}\|\nabla^{k_0+2}\eta\|^2,\quad R_\kappa(\eta,b):=R(\eta,b) + \kappa\|\nabla^{k_0+2}b\|^2\]
where the exponent \(a_0>0\) will be chosen later, and $k_0$ is chosen so large that $W^{k_0,2}(Q)$ embeds into $W^{2,q}(Q)$.

Now we fix \(h>0\) and solve first the time delayed problem on
\((0,h)\).

\begin{definition}[Time-delayed problem]\label{def:time-delayed-solution}
Let \(w_f\in L^2((0,h)\times \Omega_0;\mathbb R^d)\),
\(w_s\in L^2((0,h)\times Q;\mathbb R^d)\) be given, where $\Omega_0=\Omega\setminus \eta_0(\overline Q)$. Then
\(\eta,v\) is called a \emph{weak solution to the time-delayed problem} if
\((\eta\circ\eta^{-1})\cdot n= v\cdot n\) on \(\partial\eta(\cdot,Q)\),
and satisfies the following equations.

\subsubsection*{Fluid-only equation}
\[\nu \langle\varepsilon v, \varepsilon \xi\rangle_{\Omega(t)} + \kappa\langle\nabla^{k_0} v, \nabla^{k_0}\xi\rangle_{\Omega(t)}+\rho_f \left\langle \tfrac{v \circ \Phi-w_{f}}{h}, \xi \circ\Phi \right\rangle_{\Omega_{0}} - a \langle \partial_t \eta \circ \eta^{-1}-v,\xi\rangle_{\partial \Omega(t)} -\rho_f\langle f,\xi\rangle_{\Omega(t)}=0\]
for all test functions
\(\xi\in C^\infty((0,h)\times \overline\Omega(t);\mathbb R^d)\) with
\(\operatorname{div}\xi =0\) in \(\Omega(t)\), \(\xi\cdot n = 0\) on
\(\partial\Omega(t)\).

\subsubsection*{Coupled equation} \[\begin{aligned} 
DE(\eta)\langle\phi\rangle+D_2 R\left(\eta, \partial_t\eta\right) \langle \phi \rangle  + 2\kappa^{a_0}\langle \nabla^{k_0+2}\eta, \nabla^{k_0+2}\phi \rangle + 2\kappa\left\langle \nabla^{k_0+2}\partial_t\eta , \nabla^{k_0+2}\phi \right\rangle
\\
+\rho_s\left\langle\tfrac{\partial_t\eta-w_s}{h}, \phi\right\rangle- \rho_s\left\langle f\circ\eta,\phi \right\rangle
\\
+\nu \langle\varepsilon v, \varepsilon \xi\rangle_{\Omega(t)} + \kappa\langle\nabla^{k_0} v, \nabla^{k_0}\xi\rangle_{\Omega(t)}+\rho_f \left\langle \tfrac{v \circ \Phi-w_{f}}{h}, \xi \circ\Phi \right\rangle_{\Omega_{0}} - \rho_f\langle f,\xi\rangle_{\Omega}=0
\end{aligned}\] for all test functions
\(\xi\in C^\infty((0,h)\times \Omega;\mathbb R^d)\),
\(\phi\in C([0,h];W^{k_0+2,2}(Q;\mathbb R^d))\) with
\(\phi=\xi\circ \eta\) and \(\operatorname{div}\xi =0\) in
\(\Omega(t)\), \(\nabla^\ell(\xi\cdot n) = 0\) on \(\partial\Omega\), $\ell\in \{0,\dots,k_0\}$.

The flow map \(\Phi\colon (0,T)\times\Omega_0\to\mathbb R^d\) is such that
\(\Phi_t:=\Phi(t,\cdot)\) solves \(\partial_t \Phi_t = v(t)\circ\Phi_t\)
with \(\Phi_0=\operatorname{id}_{\Omega_0}\), here we denote
\(\Omega(t)=\Omega\setminus \eta(t,Q)\).
\end{definition}

The velocities \(w_f\) resp.\ \(w_s\) will later represent the fluid
resp.\ solid velocity in the previous \(h\) step. To solve this problem, we perform the following minimization scheme.

\subsection{Minimization}\label{sec:minimization}

For now, let \(\tau>0\) be a fixed discretization step. To avoid technicalities, assume that $T$ is an integer multiple of $\tau$. Throughout this section, we use the time-step index $k=1,\dots, T/\tau$

To ease the notation, we write for the fluid terms
\(\|\cdot\|_{\Omega_k}\) for the \(L^2(\Omega_k;\mathbb R^d)\) norm, and
in the solid terms \(\|\cdot\|\) for the \(L^2(Q;\mathbb R^d)\) norm,
similarly for scalar products on these spaces.

We discretize the right hand side $f$ as
\[f_k^{(\tau)}=\int_{(k-1)\tau}^{k\tau}f(t)\,dt\] and the velocities
\[\begin{aligned}
w_{s,k}^{(\tau)} = \frac 1\tau \int_{(k-1)\tau}^{k\tau} w_s\,dt, \quad w_{f,k}^{(\tau)} = \frac 1\tau \int_{(k-1)\tau}^{k\tau} w_f\,dt. 
\end{aligned}\] Assume we have \(\eta_{k-1}^{(\tau)}\),
\(v_{k-1}^{(\tau)}\), where
\(\eta_0^{(\tau)}\in \mathcal E \cap W^{k_0+2,2}(Q;\mathbb R^d)\) and
\(v_0^{(\tau)}\in W^{k_0,2}(\Omega_0;\mathbb R^d)\) is the given initial
condition and the domain is
\(\Omega_{k-1}^{(\tau)}=\Omega\setminus\eta_{k-1}^{(\tau)}(\overline Q)\).

The next step is found as
\begin{equation}\label{eqn:minimization-Jktau}
(\eta_k^{(\tau)},v_k^{(\tau)})\in \operatorname{arg\,min}  \mathcal J_k^{(\tau)}(\eta,v)
\end{equation}
where the minimum is over
\((\eta,v)\in\mathcal E \cap W^{k_0+2,2}(Q;\mathbb R^d)\times W^{k_0,2}_{\operatorname{div}}(\Omega_{k-1}^{(\tau)};\mathbb R^d)\)
satisfying
\[\text{ with } \\ \tfrac{\eta-\eta_{k-1}^{(\tau)}}{\tau}\cdot n_{k-1}^{(\tau)} = v \circ\eta \cdot n_{k-1}^{(\tau)} ,\text{ on }\partial Q, \text{ and } v\cdot n_{k-1}^{(\tau)} = 0 \text{ on } \partial\Omega,  
\]
where  the functional
\(\mathcal J_k^{(\tau)}\colon W^{k_0+2,2}(Q;\mathbb R^d)\times W^{k_0,2}_{\operatorname{div}}(\Omega_{k-1}^{(\tau)};\mathbb R^d) \to \mathbb R\)
is defined as \[\begin{aligned}
\mathcal J_k^{(\tau)}(\eta,v):=E(\eta)+\tau R\left(\eta_{k-1}^{(\tau)}, \tfrac{\eta-\eta_{k-1}^{(\tau)}}{\tau}\right) + \kappa^{a_0}\|\nabla^{k_0+2}\eta\|^2 + \kappa\tau\left\| \nabla^{k_0+2}\tfrac{\eta-\eta_{k-1}^{(\tau)}}{\tau} \right\|^2
\\
 +\frac{a\tau}{2}
 \int_{\partial\Omega_{k-1}}\left| \tfrac{\eta-\eta_{k-1}^{(\tau)}\circ\eta_{k-1}^{-1}}{\tau}-v \right |^2\, dx
 +\rho_s\frac{\tau h}{2}\left\|\tfrac{\tfrac{\eta-\eta_{k-1}^{(\tau)}}{\tau}-w_{s,k-1}^{(\tau)} }{h}\right\|^2-\tau \rho_s \left\langle f_k^{(\tau)}\circ\eta_{k-1}^{(\tau)},\tfrac{\eta-\eta_{k-1}^{(\tau)}}{\tau} \right\rangle
\\
+\frac{\tau\nu}{2} \|\varepsilon v\|_{\Omega_{k-1}^{(\tau)}}^2 + \kappa\frac{\tau}{2}\|\nabla^{k_0} v\|_{\Omega_{k-1}^{(\tau)}}^2+\rho_f\frac{\tau h}{2} \left\| \tfrac{v\circ \Phi_{k-1}^{(\tau)} -w_{f,k-1}^{(\tau)}}{h} \right\|_{\Omega_0}^2 - \tau \rho_f\langle f_k^{(\tau)},v\rangle_{\Omega_{k-1}^{(\tau)}}
.\end{aligned}\]
The flow map begins as \(\Phi_0^{(\tau)}:=\operatorname{id}\), and for
the next step is defined as
\begin{equation}\label{eqn:Phiktau-def}
\Phi_k^{(\tau)}:=(\operatorname{id}+\tau v_k^{(\tau)})\circ\Phi_{k-1}^{(\tau)}.
\end{equation}
We will assume that it holds
\(\Omega_k^{(\tau)}:=\Phi_k^{(\tau)}(\Omega_0),\) and moreover
\(\Phi_k^{(\tau)}\colon \Omega_0\to\Omega_k^{(\tau)}\) is a
diffeomorphism with bounded Jacobian. This is certainly true for \(k=0\),
and for subsequent steps it will be shown below in Proposition \ref{prop:estimates-of-the-flow-map}.

\begin{lemma}[Existence of a minimum]
The minimization problem \eqref{eqn:minimization-Jktau} admits a minimum $(\eta_k^{(\tau)},v_k^{(\tau)})$.
\end{lemma}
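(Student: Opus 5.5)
We prove existence by the direct method of the calculus of variations. The admissible set is nonempty. The pair $(\eta,v)=(\eta_{k-1}^{(\tau)},0)$ satisfies the linear constraint $\frac{\eta-\eta_{k-1}^{(\tau)}}{\tau}\cdot n_{k-1}^{(\tau)} = v\circ\eta\cdot n_{k-1}^{(\tau)}$ on $\partial Q$, since both sides vanish, and $v=0$ is divergence free. It also has finite value of $\mathcal J_k^{(\tau)}$, because $\eta_{k-1}^{(\tau)}\in\mathcal E\cap W^{k_0+2,2}$ has $\inf\det\nabla\eta_{k-1}^{(\tau)}>0$ and so finite energy by \eqref{as:E-bounded-below}. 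Hence $m:=\inf\mathcal J_k^{(\tau)}<\infty$, and we take a minimizing sequence $(\eta_j,v_j)$.

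The next step is coercivity. All terms of $\mathcal J_k^{(\tau)}$ are bounded below except the two linear force terms. Here $E\ge E_{\min}$ by \eqref{as:E-bounded-below}, and $R\ge0$. The force terms are absorbed by Young's inequality into the quadratic inertial terms $\rho_s\frac{\tau h}{2}\|\cdot\|^2$ and $\rho_f\frac{\tau h}{2}\|\cdot\circ\Phi_{k-1}^{(\tau)}\|_{\Omega_0}^2$. For the fluid term we use that $\Phi_{k-1}^{(\tau)}$ is a diffeomorphism with bounded Jacobian, so that $\|v\circ\Phi_{k-1}^{(\tau)}\|_{\Omega_0}$ and $\|v\|_{\Omega_{k-1}^{(\tau)}}$ are comparable. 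This gives uniform bounds on $\|\nabla^{k_0+2}\eta_j\|$ and $\|\eta_j\|_{L^2}$, hence on $\eta_j$ in $W^{k_0+2,2}(Q)$. Similarly, the terms $\kappa\frac\tau2\|\nabla^{k_0}v\|^2$ and $\|v\|_{L^2}$ bound $v_j$ in $W^{k_0,2}(\Omega_{k-1}^{(\tau)})$. The domain $\Omega_{k-1}^{(\tau)}$ is fixed, since it depends only on the previous step, so no moving-domain issues arise at this stage. Moreover, $E(\eta_j)\le C$, so \eqref{as:E-lower-bound-det} gives $\det\nabla\eta_j\ge\varepsilon_0$ uniformly.

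We then extract subsequences with $\eta_j\rightharpoonup\eta$ in $W^{k_0+2,2}$ and $v_j\rightharpoonup v$ in $W^{k_0,2}(\Omega_{k-1}^{(\tau)})$. By compact embedding, $\eta_j\to\eta$ in $C^{1,\alpha}(\overline Q)$ and $v_j\to v$ in $C^1(\overline{\Omega_{k-1}^{(\tau)}})$, because $W^{k_0,2}\hookrightarrow W^{2,q}\hookrightarrow C^{1,\alpha}$.

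The main obstacle is closedness of the admissible set. We need $\eta\in\mathcal E$, and the uniform convergence together with $\det\nabla\eta\ge\varepsilon_0$ gives $\det\nabla\eta>0$. The Ciarlet–Ne\v cas condition $|\eta(Q)|=\int_Q\det\nabla\eta$ passes to the limit by the standard argument, as in \cite{benesovaVariationalApproachHyperbolic2023}: the inequality $\le$ holds by uniform convergence of $\eta_j$ and continuity of $\det\nabla\eta_j$, and the reverse inequality holds always. The inclusion $\eta(Q)\subset\Omega$ follows from uniform convergence, possibly up to the boundary, which is consistent with the closure used in the definition of $\mathcal E$. The constraints are also stable under the limit. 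The divergence-free condition is weakly closed. The constraint $v\cdot n=0$ on $\partial\Omega$ is stable because $n$ there is fixed. The coupling condition involves $v_j\circ\eta_j$, which converges uniformly on $\partial Q$ to $v\circ\eta$, because $v_j\to v$ uniformly and $\eta_j\to\eta$ uniformly. Here the normal $n_{k-1}^{(\tau)}$ is fixed from the previous step, which is crucial and is why the constraint was linearized there.

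Finally, we show lower semicontinuity of $\mathcal J_k^{(\tau)}$. $E$ is weakly lower semicontinuous by \eqref{as:E-wlsc}. $R(\eta_{k-1}^{(\tau)},\cdot)$ is weakly lower semicontinuous by \eqref{as:R-wlsc}, since its first argument is fixed. All quadratic norm terms are convex and continuous, hence weakly lower semicontinuous. The slip boundary integral over the fixed surface $\partial\Omega_{k-1}$ converges, since both traces converge uniformly. The force terms are linear and continuous. The inertial fluid term converges strongly thanks to the bounded Jacobian of $\Phi_{k-1}^{(\tau)}$. Therefore $\mathcal J_k^{(\tau)}(\eta,v)\le\liminf_j \mathcal J_k^{(\tau)}(\eta_j,v_j)=m$, and $(\eta,v)$ is the sought minimizer.
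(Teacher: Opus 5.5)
Your proposal is correct and follows the paper's proof essentially step by step: you show nonemptiness via the competitor $(\eta_{k-1}^{(\tau)},0)$, get a lower bound and coercivity by absorbing the force terms into the inertial terms with Young's inequality, use weak lower semicontinuity from \eqref{as:E-wlsc}, \eqref{as:R-wlsc} and convexity, and prove weak closedness of the admissible set via $C^{1,\alpha}$ convergence together with \eqref{as:E-lower-bound-det}, supplying more detail than the paper does. Two small points of wording need fixing. First, in the Ciarlet--Ne\v cas step the limit argument is what yields $\int_Q\det\nabla\eta=\lim_j|\eta_j(Q)|\le|\eta(\overline Q)|=|\eta(Q)|$, whereas $|\eta(Q)|\le\int_Q\det\nabla\eta$ is the inequality that always holds by the area formula. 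Second, $\eta(Q)\subset\Omega$ does not come from a closure in the definition of $\mathcal E$, which has none; it holds because $\eta(Q)$ is open (since $\det\nabla\eta>0$), is contained in $\overline\Omega$, and $\operatorname{int}\overline\Omega=\Omega$ for the Lipschitz container.
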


\begin{proof}

We show that the minimum exists by the direct method of the calculus of
variations. Firstly, the set over which we minimize is nonempty, because
\((\eta_{k-1}^{(\tau)},0)\in \mathcal E\cap W^{k_0+2,2}(Q;\mathbb R^d)\times W^{k_0,2}_{\operatorname{div}}(\Omega_{k-1}^{(\tau)};\mathbb R^d)\)
satisfies the coupling condition and is thus admissible. To see that the
functional \(\mathcal J_k^{(\tau)}\) is bounded from below, use the
inequality \((a-b)^2\geq a^2/2-b^2\) in the terms
\[
\begin{aligned}
\rho_f\frac{\tau h}{2} \left\| \tfrac{v\circ \Phi_{k-1}^{(\tau)} -w_{f,k-1}^{(\tau)}}{h} \right\|_{\Omega_{0}}^2 &\geq \rho_f \frac{\tau}{4h}\|v\circ\Phi_{k-1}^{(\tau)}\|_{\Omega_{0}}^2 - \rho_f\frac{\tau}{2h}\|w_{f,k-1}^{(\tau)}\|^2,
\\
\rho_s\frac{\tau h}{2}\left\|\tfrac{\tfrac{\eta-\eta_{k-1}}{\tau}-w_{s,k-1}^{(\tau)} }{h}\right\|^2&\geq \rho_s \frac{\tau}{4h}\left\| \tfrac{\eta-\eta_{k-1}^{(\tau)}}{\tau}\right\|^2-\rho_s\frac{\tau}{2h}\|w_{s,k-1}^{(\tau)}\|^2,
\end{aligned}
\]
and the Young inequality in the \(f\) terms, namely
\[
\begin{aligned}
-\tau \rho_f\langle f,v\rangle _{\Omega_{k-1}}
&\geq -\rho_f\tau\|f\|_{\Omega_{k-1}^{(\tau)}}^2 -\rho_f \frac{\tau}{4}\|v\|_{\Omega_{k-1}^{(\tau)}}^2
\\
-\tau \rho_s \left\langle f\circ\eta,\tfrac{\eta-\eta_{k-1}^{(\tau)}}{\tau} \right\rangle 
&\geq - \rho_s\tau \|f\circ\eta\|^2 - \rho_s\frac{\tau}{4}\left\|\tfrac{\eta-\eta_{k-1}^{(\tau)}}{\tau}\right\|^2.
\end{aligned}
\]
Omitting the other non-negative terms, we get
\[\mathcal J_k^{(\tau)}(\eta,v) \geq  E_{\min}  - \rho_f\frac{\tau}{2h}\|w_{f,k-1}^{(\tau)}\|^2 -\rho_s\frac{\tau}{2h}\|w_{s,k-1}^{(\tau)}\|^2 -\rho_f\tau\|f\|_{\Omega_{k-1}^{(\tau)}}^2 - \rho_s\tau \|f\circ\eta\|^2.  \]
Since the mapping \(f\mapsto f\circ\eta\) is bounded in \(L^2\) by
Lemma \ref{lem:eulerian-lagrangian-w2q} (since \(E(\eta)\) is bounded) this gives a uniform bound
and thus \(\mathcal J\) is bounded from below.

Further \(\mathcal J_k^{(\tau)}\) is coercive on
\(W^{k_0+2,2}(Q;\mathbb R^d)\times W^{k_0,2}(\Omega_{k-1}^{(\tau)};\mathbb R^d)\),
as can be easily seen (the \(\kappa\)-terms are coercive and the \(f\)
terms can be absorbed as above). Further, it is weakly lower
semicontinuous due to the assumptions on \(E\) and \(R\) and the
convexity of the other terms. So there exists a minimizing sequence with
a weakly convergent subsequence.

Moreover the set over which we minimize in \eqref{eqn:minimization-Jktau} is weakly closed by the
following argument. The Ciarlet-Ne\v{c}as condition is weakly closed,
because weak convergence in \(W^{k_0+2,2}(Q;\mathbb R^d)\) implies
uniform convergence of \(\eta\) and \(\nabla\eta\). Further, we stay strictly 
away from \(\det\nabla\eta=0\) by the assumption \eqref{as:E-lower-bound-det}.
Moreover, the normal coupling is also a weakly closed condition, since
continuous functions up to the boundary are compact in \(W^{k_0+2,2}\)
resp.\ \(W^{k_0,2}\), we get that the trace and in particular the
coupling is preserved in the weak limit.
\end{proof}

\subsubsection{The discrete weak Euler-Lagrange equation}\label{the-discrete-weak-el-equation}
Here we derive the Euler-Lagrange equation for the minimizer \((\eta_k^{(\tau)},v_k^{(\tau)})\) of
\(\mathcal J_k^{(\tau)}\).

\emph{Fluid-only equation}.

As \((\eta_k^{(\tau)},v_k^{(\tau)})\) is a minimizer of
\(\mathcal J_k^{(\tau)}\), we get in particular that \(v_k^{(\tau)}\) is
a minimizer of \(\mathcal J_k^{(\tau)}(\eta_k^{(\tau)},\cdot )\) over all
\(v\) satisfying
\(v\circ\eta_k^{(\tau)} \cdot n_{k-1}^{(\tau)}=\tfrac{\eta_k^{(\tau)}-\eta_{k-1}^{(\tau)}}{\tau} \cdot n_{k-1}^{(\tau)}\).
Since the functional \(\mathcal J_k^{(\tau)}(\eta_k^{(\tau)},\cdot )\)
is convex on \(W^{k_0,2}(\Omega_{k-1}^{(\tau)};\mathbb R^d)\), we get
that the following Euler-Lagrange equation for the minimizer
\(v_k^{(\tau)}\) is satisfied:
\begin{align}\label{eqn:minimizer-fluid-only-eqn}
0&=\nu \langle\varepsilon v_k^{(\tau)}, \varepsilon \xi\rangle_{\Omega_{k-1}^{(\tau)}} + \kappa\langle\nabla^{k_0} v_k^{(\tau)}, \nabla^{k_0}\xi\rangle_{\Omega_{k-1}^{(\tau)}}+\rho_f \left\langle \tfrac{v_k^{(\tau)} \circ \Phi_{k-1}^{(\tau)}-w_{f,k-1}^{(\tau)}}{h}, \xi \circ\Phi_{k-1} \right\rangle_{\Omega_{0}} \\
&- \langle f_k^{(\tau)},\xi\rangle_{\Omega_{k-1}^{(\tau)}} +a\left\langle \tfrac{\eta_{k}^{(\tau)}-\eta_{k-1}^{(\tau)}}{\tau} -v_k^{(\tau)},\xi\right\rangle_{\partial \Omega_{k-1}^{(\tau)}} \nonumber
\end{align}
holds for all
\(\xi\in C^\infty(\overline \Omega_k^{(\tau)}; \mathbb R^d)\) with
\(\xi\cdot n_{k-1}^{(\tau)}=0\) and \(\operatorname{div}\xi=0\) in
\(\Omega_{k-1}^{(\tau)}\).

\noindent\emph{Coupled equation}

Let us now derive the discrete coupled equation. So we take
functions \(\xi\in C^\infty(Q;\mathbb R^d)\) and
\(\phi\in W^{k_0,2}(\Omega;\mathbb R^d)\) with
\(\operatorname{div}\xi=0\) in \(\Omega_{k-1}^{(\tau)}\) and
\(\phi=\xi\circ\eta_{k-1}^{(\tau)}\) in \(Q\). Now let us take the
perturbation with the scaling \((\phi, \xi/\tau)\) (which is the one that preserves the coupling condition on the interface.) That is, we
differentiate
\[t\mapsto \mathcal J_k^{(\tau)}(\eta_k^{(\tau)}+t\phi,v_k^{(\tau)}+t\xi/\tau)\]
at \(t=0\). The differentiation is possible, as the perturbation
\((\eta_k^{(\tau)}+t\phi,v_k^{(\tau)} + t\xi/\tau)\) is admissible for
small enough \(t\), see \cite[Lemma 2.7]{benesovaVariationalApproachHyperbolic2023}.
%\noteMalte[inline]{This is unclear to me as written. We can use the distance to collision lemma here, or argue directly by mapping degree/reference the similar argument in the old paper, but it should be done.}

The resulting equation is 
\begin{equation}\label{eqn:minimizer-coupled-eqn}
\begin{aligned} 
DE(\eta_k^{(\tau)})\langle\phi\rangle+D_2 R\left(\eta_{k-1}^{(\tau)}, \tfrac{\eta_k^{(\tau)}-\eta_{k-1}^{(\tau)}}{\tau}\right) \langle \phi \rangle  + 2\kappa^{a_0}\langle \nabla^{k_0+2}\eta_k^{(\tau)}, \nabla^{k_0+2}\phi \rangle +
\\ 2\kappa\left\langle \nabla^{k_0+2}\tfrac{\eta_k^{(\tau)}-\eta_{k-1}^{(\tau)}}{\tau} , \nabla^{k_0+2}\phi \right\rangle
+\rho_s\left\langle\tfrac{\tfrac{\eta_k^{(\tau)}-\eta_{k-1}^{(\tau)}}{\tau}-w_{s,k-1}^{(\tau)}}{h}, \phi\right\rangle- \rho_s\left\langle f_k^{(\tau)}\circ\eta_{k-1}^{(\tau)},\phi \right\rangle
\\+\nu \langle\varepsilon v_k^{(\tau)}, \varepsilon \xi\rangle_{\Omega_{k-1}^{(\tau)}}
 + \kappa\langle\nabla^{k_0} v_k^{(\tau)}, \nabla^{k_0}\xi\rangle_{\Omega_{k-1}^{(\tau)}}+\rho_f \left\langle \tfrac{v_k^{(\tau)} \circ \Phi_{k-1}^{(\tau)}-w_{f,k-1}^{(\tau)}}{h}, \xi \circ\Phi_{k-1}^{(\tau)} \right\rangle_{\Omega_{0}} \\- \rho_f\langle f_k^{(\tau)},\xi\rangle_{\Omega_{k-1}^{(\tau)}}=0
\end{aligned}
\end{equation} for all
\(\xi\in C^\infty(\overline \Omega;\mathbb R^d)\),
\(\operatorname {div} \xi =0\) in \(\Omega_{k-1}^{(\tau)}\) and
\(\phi\in W^{k_0,2}(Q;\mathbb R^d)\) with
\(\phi = \xi\circ\eta_{k-1}^{(\tau)}\) in \(Q\).

\begin{lemma}[Discrete energy estimates]\label{lem:discrete-energy-estimates}
For the constructed $\eta_k^{(\tau)},v_k^{(\tau)}$ the following energy estimate holds for all $1\leq k\leq T/\tau$:
\[\begin{aligned} 
E(\eta_k^{(\tau)}) &+ \kappa^{a_0}\|\nabla^{k_0+2}\eta_k^{(\tau)}\|^2 + \sum_{j=1}^k \Bigg[ \kappa\tau\left\| \nabla^{k_0+2}\tfrac{\eta_j^{(\tau)}-\eta_{j-1}^{(\tau)}}{\tau} \right\|^2  +\tau R\left(\eta_{j-1}^{(\tau)}, \tfrac{\eta_j^{(\tau)}-\eta_{j-1}^{(\tau)}}{\tau}\right)
\\
&+\rho_s \frac{\tau}{8h}\left\| \tfrac{\eta_j^{(\tau)}-\eta_{j-1}^{(\tau)}}{\tau}\right\|^2
+\frac{\tau\nu}{2} \|\varepsilon v_j^{(\tau)}\|_{\Omega_{j-1}^{(\tau)}}^2 + \kappa\frac{\tau}{2}\|\nabla^{k_0} v_j^{(\tau)}\|_{\Omega_{j-1}^{(\tau)}}^2+  \frac{a\tau}{2} \left\|\tfrac{\eta_j^{(\tau)}-\eta_{j-1}^{(\tau)}}{\tau} - v\right\|_{\partial \Omega_{j-1}^{(\tau)}}^2\\
&+
\rho_f \frac{\tau}{8h}\|v_j^{(\tau)}\circ\Phi_{j-1}^{(\tau)}\|_{\Omega_0}^2\Bigg]
\\
&\leq
E(\eta_{0})+ \kappa^{a_0}\|\nabla^{k_0+2}\eta_{0}\|^2
\\
&\quad+ \sum_{j=1}^k\left[\rho_s\frac{\tau}{h}\left\|w_{s,j-1}^{(\tau)}\right\|^2
+\rho_f\frac{\tau }{h} \left\| w_{f,j-1}^{(\tau)} \right\|_{\Omega_{0}}^2 
+ \rho_s 2\tau h\|f_j^{(\tau)}\circ\eta_{j-1}^{(\tau)}\|^2 
+\rho_f 2\tau h \|f_j^{(\tau)}\|_{\Omega_{j-1}^{(\tau)}}^2\right]. 
\end{aligned}\]
\end{lemma}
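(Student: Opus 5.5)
The plan is to compare the minimizer $(\eta_k^{(\tau)},v_k^{(\tau)})$ with the admissible competitor $(\eta_{k-1}^{(\tau)},0)$. This pair satisfies the coupling condition and was already used in the existence lemma. The comparison gives the one-step inequality $\mathcal J_k^{(\tau)}(\eta_k^{(\tau)},v_k^{(\tau)})\le \mathcal J_k^{(\tau)}(\eta_{k-1}^{(\tau)},0)$. At the competitor, every rate term vanishes: the $\tau R$ term by 2-homogeneity \eqref{as:R-two-homogeneous}, and likewise the $\kappa\tau$ term, the slip term, the viscous terms and the force terms. What survives on the right is
\[
E(\eta_{k-1}^{(\tau)})+\kappa^{a_0}\|\nabla^{k_0+2}\eta_{k-1}^{(\tau)}\|^2+\rho_s\tfrac{\tau}{2h}\|w_{s,k-1}^{(\tau)}\|^2+\rho_f\tfrac{\tau}{2h}\|w_{f,k-1}^{(\tau)}\|_{\Omega_0}^2 .
\]
No convexity of $E$ is needed here, so \eqref{as:E-nonconvexity-estimate} plays no role at this stage. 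It matters only later, when the energy inequality is compared with the Euler--Lagrange equation.

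On the left-hand side I treat the two inertial terms with the elementary inequality $\tfrac12|a-b|^2\ge \tfrac14|a|^2-\tfrac12|b|^2$. This yields $\rho_s\tfrac{\tau}{4h}\|\tfrac{\eta_k-\eta_{k-1}}{\tau}\|^2$ and $\rho_f\tfrac{\tau}{4h}\|v_k\circ\Phi_{k-1}\|_{\Omega_0}^2$, at the cost of moving $\tfrac{\tau}{2h}\|w_{\cdot,k-1}\|^2$ to the right. Those costs combine with the competitor's terms to give the factors $\tfrac{\tau}{h}\|w\|^2$ in the statement.

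The force terms are absorbed by Young's inequality with weight $h$. For the solid,
\[
\tau\rho_s|\langle f_k\circ\eta_{k-1},\tfrac{\eta_k-\eta_{k-1}}{\tau}\rangle|\le 2\rho_s\tau h\|f_k\circ\eta_{k-1}\|^2+\rho_s\tfrac{\tau}{8h}\|\tfrac{\eta_k-\eta_{k-1}}{\tau}\|^2 ,
\]
which leaves $\tfrac{\tau}{8h}$ of the solid kinetic term on the left, as stated. The fluid force term needs one extra step, because it lives on $\Omega_{k-1}^{(\tau)}$ while the kinetic term lives on $\Omega_0$. I change variables through $\Phi_{k-1}^{(\tau)}$, using that it is a diffeomorphism onto $\Omega_{k-1}^{(\tau)}$ with Jacobian bounded above and below (the standing assumption, justified later in Proposition~\ref{prop:estimates-of-the-flow-map}). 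In the incompressible setting one expects $\det\nabla\Phi\approx1$, so $\|v\|_{\Omega_{k-1}}$ and $\|v\circ\Phi_{k-1}\|_{\Omega_0}$ are comparable. Young's inequality then gives the $2\rho_f\tau h\|f_k\|^2$ term plus a quarter-portion absorbed into $\rho_f\tfrac{\tau}{4h}\|v_k\circ\Phi_{k-1}\|^2_{\Omega_0}$, leaving $\tfrac{\tau}{8h}$.

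Once the one-step inequality holds, I sum it over $j=1,\dots,k$. The energies $E(\eta_j)+\kappa^{a_0}\|\nabla^{k_0+2}\eta_j\|^2$ telescope, and all the nonnegative dissipative terms (the $R$ term, the $\kappa$ terms, the viscous, slip and kinetic terms) accumulate as the sum on the left. This is exactly the claimed estimate.

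The main obstacle is the fluid force absorption, which depends on controlling the Jacobian of $\Phi_{k-1}^{(\tau)}$. If the constant there is not exactly $1$, I would either absorb it into the constants or argue inductively alongside the flow-map estimates. A secondary point is to check that the slip term in $\mathcal J$ vanishes at the competitor. It does, since $v=0$ and $\eta=\eta_{k-1}$.
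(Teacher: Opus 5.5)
Your proposal is correct and follows the paper's proof essentially step by step. The shared route is: compare with the competitor $(\eta_{k-1}^{(\tau)},0)$, split the inertial terms via $\tfrac12|a-b|^2\ge\tfrac14|a|^2-\tfrac12|b|^2$, absorb the forces by Young's inequality with weight $h$, transfer the fluid norm through $\Phi_{k-1}^{(\tau)}$ using Proposition~\ref{prop:estimates-of-the-flow-map}, and telescope. Your caution about the Jacobian is well placed: the paper asserts $\|v_k^{(\tau)}\circ\Phi_{k-1}^{(\tau)}\|_{\Omega_0}=\|v_k^{(\tau)}\|_{\Omega_{k-1}^{(\tau)}}$, but the discrete flow map only gives $\det\nabla\Phi_{k-1}^{(\tau)}\in[\tfrac{1}{1+\varepsilon},1+\varepsilon]$, so the fluid force constant $2\tau h$ strictly becomes $2(1+\varepsilon)\tau h$, which is harmless.
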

\begin{proof}
For the discrete energy estimates, we compare
\((\eta_k^{(\tau)},v_k^{(\tau)})\) with \((\eta_{k-1}^{(\tau)},0)\) in
the minimization \eqref{eqn:minimization-Jktau}. That is,
\[\mathcal J_k^{(\tau)}(\eta_k^{(\tau)},v_k^{(\tau)})\leq\mathcal J_k^{(\tau)}(\eta_{k-1}^{(\tau)},0).\]
Writing this out gives 
\[\begin{aligned} 
E(\eta_k^{(\tau)})&+\tau R\left(\eta_{k-1}^{(\tau)}, \tfrac{\eta_k^{(\tau)}-\eta_{k-1}^{(\tau)}}{\tau}\right) + \kappa^{a_0}\|\nabla^{k_0+2}\eta_k^{(\tau)}\|^2 + \kappa\tau\left\| \nabla^{k_0+2}\tfrac{\eta_k^{(\tau)}-\eta_{k-1}^{(\tau)}}{\tau} \right\|^2
\\
+\frac{a\tau}{2}
 &\int_{\partial\Omega_{k-1}}\left| \tfrac{\eta_k^{(\tau)}-\eta_{k-1}^{(\tau)}\circ\eta_{k-1}^{-1}}{\tau}-v_k \right |^2\, dx +\rho_s\frac{\tau h}{2}\left\|\tfrac{\tfrac{\eta_k^{(\tau)}-\eta_{k-1}^{(\tau)}}{\tau}-w_{s,k-1}^{(\tau)}}{h}\right\|^2-\tau \rho_s \left\langle f_k^{(\tau)}\circ\eta_{k-1}^{(\tau)},\tfrac{\eta_k^{(\tau)}-\eta_{k-1}^{(\tau)}}{\tau} \right\rangle
\\
&+\frac{\tau\nu}{2} \|\varepsilon v_k^{(\tau)}\|_{\Omega_{k-1}^{(\tau)}}^2 + \kappa\frac{\tau}{2}\|\nabla^{k_0} v_k^{(\tau)}\|_{\Omega_{k-1}^{(\tau)}}^2+\rho_f\frac{\tau h}{2} \left\| \tfrac{v_k^{(\tau)} \circ \Phi_{k-1}^{(\tau)}-w_{f,k-1}^{(\tau)}}{h} \right\|_{\Omega_{0}}^2 - \tau \rho_f\langle f_k^{(\tau)},v_k^{(\tau)}\rangle_{\Omega_{k-1}^{(\tau)}}
\\
&\leq
E(\eta_{k-1}^{(\tau)})+ \kappa^{a_0}\|\nabla^{k_0+2}\eta_{k-1}^{(\tau)}\|^2
+\rho_s\frac{\tau h}{2}\left\|\tfrac{w_{s,k-1}^{(\tau)}}{h}\right\|^2
+\rho_f\frac{\tau h}{2} \left\| \tfrac{w_{f,k-1}^{(\tau)} }{h} \right\|_{\Omega_{0}}^2.
\end{aligned}\]
Now we sum this over \(j=1,\dots,k\), so that we obtain
the energy estimates 
\[\begin{aligned} 
E(\eta_k^{(\tau)})&+ \kappa^{a_0}\|\nabla^{k_0+2}\eta_k^{(\tau)}\|^2 + \frac{1}{2} \sum_{j=1}^k\Bigg[ \kappa\tau\left\| \nabla^{k_0+2}\tfrac{\eta_j^{(\tau)}-\eta_{j-1}^{(\tau)}}{\tau} \right\|^2 +\tau R\left(\eta_{j-1}^{(\tau)}, \tfrac{\eta_j^{(\tau)}-\eta_{j-1}^{(\tau)}}{\tau}\right)
\\
&+\frac{a\tau}{2}
 \int_{\partial\Omega_{j-1}}\left| \tfrac{\eta_j^{(\tau)}-\eta_{j-1}^{(\tau)}\circ\eta_{j-1}^{-1}}{\tau}-v_j \right |^2\, dx +\rho_s\frac{\tau h}{2}\left\|\tfrac{\tfrac{\eta_j^{(\tau)}-\eta_{j-1}^{(\tau)}}{\tau}-w_{s,j-1}^{(\tau)} }{h}\right\|^2-\tau \rho_s \left\langle f_j^{(\tau)}\circ\eta_{j-1}^{(\tau)},\tfrac{\eta_j^{(\tau)}-\eta_{j-1}^{(\tau)}}{\tau} \right\rangle
\\
&+\frac{\tau\nu}{2} \|\varepsilon v_j^{(\tau)}\|_{\Omega_{j-1}^{(\tau)}}^2 + \kappa\frac{\tau}{2}\|\nabla^{k_0} v_j^{(\tau)}\|_{\Omega_{j-1}^{(\tau)}}^2+\rho_f\frac{\tau h}{2} \left\| \tfrac{v_j^{(\tau)} \circ \Phi_{j-1}^{(\tau)}-w_{f,j-1}^{(\tau)}}{h} \right\|_{\Omega_{0}}^2 - \tau \langle f_j^{(\tau)},v_j^{(\tau)}\rangle_{\Omega_{j-1}^{(\tau)}} \Bigg]
\\
&\leq
E(\eta_0)+ \kappa^{a_0}\|\nabla^{k_0+2}\eta_0\|^2
+C(h)\|w\|_{L^2(L^2)}^2 + C\|f\|_{L^2(L^\infty)} .
\end{aligned}\]

To get uniform in \(\tau\) energy estimates, using the Young
inequality we get in inertial and forcing term in the solid
\[\begin{aligned}
\rho_s \frac{\tau}{4h}\left\| \tfrac{\eta_k^{(\tau)}-\eta_{k-1}}{\tau}\right\|^2 -\rho_s\frac{\tau}{2h}\|w_{s,k-1}^{(\tau)}\|^2 - \rho_s\frac{\tau}{8h}^2 \left\| \tfrac{\eta_k^{(\tau)}-\eta_{k-1}^{(\tau)}}{\tau}\right\|^2 - \rho_s 2\tau h\|f_k^{(\tau)}\circ\eta_{k-1}^{(\tau)}\|  \\\leq
\rho_s\frac{\tau h}{2}\left\|\tfrac{\tfrac{\eta_k^{(\tau)}-\eta_{k-1}^{(\tau)}}{\tau}-w_{s,k-1}^{(\tau)}}{h}\right\|^2-\tau \rho_s\left\langle f_k^{(\tau)}\circ\eta_{k-1}^{(\tau)},\tfrac{\eta_k^{(\tau)}-\eta_{k-1}^{(\tau)}}{\tau} \right\rangle,
\end{aligned}\]
as well as in the fluid 
\[\begin{aligned}
\rho_f \frac{\tau}{4h}\|v_k^{(\tau)}\circ\Phi_{k-1}^{(\tau)}\|_{\Omega_0}^2 - \rho_f \frac{\tau}{2h}\|w_{f,k-1}^{(\tau)}\|_{\Omega_0}^2 -\rho_f \frac{\tau}{8h}\|v_k^{(\tau)}\|_{\Omega_{k-1}^{(\tau)}}^2-\rho_f2\tau h \|f_k^{(\tau)}\|_{\Omega_{k-1}^{(\tau)}}^2
\\
\leq \rho_f\frac{\tau h}{2} \left\| \tfrac{v_k^{(\tau)} \circ \Phi_{k-1}^{(\tau)}-w_{f,k-1}^{(\tau)}}{h} \right\|_{\Omega_{0}}^2 - \tau \rho_f \langle f_k^{(\tau)},v_k^{(\tau)}\rangle_{\Omega_{k-1}^{(\tau)}}.
\end{aligned}\]

As \(\Phi_j^{(\tau)}\) is a diffeomorphism with bounded change of volume by by Proposition \ref{prop:estimates-of-the-flow-map} below,
we have
\(\|v_k^{(\tau)}\circ\Phi_{k-1}^{(\tau)}\|_{\Omega_0}=\|v_k^{(\tau)}\|_{\Omega_{k-1}^{(\tau)}}\)
. So we get 
\[\begin{aligned} 
E(\eta_k^{(\tau)})&+\tau R\left(\eta_{k-1}^{(\tau)}, \tfrac{\eta_k^{(\tau)}-\eta_{k-1}^{(\tau)}}{\tau}\right) + \kappa^{a_0}\|\nabla^{k_0+2}\eta_k^{(\tau)}\|^2 + \kappa\tau\left\| \nabla^{k_0+2}\tfrac{\eta_k^{(\tau)}-\eta_{k-1}^{(\tau)}}{\tau} \right\|^2
\\
&+\rho_s \frac{\tau}{8h}\left\| \tfrac{\eta_k^{(\tau)}-\eta_{k-1}^{(\tau)}}{\tau}\right\|^2
+\frac{\tau\nu}{2} \|\varepsilon v_k^{(\tau)}\|_{\Omega_{k-1}^{(\tau)}}^2 + \kappa\frac{\tau}{2}\|\nabla^{k_0} v_k^{(\tau)}\|_{\Omega_{k-1}^{(\tau)}}^2+ 
\rho_f \frac{\tau}{8h}\|v_k^{(\tau)}\circ\Phi_{k-1}^{(\tau)}\|_{\Omega_0}^2
\\
&\leq
E(\eta_{k-1}^{(\tau)})+ \kappa^{a_0}\|\nabla^{k_0+2}\eta_{k-1}^{(\tau)}\|^2
+\rho_s\frac{\tau}{h}\left\|w_{s,k-1}^{(\tau)}\right\|^2
+\rho_f\frac{\tau }{h} \left\| w_{f,k-1}^{(\tau)} \right\|_{\Omega_{0}}^2 
+ \rho_s 2\tau h\|f_k^{(\tau)}\circ\eta_{k-1}^{(\tau)}\|^2 
\\
&\quad+\rho_f2\tau h \|f_k^{(\tau)}\|_{\Omega_{k-1}^{(\tau)}}^2.
\end{aligned}\] 
Now we sum this over \(j=1,\dots,k\), so that we obtain the energy estimates in the statement of this lemma.
\end{proof}

The following proposition is contained in \cite[Proposition 4.6, Corollary 4.7]{benesovaVariationalApproachHyperbolic2023}

\begin{proposition}[Estimates of the flow map]\label{prop:estimates-of-the-flow-map}
For every $\varepsilon>0$, the discrete flow map satisfies the estimate on its Jacobian \[\frac {1}{1+\varepsilon} \leq \det \nabla\Phi_k^{(\tau)}\leq 1+\varepsilon\]
for $\tau$ small enough, in dependence of $\varepsilon$ (i.e.\ for all $0<\tau<\tau_0(\varepsilon)$).
Consequently, it holds
\[\lim_{\tau \to 0} \det \nabla \Phi^{(\tau)}=1\] uniformly on
\([0,T]\). Moreover the maps \(\Phi^{(\tau)}\) are uniformly Lipschitz-continuous
in space, so that the Lipschitz constant
\(\operatorname{Lip}\Phi^{(\tau)}(t)\) is independent of \(\tau\) and \(t\).
\end{proposition}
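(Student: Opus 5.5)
We argue by induction on $k$, following the strategy of \cite[Proposition 4.6, Corollary 4.7]{benesovaVariationalApproachHyperbolic2023}. The recursive definition \eqref{eqn:Phiktau-def} gives, by the chain rule,
\[
\nabla\Phi_k^{(\tau)} = \bigl(I+\tau\nabla v_k^{(\tau)}\bigr)\circ\Phi_{k-1}^{(\tau)}\;\nabla\Phi_{k-1}^{(\tau)},
\qquad
\det\nabla\Phi_k^{(\tau)}=\prod_{j=1}^k \det\bigl(I+\tau\nabla v_j^{(\tau)}\bigr)\circ\Phi_{j-1}^{(\tau)} .
\]
So the Jacobian estimate and the Lipschitz estimate both reduce to controlling $\tau\nabla v_j^{(\tau)}$ uniformly in $L^\infty$, and its sum over $j$.

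\emph{Step 1 (uniform velocity bounds).} From the discrete energy estimate, Lemma \ref{lem:discrete-energy-estimates}, we get for fixed $\kappa,h$ and uniformly in $\tau$
\[
\sum_j \tau\,\kappa\|\nabla^{k_0}v_j^{(\tau)}\|^2_{\Omega_{j-1}^{(\tau)}}\leq C,
\qquad
\sum_j \frac{\tau}{h}\|v_j^{(\tau)}\|^2_{\Omega_{j-1}^{(\tau)}}\leq C .
\]
The second bound uses the Jacobian bound at the previous steps, which is exactly the induction hypothesis. Since $k_0$ is chosen so large that $W^{k_0,2}\hookrightarrow W^{2,q}\hookrightarrow C^{1,\alpha}$, and the domains $\Omega_{j-1}^{(\tau)}$ are uniformly Lipschitz (because $E(\eta_{j-1}^{(\tau)})$ is bounded and the solid stays away from collision), the embedding constants are uniform. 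This gives $\sum_j\tau\|\nabla v_j^{(\tau)}\|_{L^\infty}^2\le C(\kappa,h)$.

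\emph{Step 2 (Jacobian and Lipschitz bounds).} By Cauchy--Schwarz,
\[
\tau\|\nabla v_j^{(\tau)}\|_\infty\le \tau^{1/2}C^{1/2}\to0,
\qquad
\sum_j\tau\|\nabla v_j^{(\tau)}\|_\infty\le (TC)^{1/2}.
\]
Since $\operatorname{div}v_j^{(\tau)}=0$, expanding the determinant gives
\[
\det(I+\tau\nabla v_j^{(\tau)})=1+O\bigl(\tau^2|\nabla v_j^{(\tau)}|^2\bigr).
\]
Taking logarithms of the product,
\[
\bigl|\log\det\nabla\Phi_k^{(\tau)}\bigr|\le C\sum_j\tau^2\|\nabla v_j^{(\tau)}\|_\infty^2\le C\tau .
\]
This is smaller than $\log(1+\varepsilon)$ once $\tau<\tau_0(\varepsilon)$, which closes the induction and proves the uniform convergence of $\det\nabla\Phi^{(\tau)}$ to $1$. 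Similarly,
\[
\operatorname{Lip}\Phi_k^{(\tau)}\le\prod_j\bigl(1+\tau\|\nabla v_j^{(\tau)}\|_\infty\bigr)\le \exp\Bigl(\sum_j\tau\|\nabla v_j^{(\tau)}\|_\infty\Bigr)\le e^{(TC)^{1/2}},
\]
which is independent of $\tau$ and $t$. The same argument applied to the inverse maps $(\operatorname{id}+\tau v_j)^{-1}$ gives the diffeomorphism property.

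\emph{Main obstacle.} The delicate point is the consistency of domains: $v_k^{(\tau)}$ lives on $\Omega_{k-1}^{(\tau)}$, and we need $\Phi_k^{(\tau)}(\Omega_0)=\Omega_k^{(\tau)}$ with $\operatorname{id}+\tau v_k^{(\tau)}$ injective. In the slip setting only the normal components of $v_k^{(\tau)}\circ\eta$ and $(\eta_k-\eta_{k-1})/\tau$ agree on the interface, so the fluid map and the solid map do not coincide pointwise on $\partial Q$. I would first handle the boundary by noting that both maps move the interface with the same normal velocity. The two images then differ only by a tangential reparametrization of the interface, plus an $O(\tau^2)$ error controlled by the $C^{1,\alpha}$ bounds. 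The set equality would therefore hold up to $O(\tau^2)$ per step. If an exact equality is required, I would follow \cite{benesovaVariationalApproachHyperbolic2023} and define $\Omega_k^{(\tau)}$ through the flow map itself, then pass to the limit. Injectivity of $\operatorname{id}+\tau v_k^{(\tau)}$ for small $\tau$ follows from $\tau\|\nabla v_k^{(\tau)}\|_\infty<1$.
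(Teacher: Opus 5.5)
Your Steps 1--2 are essentially the argument of \cite[Proposition 4.6, Corollary 4.7]{benesovaVariationalApproachHyperbolic2023}. The paper gives no proof of its own and simply defers to that reference. The ingredients match:
the chain-rule product for $\nabla\Phi_k^{(\tau)}$;
the expansion $\det(I+\tau\nabla v_j^{(\tau)})=1+O(\tau^2|\nabla v_j^{(\tau)}|^2)$, using $\operatorname{div}v_j^{(\tau)}=0$;
the bound $\sum_j\tau\|\nabla v_j^{(\tau)}\|_\infty^2\le C(\kappa,h)$, from the $\kappa$- and $\tfrac{\tau}{h}$-terms of Lemma \ref{lem:discrete-energy-estimates} plus Sobolev embedding;
and the exponential Lipschitz bound.
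The induction closes as you say, because the energy estimate through step $k$ only involves the Jacobians of $\Phi_0^{(\tau)},\dots,\Phi_{k-1}^{(\tau)}$.

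Two minor points. On a nonconvex domain, $\tau\|\nabla v_k^{(\tau)}\|_\infty<1$ does not by itself give injectivity of $\operatorname{id}+\tau v_k^{(\tau)}$; extend $v_k^{(\tau)}$ to $\mathbb{R}^d$ with uniform constants first. Also, the uniform Lipschitz character of $\Omega_{j-1}^{(\tau)}$ rests on a quantitative non-collision assumption at the $\tau$-level, not on anything proved there.

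The genuine gap is the one you flag, and neither of your remedies closes it. Your whole argument uses the exact identity $\Phi_{j-1}^{(\tau)}(\Omega_0)=\Omega_{j-1}^{(\tau)}=\Omega\setminus\eta_{j-1}^{(\tau)}(\overline Q)$, in both directions:
\begin{itemize}
\item $\subseteq$ is needed for $v_j^{(\tau)}\circ\Phi_{j-1}^{(\tau)}$ (already present in $\mathcal J_j^{(\tau)}$) and the product formula to be defined at all, since $v_j^{(\tau)}$ lives only on $\Omega_{j-1}^{(\tau)}$;
\item $\supseteq$ is needed in Step 1 to bound $\|v_j^{(\tau)}\|_{\Omega_{j-1}^{(\tau)}}$ by $\|v_j^{(\tau)}\circ\Phi_{j-1}^{(\tau)}\|_{\Omega_0}$.
\end{itemize}
An equality ``up to $O(\tau^2)$ per step'' gives neither inclusion.

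Redefining $\Omega_k^{(\tau)}:=\Phi_k^{(\tau)}(\Omega_0)$ does not help either, because the scheme needs the fluid region to be the complement of the solid. The constraint in \eqref{eqn:minimization-Jktau} is imposed through $\eta$ on $\partial Q$. The coupled test functions are $\phi=\xi\circ\eta$ with $\operatorname{div}\xi=0$ in $\Omega\setminus\eta(Q)$. The no-contact corollary infers the solid's geometry from $\Phi$.

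In the no-slip setting of the cited result, the identity is a consequence of the coupling, not a definition. There $v_k\circ\eta_{k-1}=\tfrac{\eta_k-\eta_{k-1}}{\tau}$ on $\partial Q$ and $v_k=0$ on $\partial\Omega$ give $(\operatorname{id}+\tau v_k)(\partial\Omega_{k-1})=\partial\Omega_k$ exactly. Here only normal components are matched. This holds on the interface and also on $\partial\Omega$, which you did not mention: on a curved container, a tangential $v_k^{(\tau)}$ pushes points out of $\Omega$ at order $\tau^2$.

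One way to repair this at fixed $\kappa,h$ is as follows. The first-order boundary defect $\bigl(v_k^{(\tau)}\circ\eta_{k-1}^{(\tau)}-\tfrac{\eta_k^{(\tau)}-\eta_{k-1}^{(\tau)}}{\tau}\bigr)\cdot n_{k-1}^{(\tau)}$ vanishes, and likewise $v_k^{(\tau)}\cdot n=0$ on $\partial\Omega$. So one can construct a diffeomorphism $\Pi_k$ from $(\operatorname{id}+\tau v_k^{(\tau)})(\Omega_{k-1}^{(\tau)})$ onto $\Omega_k^{(\tau)}$ with $\|\Pi_k-\operatorname{id}\|_{C^1}\le C\tau^2a_k^2$. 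Here $a_k$ is controlled by the $W^{k_0,2}$- and $W^{k_0+2,2}$-norms of $v_k^{(\tau)}$ and $\tfrac{\eta_k^{(\tau)}-\eta_{k-1}^{(\tau)}}{\tau}$, so $\sum_k\tau a_k^2\le C(\kappa,h)$. Now define $\Phi_k^{(\tau)}:=\Pi_k\circ(\operatorname{id}+\tau v_k^{(\tau)})\circ\Phi_{k-1}^{(\tau)}$. The extra Jacobian factors are $1+O(\tau^2a_k^2)$, and their product stays bounded. So your product argument and the limit $\partial_t\Phi=v\circ\Phi$ go through unchanged.

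Note that the paper does not address this point either: it asserts $\Omega_k^{(\tau)}=\Phi_k^{(\tau)}(\Omega_0)$ and cites the no-slip result.
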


\subsubsection{Weak limit $\tau\to 0$}\label{sec:weak-limit}

We define the piecewise constant and piecewise affine interpolations as
\begin{equation}\label{eqn:tau-interpolations}
\begin{aligned}
\eta^{(\tau)}(t, x)&=\eta_{k}^{(\tau)}(x) & \text { for } \tau (k-1) &\leq t<\tau k,\quad x\in Q, \\
\underline{\eta}^{(\tau)}(t, x)&=\eta_{k-1}^{(\tau)}(x) & \text { for } \tau (k-1) &\leq t<\tau k,\quad x\in Q, \\
\tilde{\eta}^{(\tau)}(t, x)&=\tfrac{\tau k-t}{\tau} \eta_{k-1}^{(\tau)}(x)+\tfrac{t-\tau (k-1)}{\tau} \eta_k^{(\tau)}(x) & \text { for } \tau (k-1) &\leq t<\tau k,\quad x\in Q, \\
v^{(\tau)}(y)&=v_{k-1}^{(\tau)}(y) & \text { for } \tau (k-1) &\leq t<\tau k,\quad y \in \Omega_{k-1}^{(\tau)}, \\
\Phi^{(\tau)}(t, y)&=\Phi_{k-1}^{(\tau)}(y) & \text { for } \tau (k-1) &\leq t<\tau k,\quad y \in \Omega_{k-1}^{(\tau)}, \\ \tilde{\Phi}^{(\tau)}(t, y)&=\tfrac{\tau k-t}{\tau} \Phi_{k-1}^{(\tau)}(y)+\tfrac{t-\tau (k-1)}{\tau} \Phi_k^{(\tau)}(y) & \text { for } \tau (k-1) &\leq t<\tau k,\quad y \in \Omega_{k-1}^{(\tau)},
\end{aligned}
\end{equation}
as well as \(\Omega^{(\tau)}(t)=\Omega_{k-1}^{(\tau)}\) for
\(\tau (k-1) \leq t<\tau k\).

By the estimate of Lemma \ref{lem:discrete-energy-estimates} we have (for a nonlabelled subsequence $\tau\to 0$) that
\begin{equation}\label{eqn:weak-limit-tau}
\begin{aligned}
\eta^{(\tau)} &\overset{\ast}\rightharpoonup \eta \quad & \text{in } &L^\infty((0,h);W^{k_0+2,2}(Q;\mathbb R^d)), \\
\partial_t \tilde \eta^{(\tau)} &\rightharpoonup \partial_t\eta \quad & \text{in } &L^2((0,h);W^{k_0+2,2}(Q;\mathbb R^d)), \\
v^{(\tau)}&\overset\eta\rightharpoonup v\quad & \text{in } &L^2((0,h);W^{k_0,2}(\Omega(t);\mathbb R^d)).
\end{aligned}
\end{equation} Since we have a compact embedding
\[W^{1,2}((0,h);W^{k_0,2}(Q;\mathbb R^d)) \hookrightarrow \hookrightarrow C^{(1/2)^-}([0,h];C^{1,\alpha^-}(Q;\mathbb R^d)), \]
we have
\[\eta^{(\tau)}\to \eta \quad \text{ in } C^{(1/2)^-}([0,h];C^{1,\alpha^-}(Q;\mathbb R^d)),\]
which in particular means that \(\eta^{(\tau)}\rightarrow \eta\) uniformly
on \([0,h]\times \partial Q\) and also
\(\det \nabla \eta^{(\tau)} \to \det \nabla \eta\) in
\(C([0,T]\times \overline Q)\).

Now we verify that the coupling condition holds in the limit.
\begin{lemma}
The limit $(\eta,v)$ satisfies the coupling condition
\begin{equation}
v \cdot n=\partial_t \eta \circ\eta^{-1} \cdot n \quad \text{on } (0,T)\times\partial \Omega(t).
\end{equation}
\end{lemma}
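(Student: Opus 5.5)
We verify the coupling condition in integrated form. Before passing to the limit, we move the discrete constraint entirely onto the fixed reference boundary $\partial Q$, or equivalently onto the discrete fluid boundaries, and then use the convergences \eqref{eqn:weak-limit-tau} together with the Lemma on convergence of normal trace.

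\emph{Step 1: a discrete identity.} Fix $\psi\in C^1_c([0,h]\times\mathbb R^d)$. For each step $k$ the admissibility constraint in \eqref{eqn:minimization-Jktau} reads $\tfrac{\eta_k^{(\tau)}-\eta_{k-1}^{(\tau)}}{\tau}\cdot n_{k-1}^{(\tau)} = v_k^{(\tau)}\circ\eta_k^{(\tau)}\cdot n_{k-1}^{(\tau)}$ on $\partial Q$. We multiply by $\psi\circ\eta_{k-1}^{(\tau)}\,|\operatorname{cof}\nabla\eta_{k-1}^{(\tau)} n_Q|$, integrate over $\partial Q$ and sum in time. This gives
\[
\int_0^h\!\!\int_{\partial Q}\psi\circ\underline\eta^{(\tau)}\,\big(\partial_t\tilde\eta^{(\tau)}-v_\sharp^{(\tau)}\circ\eta^{(\tau)}\big)\cdot\operatorname{cof}\nabla\underline\eta^{(\tau)}n_Q\,dS\,dt=0,
\]
where $v_\sharp^{(\tau)}$ is the time-shifted interpolant equal to $v_k^{(\tau)}$ on $[\tau(k-1),\tau k)$.

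\emph{Step 2: the solid term.} This term passes to the limit directly. By the compact embedding we have $\underline\eta^{(\tau)}\to\eta$ in $C([0,h];C^{1,\alpha})$, because $\|\eta_k^{(\tau)}-\eta_{k-1}^{(\tau)}\|\le C\tau^{1/2}$ by Lemma \ref{lem:discrete-energy-estimates}. Hence $\psi\circ\underline\eta^{(\tau)}\operatorname{cof}\nabla\underline\eta^{(\tau)}n_Q$ converges uniformly on $[0,h]\times\partial Q$. Moreover, $\partial_t\tilde\eta^{(\tau)}\rightharpoonup\partial_t\eta$ weakly in $L^2((0,h);L^2(\partial Q))$ by the trace theorem. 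The product of a uniformly convergent and a weakly convergent factor converges, so this term tends to $\int_0^h\int_{\partial Q}\psi\circ\eta\,\partial_t\eta\cdot\operatorname{cof}\nabla\eta\, n_Q\,dS\,dt$. By the change of variables on the boundary, this equals $\int_0^h\int_{\partial\eta(t,Q)}\psi\,(\partial_t\eta\circ\eta^{-1})\cdot n\,dS\,dt$.

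\emph{Step 3: the fluid term.} This is where I expect the main obstacle, namely the mismatch of time indices and domains. The velocity $v_k^{(\tau)}$ lives on $\Omega_{k-1}^{(\tau)}$ but is evaluated at $\eta_k^{(\tau)}$, and the interpolant $v^{(\tau)}$ in \eqref{eqn:tau-interpolations} is shifted by one step relative to $v_\sharp^{(\tau)}$.

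First I would replace $v_k^{(\tau)}\circ\eta_k^{(\tau)}$ by $v_k^{(\tau)}\circ\eta_{k-1}^{(\tau)}$. Since $W^{k_0,2}\hookrightarrow C^1$, the error is bounded by $\|\nabla v_k^{(\tau)}\|_\infty\|\eta_k^{(\tau)}-\eta_{k-1}^{(\tau)}\|_\infty$. After summation, Cauchy--Schwarz and the $\kappa$-bound $\tau\sum_k\|v_k^{(\tau)}\|_{W^{k_0,2}}^2\le C$, this error is $O(\tau^{1/2})$. One point must be checked here: $\eta_k^{(\tau)}(\partial Q)$ need not lie in $\overline{\Omega_{k-1}^{(\tau)}}$. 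I would handle this by extending $v_k^{(\tau)}$ with a bounded $W^{k_0,2}$ extension operator, uniform for uniformly Lipschitz domains.

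After this replacement, the boundary change of variables turns the term into $\int_0^h\int_{\partial\Omega^{(\tau)}(t)}\psi\,v_\sharp^{(\tau)}\cdot dn^{(\tau)}\,dt$, with $\psi$ supported away from $\partial\Omega$ or using $v\cdot n=0$ there. By the divergence theorem and $\operatorname{div} v_k^{(\tau)}=0$, this equals $\int_0^h\int_{\Omega^{(\tau)}(t)}\nabla\psi\cdot v_\sharp^{(\tau)}\,dx\,dt$. Since $\Omega^{(\tau)}\to\Omega$ in the sense of Section \ref{sec:convergence-of-domains}, the zero-extended $v_\sharp^{(\tau)}$ converges weakly to $v$. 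The index shift costs only a time translation by $\tau$, and $\nabla\psi$ is uniformly continuous in time, so we may equally argue with $v^{(\tau)}$. Together with the $L^1$-convergence of the characteristic functions, the integral converges to $\int_0^h\int_{\Omega(t)}\nabla\psi\cdot v\,dx\,dt=\int_0^h\int_{\partial\Omega(t)}\psi\,v\cdot n\,dS\,dt$. This is exactly the mechanism of the Lemma on convergence of normal trace.

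\emph{Conclusion.} Equating the limits of the two terms and using that $\psi$ is arbitrary, we obtain $v\cdot n=(\partial_t\eta\circ\eta^{-1})\cdot n$ $\mathcal H^{d-1}$-a.e. on $\partial\eta(t,Q)$ for a.e. $t$.
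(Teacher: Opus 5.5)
Your proposal is correct and follows essentially the paper's route: test the discrete coupling against $\psi$, and pass to the limit separately in the solid and fluid boundary integrals, with the fluid one handled via the divergence theorem on $\Omega^{(\tau)}(t)$ and zero-extension weak convergence. The differences are minor. You treat the solid term directly on $\partial Q$ (trace theorem plus Nanson's formula), whereas the paper rewrites it as a bulk integral over $\eta^{(\tau)}(Q)$ and changes variables to $Q$. You also explicitly account for the time-index shift between $\eta_{k-1}^{(\tau)}$, $\eta_k^{(\tau)}$ and $v_k^{(\tau)}$, which the paper's proof passes over.
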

\begin{proof}
We know
that for the approximate solutions $(\eta^{(\tau)},v^{(\tau)})$ the following coupling condition
holds
\begin{equation*}\label{eqn:coupling-condition-tau}
v^{(\tau)}\circ \eta^{(\tau)}\cdot n^{(\tau)}=\partial_t\eta^{(\tau)}\cdot n^{(\tau)} \quad \text{on } [0,T]\times\partial Q.
\end{equation*}
 Let us now operate in the Eulerian
domain and fix an arbitrary test function
\(\psi\in C_0((0,T)\times\Omega)\). Now write the solid part using the
divergence theorem, change of variables, and the above convergences:
\[\begin{aligned}
&\int_0^T\int_{\partial\Omega^{(\tau)}} \psi \partial_t\eta^{(\tau)} \circ(\eta^{(\tau)})^{-1} \cdot dn^{(\tau)}\,dt = \int_0^T\int_{\eta^{(\tau)}(Q)}\operatorname{div}(\psi \partial_t\eta^{(\tau)} \circ(\eta^{(\tau)})^{-1})\,dx \,dt
\\
&= \int_0^T \int_{\eta^{(\tau)}(Q)} \nabla \psi\cdot \partial_t\eta^{(\tau)} \circ(\eta^{(\tau)})^{-1} + \psi\nabla \partial_t\eta^{(\tau)} \circ(\eta^{(\tau)})^{-1}:\nabla^T  (\eta^{(\tau)})^{-1
} \,dx\,dt 
\\
&= \int_0^T\int_Q \left(\nabla \psi \circ\eta^{(\tau)}\cdot \partial_t\eta^{(\tau)}+\psi\circ\eta^{(\tau)} \nabla\partial_t \eta^{(\tau)} :\nabla ^T (\eta^{(\tau)})^{-1}\circ \eta^{(\tau)} \right) \det \nabla \eta^{(\tau)}\,dx\,dt
\\
&\to   \int_0^T\int_Q \left(\nabla \psi \circ\eta \cdot \partial_t\eta +\psi\circ\eta \nabla\partial_t \eta  :\nabla ^T \eta^{-1}\circ \eta \right) \det \nabla \eta\,dx \,dt  \\
&= \int_0^T\int_{\eta(Q)} \nabla\psi \cdot \partial_t \eta\circ \eta^{-1} + \psi \nabla \partial_t\eta \circ \eta^{-1}:\nabla^T \eta^{-1} \,dt
\\
&= \int_0^T\int_{\eta(Q)}\operatorname{div}(\psi \partial_t\eta \circ\eta^{-1})\,dx \,dt= \int_0^T\int_{\partial\eta(Q)}\psi \partial_t\eta \circ\eta^{-1} \cdot dn\,dt.
\end{aligned}\] 
Similarly on the fluid domain, we can write by the
divergence theorem
\[\begin{aligned}\int_0^T\int_{\partial \Omega^{(\tau)}(t)} \psi v^{(\tau)}\cdot dn^{(\tau)}\,dt = \int_0^T \int_{\Omega^{(\tau)}(t)}\nabla \psi \cdot v^{(\tau)} + \psi \operatorname{div} v^{(\tau)} \,dx\,dt \\ \to \int_0^T \int_{\Omega(t)}\nabla\psi \cdot v+\psi \operatorname{div} v\,dx\,dt = \int_0^T \int_{\partial \Omega(t)}\psi v\cdot dn\,dt.\end{aligned}\]
Now recall that by \eqref{eqn:coupling-condition-tau} we have that the left hand sides of the last two equations are equal, so that this proves that the coupling condition holds.
\end{proof}

%\noteMalte[inline]{I would promote this to a technical lemma maybe?}

We also pass to the limit in the flow map. By the estimates of the
previous section, Proposition \ref{prop:estimates-of-the-flow-map}, there is
\(\Phi\in C([0,T];W^{1,\infty}(\Omega_0;\mathbb R^d))\) such that
\[\Phi^{(\tau)}\to\Phi \quad \text{in }C([0,T];C^{0,\alpha}(\Omega_0;\mathbb R^d).\]
By Proposition \ref{prop:estimates-of-the-flow-map} we know that \(\det\nabla\Phi=1\) a.e. Moreover, passing to the
limit by \ref{eqn:Phiktau-def} definition of \(\Phi^{(\tau)}\) we have 
\begin{equation}\label{eqn:flow-map-dt-equal-v}
\partial_t \Phi(t)=\lim _{\tau \rightarrow 0} \partial_t \tilde{\Phi}^{(\tau)}(t)=\lim _{\tau \rightarrow 0} u^{(\tau)}(t) \circ \Phi^{(\tau)}(t) =v(t) \circ \Phi(t).
\end{equation}
From here it also follows that \(\Phi_t\colon \Omega_0\to\Omega(t)\) is
a diffeomorphism, \(t\in[0,T]\).

\subsection{\texorpdfstring{Passing to the limit \(\tau\to 0\) in the Euler-Lagrange equation}{Passing to the limit \textbackslash tau\textbackslash to 0 in the Euler-Lagrange equation}}\label{passing-to-the-limit-tauto-0-in-el-eqn.}

\subsubsection{Fluid only equation}\label{sec:fluid-only-eqn}
\begin{proposition}\label{prop:fluid-only-eqn-taulimit}
The limit solution $v$ from \eqref{eqn:weak-limit-tau} satisfies the equation
\[\int_0^h \nu \langle\varepsilon v, \varepsilon \xi \rangle_{\Omega(t)} + \kappa\langle\nabla^{k_0} v, \nabla^{k_0}\xi\rangle_{\Omega(t)}+\rho_f \left\langle \tfrac{v \circ \Phi-w^{f}}{h}, \xi \circ\Phi \right\rangle_{\Omega_{0}} - \langle f,\xi \rangle_{\Omega(t)}\,dt- a \langle\partial_t \eta \circ \eta^{-1}-v,\xi\rangle_{\partial \Omega(t)}=0\]
holds for all 
\(\xi\in C^\infty([0,h] \times \overline \Omega(t);\mathbb R^d)\),
\(\nabla^\ell(\xi\cdot n)=0\) on \(\partial \Omega(t)\), \(\operatorname{div}\xi=0\)
in \(\Omega(t)\), with \(\xi(h)=0\) and \(\nabla^\ell( \xi\cdot n)=0\),
\(\ell=1,\dots,k_0\)
\end{proposition}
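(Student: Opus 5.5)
We pass to the limit $\tau\to0$ in the discrete fluid-only Euler--Lagrange equation \eqref{eqn:minimizer-fluid-only-eqn}. The difficulty is that a fixed test function $\xi$ on the limit domain is not admissible at level $\tau$: it need not be tangential on $\partial\Omega^{(\tau)}(t)$, and it need not be defined on $\Omega^{(\tau)}(t)$. We therefore first approximate $\xi$ by admissible functions and then pass to the limit term by term.

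\emph{Step 1: admissible approximations.} Fix $\xi$ as in the statement. Apply Proposition~\ref{prop:approximation-fluid-only} with $\eta^{(i)}$ replaced by the interpolants $\underline\eta^{(\tau)}$. This is legitimate because $\Omega^{(\tau)}(t)=\Omega\setminus\eta_{k-1}^{(\tau)}(Q)$ and the discrete equation is posed on $\Omega_{k-1}^{(\tau)}$ with normal $n_{k-1}^{(\tau)}$. Part (ii) applies, since \eqref{eqn:weak-limit-tau} gives convergence in $W^{1,2}((0,h);W^{k_0+2,2})$ and the uniform $C^{1,\alpha}$ convergence of $\underline\eta^{(\tau)}$ follows from the compact embedding. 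For each $\delta>0$ this yields $\xi^{(\tau)}_\delta$ that is divergence free in $\Omega^{(\tau)}(t)$ and satisfies $\xi^{(\tau)}_\delta\cdot n^{(\tau)}=0$ on $\partial\Omega^{(\tau)}(t)$. Moreover $\xi^{(\tau)}_\delta\to\xi_\delta$ strongly in $L^2(W^{k_0,2})$ and $\partial_t\xi^{(\tau)}_\delta\to\partial_t\xi_\delta$ in $L^2(L^2)$ in the zero-extension sense. Because $\nabla^\ell(\xi\cdot n)=0$ for $\ell\le k_0$, part (ii) also gives $\xi_\delta\to\xi$ in $L^2(W^{k_0,2})$ with rate $\delta$. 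We test \eqref{eqn:minimizer-fluid-only-eqn} at step $k$ with $\xi^{(\tau)}_\delta(t)$, multiply by $\tau$ and sum over $k$, so that the result is written as $\int_0^h\dots\,dt$ with the interpolants of \eqref{eqn:tau-interpolations}.

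\emph{Step 2: limit $\tau\to0$ at fixed $\delta$.} We treat each term as a product of a weakly and a strongly convergent factor.
\begin{itemize}
\item The terms $\nu\langle\varepsilon v^{(\tau)},\varepsilon\xi^{(\tau)}_\delta\rangle$ and $\kappa\langle\nabla^{k_0}v^{(\tau)},\nabla^{k_0}\xi^{(\tau)}_\delta\rangle$ converge because $v^{(\tau)}\overset\eta\rightharpoonup v$ weakly in $L^2(W^{k_0,2})$ while the test functions converge strongly.
\item The forcing term converges since $f^{(\tau)}\to f$ in $L^2$ and $\chi_{\Omega^{(\tau)}}\to\chi_{\Omega}$.
\item For the inertial term we write $\langle (v^{(\tau)}\circ\Phi^{(\tau)}-w_f^{(\tau)})/h,\ \xi^{(\tau)}_\delta\circ\Phi^{(\tau)}\rangle_{\Omega_0}$. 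The factor $\xi^{(\tau)}_\delta\circ\Phi^{(\tau)}$ converges strongly in $L^2$, using the uniform convergence $\Phi^{(\tau)}\to\Phi$, the uniform Lipschitz bounds and the Jacobian bounds of Proposition~\ref{prop:estimates-of-the-flow-map}. The factor $v^{(\tau)}\circ\Phi^{(\tau)}$ converges weakly in $L^2$ to $v\circ\Phi$, which we identify by testing against smooth functions and changing variables with $\det\nabla\Phi^{(\tau)}\to1$.
\item The boundary friction term $a\langle \partial_t\tilde\eta^{(\tau)}\circ(\underline\eta^{(\tau)})^{-1}-v^{(\tau)},\xi^{(\tau)}_\delta\rangle_{\partial\Omega^{(\tau)}}$ is the genuinely moving-boundary term. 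We rewrite it by the divergence theorem as a bulk integral, exactly as in the proof of the coupling lemma. Its solid part is $\int_{\eta^{(\tau)}(Q)}\operatorname{div}(\dots)$, which is pulled back to $Q$ and converges by the strong convergence of $\eta^{(\tau)}$ in $C^{1,\alpha}$ together with the weak convergence of $\partial_t\tilde\eta^{(\tau)}$ in $L^2(W^{k_0+2,2})$. Its fluid part is $\int_{\Omega^{(\tau)}}\operatorname{div}(v^{(\tau)}\otimes\xi^{(\tau)}_\delta)$, which converges like the viscous term. For this to work we use the extended tangent $\overline\tau$, so that the integrand is a genuine bulk field.
\end{itemize}
We also use that the difference between $\partial_t\tilde\eta^{(\tau)}$ composed with $\eta_k^{-1}$ and with $\eta_{k-1}^{-1}$ vanishes as $\tau\to0$, by the uniform H\"older-in-time bound.

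\emph{Step 3: limit $\delta\to0$, and the main obstacle.} Letting $\delta\to0$ in the equation satisfied by $\xi_\delta$ is straightforward using the estimates of Proposition~\ref{prop:approximation-fluid-only}(ii); the boundary term is controlled by the trace theorem applied to $\xi_\delta-\xi$ in $W^{1,2}$. I expect the main obstacle to be the inertial term in Step 2. The difficulty is the weak convergence of $v^{(\tau)}\circ\Phi^{(\tau)}$ on the fixed reference domain $\Omega_0$ combined with the time-dependent test function. I would handle it by pulling back to $\Omega_0$, where all domains coincide, and then using the equi-Lipschitz flow maps, $\det\nabla\Phi^{(\tau)}\to1$ and \eqref{eqn:flow-map-dt-equal-v}. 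A secondary check is that the boundary term in Step 2 is well defined only because $\delta$ is fixed, since $\xi^{(\tau)}_\delta$ is tangential on a whole $\delta$-neighbourhood of the boundary.
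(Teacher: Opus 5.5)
Your route matches the paper's. You build $\xi^{(\tau)}_\delta,\xi_\delta$ by Proposition~\ref{prop:approximation-fluid-only}(ii), pass $\tau\to0$ at fixed $\delta$ by pairing the weakly convergent $v^{(\tau)}$ (zero-extension sense) with strongly convergent test-function quantities, and then let $\delta\to0$ using $\xi_\delta\to\xi$ in $L^2(W^{k_0,2})$. The paper's proof does not discuss the inertial and friction terms individually, so your term-by-term treatment adds detail. Your inertial-term argument is fine.

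\textbf{The friction-term step fails as written.} This is your bulk rewriting of the fluid part. The integral $\int_{\partial\eta^{(\tau)}(Q)}v^{(\tau)}\cdot\xi^{(\tau)}_\delta\,dS$ is taken against $dS$, not $dn$, so it is not a flux, and the coupling-lemma argument does not transfer ``exactly''. The divergence theorem turns $\int_{\Omega^{(\tau)}}\operatorname{div}(v^{(\tau)}\otimes\xi^{(\tau)}_\delta)\,dx$ into a vector-valued boundary integral: either $\int v^{(\tau)}(\xi^{(\tau)}_\delta\cdot n^{(\tau)})\,dS=0$ or $\int (v^{(\tau)}\cdot n^{(\tau)})\,\xi^{(\tau)}_\delta\,dS$, depending on convention. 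It never gives the scalar $\int v^{(\tau)}\cdot\xi^{(\tau)}_\delta\,dS$.

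What makes the term a bulk integral is the extended \emph{normal} of Lemma~\ref{lem:ext-normal-tangent}, not the tangent. Take a cutoff $\psi$ equal to $1$ near the interface, vanishing near $\partial\Omega$, and supported where $\overline n^{(\tau)}$ is defined. Since $\overline n^{(\tau)}$ is the unit outer normal of the solid, $\overline n^{(\tau)}=-n^{(\tau)}$ on the interface, and
\[
\int_{\partial\eta^{(\tau)}(Q)}v^{(\tau)}\cdot\xi^{(\tau)}_\delta\,dS=-\int_{\Omega^{(\tau)}(t)}\operatorname{div}\bigl(\psi\,(v^{(\tau)}\cdot\xi^{(\tau)}_\delta)\,\overline n^{(\tau)}\bigr)\,dx .
\]
The integrand is linear in $(v^{(\tau)},\nabla v^{(\tau)})$. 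Its coefficients are built from $\psi,\xi^{(\tau)}_\delta,\nabla\xi^{(\tau)}_\delta,\overline n^{(\tau)},\nabla\overline n^{(\tau)}$, all of which converge strongly; the normals converge in $C^1$ near the interface because $\underline\eta^{(\tau)}\to\eta$ uniformly in time in $W^{k_0+1,2}$. So weak--strong pairing applies.

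The solid part needs no divergence theorem. Pull it back to $\partial Q$. There the traces $\partial_t\tilde\eta^{(\tau)}|_{\partial Q}$ converge weakly in $L^2((0,h);L^2(\partial Q))$. The factor $\xi^{(\tau)}_\delta\circ\underline\eta^{(\tau)}\,|\operatorname{cof}\nabla\underline\eta^{(\tau)}n_Q|$ converges uniformly, because on the interface $\xi^{(\tau)}_\delta$ equals the explicit tangential projection of $\xi$ (the Bogovskii correction has zero trace).

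\textbf{A smaller point on the interpolant.} $\underline\eta^{(\tau)}$ is piecewise constant in time, so it does not literally satisfy the hypothesis $\eta^{(i)}\rightharpoonup\eta$ in $W^{1,2}((0,T);W^{k_0+2,2})$ of Proposition~\ref{prop:approximation-fluid-only}(ii). This is harmless. Only the pointwise-in-time estimate from its proof is needed:
\[
\|\xi^{(\tau)}_\delta(t)-\xi_\delta(t)\|_{W^{k_0,2}}\le C_\delta\|\underline\eta^{(\tau)}(t)-\eta(t)\|_{W^{k_0+1,2}} .
\]
The right-hand side tends to $0$ uniformly in $t$. This follows since $\tilde\eta^{(\tau)}\to\eta$ in $C([0,h];W^{k_0+1,2})$ (equi-H\"older in time plus compact embedding) and $\|\underline\eta^{(\tau)}-\tilde\eta^{(\tau)}\|_{W^{k_0+2,2}}\le\sqrt\tau\,\|\partial_t\tilde\eta^{(\tau)}\|_{L^2(W^{k_0+2,2})}$. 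Likewise, the convergence of $\partial_t\xi^{(\tau)}_\delta$ you list is never used, since the time-delayed equation contains no time derivative of the test function.
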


\begin{proof}
We have a fluid only equation as in \eqref{eqn:minimizer-fluid-only-eqn}, denoting this in the \((\tau)\)-notation of \eqref{eqn:tau-interpolations}, this reads as
\begin{equation}\label{eqn:fluid-only-eqn-tau}
\begin{aligned}
\int_0^h \nu \langle\varepsilon v^{(\tau)}, \varepsilon \xi^{(\tau)}\rangle_{\Omega^{(\tau)}} + \kappa\langle\nabla^{k_0} v^{(\tau)}, \nabla^{k_0}\xi^{(\tau)}\rangle_{\Omega^{(\tau)}}+\rho_f \left\langle \tfrac{v^{(\tau)} \circ \Phi^{{(\tau)}}-w^{(\tau),f}}{h}, \xi^{(\tau)} \circ\Phi^{(\tau)}\right\rangle_{\Omega_{0}}  \\ - \langle f^{(\tau)},\xi^{(\tau)}\rangle_{\Omega^{(\tau)}} -a \langle\partial_t \tilde\eta^{(\tau)} \circ (\underline{\eta}^{(\tau)})^{-1}-v^{(\tau)},\xi\rangle_{\partial \Omega(t)}\,dt=0
\end{aligned}
\end{equation}
where
\(\xi^{(\tau)}\equiv \xi_k^{(\tau)} \in C^\infty(\overline \Omega_{k-1}^{(\tau)}; \mathbb R^d)\)
on \(((k-1)\tau,k\tau]\) with \(\xi_k^{(\tau)}\cdot n_k^{(\tau)}=0\) on
\(\partial \Omega_k^{(\tau)}\) and
\(\operatorname{div}\xi_k^{(\tau)}=0\) in \(\Omega_k^{(\tau)}\).

Approximation of
\(\xi\):
Let us have a fixed test function for the limit equation, that is
\(\xi\in C^\infty([0,h] \times \overline \Omega(t);\mathbb R^d)\),
\(\xi\cdot n=0\) on \(\partial \Omega(t)\), \(\operatorname{div}\xi=0\)
in \(\Omega(t)\), with \(\xi(h)=0\) and \(\nabla^\ell (\xi\cdot n)=0\),
\(\ell=1,\dots,k_0\).

Let now \(\delta>0\) be fixed. Consider a smooth cutoff
\(\psi_\delta\) which is \(1\) on \(\delta\)-neighborhood of
\(\partial\eta(Q)\) and vanishes outside its
\(2\delta\)-neighborhood. That is,
\(\psi_\delta\in C^\infty(\mathbb R^d;\mathbb R^d)\) such that
\(\psi_\delta(y)=1\) if
\(\operatorname{dist}(y,\partial \eta(Q))\leq \delta\) and
\(\psi_\delta(y)=0\) if
\(\operatorname{dist}(y,\partial\eta(Q))\geq 2\delta\).

We now take the approximations \(\xi^{(\tau)}_\delta\) and
\(\xi_\delta\) given by Proposition \ref{prop:approximation-fluid-only} (ii). By this we see that
\(\xi_\delta^{(\tau)}\) is an admissible test function for
equation \eqref{eqn:fluid-only-eqn-tau}. By the same proposition we see that for fixed
\(\delta>0\) we can pass to the limit in the equation as
\(\tau \to 0\) and see that the limit equation is satisfied with
\(\xi_\delta\).

At this point we will comment in a bit more detail about the convergence
\(\tau\to 0\). For any \(\xi\in C^\infty([0,T] \times \overline \Omega(t);\mathbb R^d)\),
\(\xi\cdot n=0\) on \(\partial \Omega(t)\), \(\operatorname{div}\xi=0\),
with \(\xi(T)=0\)), construct
\(\xi^{(\tau)}(t) \in C^\infty(\overline \Omega_{k-1}^{(\tau)}; \mathbb R^d)\)
on \(t\in((k-1)\tau,k\tau]\) with
\(\xi_k^{(\tau)}\cdot n_{k-1}^{(\tau)}=0\) on
\(\partial \Omega_k^{(\tau)}\) and
\(\operatorname{div}\xi_k^{(\tau)}=0\) in \(\Omega_{k-1}^{(\tau)}\) as above. By
the lemma we have that \[\begin{aligned}
\xi^{(\tau)}_\delta&\overset\eta\to\xi_\delta\quad \text{in }L^2((0,T);W^{k_0,2}(\Omega;\mathbb R^d))
\end{aligned}\] holds in the sense that
\[(\nabla^\ell\xi^{(\tau)}_\delta)_0 \to(\nabla^\ell\xi_\delta)_0\quad \text{in }L^2((0,T);L^2(\Omega;\mathbb R^d))\]
for \(\ell=0,\dots,k_0\), where \((\cdot)_0\) is the extension by \(0\)
to \(\Omega\). In particular this means
\[\int_0^T \|(\nabla^\ell\xi^{(\tau)}_\delta)_0 -(\nabla^\ell\xi_\delta)_0\|_{L^2(\Omega;\mathbb R^d)}^2\,dt \to 0\]
for \(\ell=0,\dots,k_0\).

Then we show in a standard way, that for every
\(u^{(\tau)} \in L^2((0,T);L^2(\Omega^{(\tau)}(t);\mathbb R^d))\) and
\(u\in L^2((0,T);L^2(\Omega(t);\mathbb R^d))\) with
\(u^{(\tau)}\rightharpoonup u\) it holds that
\[\int_0^T \langle u^{(\tau)},\nabla^\ell\xi_\delta^{(\tau)}\rangle_{\Omega^{(\tau)}(t)}\,dt \to \int _0^T \langle u, \nabla^\ell \xi_\delta\rangle_{\Omega(t)} \,dt.\]
Indeed, we can write \[\begin{aligned}
&\left| \int_0^T \langle u^{(\tau)},\nabla^\ell\xi_\delta^{(\tau)}\rangle_{\Omega^{(\tau)}(t)}- \langle u, \nabla^\ell \xi_\delta\rangle_{\Omega(t)} \,dt \right| \\
&=\left|\int_0^T \langle u^{(\tau)}_0,(\nabla^\ell\xi_\delta^{(\tau)})_0 - (\nabla^\ell \xi_\delta)_0\rangle_{\Omega}- \langle u_0-u^{(\tau)}_0, (\nabla^\ell \xi_\delta)_0\rangle_{\Omega} \,dt  \right| \\ \leq&\|u_0^{(\tau)}\|_{L^2((0,T);L^2(\Omega))} \|(\nabla^\ell\xi_\delta^{(\tau)})_0 -(\nabla^\ell\xi_\delta)_0\|_{L^2((0,T);L^2(\Omega;\mathbb R^d))}^2 + \left|\int_0^T  \langle u_0-u^{(\tau)}_0, (\nabla^\ell \xi_\delta)_0\rangle_{\Omega} \,dt  \right| \to 0
\end{aligned}\] where the convergence is respectively by boundedness of
\(u_0^{(\tau)}\), strong convergence of
\((\nabla^\ell\xi_\delta^{(\tau)})_0\) and weak convergence of
\(u^{(\tau)}_0\), see Section \ref{sec:spaces-on-a-moving-domain}. %\todo{backref to convergence \section}

\[\int_0^h \nu \langle\delta v, \delta \xi_\delta \rangle_{\Omega(t)} + \kappa\langle\nabla^{k_0} v, \nabla^{k_0}\xi_\delta\rangle_{\Omega(t)}+\rho_f \left\langle \tfrac{v \circ \Phi-w^{f}}{h}, \xi_\delta \circ\Phi \right\rangle_{\Omega_{0}} - \langle f,\xi_\delta \rangle_{\Omega(t)}- a \langle\partial_t \eta \circ \eta^{-1}-v,\xi_\delta\rangle_{\partial \Omega(t)}\,dt=0\]

By the same lemma we can now pass here with
\(\delta \to 0\) with
\(\xi_\delta \to \xi\) in \(L^2((0,T);W^{k_0,2}(\Omega(t);\mathbb R^d))\)
to obtain the desired limit equation.
\end{proof}

\subsubsection{Coupled equation}\label{sec:coupled-eqn}
\begin{proposition}\label{prop:coupled-equation-taulimit}
For the limit $\eta,v$ from \eqref{eqn:weak-limit-tau} the following equation is satisfied
\begin{equation}\label{eqn:coupled-equation-taulimit}
\begin{aligned} 
\int_0^h DE(\eta)\langle\phi\rangle&+D_2 R\left(\eta, \partial_t\eta \right) \langle \phi \rangle  + 2\kappa^{a_0}\langle \nabla^{k_0+2}\eta, \nabla^{k_0+2}\phi \rangle + 2\kappa\left\langle \nabla^{k_0+2}\partial_t\eta , \nabla^{k_0+2}\phi \right\rangle
\\
&+\rho_s\left\langle\tfrac{\partial_t\eta- w \circ \eta_0}{h}, \phi\right\rangle^2-\rho_s\left\langle f\circ\eta,\phi \right\rangle
\\
&+\nu \langle\varepsilon v, \varepsilon \xi\rangle_{\Omega(t)} + \kappa\langle\nabla^{k_0} v, \nabla^{k_0}\xi\rangle_{\Omega(t)}+\rho_f \left\langle \tfrac{v \circ \Phi_t -w}{h}, \xi\circ\Phi \right\rangle_{\Omega_{0}} - \rho_f\langle f,\xi\rangle_{\Omega(t)}\,dt=0
\end{aligned}
\end{equation} for all
\(\xi\in C^\infty([0,h]\times \overline \Omega(t);\mathbb R^d)\),
\(\operatorname {div} \xi =0\) in \(\Omega(t)\) and
\(\phi\in W^{k_0+2,2}(Q;\mathbb R^d)\) with \(\phi = \xi\circ\eta\) on
\(Q\).

\end{proposition}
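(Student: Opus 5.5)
We pass to the limit $\tau\to0$ in the discrete coupled equation \eqref{eqn:minimizer-coupled-eqn}, written in the interpolation notation \eqref{eqn:tau-interpolations}. The approach mirrors Proposition \ref{prop:fluid-only-eqn-taulimit}, but now uses Proposition \ref{prop:approximation-coupled-test-functions} for the test functions.

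\textbf{Step 1 (test functions).} Fix a limit test pair $(\phi,\xi)$ with $\phi=\xi\circ\eta$ and $\operatorname{div}\xi=0$ in $\Omega(t)$. The discrete equation requires $\operatorname{div}\xi=0$ in $\Omega^{(\tau)}(t)$ and $\phi^{(\tau)}=\xi\circ\underline\eta^{(\tau)}$. We first replace $\xi$ by the approximant $\xi_\delta$ from Proposition \ref{prop:approximation-coupled-test-functions}. It is smooth and divergence-free on the $\delta$-neighbourhood of $\Omega(t)$.

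By the uniform convergence $\eta^{(\tau)}\to\eta$ in $C([0,h];C^{1,\alpha})$, the domains satisfy $\Omega^{(\tau)}(t)\subset\{\operatorname{dist}(\cdot,\Omega(t))<\delta\}$ for $\tau$ small. The same holds for $\underline\eta^{(\tau)}$, which is uniformly close to $\eta^{(\tau)}$. Hence $\xi_\delta$ itself is admissible at level $\tau$, paired with $\phi^{(\tau)}_\delta:=\xi_\delta\circ\underline\eta^{(\tau)}$.

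By Lemma \ref{lem:eulerian-lagrangian-wk02} and the strong convergence of $\underline\eta^{(\tau)}$, we get $\phi_\delta^{(\tau)}\to\phi_\delta=\xi_\delta\circ\eta$ strongly in $L^2((0,h);W^{k_0+2,2}(Q))$. This uses the smoothness of $\xi_\delta$ and the uniform $W^{k_0+2,2}$ bounds on $\eta^{(\tau)}$.

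\textbf{Step 2 (limit $\tau\to0$ for fixed $\delta$).} The linear solid terms converge by weak--strong pairing:
\begin{itemize}
\item the $\kappa^{a_0}$, $\kappa\,\partial_t$ and inertial terms, using \eqref{eqn:weak-limit-tau};
\item the $D_2R$ term, by \eqref{as:R-differentiable}, since $\underline\eta^{(\tau)}\rightharpoonup\eta$ and $\partial_t\tilde\eta^{(\tau)}\rightharpoonup\partial_t\eta$;
\item the $DE(\eta^{(\tau)})$ term, by \eqref{as:E-differentiable}, since the bound on $\kappa^{a_0}\|\nabla^{k_0+2}\eta^{(\tau)}\|$ gives strong $W^{2,q}$ convergence.
\end{itemize}
The forcing terms converge since $f^{(\tau)}\to f$ and composition with $\underline\eta^{(\tau)}$ is continuous.

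The fluid viscous and $\kappa$ terms converge by the zero-extension argument used in Proposition \ref{prop:fluid-only-eqn-taulimit}: $v^{(\tau)}\overset\eta\rightharpoonup v$ is paired against the fixed smooth $\xi_\delta$, and $\chi_{\Omega^{(\tau)}}\to\chi_{\Omega(t)}$ strongly.

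For the inertial fluid term $\langle v^{(\tau)}\circ\Phi^{(\tau)}-w_f^{(\tau)},\xi_\delta\circ\Phi^{(\tau)}\rangle_{\Omega_0}$ we proceed as follows. We use $\Phi^{(\tau)}\to\Phi$ uniformly and Proposition \ref{prop:estimates-of-the-flow-map} ($\det\nabla\Phi^{(\tau)}\to1$) to change variables to $\Omega^{(\tau)}(t)$. Then $\xi_\delta\circ\Phi^{(\tau)}\to\xi_\delta\circ\Phi$ strongly, so the term converges to $\langle v\circ\Phi-w_f,\xi_\delta\circ\Phi\rangle_{\Omega_0}$.

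A detail to handle: the discrete scheme tests with $\xi/\tau$ on the fluid and $\phi$ on the solid. The coupled equation \eqref{eqn:minimizer-coupled-eqn} is already normalized, so no $\tau$ remains.

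\textbf{Step 3 ($\delta\to0$).} This is the step I expect to be the main obstacle. Proposition \ref{prop:approximation-coupled-test-functions} gives $\xi_\delta\to\xi$ in $L^2(W^{k_0,2}(\Omega(t)))$ and $\phi_\delta\to\phi$ in $L^\infty(W^{k_0+2,2})\cap W^{1,2}(W^{1,2})$, using $\eta\in L^\infty(W^{k_0+2,2})$. This suffices for every term, since each is continuous in these norms given the a priori bounds on $\eta$ and $v$.

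The main difficulty is the inertial fluid term. There we need $\xi_\delta\circ\Phi\to\xi\circ\Phi$ in $L^2(\Omega_0)$, which follows from the $L^2$ estimate $\|\xi_\delta-\xi\|_{L^2}\le c\delta^{2/(d+2)}\|\xi\|_{W^{1,2}}$ together with $\det\nabla\Phi=1$.

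A second issue is the admissibility in Step 1 near the outer boundary $\partial\Omega$, where $\xi\cdot n=0$ is required. I would handle this by noting that the construction of Proposition \ref{prop:approximation-coupled-test-functions} modifies $\xi$ only near $\eta(Q)$, which stays away from $\partial\Omega$ before collision.
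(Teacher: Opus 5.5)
Your route is the paper's: replace $\xi$ by $\xi_\delta$ from Proposition~\ref{prop:approximation-coupled-test-functions}, use $\Omega^{(\tau)}(t)\to\Omega(t)$ to see that $(\xi_\delta\circ\underline\eta^{(\tau)},\xi_\delta)$ is admissible for small $\tau$, pass $\tau\to0$, then let $\delta\to0$. However, the one justification you add beyond the paper is false, and it sits exactly where the difficulty is.

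You claim $\phi^{(\tau)}_\delta=\xi_\delta\circ\underline\eta^{(\tau)}\to\xi_\delta\circ\eta$ strongly in $L^2((0,h);W^{k_0+2,2}(Q))$. This does not follow. By the chain rule, $\nabla^{k_0+2}(\xi_\delta\circ\underline\eta^{(\tau)})=(\nabla\xi_\delta\circ\underline\eta^{(\tau)})\,\nabla^{k_0+2}\underline\eta^{(\tau)}$ plus terms containing at most $k_0+1$ derivatives of $\underline\eta^{(\tau)}$.
\begin{itemize}
\item The lower-order terms do converge strongly: the a priori bounds give compactness of $\underline\eta^{(\tau)}$ in $W^{k_0+1,2}$, $W^{2,q}$ and $C^{1,\alpha}$.
\item The leading term converges only weakly, because $\nabla^{k_0+2}\underline\eta^{(\tau)}$ is merely weakly-$*$ convergent by \eqref{eqn:weak-limit-tau}.
\item Smoothness of $\xi_\delta$ and Lemma~\ref{lem:eulerian-lagrangian-wk02} only give boundedness, not continuity in $\eta$ at top order.
\end{itemize}
Consequently the two regularizing terms $2\kappa^{a_0}\langle\nabla^{k_0+2}\eta^{(\tau)},\nabla^{k_0+2}\phi^{(\tau)}_\delta\rangle$ and $2\kappa\langle\nabla^{k_0+2}\partial_t\tilde\eta^{(\tau)},\nabla^{k_0+2}\phi^{(\tau)}_\delta\rangle$ are not weak--strong pairings, as you call them. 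They are products of two merely weakly convergent sequences. For instance, the first contains $\langle\nabla^{k_0+2}\eta^{(\tau)},(\nabla\xi_\delta\circ\underline\eta^{(\tau)})\nabla^{k_0+2}\underline\eta^{(\tau)}\rangle$, which is quadratic in the top derivatives and can leave a defect measure in the limit. The $DE$, $D_2R$, inertial and forcing terms are fine, since they only need strong convergence of $\phi^{(\tau)}_\delta$ in $W^{2,q}$, $W^{1,2}$ or $L^2$, and that does hold.

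The paper's proof lists the same weak convergences and asserts the limit passage without addressing this, so the missing ingredient is not there either. What you need is strong convergence of $\underline\eta^{(\tau)}$ (equivalently of $\eta^{(\tau)}$) to $\eta$ in $L^2((0,h);W^{k_0+2,2}(Q))$. The two are equivalent because $\eta^{(\tau)}-\underline\eta^{(\tau)}=\tau\,\partial_t\tilde\eta^{(\tau)}\to0$ in that space. One way to obtain it is a Minty-type argument that exploits the quadratic structure of the $\kappa$-regularizers, similar in spirit to the paper's $\kappa\to0$ step.
\begin{itemize}
\item Test the discrete coupled equation with a pair whose solid part is essentially $\eta^{(\tau)}-\eta_\varepsilon$, where $\eta_\varepsilon$ is a mollification of the limit, and whose fluid part is its push-forward, made divergence-free with the Bogovskii operator.
\item All non-regularizing terms then vanish in the limit.
\item The $\kappa$-dissipation term is bounded below by $-o(1)$ after integration in time.
\item The $\kappa^{a_0}$ term then forces $\nabla^{k_0+2}\eta^{(\tau)}\to\nabla^{k_0+2}\eta$ in $L^2L^2$.
\end{itemize}
Finally, the step you single out as the main obstacle, $\delta\to0$, is routine given Proposition~\ref{prop:approximation-coupled-test-functions}; the real obstacle is the top-order issue above.
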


\begin{proof}
As for now, assume the minimizer \(\eta_k^{(\tau)}\) is always in the
interior of \(\mathcal E^{k_0}\). Thus as above \eqref{eqn:minimizer-coupled-eqn} it holds
\[\begin{aligned} 
DE( \eta_k^{(\tau)})\langle\phi\rangle&+D_2 R\left(\eta_{k-1}^{(\tau)}, \tfrac{\eta_k^{(\tau)}-\eta_{k-1}^{(\tau)}}{\tau}\right) \langle \phi \rangle  + 2\kappa^{a_0}\langle \nabla^{k_0+2}\eta_k^{(\tau)}, \nabla^{k_0+2}\phi \rangle
\\ &+ 2\kappa\left\langle \nabla^{k_0+2}\tfrac{\eta_k^{(\tau)}-\eta_{k-1}^{(\tau)}}{\tau} , \nabla^{k_0+2}\phi \right\rangle
+\rho_s\left\langle\tfrac{\tfrac{\eta_k^{(\tau)}-\eta_{k-1}^{(\tau)}}{\tau}-w_{s,k-1}^{(\tau)}}{h}, \phi\right\rangle- \rho_s\left\langle f_k^{(\tau)}\circ\eta_{k-1}^{(\tau)},\phi \right\rangle
\\&+\nu \langle\varepsilon v_k^{(\tau)}, \varepsilon \xi\rangle_{\Omega_{k-1}^{(\tau)}}
 + \kappa\langle\nabla^{k_0} v_k^{(\tau)}, \nabla^{k_0}\xi\rangle_{\Omega_{k-1}^{(\tau)}}+\rho_f \left\langle \tfrac{v_k^{(\tau)} \circ \Phi_{k-1}^{(\tau)}-w_{f,k-1}^{(\tau)}}{h}, \xi \circ\Phi_{k-1}^{(\tau)}\right\rangle_{\Omega_{0}} 
 \\&- \rho_f\langle f_k^{(\tau)},\xi\rangle_{\Omega_{k-1}^{(\tau)}}=0
\end{aligned}\] for all
\(\xi\in C^\infty(\overline \Omega;\mathbb R^d)\),
\(\operatorname {div} \xi =0\) in \(\Omega_{k-1}^{(\tau)}\) and
\(\phi\in W^{k_0+2,2}(Q;\mathbb R^d)\) with
\(\phi = \xi\circ\eta_{k-1}^{(\tau)}\).

Now we pass to the limit. We use as in Proposition \ref{prop:approximation-coupled-test-functions} that the
test functions can be approximated.

Let \(\xi\) be a test function for the limit equation, that is
\(\xi\in C^\infty([0,h]\times \overline \Omega(t); \mathbb R^d)\) with
\(\xi\cdot n=0\) on \(\partial\Omega\) and \(\operatorname{div}\xi=0\)
in \(\Omega(t)\). The corresponding solid test function \(\phi\) is
defined by \(\phi:=\xi\circ\eta\).

Pick \(\delta >0\). Then for this \(\delta\), find
\(\xi_\delta\) and \(\phi_\delta\) by Proposition \ref{prop:approximation-coupled-test-functions}.
As \(\Omega^{(\tau)}(t)\to \Omega(t)\) in the Hausdorff distance we have
that \(\xi_\delta\) is divergence free on \(\Omega^{(\tau)}\).
Necessarily it holds \(\phi_\delta = \xi_\delta\circ\eta\), so
we will use this notation.

For \(\tau\) small enough, \(\xi_\delta\), \(\phi_\delta\) is
an admissible test function for the approximate equation in \eqref{eqn:coupled-eqn-tau} .

Thus we get, after integrating over each \(\tau\) intervals and summing
this up over \(k\), that 
\begin{equation}\label{eqn:coupled-eqn-tau}
\begin{aligned} 
\int_0^h DE(\eta^{(\tau)})\langle\xi_\delta\circ\underline\eta^{(\tau)}\rangle+D_2 R\left(\underline\eta^{(\tau)}, \partial_t\tilde\eta^{(\tau)}\right) \langle \xi_\delta\circ\underline\eta^{(\tau)}\rangle  + 2\kappa^{a_0}\langle \nabla^{k_0+2}\overline\eta^{(\tau)}, \nabla^{k_0+2}(\xi_\delta\circ\underline \eta^{(\tau)}) \rangle\\ + 2\kappa\left\langle \nabla^{k_0} \partial_t\tilde\eta^{(\tau)}, \nabla^{k_0}(\xi_\delta\circ\underline\eta^{(\tau)}) \right\rangle
+\rho_s\left\langle\tfrac{\partial_t\tilde\eta^{(\tau)}-w_{s,k-1}^{(\tau)}}{h}, \xi_\delta\circ\underline\eta^{(\tau)} \right\rangle- \rho_s\left\langle \overline f^{(\tau)}\circ\underline\eta^{(\tau)},\xi_\delta\circ\underline\eta^{(\tau)} \right\rangle
\\
+\nu \langle\varepsilon v^{(\tau)}, \varepsilon \xi_\delta\rangle_{\Omega^{(\tau)}} + \kappa\langle\nabla^{k_0} v^{(\tau)}, \nabla^{k_0}\xi_\delta\rangle_{\Omega^{(\tau)}(t)}+\rho_f \left\langle \tfrac{v^{(\tau)} \circ \Phi^{(\tau)}-w_{f,k-1}^{(\tau)}}{h}, \xi\circ\Phi^{(\tau)} \right\rangle_{\Omega_{0}} \\ - \rho_f\langle \overline f^{(\tau)},\xi_\delta\rangle_{\Omega^{(\tau)}(t)}\,dt =0.
\end{aligned}
\end{equation}
Now we pass to the limit in this equation. It is possible since we have
the following convergences
\begin{align*}
\eta^{(\tau)} &\overset\ast\rightharpoonup \eta &\quad  \text{in } &L^\infty((0,h);W^{k_0+2,2}(Q;\mathbb R^d)), \\
\partial_t\tilde\eta^{(\tau)} &\rightharpoonup \partial_t\eta &\quad  \text{in } &L^2((0,h);W^{k_0+2,2}(Q;\mathbb R^d)), \\
\eta^{(\tau)}&\to \eta &\quad \text{in } &C^{(1/2)^-}([0,h];C^{1,\alpha^-}(Q;\mathbb R^d)),\\
v^{(\tau)}&\overset\eta\rightharpoonup v &\quad \text{in } &L^2((0,h);W^{k_0,2}(\Omega(t);\mathbb R^d)).
\intertext{The passage to the limit in the non linearity is not a
problem, as thanks to the compact embedding
\(W^{k_0+2,2}\hookrightarrow\hookrightarrow W^{2,q}\) we have}
\eta^{(\tau)}&\to\eta \quad &\text{in }&L^2((0,h);W^{2,q}(Q;\mathbb R^d)),
\intertext{and thus by \eqref{as:E-differentiable}}
DE(\eta^{(\tau)})&\rightharpoonup DE(\eta) \quad&\text{in }&L^2((0,h);W^{2,q}(Q;\mathbb R^d)^*),
\intertext{and also by \eqref{as:R-differentiable}}
D_2R(\underline\eta^{(\tau)},\partial_t\tilde\eta^{(\tau)}) &\rightharpoonup D_2R(\eta,\partial_t\eta) \quad&\text{in }&L^2((0,h);W^{1,2}(Q;\mathbb R^d)).
\end{align*}

Thus one can pass to the limit \(\tau\to0\) and the limit equation in the statement
is satisfied with $\xi_\delta$. Since the functions \(\xi_\delta \to \xi\) in
\(L^2((0,T);W^{k_0,2}(\Omega(t);\mathbb R^d)\) from Proposition \ref{prop:approximation-coupled-test-functions}, we
obtain equation in the statement for $\xi$.
\end{proof}

\begin{proposition}[Energy inequality]
The solution satisfies the energy inequality
\begin{equation}\label{eqn:energy-inequality-taulimit}
\begin{aligned} 
&E(\eta(h))+2\kappa^{a_0}\|\nabla^{k_0+2}\eta(h)\|^2+ \frac{1}{2h} \int_0^h \rho_f\|v\|^2_{\Omega(t)}+\rho_s\| \partial_t\eta \|^2 \,dt 
\\ 
&+ \int_0^h 2R(\eta,\partial_t\eta)  + 2\kappa\|\nabla^{k_0+2}\partial_t\eta\|^2 + \kappa\|\nabla^{k_0} v\|^2 + a \|\partial_t \eta \circ \eta^{-1} -v\|^2_{\partial \Omega(t)} \,dt
\\
& \leq E(\eta(0))+2\kappa^{a_0}\|\nabla^{k_0+2}\eta(0)\|^2+ \frac{1}{2h} \int_0^h\rho_f\|w_f\|^2_{\Omega_0}+\rho_s\| w_s \|^2 \,dt +\int_0^t \rho_s\left\langle f\circ\eta,\partial_t\eta \right\rangle
+ \rho_f\langle f,v\rangle_{\Omega(t)}\,dt.
\end{aligned}
\end{equation}
\end{proposition}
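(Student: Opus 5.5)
The energy inequality will come from the discrete energy estimate of Lemma \ref{lem:discrete-energy-estimates}, with the $\tau$-independent bounds pushed to the limit $\tau\to0$ by weak lower semicontinuity. The discrete estimate there was obtained by comparing the minimizer with $(\eta_{k-1}^{(\tau)},0)$, which loses constants (factors $1/8$ versus $1/2$). To get the sharp constants in \eqref{eqn:energy-inequality-taulimit}, I will instead test the discrete Euler--Lagrange equations with the solution itself.

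\textbf{Step 1 (discrete identity).} I test the coupled equation \eqref{eqn:minimizer-coupled-eqn} with $(\phi,\xi)=\bigl(\eta_k^{(\tau)}-\eta_{k-1}^{(\tau)},\,\tau v_k^{(\tau)}\bigr)$ and use the fluid-only equation \eqref{eqn:minimizer-fluid-only-eqn} to account for the tangential mismatch. Admissibility of this pair is the issue: $\tau v_k^{(\tau)}$ and $\eta_k^{(\tau)}-\eta_{k-1}^{(\tau)}$ agree only in the normal direction on the interface. So I split $v_k^{(\tau)}$ into a part continuous across the interface (extending the solid velocity, corrected with the Bogovskii operator of Theorem \ref{thm:bogovskii-close-deformations}) and a tangential fluid-only remainder. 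Summing the two equations then yields the friction term $a\tau\|\tfrac{\eta_k-\eta_{k-1}}{\tau}-v_k\|^2_{\partial\Omega_{k-1}}$ exactly.

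\textbf{Step 2 (convexity bounds).} For the energy I use the non-convexity estimate \eqref{as:E-nonconvexity-estimate}:
\[DE(\eta_k)\langle\eta_k-\eta_{k-1}\rangle\ge E(\eta_k)-E(\eta_{k-1})-C_1\tau^2\|\tfrac{\eta_k-\eta_{k-1}}{\tau}\|_{W^{1,2}}^2 .\]
The error term is $O(\tau)$ after summation, since $\tau\|\cdot\|^2$ is summably bounded by Lemma \ref{lem:discrete-energy-estimates} and \eqref{as:R-korn-inequality}. Note that the $\kappa^{a_0}$ term is quadratic, hence convex. Homogeneity \eqref{as:R-two-homogeneous} gives $D_2R(\eta,b)\langle b\rangle=2R(\eta,b)$, and the $\kappa$-dissipation gives $2\kappa\|\nabla^{k_0+2}\partial_t\eta\|^2$ analogously. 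For the inertial terms I use $\langle a-b,a\rangle\ge\tfrac12|a|^2-\tfrac12|b|^2$, both on the solid side and on the fluid side, the latter after changing variables by $\Phi_{k-1}^{(\tau)}$. Proposition \ref{prop:estimates-of-the-flow-map} gives $\det\nabla\Phi\to1$, so $\|v\circ\Phi\|_{\Omega_0}\to\|v\|_{\Omega(t)}$ up to a factor $1+\varepsilon$. This produces the $\frac1{2h}$ terms with $w_f,w_s$.

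\textbf{Step 3 (limit $\tau\to0$).} I sum over $k$ up to $h/\tau$ and pass to the limit using \eqref{eqn:weak-limit-tau}. On the left, lower semicontinuity applies: \eqref{as:E-wlsc} for $E(\eta(h))$, weak lsc of norms, \eqref{as:R-wlsc} combined with the strong convergence $\underline\eta^{(\tau)}\to\eta$, and for the boundary friction the convergence of traces via the normal-trace lemma together with uniform convergence of the interfaces. On the right, the forcing terms converge, because $f^{(\tau)}\to f$ strongly and $\eta^{(\tau)}\to\eta$ uniformly while the velocities converge weakly.

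\textbf{Main obstacle.} The main obstacle is Step 1, the admissibility of testing with the solution, because of the mismatch between continuous and fluid-only test function classes. Note also that the interface $\partial\Omega_{k-1}$ is lagged, so the friction term is evaluated on the previous domain; the uniform Hausdorff convergence of domains must handle this in the limit. If the splitting in Step 1 proves delicate, the fallback is to use the minimality comparison argument instead, which directly yields $\mathcal J_k(\eta_k,v_k)\le\mathcal J_k(\eta_{k-1},0)$, i.e.\ an inequality with the friction term already present but with nonsharp constants; sharp constants would then need the Euler--Lagrange route.
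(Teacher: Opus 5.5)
Your proposal follows essentially the same route as the paper: the inequality is derived already at the discrete $\tau$-level by splitting the fluid velocity into a part matching the solid velocity $\tfrac{\eta_k^{(\tau)}-\eta_{k-1}^{(\tau)}}{\tau}$ on the interface (tested in the coupled equation) and a divergence-free tangential extension of the interface jump (tested in the fluid-only equation), so that the friction term appears exactly. It then uses the non-convexity estimate \eqref{as:E-nonconvexity-estimate}, $2$-homogeneity of $R$, the polarization identity for the $\kappa^{a_0}$-term and Young's inequality in the inertial terms, before summing over $k$ and letting $\tau\to0$. The only difference is cosmetic: you let the non-convexity error vanish as $O(\tau)$ via the bounds of Lemma \ref{lem:discrete-energy-estimates} and \eqref{as:R-korn-inequality}, while the paper absorbs it into the dissipation for small $\tau$, which amounts to the same thing in the limit.
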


\begin{proof}
 Note that (in contrast to \cite[Lemma 4.8]{benesovaVariationalApproachHyperbolic2023}) we derive the energy inequality already here on the discrete $\tau$ level. Here we rely on our findings from \cite{cesikStabilityConvergenceTime2023}. 
Let $\tilde v_k^{(\tau)}\in W^{k_0,2}(\Omega_{k-1}^{(\tau)})$ be an extension of the boundary jump $\left.\left(v_k^{(\tau)}-\tfrac{\eta_k^{(\tau)}-\eta_{k-1}^{(\tau)}}{\tau}\circ(\eta_k^{(\tau)})^{-1} \right)\right|_{\partial\Omega_{k-1}^{(\tau)}}$ into the fluid domain. Then we use the test function pair $\left( \tfrac{\eta_k^{(\tau)}-\eta_{k-1}^{(\tau)}}{\tau}, v_k^{(\tau)}-\tilde v_k^{(\tau)}\right)$ which satisfies the required coupling in \eqref{eqn:coupled-eqn-tau} in \eqref{eqn:coupled-eqn-tau}; and use the test function $\tilde v_k^{(\tau)}$ in \eqref{eqn:fluid-only-eqn-tau}. Adding these two gives $$\begin{aligned}
\left(DE( \eta_k^{(\tau)})+D_2 R\left(\eta_{k-1}^{(\tau)}, \tfrac{\eta_k^{(\tau)}-\eta_{k-1}^{(\tau)}}{\tau}\right) \right)\left\langle \tfrac{\eta_k^{(\tau)}-\eta_{k-1}^{(\tau)}}{\tau} \right\rangle  + 2\kappa^{a_0}\left\langle \nabla^{k_0+2}\eta_k^{(\tau)}, \nabla^{k_0+2}\tfrac{\eta_k^{(\tau)}-\eta_{k-1}^{(\tau)}}{\tau} \right\rangle
\\+ 2\kappa\left\| \nabla^{k_0+2}\tfrac{\eta_k^{(\tau)}-\eta_{k-1}^{(\tau)}}{\tau} \right\|^2
+\rho_s\left\langle\tfrac{\tfrac{\eta_k^{(\tau)}-\eta_{k-1}^{(\tau)}}{\tau}-w_{s,k-1}^{(\tau)}}{h}, \tfrac{\eta_k^{(\tau)}-\eta_{k-1}^{(\tau)}}{\tau}\right\rangle- \rho_s\left\langle f_k^{(\tau)}\circ\eta_{k-1}^{(\tau)},\tfrac{\eta_k^{(\tau)}-\eta_{k-1}^{(\tau)}}{\tau} \right\rangle
\\
+\nu \|\varepsilon v_k^{(\tau)}\|_{\Omega_{k-1}^{(\tau)}}^2 \!+ \kappa\|\nabla^{k_0} v_k^{(\tau)}\|_{\Omega_{k-1}^{(\tau)}}^2\!+\rho_f \left\langle \tfrac{v_k^{(\tau)} \circ \Phi_{k-1}^{(\tau)}-w_{f,k-1}^{(\tau)}}{h}, v_k^{(\tau)} \circ\Phi_{k-1}^{(\tau)}\right\rangle_{\Omega_{0}} \hspace{-1em} 
\\
+a \left\|\tfrac{\eta_k^{(\tau)}-\eta_{k-1}^{(\tau)}}{\tau}\circ(\eta_k^{(\tau)})^{-1}  -v_k^{(\tau)}\right\|^2_{\partial\Omega_{k-1}^{(\tau)}} - \rho_f\langle f_k^{(\tau)},v_k^{(\tau)}\rangle_{\Omega_{k-1}^{(\tau)}}=0.
\end{aligned}$$
Here we use the non convexity estimate  \eqref{as:E-nonconvexity-estimate} $$DE( \eta_k^{(\tau)})\left\langle\tfrac{\eta_k^{(\tau)}-\eta_{k-1}^{(\tau)}}{\tau}\right\rangle\geq\frac1\tau(E(\eta_k^{(\tau)})-E(\eta_{k-1}^{(\tau)})-C_1\|\nabla(\eta_k^{(\tau)}-\eta_{k-1}^{(\tau)})\|^2),$$
two-homogeneity of R \eqref{as:R-two-homogeneous} $$D_2 R\left(\eta_{k-1}^{(\tau)}, \tfrac{\eta_k^{(\tau)}-\eta_{k-1}^{(\tau)}}{\tau}\right) \left\langle \tfrac{\eta_k^{(\tau)}-\eta_{k-1}^{(\tau)}}{\tau} \right\rangle =2 R\left(\eta_{k-1}^{(\tau)}, \tfrac{\eta_k^{(\tau)}-\eta_{k-1}^{(\tau)}}{\tau}\right), $$
we expand the term
$$2\kappa^{a_0}\left\langle \nabla^{k_0+2}\eta_k^{(\tau)}, \nabla^{k_0+2}\tfrac{\eta_k^{(\tau)}-\eta_{k-1}^{(\tau)}}{\tau} \right\rangle = \frac{\kappa^{a_0}}{\tau}\left(  \|\nabla^{k_0+2}\eta_k^{(\tau)}\|^2 + \|\nabla^{k_0+2}(\eta_k^{(\tau)}-\eta_{k-1}^{(\tau)})\|^2 -\|\nabla^{k_0+2}\eta_{k-1}^{(\tau)}\|^2\right)$$
and use Young's inequality in the inertial terms $$\begin{aligned}
\rho_s\left\langle\tfrac{\tfrac{\eta_k^{(\tau)}-\eta_{k-1}^{(\tau)}}{\tau}-w_{s,k-1}^{(\tau)}}{h}, \tfrac{\eta_k^{(\tau)}-\eta_{k-1}^{(\tau)}}{\tau}\right\rangle &\geq  \frac{\rho_s}{2h} \left\|\tfrac{\eta_k^{(\tau)}-\eta_{k-1}^{(\tau)}}{\tau}\right\|^2 - \frac{\rho_s}{2h}\|w_{s,k-1}^{(\tau)}\|^2,\\
\rho_f \left\langle \tfrac{v_k^{(\tau)} \circ \Phi_{k-1}^{(\tau)}-w_{f,k-1}^{(\tau)}}{h}, v_k^{(\tau)} \circ\Phi_{k-1}^{(\tau)}\right\rangle_{\Omega_{0}} &\geq \frac{\rho_f}{2h}\|v_k^{(\tau)}\circ\Phi_{k-1}^{(\tau)}\|_{\Omega_0}^2 - \frac{\rho_f}{2h}\|w_{f,k-1}^{(\tau)}\|_{\Omega_0}^2.
\end{aligned}$$
Altogether, multiplying by $\tau$ and using these estimates (the error from non convexity estimate is absorbed by dissipation for $\tau$ small enough) we obtain the estimate
$$\begin{aligned} 
E(\eta_k^{(\tau)})&+ 2\kappa^{a_0}\| \nabla^{k_0+2}\eta_k^{(\tau)}\|^2 + \tau 2 R\left(\eta_{k-1}^{(\tau)}, \tfrac{\eta_k^{(\tau)}-\eta_{k-1}^{(\tau)}}{\tau}\right)
+ \tau 2\kappa\left\| \nabla^{k_0+2}\tfrac{\eta_k^{(\tau)}-\eta_{k-1}^{(\tau)}}{\tau} \right\|^2+\tau\frac{\rho_s}{2h}\left\|\tfrac{\eta_k^{(\tau)}-\eta_{k-1}^{(\tau)}}{\tau}\right\|^2 
\\ 
&+\tau\kappa\|\nabla^{k_0} v_k^{(\tau)}\|_{\Omega_{k-1}^{(\tau)}}^2
+\tau\nu \| \varepsilon v_k^{(\tau)}\|_{\Omega_{k-1}^{(\tau)}}^2 
+\tau\frac{\rho_f}{2h} \left\| v_k^{(\tau)} \circ \Phi_{k-1}^{(\tau)}\right\|_{\Omega_{0}}^2
+a \left\|\tfrac{\eta_k^{(\tau)}-\eta_{k-1}^{(\tau)}}{\tau}\circ(\eta_k^{(\tau)})^{-1}  -v_k^{(\tau)}\right\|^2_{\partial \Omega_{k-1}^{(\tau)}}
\\ 
&\leq E(\eta_{k-1}^{(\tau)}) + 2\kappa^{a_0}\| \nabla^{k_0+2}\eta_{k-1}^{(\tau)}\|^2 + \tau\frac{\rho_s}{2h} \|w_{s,k-1}^{(\tau)}\|^2 +\tau\frac{\rho_f}{2h}\|w_{f,k-1}^{(\tau)}\|_{\Omega_0}^2
\\
&\quad\ + \tau\rho_s\left\langle f_k^{(\tau)}\circ\eta_{k-1}^{(\tau)},\tfrac{\eta_k^{(\tau)}-\eta_{k-1}^{(\tau)}}{\tau} \right\rangle +\tau\rho_f\langle f_k^{(\tau)},v_k^{(\tau)}\rangle_{\Omega_{k-1}^{(\tau)}}.
\end{aligned}$$
Summing over $k$, this yields the energy inequality
$$\begin{aligned} 
E(\eta^{(\tau)}(h))&+2\kappa^{a_0}\|\nabla^{k_0+2}\eta^{(\tau)}(h)\|^2+ \frac{1}{2h} \int_0^h \rho_f\|v^{(\tau)}\|^2_{\Omega(t)}+\rho_s\| \partial_t\tilde\eta^{(\tau)} \|^2 \,dt + \int_0^h 2R(\underline\eta^{(\tau)},\partial_t\tilde\eta^{(\tau)})
\\
&  + 2\kappa\|\nabla^{k_0+2}\partial_t\tilde\eta^{(\tau)}\|^2 
+ \kappa\|\nabla^{k_0} v^{(\tau)}\|^2 \,dt
+a \left\|\partial_t\tilde\eta^{(\tau)}\circ(\eta^{(\tau)})^{-1}  -v^{(\tau)}\right\|^2_{\partial \Omega^{(\tau)}(t)}
\\
&\leq E(\eta^{(\tau)}(0))+2\kappa^{a_0}\|\nabla^{k_0+2}\eta^{(\tau)}(0)\|^2+ \frac{1}{2h} \int_0^h\rho_f\|w_f\|^2_{\Omega_0}+\rho_s\| w_s \|^2 \,dt 
\\&\quad\ +\int_0^t \rho_s\left\langle f\circ\eta^{(\tau)},\partial_t\tilde\eta^{(\tau)} \right\rangle
+ \rho_f\langle f,v^{(\tau)}\rangle_{\Omega(t)}\,dt
\end{aligned}$$
which passes to the limit $\tau\to 0$.
\end{proof}

\subsection{\texorpdfstring{Passing to limit with the delay \(h\to 0\)}{Passing to limit with the delay h\textbackslash to 0}}\label{passing-to-limit-with-the-delay-hto-0}

We construct the solution on intervals \((0,h)\), \((h,2h)\), \(\dots\), \((T-h,T)\), as in the previous section and glue this together to get a time-delayed solution on the entire time interval $(0,T)$.

More precisely, for \(\ell=0,\dots,T/h - 1\) we let
\(\eta^{(h)}(\cdot +\ell h)\), \(v^{(h)}(\cdot + \ell h)\) and
\(\Omega^{(h)}(\cdot+\ell h)\), \(\Phi^{(h)}(\cdot + \ell h)\) to be
inductively the solution of the time-delayed equation of Definition \ref{def:time-delayed-solution} constructed
in the previous section, where we put first for \(\ell=0\)
\[w_s(t)\equiv \eta_*,\quad w_f(t)\equiv v_0, \quad \eta^{(h)}(t)=\eta_0, \quad t\in[0,h).\]
and for the subsequent steps \(\ell>0\)
\[\begin{aligned}w_s(t)=\partial_t\eta^{(h)}(t+(\ell-1)h),\quad w_f(t)=v^{(h)}(t +(\ell-1)h ) \circ \Phi^{(h)}(t+(\ell-1)h), \quad t\in [0,h),\\
\Omega_0=\Omega^{(h)}(\ell h ),\quad  \eta_0=\eta^{(h)}(\ell h ),
\end{aligned}\] where all these quantities are given by the solution in
the \((\ell-1)\)-th step.
Summing the time-delayed equations over \(\ell=0,\dots,T/h-1\), we have
the following equations.

\noindent \emph{Fluid-only equation.} It holds
\begin{equation}\label{eqn:fluid-only-eqn-h}
\begin{aligned}
\int_0^T \nu \langle\varepsilon v^{(h)}, \varepsilon \xi \rangle_{\Omega^{(h)}(t)} + \kappa\langle\nabla^{k_0} v^{(h)}, \nabla^{k_0}\xi\rangle_{\Omega^{(h)}(t)}+\rho_f \left\langle \tfrac{v^{(h)} \circ \Phi^{(h)}_t-v^{(h)}(\cdot-h)\circ\Phi^{(h)}_{t-h}}{h}, \xi \circ\Phi^{(h)} \right\rangle_{\Omega_{0}} \\ - \langle \partial_t \eta^{(h)} \circ (\eta^{(h)})^{-1}-v , \xi \rangle- \langle f,\xi \rangle_{\Omega^{(h)}(t)}\,dt=0
\end{aligned}
\end{equation}
for all
\(\xi\in C^\infty([0,T] \times \overline \Omega^{(h)}(t));\mathbb R^d\),
\(\xi\cdot n^{(h)}=0\) on \(\partial \Omega^{(h)}(t)\),
\(\operatorname{div}\xi=0\) in \(\Omega^{(h)}(t)\), with \(\xi(T)=0\).

\noindent \emph{Coupled equation.} It holds
\begin{equation}\label{eqn:coupled-eqn-h}
\begin{aligned} 
\int_0^T DE(\eta^{(h)})\langle\phi\rangle+D_2 R\left(\eta^{(h)}, \partial_t\eta^{(h)} \right) \langle \phi \rangle  + 2\kappa^{a_0}\langle \nabla^{k_0+2}\eta^{(h)}, \nabla^{k_0+2}\phi \rangle \\ + 2\kappa\left\langle \nabla^{k_0+2}\partial_t\eta^{(h)} , \nabla^{k_0+2}\phi \right\rangle
+\rho_s\left\langle\tfrac{\partial_t\eta^{(h)}- \partial_t\eta^{(h)}(\cdot-h)}{h}, \phi\right\rangle-\left\langle f\circ\eta^{(h)},\phi \right\rangle
\\
+\nu \langle\varepsilon v^{(h)}, \varepsilon \xi\rangle_{\Omega^{(h)}(t)} + \kappa\langle\nabla^{k_0} v^{(h)}, \nabla^{k_0}\xi\rangle_{\Omega^{(h)}(t)}\\+\rho_f \left\langle \tfrac{v^{(h)} \circ \Phi^{(h)}_t - v^{(h)}(t-h)\circ\Phi^{(h)}_{t-h}}{h}, \xi\circ\Phi^{(h)}_t \right\rangle_{\Omega_{0}} - \langle f,\xi\rangle_{\Omega^{(h)}(t)}\,dt=0
\end{aligned}
\end{equation}

for all
\(\xi\in C^\infty([0,T]\times \overline \Omega^{(h)}(t);\mathbb R^d)\),
\(\operatorname {div} \xi =0\) in \(\Omega^{(h)}(t)\) and
\(\phi\in W^{k_0+2,2}(Q;\mathbb R^d)\) with \(\phi = \xi\circ\eta\) on
\(Q\).

\subsubsection{Estimates and a weak limit}\label{estimates-and-a-weak-limit}

By the energy inequality \eqref{eqn:energy-inequality-taulimit} we have the following estimate

\begin{equation}\label{eqn:energy-inequality-h}
\begin{aligned} 
E(\eta^{(h)}(t))+2\kappa^{a_0}\|\nabla^{k_0+2}\eta^{(h)}(t)\|^2+ \frac{1}{2h} \int_{t-h}^t\rho_f\|v^{(h)}\|^2_{\Omega(t)}+\rho_s\| \partial_t\eta^{(h)} \|^2 \,dt \\ + \int_0^t 2R(\eta^{(h)},\partial_t\eta^{(h)})  + 2\kappa\|\nabla^{k_0+2}\partial_t\eta^{(h)}\|^2 + \|\nabla^{k_0}v^{(h)}\|^2+ a \|\partial_t \eta^{(h)} \circ (\eta^{(h)})^{-1} - v\|_{\partial \Omega(t)}^2 \,dt
\\
\leq E(\eta^{(h)}(0))+2\kappa^{a_0}\|\nabla^{k_0+2}\eta^{(h)}(0)\|^2+ \frac{1}{2h} \int_{t-h}^t\rho_f\|w_f\|^2_{\Omega_0}+\rho_s\| w_s \|^2 \,dt \\+\int_0^t\rho_s\left\langle f\circ\eta^{(h)},\partial_t\eta \right\rangle
+ \rho_f\langle f,v^{(h)}\rangle_{\Omega(t)}\,dt
\end{aligned}
\end{equation}

and thus for (a subsequence of) \(h\to0\) we have weakly
converging subsequences

\begin{equation}\label{eqn:weak-limit-h}
\begin{aligned}
\eta^{(h)}\overset\ast\rightharpoonup \eta &\quad\text{in }L^\infty((0,T);W^{k_0+2,2}(Q;\mathbb R^d)), \\
\partial_t\eta^{(h)}\rightharpoonup\partial_t\eta &\quad \text{in }L^2((0,T);W^{k_0+2,2}(Q;\mathbb R^d)), \\
v^{(h)}\overset\eta\rightharpoonup v &\quad \text{in }L^2((0,T);W^{k_0,2}(\Omega(t);\mathbb R^d)).
\end{aligned}
\end{equation}

In order to pass to the limit in the inertial terms, we take an approximation of test functions for \eqref{eqn:fluid-only-eqn-h} as in Proposition \ref{prop:approximation-coupled-test-functions}
(resp. for \eqref{eqn:coupled-eqn-h} as in Proposition \ref{prop:approximation-fluid-only} (ii)) and pass to the limits \(h\to0\) in all of the terms
in fluid-only equation and in the coupled equation in analogy to Proposition \ref{prop:fluid-only-eqn-taulimit} and \ref{prop:coupled-equation-taulimit},
except for the inertial terms that we shall deal with below.

For this we shall need an estimate on discrete versions of \(\partial_{tt}\eta^{(h)}\) and $\partial_t v$. In particular, from the equation we get the estimate on $h$-difference quotients as follows.

\begin{lemma}[Solid bounds with length $h$]\label{lem:solid-bounds-length-h}
The following bound exists independent of $h$:
\begin{equation}\label{eqn:bound-dtt-eta,w-k02}
\int_0^T \left\|\tfrac{\partial_t\eta^{(h)}-\partial_t\eta^{(h)}(t-h)}{h} \right\|_{W^{-k_0-2,2}(Q;\mathbb R^d)}^2\,dt \leq C.
\end{equation}

\end{lemma}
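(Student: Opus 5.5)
The plan is to read $W^{-k_0-2,2}(Q;\mathbb R^d)$ as the dual of $W^{k_0+2,2}_0(Q;\mathbb R^d)$. For such test functions the coupled equation \eqref{eqn:coupled-eqn-h} decouples: every fluid term, including the fluid inertial term, drops out. The difference quotient is then expressed through solid terms only, and each of these is controlled by the energy inequality \eqref{eqn:energy-inequality-h}. We do not try to test with general $\phi$: the coupled test functions must have zero net flux through $\partial\eta^{(h)}(t,Q)$, and the fluid inertial term $\tfrac1h\langle v^{(h)}\circ\Phi^{(h)}_t-v^{(h)}(t-h)\circ\Phi^{(h)}_{t-h},\xi\circ\Phi^{(h)}_t\rangle_{\Omega_0}$ carries no $h$-independent bound. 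Both difficulties disappear once the test function vanishes at $\partial Q$.

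\emph{Step 1 (admissible test functions).} By duality,
\[
\Bigl(\int_0^T \bigl\|\tfrac{\partial_t\eta^{(h)}-\partial_t\eta^{(h)}(t-h)}{h}\bigr\|^2_{W^{-k_0-2,2}}dt\Bigr)^{1/2}
=\sup\Bigl\{\int_0^T\bigl\langle \tfrac{\partial_t\eta^{(h)}-\partial_t\eta^{(h)}(t-h)}{h},\phi\bigr\rangle dt\Bigr\},
\]
where the supremum runs over $\phi$ with $\|\phi\|_{L^2((0,T);W^{k_0+2,2}_0(Q))}\le1$. By density it suffices to take $\phi\in C^\infty_c((0,T)\times Q;\mathbb R^d)$. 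For such $\phi$, set $\xi(t):=\phi(t)\circ(\eta^{(h)}(t))^{-1}$ on $\eta^{(h)}(t,Q)$ and $\xi(t):=0$ on $\Omega^{(h)}(t)$. Since $\phi(t)$ vanishes to all orders near $\partial Q$ and $\eta^{(h)}(t)$ is a $W^{k_0+2,2}$-diffeomorphism onto its image (Lemma \ref{lem:eulerian-lagrangian-wk02}, injectivity before collision), the field $\xi(t)$ lies in $W^{k_0+2,2}_0(\Omega)$. Moreover $\operatorname{div}\xi=0$ holds trivially in $\Omega^{(h)}(t)$, $\phi=\xi\circ\eta^{(h)}$, and $\xi\cdot n=0$ on $\partial\Omega$. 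The coupled equation at level $h$ contains no time derivative of the test function, and time regularity of $\phi$ enters only through this density argument. Hence $(\phi,\xi)$ is admissible in \eqref{eqn:coupled-eqn-h}, after a routine smoothing if $C^\infty$ is demanded.

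\emph{Step 2 (the equation reduces to solid terms).} Since $\xi\equiv0$ on $\Omega^{(h)}(t)$, all terms integrated over $\Omega^{(h)}(t)$ or $\Omega_0$ (viscous, $\kappa$-fluid, fluid inertia, fluid force) vanish. What remains is
\[
\rho_s\int_0^T\bigl\langle\tfrac{\partial_t\eta^{(h)}-\partial_t\eta^{(h)}(t-h)}{h},\phi\bigr\rangle dt
=-\int_0^T\Bigl(DE(\eta^{(h)})\langle\phi\rangle+D_2R(\eta^{(h)},\partial_t\eta^{(h)})\langle\phi\rangle+2\kappa^{a_0}\langle\nabla^{k_0+2}\eta^{(h)},\nabla^{k_0+2}\phi\rangle+2\kappa\langle\nabla^{k_0+2}\partial_t\eta^{(h)},\nabla^{k_0+2}\phi\rangle-\rho_s\langle f\circ\eta^{(h)},\phi\rangle\Bigr)dt .
\]

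\emph{Step 3 (bounds).} Each term on the right is estimated via \eqref{eqn:energy-inequality-h}, which gives $\sup_tE(\eta^{(h)}(t))\le E_0$ uniformly in $h$.
\begin{itemize}
\item By \eqref{as:E-differentiable}, $DE(\eta^{(h)})$ is bounded in $L^\infty((0,T);(W^{2,q})^*)$, and $W^{k_0+2,2}\hookrightarrow W^{2,q}$.
\item By \eqref{as:R-two-homogeneous}--\eqref{as:R-differentiable}, together with the dissipation bound $\int_0^T R(\eta^{(h)},\partial_t\eta^{(h)})dt\le C$ and the Korn inequality \eqref{as:R-korn-inequality} (using $\det\nabla\eta^{(h)}\ge\varepsilon_0$ from \eqref{as:E-lower-bound-det}), we get $\partial_t\eta^{(h)}$ bounded in $L^2((0,T);W^{1,2})$. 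Hence $D_2R(\eta^{(h)},\partial_t\eta^{(h)})$ is bounded in $L^2((0,T);(W^{1,2})^*)$; linearity in the second argument and boundedness give $|D_2R(\eta,b)\langle\phi\rangle|\le C\|b\|_{W^{1,2}}\|\phi\|_{W^{1,2}}$.
\item The $\kappa^{a_0}$ term is bounded in $L^\infty$ in time, and the $\kappa$ term in $L^2$ in time, again by \eqref{eqn:energy-inequality-h}.
\item The force term is bounded by Lemma \ref{lem:eulerian-lagrangian-w2q}, which gives $\|f\circ\eta^{(h)}\|_{L^2}\le C\|f\|_{L^2}$.
\end{itemize}
Each term is thus at most $C\|\phi\|_{L^2((0,T);W^{k_0+2,2})}$ with $C$ independent of $h$. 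Taking the supremum yields \eqref{eqn:bound-dtt-eta,w-k02}.

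The main obstacle is Step 1: admissibility of the zero-extended $\xi$. This needs regularity of $\xi$ across $\partial\eta^{(h)}(t,Q)$ (secured by $\phi\in C_c^\infty$ and Lemma \ref{lem:eulerian-lagrangian-wk02}) and the standing assumption that we are before collision, so that $(\eta^{(h)}(t))^{-1}$ is well defined. The constants remain $h$-independent, but they may depend on $\kappa$.
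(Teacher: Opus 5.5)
Your proposal is correct and is essentially the paper's proof. You test \eqref{eqn:coupled-eqn-h} with $\phi\in W^{k_0+2,2}_0(Q;\mathbb R^d)$ and $\xi$ vanishing on the fluid domain, so all fluid terms drop out, and then bound each remaining solid term by $c(t)\|\phi\|_{W^{k_0+2,2}}$ with $c\in L^2(0,T)$ via \eqref{eqn:energy-inequality-h}; you merely phrase this through space-time duality and check admissibility of the zero-extended $\xi$ more carefully than the paper does.

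One small correction: under the abstract assumptions $D_2R(\eta,\cdot)$ need not be linear. However, \eqref{as:R-two-homogeneous} makes it $1$-homogeneous, and this together with the boundedness in \eqref{as:R-differentiable} gives the same growth bound $\|D_2R(\eta,b)\|_{(W^{1,2})^*}\le C\|b\|_{W^{1,2}}$ that your $L^2$-in-time estimate needs.
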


\begin{proof}
To show \eqref{eqn:bound-dtt-eta,w-k02} we use a test function $\phi\in W^{k_0+2,2}_0(Q;\mathbb R^d)$ in the equation \eqref{eqn:coupled-eqn-h} (the corresponding $\xi$ is zero in the fluid domain), so we get 
$$
\begin{aligned}
\rho_s&\left|\left\langle\tfrac{\partial_t \eta^{(h)}(t)-\partial_t \eta^{(h)}(t-h)}{h}, \phi\right\rangle \right| \leq\left|\left\langle D E\left(\eta^{(h)}(t)\right), \phi\right\rangle\right|+\kappa^{a_0}\left|\left\langle\nabla^{k_0+2} \eta^{(h)}, \nabla^{k_0+2} \phi\right\rangle\right| \\
&+\left|\left\langle D_2 R\left(\eta^{(h)}(t), \partial_t \eta^{(h)}(t)\right), \phi\right\rangle\right|+\kappa\left|\left\langle\nabla^{k_0+2} \partial_t \eta^{(h)}, \nabla^{k_0+2} \phi\right\rangle\right|+\left|\langle f_s(t), \phi\rangle\right| \\
& \leq\left(\left\|D E\left(\eta^{(h)}(t)\right)\right\|_{W^{-2, q'}}+\kappa^{a_0}\left\|\nabla^{k_0+2} \eta^{(h)}(t)\right\| +\|f_s\|_{\infty}\right)\|\phi\|_{W^{k_0+2, 2}} \\
& \quad\ +\left(\left\|D_2 R\left(\eta^{(h)}(t), \partial_t \eta^{(h)}(t)\right)\right\|_{W^{-1,2}}+\kappa\left\|\nabla^{k_0} \partial_t \eta^{(h)}(t)\right\|\right)\|\phi\|_{W^{k_0+2, 2}} \leq c(t) \|\phi\|_{W^{k_0+2, 2}},
\end{aligned}
$$
with the bound $c\in L^2((0,T))$ thanks to \eqref{eqn:energy-inequality-h}, after integration in time we get the estimate \eqref{eqn:bound-dtt-eta,w-k02}.
\end{proof}

For the solid, convergence of the inertial term is not difficult to prove using the preceding estimate.
Let \(b^{(h)}(t)=\frac1h\int_{t-h}^h \partial_t\eta^{(h)}\). By \eqref{eqn:energy-inequality-h} it is uniformly bounded in \(L^2((0,T);W^{k_0+2,2}(Q;\mathbb R^d))\) Since
we have
\[\partial_t b ^{(h)}= \tfrac{\partial_t\eta^{(h)}(\cdot)-\partial_t\eta^{(h)}(\cdot - h)}{h}\]
we see that by Lemma \ref{lem:solid-bounds-length-h}   \(\partial_t b^{(h)}\) is uniformly bounded in
\(L^{2}((0,T);W^{-k_0-2,2}(Q;\mathbb R^d))\)
 which using the Aubin-Lions lemma immediately yields
the convergence (for a subsequence)
\[b^{(h)}\to \partial_t\eta \quad\text{ in }C([0,T]; L^2(Q;\mathbb R^d)).\]

The convergence of the fluid inertial term is a more delicate matter, for we first need to prove use some additional flow map estimates, see also \cite[Lemma 4.14]{benesovaVariationalApproachHyperbolic2023}.

\begin{proposition}[Flow map $h$-estimates]\label{prop:flow-map-h-estimate}
It holds 
\begin{equation}
\left\|\tfrac{\xi(t+h)\circ\Phi^{(h)}_h-\xi(t)}{h}\right\|_{L^\infty L^2}\leq C
\end{equation}
for all \(\xi\in C^\infty([0,T] \times \overline \Omega^{(h)}(t);\mathbb R^d)\),
\(\xi\cdot n^{(h)}=0\) on \(\partial \Omega^{(h)}(t)\),
\(\operatorname{div}\xi=0\) in \(\Omega^{(h)}(t)\).
\end{proposition}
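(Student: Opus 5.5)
The plan is to write the difference quotient along the discrete/continuous flow map as a time integral and bound the integrand pointwise in time, using the Lipschitz and Jacobian bounds of Proposition \ref{prop:estimates-of-the-flow-map} together with the energy bound \eqref{eqn:energy-inequality-h}. Here $\Phi^{(h)}_h$ denotes the flow over one delay step starting at time $t$, i.e.\ the map $\Omega^{(h)}(t)\to\Omega^{(h)}(t+h)$ obtained from \eqref{eqn:flow-map-dt-equal-v}, so that $\partial_s \Phi^{(h)}_s = v^{(h)}(t+s)\circ\Phi^{(h)}_s$ for $s\in[0,h]$. By the fundamental theorem of calculus along trajectories we write
\[
\tfrac{\xi(t+h)\circ\Phi^{(h)}_h-\xi(t)}{h} = \frac1h\int_0^h \big(\partial_t\xi(t+s) + v^{(h)}(t+s)\cdot\nabla\xi(t+s)\big)\circ\Phi^{(h)}_s\,ds .
\]
This is legitimate because $\xi$ is smooth on the closure of the moving domain and $\Phi^{(h)}_s$ maps $\Omega^{(h)}(t)$ into $\Omega^{(h)}(t+s)$; the impermeability condition guarantees that trajectories never leave the fluid domain, so no extension of $\xi$ across the interface is needed.

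Next we take the $L^2(\Omega^{(h)}(t))$ norm and use Minkowski's inequality to move it inside the $s$-integral. Since $\det\nabla\Phi^{(h)}_s=1$ (the limit of Proposition \ref{prop:estimates-of-the-flow-map}), the composition with $\Phi^{(h)}_s$ is an $L^2$ isometry. Hence it suffices to bound, uniformly in $s$ and $t$,
\[
\|\partial_t\xi(t+s)\|_{L^2(\Omega^{(h)}(t+s))} + \|v^{(h)}(t+s)\|_{L^2(\Omega^{(h)}(t+s))}\,\|\nabla\xi(t+s)\|_{L^\infty}.
\]
The first term is bounded by the smoothness of $\xi$. For the second term, $\|\nabla\xi\|_{L^\infty}$ is controlled by $\xi$ alone.

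The main obstacle is a uniform-in-time $L^2$ bound on $v^{(h)}$, since \eqref{eqn:energy-inequality-h} controls only the $h$-averages $\frac1{2h}\int_{t-h}^t\|v^{(h)}\|^2$, not pointwise values. My first attempt is to avoid the pointwise bound altogether: after Minkowski, the velocity contribution is $\frac1h\int_0^h\|v^{(h)}(t+s)\|_{L^2}\,ds\,\|\nabla\xi\|_\infty$. By Cauchy--Schwarz this is at most
\[
\Big(\frac1h\int_t^{t+h}\|v^{(h)}\|^2\Big)^{1/2}\|\nabla\xi\|_\infty,
\]
which is exactly the quantity bounded uniformly in $t$ by \eqref{eqn:energy-inequality-h}, after iterating over the $h$-intervals with the data $w_f$ bounded by the previous step's average. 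Some care is needed to make this iteration close: the right-hand side of \eqref{eqn:energy-inequality-h} contains the previous interval's average of $\|w_f\|^2=\|v^{(h)}(\cdot-h)\circ\Phi^{(h)}\|^2$, which equals the previous average of $\|v^{(h)}\|^2$ by volume preservation. A discrete Gronwall argument over $\ell=0,\dots,T/h$ then yields a bound independent of $h$. The forcing terms are absorbed by Young's inequality exactly as in Lemma \ref{lem:discrete-energy-estimates}. Taking the supremum over $t$ gives the claimed $L^\infty L^2$ bound.
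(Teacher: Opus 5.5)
Your proposal is correct and takes essentially the same route as the paper. You write the difference quotient as $\frac1h\int_0^h\partial_s(\xi(t+s)\circ\Phi^{(h)}_s)\,ds$, bound the $\partial_t\xi$ part by the smoothness of $\xi$, and bound the convective part by $\|\nabla\xi\|_{L^\infty}$ times the $h$-averaged $L^2$ norm of $v^{(h)}$, which \eqref{eqn:energy-inequality-h} controls uniformly; your extra steps (volume preservation making composition with $\Phi^{(h)}_s$ an $L^2$ isometry, Minkowski and Cauchy--Schwarz, and the telescoping of the delayed inertial terms across $h$-intervals) only spell out what the paper's short proof leaves implicit.
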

%\noteMalte[inline]{We should be more consistent with how we write norms. This one also might want to reference some predecessor in the original paper.}
\begin{proof}
Remember that 
\(\partial_s \Phi^{(h)}_s=u(s)\circ \Phi^{(h)}_s\) from \eqref{eqn:flow-map-dt-equal-v}. We rewrite
\begin{equation}\label{eqn:flow-map-integration}
\xi(t+h)\circ\Phi^{(h)}_h-\xi(t)=\int_0^h \partial_s (\xi(t+s)\circ\Phi^{(h)}_s))\,ds=\int_0^h \partial_t \xi(t+s)\circ\Phi^{(h)}_s + \nabla\xi(t+s)\circ \Phi^{(h)}_s\cdot u^{(h)}(s)\,ds.
\end{equation}
 We have   
$$\left\|\tfrac{\xi(t+h)\circ\Phi^{(h)}_h-\xi(t)}{h}\right\|_{L^\infty L^2}^2\leq C \operatorname{Lip}_t\xi +C \operatorname{Lip}_x\xi\sup_t\frac1h\int_t^{t+h}|v^{(h)}|^2\,dt\leq C$$
and we conclude by the \(L^\infty\) estimate of \(h\)-average of $v^{(h)}$ from \eqref{eqn:energy-inequality-h}.
\end{proof}

Now we shall estimate the inertial term. We now work with the \emph{global velocity field} $u^{(h)}$ defined by
\begin{equation}\label{eqn:global-velocity-field}
u^{(h)}(t,y)= \begin{cases}
v^{(h)}(t,y), & y\in \Omega^{(h)}(t) \\
\eta^{(h)}(t,(\eta^{(h)})^{-1}(t,y)), & y\in \eta^{(h)}(t,Q).
\end{cases}
\end{equation}
Note that $u^{(h)}$ has a tangential jump along $\partial\eta(\cdot,Q)$. For this global velocity field, we have the following estimate of its time derivative.

\begin{lemma}[Fluid bounds with length $h$]\label{lem:fluid-bounds-length-h}
Let $u^{(h)}$ be the global velocity field \eqref{eqn:global-velocity-field}. Then the for $m$ large enough we have the estimate
\[\int_0^T \left|\left\langle \tfrac{u^{(h)}(t)-u^{(h)}(t-h)}{h} , \xi(t)\right\rangle_{\Omega^{(h)}(t)}\right|\,dt \leq C\|\xi\|_{L^2((0,T);W^{m,2}(\Omega^{(h)}(t);\mathbb R^d))}\] for
every
\(\xi\in C^\infty([0,T] \times \overline \Omega^{(h)}(t);\mathbb R^d)\),
\(\nabla^\ell(\xi\cdot n^{(h)})=0\) on \(\partial \Omega^{(h)}(t)\), $\ell=0,\dots,k_0$,
\(\operatorname{div}\xi=0\) in \(\Omega^{(h)}(t)\).
\end{lemma}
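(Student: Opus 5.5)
The bound will be obtained in the standard way for the time-delayed scheme: read the $h$-difference quotient off the fluid-only equation \eqref{eqn:fluid-only-eqn-h}, and control the remaining terms with the energy inequality \eqref{eqn:energy-inequality-h}. The only real difficulty is that the equation gives the inertial term in Lagrangian form along the flow,
\[
\left\langle \tfrac{v^{(h)}\circ\Phi^{(h)}_t-v^{(h)}(t-h)\circ\Phi^{(h)}_{t-h}}{h},\xi\circ\Phi^{(h)}_t\right\rangle_{\Omega_0},
\]
whereas the statement asks for the Eulerian difference $\tfrac{u^{(h)}(t)-u^{(h)}(t-h)}{h}$ paired with $\xi(t)$ on $\Omega^{(h)}(t)$. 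I would therefore split
\[
\tfrac{u^{(h)}(t)-u^{(h)}(t-h)}{h}=\tfrac{v^{(h)}(t)-v^{(h)}(t-h)\circ\Psi_h}{h}+\tfrac{v^{(h)}(t-h)\circ\Psi_h-u^{(h)}(t-h)}{h},
\]
where $\Psi_h=\Phi^{(h)}_{t-h}\circ(\Phi^{(h)}_t)^{-1}$ is the backward flow over one $h$-interval. Since $\det\nabla\Phi^{(h)}=1$, a change of variables identifies the first part, tested with $\xi(t)$, with the Lagrangian inertial term.

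The first step bounds the Lagrangian part. Test \eqref{eqn:fluid-only-eqn-h} with $\xi$ on each interval. The viscous term is bounded by $\|\varepsilon v^{(h)}\|\,\|\xi\|_{W^{1,2}}$, the $\kappa$-term by $\|\nabla^{k_0}v^{(h)}\|\,\|\xi\|_{W^{k_0,2}}$, and the force term trivially. The slip term is bounded by
\[
a\,\|\partial_t\eta^{(h)}\circ(\eta^{(h)})^{-1}-v^{(h)}\|_{\partial\Omega^{(h)}(t)}\;\|\xi\|_{L^2(\partial\Omega^{(h)}(t))},
\]
and the trace is controlled by $\|\xi\|_{W^{1,2}}$. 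Every factor involving the solution lies in $L^2(0,T)$ by \eqref{eqn:energy-inequality-h}. Integrating in time and applying Cauchy--Schwarz gives the bound $C\|\xi\|_{L^2W^{k_0,2}}$. The constraint $\nabla^\ell(\xi\cdot n^{(h)})=0$ is exactly what makes $\xi$ admissible in \eqref{eqn:fluid-only-eqn-h}, which is why it appears in the hypothesis.

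The second step handles the transport correction, the term where $v^{(h)}(t-h)$ is evaluated along the flow instead of at a fixed point. I would move this discrepancy onto the test function: pair $v^{(h)}(t-h)$ with $\tfrac{\xi(t)\circ\Phi_h-\xi(t)}{h}$ (flow over one step), using the volume preservation of $\Phi^{(h)}$ from Proposition \ref{prop:estimates-of-the-flow-map}. Then Proposition \ref{prop:flow-map-h-estimate} together with the representation \eqref{eqn:flow-map-integration} bounds this by
\[
\|v^{(h)}\|_{L^\infty_{h\text{-avg}}L^2}\,\Big(\|\partial_t\xi\|+\|\nabla\xi\|_{L^\infty}\,\sup_t\tfrac1h\textstyle\int|v^{(h)}|^2\Big)^{1/2}.
\]
Taking $m$ large, Sobolev embedding gives $\|\nabla\xi\|_{L^\infty}\le C\|\xi\|_{W^{m,2}}$. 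The $h$-averaged $L^2$ bound on $v^{(h)}$ comes from \eqref{eqn:energy-inequality-h}. One caveat: the time derivative of $\xi$ is not part of the stated norm. I would handle this by performing the comparison pointwise in $t$ with $\xi(t)$ frozen, so that only spatial derivatives enter.

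The third step, which I expect to be the main obstacle, is the mismatch of domains. The sets $\Omega^{(h)}(t-h)$ and $\Omega^{(h)}(t)$ differ near the interface, so $u^{(h)}(t-h)$ on the strip swept by the solid equals the solid velocity, not $v^{(h)}$. In the no-slip case $u^{(h)}$ is continuous across the interface and this strip costs nothing. Here $u^{(h)}$ has a tangential jump. My plan is to bound the strip contribution by
\[
\tfrac1h\,|\text{strip}|^{1/2}\,\big\|[u^{(h)}]\big\|\,\|\xi\|_{L^\infty},
\]
with $|\text{strip}|\lesssim h\,\|\partial_t\eta^{(h)}\|_{L^\infty}$, using the $W^{k_0+2,2}$ control of $\partial_t\eta^{(h)}$ available at fixed $\kappa$. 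The jump $[u^{(h)}]$ would be estimated through the slip dissipation $a\|\partial_t\eta^{(h)}\circ(\eta^{(h)})^{-1}-v^{(h)}\|^2_{\partial\Omega}$ combined with a trace-to-strip (Fubini) argument. If this leaves a stray factor $h^{-1/2}$, the fallback is to exploit the normal continuity $u^{(h)}\cdot n$, which holds because of the coupling condition, together with the fact that $\xi$ is tangential near the interface, so that only tangential components interact. Since $\kappa$ is fixed at this stage, constants depending on $\kappa$ are acceptable throughout.
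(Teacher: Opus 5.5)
Your decomposition and Step~1 follow the paper's proof. The paper inserts $\Phi^{(h)}_{-h}$, reads the Lagrangian part off the fluid-only equation \eqref{eqn:fluid-only-eqn-h}, and picks $m$ with $W^{m,2}\hookrightarrow C^{k_0}$. You are more careful in two respects: you keep the slip term, which the paper drops, and your Cauchy--Schwarz gives the $L^2$-in-time norm of the statement rather than the paper's $\|\xi\|_{L^\infty(C^{k_0})}$. The paper stops at that point and never estimates the remainder $\frac1h\langle u^{(h)}(t-h)\circ\Phi^{(h)}_{-h}-u^{(h)}(t-h),\xi(t)\rangle_{\Omega^{(h)}(t)}$. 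Your Step~2 handles the interior part of that remainder correctly. Freeze $\xi(t)$, extend it to $\bar\xi(t)$, and use volume preservation together with $\|\Psi_h^{-1}-\mathrm{id}\|_{L^2}\le\int_{t-h}^t\|v^{(h)}(s)\|\,ds$; this gives $C\|v^{(h)}(t-h)\|\,\|\xi(t)\|_{W^{m,2}}$, which integrates against $\|\xi\|_{L^2W^{m,2}}$.

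The gap is in Step~3. After Step~2 what remains is
\[
\frac1h\int_\Omega\big(\chi_{\Omega^{(h)}(t-h)}-\chi_{\Omega^{(h)}(t)}\big)\,u^{(h)}(t-h)\cdot\bar\xi(t)\,dx .
\]
This contains the fluid velocity on the region the solid invades and the solid velocity on the region it vacates. Both are one-sided quantities; the jump $[u^{(h)}]$ does not appear.

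\begin{itemize}
\item This term is the discrete boundary-flux term of Theorem~\ref{thm:transport-theorem}. It is of order one and would be present just the same if $u^{(h)}$ were continuous, so your claim that the strip costs nothing in the no-slip case is not right.
\item Consequently the slip dissipation is not what controls it.
\item The tangential fallback cannot help either: $\xi\cdot n=0$ removes the continuous normal component of $u^{(h)}$ and keeps exactly the tangential component that jumps.
\end{itemize}

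More seriously, the bound $|\text{strip}|\lesssim h\|\partial_t\eta^{(h)}\|_{L^\infty}$ with an $h$-independent constant is not available. Even at fixed $\kappa$, \eqref{eqn:energy-inequality-h} gives $\partial_t\eta^{(h)}$ only in $L^2(0,T;W^{k_0+2,2})$, plus $h$-averaged $L^2$ bounds. So the swept region satisfies only $|S(t)|\lesssim\int_{t-h}^t\|\partial_t\eta^{(h)}\|_{L^\infty(Q)}$. Your estimate then becomes a product of three functions that are merely $L^2$ in time: $|S|/h$, a norm of $u^{(h)}(t-h)$ (or of the slip jump), and $\|\xi(t)\|$. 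Such a product does not close against $\|\xi\|_{L^2W^{m,2}}$.

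One way to close it is as follows.
\begin{itemize}
\item Bound the strip term by $\frac{|S(t)|}{h}\|u^{(h)}(t-h)\|_{L^\infty}\|\xi(t)\|_{L^\infty}$.
\item Interpolate $\|w\|_{L^\infty}\lesssim\|w\|_{L^2}^{1-\theta}\|w\|_{W^{k,2}}^{\theta}$ with $\theta=\frac{d}{2k}$, taking $k=k_0+2$ for $\partial_t\eta^{(h)}$ and $k=k_0$ for $v^{(h)}$.
\item On each block of length $h$, use the $h$-averaged $L^2$ bounds, then apply H\"older over the blocks.
\end{itemize}
This bounds $\frac{|S|}{h}\|u^{(h)}(\cdot-h)\|_{L^\infty}$ in $L^2(0,T)$ uniformly in $h$, provided $\frac{d}{2(k_0+2)}+\frac{d}{2k_0}\le 1$. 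That condition holds because $k_0$ is taken large.
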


\begin{proof}
First we put in the flow map as follows 
\[\begin{aligned}
\int_0^T \left\langle \tfrac{u^{(h)}(t)-u^{(h)}(t-h)}{h} , \xi(t)\right\rangle_{\Omega^{(h)}(t)} \,dt= \int_0^T \left\langle \tfrac{u^{(h)}(t)-u^{(h)}(t-h)\circ\Phi^{(h)}_{-h}(t)}{h} , \xi(t)\right\rangle_{\Omega^{(h)}(t)} \\+\left\langle \tfrac{u^{(h)}(t-h)\circ \Phi^{(h)}_{-h}(t)-u^{(h)}(t-h)}{h} , \xi(t)\right\rangle_{\Omega^{(h)}(t)}\,dt.
\end{aligned}\]

We estimate by using the test
function \(\xi\) in the fluid only equation \eqref{eqn:fluid-only-eqn-h}. Then we estimate
\begin{equation}\label{eqn:u-dual-estimate}
\begin{aligned}
\int_0^T &\left\langle \tfrac{u^{(h)}(t)-u^{(h)}(t-h)\circ\Phi^{(h)}_{-h}(t)}{h} , \xi(t)\right\rangle_{\Omega^{(h)}(t)}\,dt  
\\&=
-\int_0^T \nu \langle\varepsilon u^{(h)}, \varepsilon \xi \rangle_{\Omega^{(h)}(t)} + \kappa\langle\nabla^{k_0} u^{(h)}, \nabla^{k_0}\xi\rangle_{\Omega^{(h)}(t)} - \langle f,\xi \rangle_{\Omega^{(h)}(t)}\,dt \\
&\leq C \|\xi\|_{L^\infty(C^{k_0})}
\end{aligned}
\end{equation}
and choose $m$ so large that $W^{m,2}$ embeds into $C^{k_0}$.
\end{proof}

\subsubsection{\texorpdfstring{Limit \(h\to 0\) in the fluid inertial term}{Limit h\textbackslash to 0 in the fluid inertial term}}\label{limit-hto-0-in-the-fluid-inertial-term}

We now want to pass to the limit \(h\to 0\) in
\[\int_0^T\rho_f\left\langle \tfrac{u^{(h)}(t)\circ\Phi^{(h)}-u^{(h)}(t-h)}{h},\xi^{(h)}_\delta(t) \right\rangle \,dt.\]
We want to do this in both the fluid-only equation \eqref{eqn:fluid-only-eqn-h}, and also the
coupled equation \eqref{eqn:coupled-eqn-h}. We
shall prove now a version of Aubin-Lions lemma for the fluid. %Below in the computations we shall omit the constant \(\rho_f\).
%\paragraph{Fluid Aubin-Lions}
Denote \[\widetilde m^{(h)}(t) = \frac1h\int_{t-h}^t \rho v^{(h)}\,dt\]
Our aim now is to show the convergence
\[\int_0^T\langle u^{(h_i)},A\widetilde m^{(h_i)}\rangle\,dt\to \int_0^T\langle u, A\widetilde m\rangle\,dt\]
where \(A\in C^\infty_0((0,T)\times\Omega)\) is a cutoff function chosen
that \(u^{(h)}\) and also \(u^{(h)}(\cdot +\sigma)\),
\(\sigma\in (0,h)\) is always well defined on \(\operatorname{supp} A\).

First we show the Fluid Aubin-Lions in the form that
\[\int_0^T \langle (u^{(h_i)})_\delta,A\widetilde m^{(h_i)}\rangle\,dt\to \int_0^T \langle u_\delta, A\widetilde m\rangle\,dt.\]
%where \(\delta\) is the divergence-free extension of \(u\) to
%\(\delta\)-neighborhood by Proposition \ref{prop:approximation-coupled-test-functions}. The reason is that we
%have the estimate \eqref{eqn:u-dual-estimate} only for divergence free test
%functions.
We carefully construct our approximation of $(\cdot)_\delta$ by first truncating the functions on a $\delta$ strip. For that we use the cutoff $\psi_\delta$ defined in Proposition~\ref{prop:approximation-fluid-only} and the uniform Bogovskii operator defined in Theorem~\ref{thm:bogovskii-close-deformations} and take
\[
(u^{(h_i)})_{\delta}^*:=(1-\psi_\delta)v^{(h_i)}-\mathcal{B}(\mathrm{div}((1-\psi_\delta)v^{(h_i)})).
\]
Please observe that since $v^{(h_i)}\in L^p((0,T)\times \Omega)$ for $p>2$ we find (using the $L^2$-bound of the Bogovskii) that for $1<q\leq p$
\[
\|(u^{(h_i)})_{\delta}^{*}-u^{(h_i)}\|_{L^q((0,T)\times \Omega)}\leq C\delta^{\frac{1}{q}-\frac{1}{p}}\|(u^{(h_i)})_{\delta}^{*}-u^{(h_i)}\|_{L^2((0,T)\times \Omega)}\leq C.
\]
Note further that $(u^{(h_i)})_{\delta}^{*}$ is uniformly in $L^2(0,T;W^{1,2}_0(\Omega^{(h_j)}(t))$ for $i,j$ large enough with bounds depending on $\delta$. We take the standard mollifier $\phi_{\frac{\delta}{2}}$ and define
\begin{equation}\label{eqn:uh-def}
(u^{(h_i)})_{\delta}:=(u^{(h_i)})_{\delta}^{*}*\phi_{\frac{\delta}{2}}.
\end{equation}
Analogous to the proof of Proposition~\ref{prop:approximation-coupled-test-functions} one finds the respective bounds and convergence properties in dependence of $\delta$. Moreover 
\[
\|(u^{(h_i)})_{\delta}-u^{(h_i)}\|_{L^q((0,T)\times \Omega)}\leq C\delta^{\frac{1}{q}-\frac{1}{p}}.
\]
%We shall decompose the ($\delta$-approximated) global velocity field $(u^{(h)})_\delta$ defined in \eqref{eqn:global-velocity-field} into two components,
%reminiscent of the distinction for fluid-only and coupled equations.
%
%First, the component which is continuous in the entire \(\Omega\) will be defined by
%\begin{equation}\label{eqn:uc-def}
%(u_c^{(h)})_\delta = \mathcal E_\delta(\partial_t\eta^{(h)}\circ(\eta^{(h)})^{-1})
%\end{equation}
%where
%\(\mathcal E_\delta(t)\colon W^{k_0,2}(\Omega^{(h)}(t);\mathbb R^d)\to W^{k_0,2}(\Omega;\mathbb R^d)\)
%is a divergence-free extension operator such that also it is continuous as
%\(\mathcal E_h\colon L^2((0,T);W^{k_0,2}(\Omega^{(h)}(t);\mathbb R^d))\to L^2((0,T);W^{k_0,2}(\Omega;\mathbb R^d))\).
%The regularity of \(\eta^{(h)}\) is sufficient for such construction.
%
%The second, fluid-only part of $u^{(h)}_\delta$ is defined as
%\begin{equation}\label{eqn:uf-def}
%(u_f^{(h)})_\delta= \tilde {\mathcal E}_\delta( v^{(h)}-u_c^{(h)})
%\end{equation}
% where
%\(\tilde{\mathcal E}_\delta(t)\) is an extension from \(\Omega^{(h)}(t)\) to
%a neighborhood of it, and bounded in \(W^{k_0,2}\) norm.
%
%By linearity of $\mathcal E$ and of $(\cdot)_\delta$ we see that $(u^{(h)})_\delta = (u_c^{(h)})_\delta+(u_f^{(h)})_\delta$.

Thus we formulate the fluid Aubin Lions lemma in the following way.

\begin{theorem}[Fluid Aubin-Lions]\label{thm:aubin-lions-fluid}
 For every \(\delta>0\) it holds
that
\[\int_0^T\langle (u^{(h_i)})_\delta,A\widetilde m^{(h_i)}\rangle\,dt\to \int_0^T\langle u_\delta, A\rho u\rangle\,dt ,\]
where \((\cdot)_\delta\) is the approximation defined in \eqref{eqn:uh-def}.
\end{theorem}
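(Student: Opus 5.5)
The plan is the classical compactness argument behind the Aubin--Lions lemma, adapted to the moving fluid domain. For fixed $\delta>0$, the spatial regularity comes from the smoothing built into $(u^{(h_i)})_\delta$, i.e.\ the cutoff, the Bogovskii correction and the mollification in \eqref{eqn:uh-def}. The temporal regularity comes from Lemma \ref{lem:fluid-bounds-length-h} applied with the test function $(u^{(h_i)})_\delta$, which is admissible because it is divergence free and vanishes on a $\delta$-strip around $\partial\Omega^{(h_i)}(t)$. This yields equicontinuity of $\widetilde m^{(h_i)}$ in time, measured in a negative norm. Pairing the time-compact factor with the space-compact factor then gives the product convergence.

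\textbf{Step 1: spatial compactness.} For fixed $\delta$, the functions $A(u^{(h_i)})_\delta$ are bounded in $L^2((0,T);W^{m,2}_0(\Omega))$ for every $m$, with constants depending on $\delta$. This uses the mollification, the bound $L^2(W^{1,2})$ of $v^{(h_i)}$ from \eqref{eqn:energy-inequality-h}, the uniform bounds of the universal Bogovskii operator from Theorem \ref{thm:bogovskii-close-deformations}, and the Hausdorff convergence $\Omega^{(h_i)}(t)\to\Omega(t)$, which makes the supports compatible for $i$ large. Moreover $(u^{(h_i)})_\delta\rightharpoonup u_\delta$ weakly in $L^2((0,T);W^{m,2})$, by linearity of the construction and $v^{(h_i)}\overset\eta\rightharpoonup v$.

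\textbf{Step 2: temporal equicontinuity of $\widetilde m^{(h_i)}$.} The aim is a bound, uniform in $i$, of the form
\[
\int_0^{T-s}\big\|A\big(\widetilde m^{(h_i)}(t+s)-\widetilde m^{(h_i)}(t)\big)\big\|_{(W^{m,2}_{0,\operatorname{div}})^*}\,dt\le C\,(s+h_i)^{\alpha}.
\]
Telescoping $\widetilde m^{(h)}(t+s)-\widetilde m^{(h)}(t)$ produces averages of the difference quotients $\frac{u^{(h)}(t)-u^{(h)}(t-h)}{h}$. These are controlled in duality with divergence-free test functions by Lemma \ref{lem:fluid-bounds-length-h}. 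The transport part, which is the difference between $u(t-h)\circ\Phi^{(h)}_{-h}$ and $u(t-h)$, is handled by Proposition \ref{prop:flow-map-h-estimate}: move the flow map onto the test function and use the $L^\infty L^2$ bound on the $h$-averages of $v^{(h)}$. Together with the $L^\infty L^2$ bound, Arzel\`a--Ascoli or Simon's criterion gives $A\widetilde m^{(h_i)}\to A\rho u$ strongly in $L^2((0,T);(W^{m,2}_{0,\operatorname{div}})^*)$. The limit is identified as $\rho u$ because $\widetilde m^{(h)}-\rho v^{(h)}\to0$ in the same negative norm, again by the difference-quotient bound.

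\textbf{Step 3: conclusion.} Write
\[
\langle (u^{(h_i)})_\delta,A\widetilde m^{(h_i)}\rangle-\langle u_\delta,A\rho u\rangle=\langle (u^{(h_i)})_\delta,A(\widetilde m^{(h_i)}-\rho u)\rangle+\langle (u^{(h_i)})_\delta-u_\delta,A\rho u\rangle.
\]
The first term is a bounded sequence in $L^2(W^{m,2}_{0,\operatorname{div}})$ paired with a strongly vanishing dual sequence. The second term is a weakly vanishing sequence paired against a fixed function.

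The main obstacle is Step 2. Two issues arise: the test functions must be admissible for the fluid-only equation \eqref{eqn:fluid-only-eqn-h} uniformly along the moving domains $\Omega^{(h_i)}(t)$, and the tangential jump of $u^{(h)}$ must not contribute. My first attempt is to restrict the duality to fields supported at distance at least $\delta$ from $\partial\eta^{(h_i)}(t,Q)$. There the boundary friction term vanishes and Lemma \ref{lem:fluid-bounds-length-h} applies directly. I then use the uniform-in-time Hausdorff closeness of the domains to keep a common $\delta/2$-interior on short time windows, which lets the telescoping argument be run window by window.
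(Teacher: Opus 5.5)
Your overall architecture (spatial compactness from the $\delta$-smoothing, temporal compactness from the $h$-difference quotients, then a strong-times-weak pairing) is the same Aubin--Lions mechanism the paper uses. However, Step 2 does not go through as stated, and this is the crux.

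Lemma \ref{lem:fluid-bounds-length-h} only controls the difference quotients against admissible fluid-only fields, i.e.\ fields that are solenoidal and tangential on $\partial\Omega^{(h)}(t)$. The norm in your Step 2 estimate instead involves $\langle A(\widetilde m^{(h)}(t+s)-\widetilde m^{(h)}(t)),\varphi\rangle=\langle \widetilde m^{(h)}(t+s)-\widetilde m^{(h)}(t),A^{T}\varphi\rangle$, and $A^{T}\varphi$ is not solenoidal once $A$ is non-constant.

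Localizing to fields supported away from the interface does not cure this; it exposes a genuine obstruction. Let $p$ be harmonic on an interior set $U$ and let $\varphi$ be solenoidal with compact support in $U$. Then $\langle\nabla p,\varphi\rangle=-\langle p,\operatorname{div}\varphi\rangle=0$. Hence $g_n(t)\nabla p$ with $g_n(t)=\sin(nt)$ is divergence free and has zero time derivative against all of your test fields. Yet for a scalar $A\geq0$ not vanishing where $\nabla p\neq0$,
$\int_0^T\langle g_n\nabla p,A\,g_n\nabla p\rangle\,dt\to\frac12\int_0^T\langle \nabla p,A\nabla p\rangle\,dt\neq0$,
although both factors converge weakly to $0$.

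In the present problem such components are forced by the moving interface. Write $v=v_\sigma+\nabla p$ on $\Omega(t)$ with $\Delta p=0$, $\partial_n p=(\partial_t\eta\circ\eta^{-1})\cdot n$ on $\partial\eta(t,Q)$ and $\partial_n p=0$ on $\partial\Omega$. Then the potential part $\nabla p$ is $L^2$-orthogonal to every fluid-only test function. So the fluid-only equation carries essentially no information about its time behaviour. What is missing is an argument that gets time compactness of this part from the solid through the kinematic condition. One option is Lemma \ref{lem:solid-bounds-length-h} together with $b^{(h)}\to\partial_t\eta$. Another is to test the coupled equation with fields that are continuous across the interface, and treat only a tangential remainder with the fluid-only equation.

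For comparison, the paper does not look for equicontinuity in a fixed dual space; it proves the Cauchy property directly. It freezes $A\widetilde m^{(h)}$ at the grid times $\sigma k$, where only compactness at finitely many times is used. There the weight $A$ is harmless, because weak $L^2$ convergence of $A\widetilde m^{(h_i)}(\sigma k)$ already gives strong convergence in $W^{-1,2}$. It then handles the remaining time increment by switching the order of integration and testing the difference quotient directly with $(u^{(h_i)}(\theta+s))_\delta$, so no uniform test space is needed. However, where it bounds that increment by $\|A\|_\infty$ times the unweighted pairing, it passes over exactly the non-solenoidality described above. So the missing step is not supplied there either.

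There are also two smaller points.
\begin{enumerate}
\item The field $(u^{(h_i)})_\delta$ from \eqref{eqn:uh-def} need not vanish near the interface. The Bogovskii correction only has zero trace, and the mollification spreads it across $\partial\eta^{(h_i)}(t,Q)$. So it need not lie in your localized test space, contrary to your admissibility claim.
\item Your identification $\widetilde m^{(h)}-\rho v^{(h)}\to0$ does not follow from an $h$-difference-quotient bound, which says nothing about variations on time scales below $h$. Identify the limit weakly instead, by moving the average onto the test function as the paper does.
\end{enumerate}
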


\begin{proof} Let \(\delta>0\). 
%As above we decompose $(u^{(h)})_\delta$ into\((u^{(h)}_c)_\delta\) and
%\((u^{(h)}_f)_\delta\) defined by \eqref{eqn:uc-def} and \eqref{eqn:uf-def}, and in subsequent estimates write \((u_\sim^{(h)} )_\delta\) for either \((u^{(h)}_c)_\delta\) or \((u^{(h)}_f)_\delta\). Note that both of these velocities are defined at least
%bit outside \(\Omega^{(h)}(t)\), and in particular that \(u_\sim^{(h)}(t-h)\)
%is always well-defined in \(\Omega^{(h)}(t)\). 
Observe that $(u^{(h_i)})_\delta$ is constructed so that the functions are divergence-free in a neighborhood in space-time. This means that $(u^{(h_i)})_\delta$ is a valid test functions in a small (but fixed) neighborhood of $t$ uniformly for $i$ large enough.

 To obtain the desired convergence, we
will show that
\(\int_0^T\langle (u^{(h_i)})_\delta,A\widetilde m^{(h_i)}\rangle\,dt\) is a
Cauchy sequence. For this we write
\[\langle (u^{(h_i)})_\delta,A\widetilde m^{(h_i)}\rangle-\langle (u^{(h_j)})_\delta,A\widetilde m^{(h_j)}\rangle = \langle (u^{(h_i)})_\delta,A\widetilde m^{(h_i)}-A\widetilde m^{(h_j)}\rangle + \langle (u^{(h_i)})_\delta-(u^{(h_j)})_\delta,A\widetilde m^{(h_j)}\rangle.\]
We now focus on the first term, namely
\[\langle (u^{(h_i)})_\delta,A\widetilde m^{(h_i)}-A\widetilde m^{(h_j)}\rangle.\]
%(the second term will work somewhat symmetrically  )

We partition the time with \(\sigma>0\) steps and replace the
\(A\widetilde m^{(h_i)}\) with piecewise constant, i.e.~we write
\begin{equation}\label{eqn:aubin-lions-insert-constant}
\begin{aligned}\langle (u^{(h_i)}(t))_\delta,A\widetilde m^{(h_i)}(t)-A\widetilde m^{(h_j)}(t)\rangle = \langle (u^{(h_i)}(t))_\delta,A\widetilde m^{(h_i)}(t)-A\widetilde m^{(h_i)}(\sigma k)\rangle \\ +\langle (u^{(h_i)}(t))_\delta,A\widetilde m^{(h_i)}(\sigma k)-A\widetilde m^{(h_j)}(\sigma k)\rangle + \langle (u^{(h_i)}(t))_\delta,A\widetilde m^{(h_j)}(\sigma k)-A\widetilde m^{(h_j)}(t)\rangle\end{aligned}
\end{equation}
Now we use that we have the uniform bound from Proposition \ref{lem:solid-bounds-length-h}
\[\|\widetilde m^{(h_i)}(\sigma k)\|_{L^2}^2\leq \frac1h\int_{\sigma k -h}^{\sigma k}\|\rho v (t)\|_{L^2}^2\,dt \leq C.\]
So by the compact embedding
\[L^2(\Omega) \subset\subset (W^{1,2}_{\operatorname{div}}(\Omega_\delta))^*\]
we have for a subsequence 
\[\widetilde m^{(h_i)}(\sigma k)\to \widetilde m(\sigma k) \quad \text{in }(W^{1,2}_{\operatorname{div}}(\Omega_\delta))^*,\]
so that, since \(\| (u^{(h_i)}(t))_\delta\|_{W^{1,2}(\Omega)}\leq C\) by the
apriori estimates \eqref{eqn:energy-inequality-h},
\[\| (u^{(h_i)}(t))_\delta\|_{W^{1,2}(\Omega)} \|A\widetilde m^{(h_i)}(\sigma k)-A\widetilde m^{(h_j)}(\sigma k)\|_{(W^{1,2}_{\operatorname{div}}(\Omega_\delta))^*}\to 0.\]

It remains in \eqref{eqn:aubin-lions-insert-constant} to estimate the term
\begin{equation}\label{eqn:aubin-lions-the-term}
\int_0^T\langle (u^{(h_i)}(t))_\delta,A\widetilde m^{(h_j)}(t)-A\widetilde m^{(h_j)}(\sigma k)\rangle\,dt.
\end{equation}
The similar term where towards the end $j$ is replaced by $i$ is dealt with in the same way.

To estimate this we replace the \(\sigma\)-difference
quotient with an \(h\)-difference quotient, with the aim to then use the
bound on \(h\)-difference quotient. So for this  we write
\[A\widetilde m^{(h_j)}(t)-A\widetilde m^{(h_j)}(\sigma k) = A\int_{\sigma k}^{t}\partial_\theta \widetilde m^{(h_j)}(\theta)\,d\theta\]
so that we get in \eqref{eqn:aubin-lions-the-term}
\[\langle (u^{(h_i)}(t))_\delta,A\widetilde m^{(h_j)}(t)-A\widetilde m^{(h_j)}(\sigma k)\rangle =\left\langle (u^{(h_i)}(t))_\delta,A\int_{\sigma k}^{t}\partial_\theta \widetilde m^{(h_j)}(\theta)\,d\theta\right\rangle.\]
Realize
that by definition of \(\widetilde m^{(h_j)}\) it is
\[\partial_\theta \widetilde m^{(h_j)}(\theta) = \tfrac{\rho u^{(h_j)}(\theta)-\rho u^{(h_j)}(\theta-h_j)}{h_j}.\]
So that now we have
\[\int_0^T \!\! \left\langle (u^{(h_i)}(t))_\delta,A\widetilde m^{(h_j)}(t)-A\widetilde m^{(h_j)}(\sigma k)\right\rangle  dt =\! \int_0^T \!\! \left\langle (u^{(h_i)}(t))_\delta,A\int_{\sigma k}^t \!\! \tfrac{\rho u^{(h_j)}(\theta)-\rho u^{(h_j)}(\theta-h_j)}{h_j} d\theta \right\rangle dt.\]
Now comes a switch of the order of integration which results in
\[\leq \|A\|_\infty \int_0^\sigma  \int_0^T \left|\left\langle (u^{(h_i)}(\theta+s))_\delta, \tfrac{\rho u^{(h_j)}(\theta)-\rho u^{(h_j)}(\theta-h_j)}{h_j} \right\rangle\right|\,ds\,d\theta\]
and we use the bound \eqref{eqn:u-dual-estimate}  to obtain, recall also the
estimate from Proposition \ref{prop:approximation-coupled-test-functions}
\[\leq \|A\|_\infty\int_0^\sigma \|(u^{(h_i)})_\delta(\theta+\cdot)\|_{L^2W^{k_0,2}_{\operatorname{div}}}\,d\theta \leq \|A\|_\infty C_\delta \sigma \|u^{(h_i)}\|_{L^2W^{1,2}}\]
so this vanishes for \(\sigma \to 0\), as \(\|u^{(h_i)}\|_{L^2W^{1,2}}\)
is bounded by the energy inequality \eqref{eqn:energy-inequality-h}.

This proves the
convergence\[\langle (u^{(h_i)})_\delta,A\widetilde m^{(h_i)}\rangle\to \langle u_\delta, A\widetilde m\rangle,\]
where \(\widetilde m^{(h_i)}\overset\ast\rightharpoonup\widetilde m\) in
\(L^\infty((0,T);L^2(\Omega;\mathbb R^d))\) as the limit exists by the estimate \eqref{eqn:energy-inequality-h}.
It is not difficult to check \(\widetilde m=\rho u\), since for
\(\xi\in C_0((0,T)\times \Omega;\mathbb R^d)\) we have
\[\int_0^T\int_\Omega \widetilde m^{(h_i)}\cdot \xi\,dx\,dt=\int_0^T\int_\Omega\rho u^{(h_i)}\cdot \frac1h\int_0^h \xi(t+s)\,ds\,dx\,dt \to\int_0^T\int_\Omega  \rho u\cdot \xi\,dx\,dt\]
which concludes the proof.
\end{proof}
Now we are equipped for the rest of the $h\to 0$ limit passage in the fluid inertial term.
\begin{theorem}[Limit passage $h\to 0$]\label{thm:solution-hlimit}
 The limit $\eta,h$ from \eqref{eqn:weak-limit-h} satisfies the following equations.
\noindent\emph{Fluid-only equation}. It holds
\begin{align}\label{eqn:fluid-only-eqn-hlimit}
\int_0^T \nu \langle\varepsilon v, \varepsilon \xi \rangle_{\Omega(t)} &+ \kappa\langle\nabla^{k_0} v, \nabla^{k_0}\xi\rangle_{\Omega(t)}-\rho_f \left\langle  \partial_t v,\xi\rangle_{\Omega(t)} + \rho_f\langle v , v\cdot \nabla \xi \right\rangle_{\Omega(t)} \nonumber\\
&- a\langle \partial_t \eta \circ \eta^{-1}-v,\xi \rangle - \langle f,\xi \rangle_{\Omega(t)}\,dt=0
\end{align}
for all
\(\xi\in C^\infty([0,T] \times \overline \Omega(t));\mathbb R^d)\),
\(\xi\cdot n=0\) on \(\partial \Omega(t)\), \(\operatorname{div}\xi=0\)
in \(\Omega(t)\), with \(\xi(T)=0\).

\noindent\emph{Coupled equation.} It holds
\begin{equation}\label{eqn:coupled-eqn-hlimit}
\begin{aligned} 
\int_0^T DE(\eta)\langle\phi\rangle+D_2 R\left(\eta, \partial_t\eta \right) \langle \phi \rangle  + 2\kappa^{a_0}\langle \nabla^{k_0+2}\eta, \nabla^{k_0+2}\phi \rangle + 2\kappa\left\langle \nabla^{k_0+2}\partial_t\eta , \nabla^{k_0+2}\phi \right\rangle
\\
-\rho_s\left\langle \partial_{t}\eta, \partial_t\phi\right\rangle-\left\langle f\circ\eta,\phi \right\rangle
\\
+\nu \langle\varepsilon v, \varepsilon \xi\rangle_{\Omega(t)} + \kappa\langle\nabla^{k_0} v, \nabla^{k_0}\xi\rangle_{\Omega(t)}-\rho_f \left\langle  \partial_t v,\xi\rangle_{\Omega(t)} + \rho_f\langle v , v\cdot \nabla \xi \right\rangle_{\Omega(t)} - \langle f,\xi\rangle_{\Omega(t)}\,dt=0
\end{aligned}
\end{equation}
for all
\(\xi\in C^\infty([0,T]\times \overline \Omega(t);\mathbb R^d)\),
\(\operatorname {div} \xi =0\) in \(\Omega(t)\) and
\(\phi\in W^{k_0+2,2}(Q;\mathbb R^d)\) with \(\phi = \xi\circ\eta\) on
\(Q\).

Further, it satisfies the following energy inequality 
\begin{equation}\label{eqn:energy-inequality-hlimit}
\begin{aligned} 
&E(\eta(t))+2\kappa^{a_0}\|\nabla^{k_0+2}\eta(t)\|^2+ \frac{\rho_f}{2} \|v(t)\|^2_{\Omega(t)}+ \frac{\rho_s}{2}\| \partial_t\eta(t) \|^2  
\\
&+ \int_0^t 2R(\eta,\partial_t\eta)  + 2\kappa\|\nabla^{k_0+2}\partial_t\eta\|^2 + \|\nabla^{k_0}v\|^2 + a \| \partial_t \eta \circ \eta^{-1} - v\|_{\partial \Omega(t)}^2 \,dt
\\
&\leq E(\eta(0))+2\kappa^{a_0}\|\nabla^{k_0+2}\eta(0)\|^2+ \frac{\rho_f}{2}\|v(0) \|^2_{\Omega_0}+\frac{\rho_s}{2}\| \partial_t\eta(0) \|^2 \,dt +\int_0^t\rho_s\left\langle f\circ\eta,\partial_t\eta \right\rangle
+ \rho_f\langle f,v\rangle_{\Omega(t)}\,dt.
\end{aligned}
\end{equation}
\end{theorem}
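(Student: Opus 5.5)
We pass to the limit $h\to 0$ in the summed time-delayed equations \eqref{eqn:fluid-only-eqn-h} and \eqref{eqn:coupled-eqn-h} along the subsequence from \eqref{eqn:weak-limit-h}. First we upgrade the convergences. From $\eta^{(h)}$ bounded in $L^\infty W^{k_0+2,2}\cap W^{1,2}W^{k_0+2,2}$ and the compact embedding into $C^{(1/2)^-}([0,T];C^{1,\alpha})$, we get $\eta^{(h)}\to\eta$ uniformly. Hence $\Omega^{(h)}(t)\to\Omega(t)$ in the sense of Section \ref{sec:convergence-of-domains}, and \eqref{as:E-differentiable}, \eqref{as:R-differentiable} give $DE(\eta^{(h)})\rightharpoonup DE(\eta)$ and $D_2R(\eta^{(h)},\partial_t\eta^{(h)})\rightharpoonup D_2R(\eta,\partial_t\eta)$. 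The coupling condition passes to the limit exactly as in the $\tau$-level coupling lemma, via the divergence theorem on both sides of the interface.

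For the test functions, given a limit test function $\xi$, we take the approximations $\xi^{(h)}_\delta\overset\eta\to\xi_\delta$ from Proposition \ref{prop:approximation-fluid-only}(ii) in the fluid-only equation, and $(\phi_\delta,\xi_\delta)$ from Proposition \ref{prop:approximation-coupled-test-functions} in the coupled equation. Since $\xi_\delta$ is divergence free on a $\delta$-neighbourhood, it is admissible for $h$ small. All terms that are linear in the unknowns then converge by strong-times-weak arguments, as in Propositions \ref{prop:fluid-only-eqn-taulimit} and \ref{prop:coupled-equation-taulimit}. This includes the viscous and $\kappa$ terms, the forcing, and the slip term $a\langle\partial_t\eta^{(h)}\circ(\eta^{(h)})^{-1}-v^{(h)},\xi^{(h)}_\delta\rangle_{\partial\Omega^{(h)}}$, which converges by the normal-trace convergence lemma applied componentwise to the traces of $\xi^{(h)}_\delta\cdot\overline\tau^{(h)}$. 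Afterwards we let $\delta\to0$ using the estimates of the same propositions.

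For the inertial terms, in the solid we shift the discrete derivative onto the test function: $\int\langle\frac{\partial_t\eta^{(h)}-\partial_t\eta^{(h)}(\cdot-h)}{h},\phi\rangle=-\int\langle\partial_t\eta^{(h)},\frac{\phi(\cdot+h)-\phi}{h}\rangle$ plus boundary contributions near $t=0$, which give the initial datum. This converges to $-\int\langle\partial_t\eta,\partial_t\phi\rangle$, with $b^{(h)}\to\partial_t\eta$ in $C([0,T];L^2)$ from Lemma \ref{lem:solid-bounds-length-h} used for the strong convergence of the $h$-averages. The fluid term is the main obstacle. We rewrite it as
\[
\int_0^T\rho_f\Big\langle v^{(h)}(t)\circ\Phi^{(h)}_t,\tfrac{\xi(t)\circ\Phi^{(h)}_t-\xi(t+h)\circ\Phi^{(h)}_{t+h}}{h}\Big\rangle_{\Omega_0}dt,
\]
change variables using $\det\nabla\Phi=1$, and apply \eqref{eqn:flow-map-integration}. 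The difference quotient becomes an average over $s\in(0,h)$ of $\partial_t\xi+v^{(h)}\cdot\nabla\xi$ evaluated along the flow. The linear part converges by Proposition \ref{prop:flow-map-h-estimate} and weak convergence. The convective part $\langle v^{(h)},\frac1h\int_0^h v^{(h)}(\cdot+s)\cdot\nabla\xi\,ds\rangle$ is quadratic, and here we invoke the fluid Aubin--Lions Theorem \ref{thm:aubin-lions-fluid}, with $A$ built from $\nabla\xi$ on compact subsets of $\Omega(t)$ and a boundary strip of width $\delta$ cut off. The error $\|(u^{(h)})_\delta-u^{(h)}\|_{L^q}\le C\delta^{1/q-1/p}$, together with the $L^p$ bound ($p>2$) from interpolating the energy estimates, controls that strip uniformly in $h$. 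This yields $\rho_f\langle v,v\cdot\nabla\xi\rangle$ and $-\rho_f\langle v,\partial_t\xi\rangle$ in the limit. The same handling applies in the coupled equation, where the global field $u^{(h)}$ appears and Lemma \ref{lem:fluid-bounds-length-h} supplies the dual time bound.

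Finally, we obtain the energy inequality \eqref{eqn:energy-inequality-hlimit} from \eqref{eqn:energy-inequality-h}. The $h$-averaged kinetic terms $\frac1{2h}\int_{t-h}^t(\rho_f\|v^{(h)}\|^2+\rho_s\|\partial_t\eta^{(h)}\|^2)$ converge for a.e. $t$ to $\frac{\rho_f}{2}\|v(t)\|^2+\frac{\rho_s}{2}\|\partial_t\eta(t)\|^2$ from below by weak lower semicontinuity. The dissipation, $\kappa$ and slip terms are handled by \eqref{as:R-wlsc} and convexity, the elastic energy by \eqref{as:E-wlsc}, and the forcing terms converge by strong-times-weak convergence.
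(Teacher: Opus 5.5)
Your treatment of the two weak equations is essentially the paper's. You use discrete integration by parts in both inertial terms and the expansion \eqref{eqn:flow-map-integration} along the flow. For the convective term you replace the velocity by its $\delta$-regularization and apply Theorem~\ref{thm:aubin-lions-fluid} with $A$ approximating $\nabla\xi\,\chi_{\Omega(t)}$. For the solid you use the $C([0,T];L^2)$ convergence of $b^{(h)}$, and you finish with $\delta\to0$. If anything, you are more explicit than the paper about the slip term.

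The genuine difference is the energy inequality. The paper tests the limit system with $(\partial_t\eta,v)$, relying on the $\kappa$-regularity. You instead pass to the limit in \eqref{eqn:energy-inequality-h} by lower semicontinuity. Your route needs no time regularity of the limit and no chain rule for $E$ or for the kinetic energy on a moving domain. It also avoids splitting $v$ into a coupled part and a tangential fluid-only part, which the paper's route needs because $(\partial_t\eta,v)$ is not itself an admissible pair. For the solid kinetic term, Jensen's inequality $\|b^{(h)}(t)\|^2\le\frac1h\int_{t-h}^t\|\partial_t\eta^{(h)}\|^2$ together with $b^{(h)}\to\partial_t\eta$ in $C([0,T];L^2)$ even gives the bound for every $t$.

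Your route has two costs you should make explicit.
\begin{enumerate}
\item Weak convergence of $v^{(h)}$ in $L^2((0,T)\times\Omega)$ carries no pointwise-in-time information, so ``converge for a.e.\ $t$'' is not justified as written. Instead, multiply \eqref{eqn:energy-inequality-h} by $\psi(t)\ge0$ and integrate in time, so the $h$-averaging moves onto $\psi$. Then pass to the limit and conclude for a.e.\ $t$.
\item The dissipation needs lower semicontinuity of $\int_0^t R(\eta^{(h)},\partial_t\eta^{(h)})$ with both arguments moving. Assumption \eqref{as:R-wlsc} only covers a fixed first argument, so it does not provide this. It does hold for the model dissipation, since $\nabla\eta^{(h)}\to\nabla\eta$ uniformly, and the paper makes the same tacit step at the $\tau$-level.
\end{enumerate}

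The paper's route sidesteps the second issue, since $D_2R(\eta,\partial_t\eta)\langle\partial_t\eta\rangle=2R(\eta,\partial_t\eta)$ by \eqref{as:R-two-homogeneous}. It would also give the inequality, even an equality, for every $t$. But that requires first justifying the test with the solution on the moving domain, which the paper asserts rather than proves.
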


\begin{proof}
We are equipped to pass to the limit with the fluid inertial term, in
particular
\[\int_0^T \left\langle\tfrac{u^{(h)}(t)\circ\Phi_h^{(h)}(t-h)-u^{(h)}(t-h)}{h},\xi(t)\circ\Phi^{(h)}_h(t-h) \right\rangle_{\Omega^{(h)}(t-h)} \,dt.\]
After discrete partial integration in time we have
\[=-\int_0^T\left\langle u^{(h)}(t),\tfrac{\xi(t+h)\circ\Phi^{(h)}-\xi(t)}{h} \right\rangle_{\Omega^{(h)}(t)}\,dt.\]
We write only the term under the integral. Use the \(\delta\)-divergence free
approximation
\[\begin{aligned}&\left\langle u^{(h)}(t),\tfrac{\xi(t+h)\circ\Phi^{(h)}-\xi(t)}{h} \right\rangle_{\Omega^{(h)}(t)} \\& = \left\langle (u^{(h)}(t))_\delta-u^{(h)}(t),\tfrac{\xi(t+h)\circ\Phi^{(h)}-\xi(t)}{h} \right\rangle_{\Omega^{(h)}(t)}+\left\langle (u^{(h)}(t))_\delta,\tfrac{\xi(t+h)\circ\Phi^{(h)}-\xi(t)}{h} \right\rangle_{\Omega^{(h)}(t)} \end{aligned}.\]

In the first term, use
\[\leq\left\| (u^{(h)}(t))_\delta-u^{(h)}(t) \right\|_{L^1L^2}\left\|\tfrac{\xi(t+h)\circ\Phi^{(h)}-\xi(t)}{h} \right\|_{L^\infty L^2}.\]
The former is by the \(\delta\)-approximation of Proposition \ref{prop:approximation-coupled-test-functions} bounded by
\[
\left\| (u^{(h)}(t))_\delta-u^{(h)}(t) \right\|_{L^1L^2}\leq \delta^{\frac{d}{d+2}}\left\|u^{(h)}(t)\right\|_{L^2(W^{1,2})},
\]
and the latter is bounded by Proposition \ref{prop:flow-map-h-estimate}, 
so that in total for the first term
\[\left|\int_0^T\left\langle (u^{(h)}(t))_\delta-u^{(h)}(t),\tfrac{\xi(t+h)\circ\Phi^{(h)}-\xi(t)}{h} \right\rangle_{\Omega^{(h)}(t)}\,dt \right|\leq \delta^{\frac{d}{d+2}}\tilde C.\]
For the second term
\[\left\langle (u^{(h)}(t))_\delta,\tfrac{\xi(t+h)\circ\Phi^{(h)}-\xi(t)}{h} \right\rangle_{\Omega^{(h)}(t)}\]
we perform again the manipulations \eqref{eqn:flow-map-integration}
so that
\[\begin{aligned}&\left\langle (u^{(h)}(t))_\delta,\tfrac{\xi(t+h)\circ\Phi^{(h)}-\xi(t)}{h} \right\rangle_{\Omega^{(h)}(t)} \\&\qquad= \left\langle(u^{(h)}(t))_\delta,\frac1h\int_0^h \left(\partial_t \xi(t+s) + \nabla\xi(t+s)\cdot u^{(h)}(t+s)\right)\circ\Phi^{(h)}_s\,ds\right\rangle. \end{aligned}\]
This is now a sum where the first part is fine: after the limit
\(h\to 0\)
\[\int_0^T \left\langle(u^{(h)}(t))_\delta,\frac1h\int_0^h \partial_t \xi(t+s)\circ\Phi^{(h)}_s \,ds\right\rangle \,dt \to \int_0^T \left\langle (u(t))_\delta, \partial_t\xi(t) \right\rangle \,dt.\]
We now handle the second term:
\[\int_0^T\left\langle(u^{(h)}(t))_\delta,\frac1h\int_0^h \left(\nabla\xi(t+s)\cdot u^{(h)}(t+s)\right) \circ \Phi^{(h)}_s\,ds\right\rangle\,dt.\]
We change the domain to obtain
\[\begin{aligned}
\int_0^T&\left\langle(u^{(h)}(t))_\delta,\frac1h\int_0^h \left(\nabla\xi(t+s)\cdot u^{(h)}(t+s)\right) \circ \Phi^{(h)}_s\,ds\right\rangle\,dt
\\
&=\int_0^T\frac1h\int_0^T \left\langle(u^{(h)}(t))_\delta \circ\Phi^{(h)}_{-s}, \nabla\xi(t+s)\cdot u^{(h)}(t+s) \right\rangle\,ds\,dt \\
&=\int_0^T\frac1h\int_0^h \left\langle(u^{(h)}(t))_\delta \circ\Phi^{(h)}_{-s} - (u^{(h)}(t))_\delta ,\nabla\xi(t+s)\cdot u^{(h)}(t+s) \right\rangle\,ds\,dt \\
&\quad \ + \int_0^T\left\langle (u^{(h)}(t))_\delta ,\frac1h\int_0^h \nabla\xi(t+s)\cdot u^{(h)}(t+s) \,ds\right\rangle\,dt. \end{aligned}\]
Convergence in the first term follows from Proposition \ref{prop:flow-map-h-estimate}:
\[\left\|(u^{(h)}(t))_\delta \circ\Phi^{(h)}_{-s} - (u^{(h)}(t))_\delta \right\|_{L^2}\leq c h \operatorname{Lip}_x(u^{(h)})_\delta\leq h C_\delta \|u^{(h)}(t)\|_{W^{1,2}}.\]
Convergence in the last term is obtained from 
\[\begin{aligned}
\int_0^T&\left\langle (u^{(h)}(t))_\delta ,\frac1h\int_0^h \nabla\xi(t+s)\cdot u^{(h)}(t+s) \,ds\right\rangle\,dt \\
&= \int_0^T\left\langle (u^{(h)}(t))_\delta ,\frac1h\int_0^h (\nabla\xi(t+s) - \nabla \xi(t))\cdot u^{(h)}(t+s) \,ds\right\rangle\,dt\\
&\quad\ +\int_0^T\left\langle (u^{(h)}(t))_\delta ,\nabla \xi(t)\cdot\frac1h\int_0^h u^{(h)}(t+s) \,ds\right\rangle\,dt.
\end{aligned}\] 
In the first term we have
\[\|\nabla \xi(t+s)-\nabla\xi(t)\|_{L^\infty}\leq h\|\partial_t\nabla\xi\|_{L^\infty} \to 0 \text{ with } h\to 0.\]
In the second, we use Theorem \ref{thm:aubin-lions-fluid}, where we take \(A\) to
be an approximation of \(\nabla \xi \chi_{\Omega(t)}\). More precisely,
write
\[\begin{aligned}\int_0^T\left\langle (u^{(h)}(t))_\delta ,\nabla \xi(t)\cdot\frac1h\int_0^h u^{(h)}(t+s) \,ds\right\rangle\,dt =\int_0^T\left\langle (u^{(h)}(t))_\delta ,A_\delta(t)\cdot\frac1h\int_0^h u^{(h)}(t+s) \,ds\right\rangle\,dt \\ + \int_0^T\left\langle (u^{(h)}(t))_\delta ,\nabla (\xi(t)-A_\delta(t))\cdot\frac1h\int_0^h u^{(h)}(t+s) \,ds\right\rangle\,dt \end{aligned}\]
and in the first term we have by Theorem \ref{thm:aubin-lions-fluid} 
\[\int_0^T\left\langle (u(t))_\delta ,A_\delta(t)\cdot\frac1h\int_0^h u^{(h)}(t+s) \,ds\right\rangle\,dt\to \int_0^T\left\langle (u^{(h)}(t))_\delta ,A_\delta(t)\cdot u(t) \,ds\right\rangle\,dt,\]
in the second term we estimate by H\"older's inequality and Sobolev
embedding for \(a<d/(d-2)\) \[\begin{aligned}
&\left|\int_0^T\left\langle (u^{(h)}(t))_\delta ,\nabla (\xi(t)-A_\delta(t))\cdot\frac1h\int_0^h u^{(h)}(t+s) \,ds\right\rangle\,dt \right|\\
 &\leq \int_0^T\left\|\left(u^{(h)}(t)\right)_\delta\right\|_{L^{2 a}(\Omega)} \frac1h\int_0^h\left\|u^{(h)}(t+s)\right\|_{L^a(\Omega)}\left\|A_\delta(t)-\chi_{\Omega^{(h)}}(t+s)\right\|_{L^{2 a^{\prime}(\Omega)}} d s d t \\
 &\leq c\left\|\left(u^{(h)}(t)\right)_\delta\right\|_{L^2\left([0, T] ; W^{1,2}(\Omega)\right)} \sup _{t \in T}\left(\frac1h\int_0^h\left\|u^{(h)}(t+s)\right\|^2 d s\right)^{1 / 2} \\
 &\quad\ \times\left(\frac1h\int_0^h\left\|A_\delta(t)-\chi_{\Omega^{(h)}}(t+s)\right\|_{L^2\left([0, T] ; L^{2 a^{\prime}}(\Omega)\right)}^2 d s\right)^{1 / 2} \\
&\leq  c\left(\frac1h\int_0^h\left\|A_\delta(\cdot)-\chi_{\Omega^{(h)}}(\cdot+s)\right\|_{L^2\left([0, T] ; L^{2 a^{\prime}}(\Omega)\right)}^2 d s\right)^{1 / 2} .
\end{aligned}
\] By the uniform convergence of \(\eta^{(h)} \rightarrow \eta\), we
find that \[
\begin{aligned}
    \lim _{h \rightarrow 0}\left(\frac1h\int_0^h\left\|A_\delta(\cdot)-\chi_{\Omega^{(h)}}(\cdot+s)\right\|_{L^2\left([0, T] ; L^{\left.2 a^{\prime}(\Omega)\right)}\right.}^2\right. & d s)^{1 / 2} 
& =\left\|A_\delta-\chi_{\Omega}\right\|_{L^2\left([0, T] ; L^{2 a^{\prime}}(\Omega)\right)} .
\end{aligned}
\] Finally, choosing
\(A_\delta\in C([0,T];C^{k_0}_0(\Omega_{-\delta}))\) with
\(A_\delta\to \nabla\xi\chi_{\Omega(t)}\) and gives, collecting all
above, \[\begin{aligned}
\int_0^T \left\langle\tfrac{u^{(h)}(t)\circ\Phi^{(h)}(t-h)-u^{(h)}(t-h)}{h},\xi(t)\circ\Phi^{(h)}(t-h) \right\rangle_{\Omega^{(h)}(t-h)} \,dt \\ \to
\int_0^T \left\langle (u(t))_\delta, \partial_t\xi(t) +\nabla \xi(t)\cdot u(t) \right\rangle\,dt \end{aligned}\]
from which we obtain the \(\delta\)-regularized equations.

We can now pass to the limit \(\delta \to 0\) and obtain the limiting equation, as all the terms converge in the same fashion as before.
Finally, using \((\partial_t\eta,v)\) as a test function (we still have
enough regularity for that due to the regularizing terms) we get, as in \eqref{eqn:energy-inequality-taulimit} the energy inequality.
\end{proof}

%\subsection{Estimate of the flow map and a no contact result}

%\noteMalte[inline]{I am not sure if this needs to be its own section}

We will now see that since we keep at this point the regularizing terms
with \(\kappa>0\), that this in fact means that the flow map remains
Lipschitz regular. Indeed, by the estimate of Proposition \ref{prop:estimates-of-the-flow-map} we have that
\[\operatorname{Lip}\Phi(t)\leq \exp(\sqrt T \sqrt{\|\nabla^{k_0} v\|_{L^2((0,T);L^2(\Omega(t);\mathbb R^d))}})\]
and the norm on the right is by \eqref{eqn:energy-inequality-hlimit} finite (although depending on
\(\kappa\)). Then we can argue that
\(\Phi(t)\colon \Omega_0\to\Omega(t)\) is a diffeomorphism. This in
particular means that there is \emph{no change of topology} and
consequently no contact between any solid parts.

\begin{corollary}[No contact with regularization]\label{cor:no-contact-regularization}
The solution obtained in Theorem \ref{thm:solution-hlimit} does not reach a collision.
\end{corollary}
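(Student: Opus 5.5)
The plan is to show that on every interval where the solution of Theorem \ref{thm:solution-hlimit} exists, the fluid domain $\Omega(t)$ is the image of the initial domain $\Omega_0=\Omega\setminus\eta_0(\overline Q)$ under a bi-Lipschitz homeomorphism $\Phi_t$. Since $\eta_0$ is assumed away from collision, the topology of $\Omega(t)$ is then preserved for all times. That rules out the closing of a fluid gap, which is exactly what a first solid–solid or solid–wall contact would require.

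\emph{Step 1: regularity of the limit velocity.} For fixed $\kappa>0$, the energy inequality \eqref{eqn:energy-inequality-hlimit} gives $\nabla^{k_0}v\in L^2((0,T);L^2(\Omega(t)))$ with a bound depending on $\kappa$, the initial data and $f$. Since $k_0$ is chosen so that $W^{k_0,2}\hookrightarrow W^{2,q}\hookrightarrow C^{1,\alpha}$, this yields $\nabla v\in L^2((0,T);L^\infty(\Omega(t)))$. For the interpolation constants to be uniform in $t$, I will extend $v(t)$ across $\partial\Omega(t)$ using a uniform extension operator. This is available because $\eta\in L^\infty((0,T);W^{k_0+2,2})$ with $E(\eta(t))\le E_0$, which gives uniform Lipschitz graph coverings as in Theorem \ref{thm:bogovskii-close-deformations}.

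\emph{Step 2: Lipschitz bounds on the flow map.} The flow map satisfies $\partial_t\Phi_t=v(t)\circ\Phi_t$ by \eqref{eqn:flow-map-dt-equal-v}. A Gronwall argument on $|\Phi_t(x)-\Phi_t(y)|$ gives
\[
\operatorname{Lip}\Phi_t\le \exp\Big(\int_0^t\|\nabla v(s)\|_{L^\infty}\,ds\Big)\le \exp\big(\sqrt T\,\|\nabla v\|_{L^2L^\infty}\big).
\]
Running the same argument backwards in time gives the same bound for $\Phi_t^{-1}$. Alternatively, one can pass to the limit in the uniform Lipschitz bound of Proposition \ref{prop:estimates-of-the-flow-map}, which is already uniform in $\tau$. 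Together with $\det\nabla\Phi_t=1$, this shows that $\Phi_t\colon\Omega_0\to\Omega(t)$ is a bi-Lipschitz homeomorphism for every $t\in[0,T]$, and that it extends to the closures.

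\emph{Step 3: conclusion by contradiction.} Suppose a collision occurs at a first time $t^*$, meaning $\eta(t^*)\in\partial\mathcal E$. Then either two boundary points $\eta(t^*,x_1)=\eta(t^*,x_2)$ with $x_1\ne x_2$ touch, or $\eta(t^*,\partial Q)$ meets $\partial\Omega$. In both cases there are points $p,q\in\partial\Omega_0$, separated in $\overline\Omega_0$ by a positive distance through the fluid, with $\Phi_{t^*}(p)=\Phi_{t^*}(q)$. Here I use that the boundary of $\Omega(t)$ moves with $\Phi$, which holds by the impermeability coupling. Alternatively, some fluid region of positive measure must be squeezed to zero thickness. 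Either outcome contradicts the bi-Lipschitz bound of Step 2 together with $\det\nabla\Phi=1$.

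\emph{Main obstacle.} The delicate point is identifying the boundary trajectories: I must show that $\Phi_t(\partial\Omega_0)=\partial\Omega(t)$, even though the fluid only follows the solid in the normal direction and slips tangentially. My first approach is the following. The coupling $v\cdot n=\partial_t\eta\circ\eta^{-1}\cdot n$ means that $v$ is tangent to the space-time boundary $\partial_x\Omega_T$. By uniqueness for Lipschitz ODEs on this $C^1$ space-time surface, trajectories starting on $\partial\Omega_0$ stay on $\partial\Omega(t)$. Hence $\Phi_t$ maps boundary to boundary, and Step 3 applies.
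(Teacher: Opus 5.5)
\textbf{There is a genuine gap.} Your outline is the paper's own: the $\kappa$-bound on $\nabla^{k_0}v$ makes $\Phi_t$ Lipschitz, so $\Phi_t\colon\Omega_0\to\Omega(t)$ is a homeomorphism and the topology cannot change. But the step that carries all the weight fails as you justify it. In Step~1 you say the extension and embedding constants for $\Omega(t)$ are uniform in $t$, because $E(\eta(t))\le E_0$ gives uniform Lipschitz graph coverings as in Theorem~\ref{thm:bogovskii-close-deformations}.

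\emph{Why that fails.} The energy only controls the solid, namely $\det\nabla\eta\ge\varepsilon_0$ and $\|\nabla^2\eta\|_{L^q}$. It gives no lower bound on the width of the fluid gaps between different parts of $\eta(t,\partial Q)$, or between $\eta(t,\partial Q)$ and $\partial\Omega$. The covering in that theorem is built for one fixed $\eta$ whose fluid domain is Lipschitz at every time, and its chart size is limited by the smallest gap.

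\emph{The constant does blow up.} Near a gap of width $\epsilon$, the constant in $\|\nabla v\|_{L^\infty(\Omega(t))}\le C\|v\|_{W^{k_0,2}(\Omega(t))}$ degenerates. Take a flat gap and a smooth tangential field that varies only along the gap on unit scale, has gradient of order one there, and vanishes outside the gap. Its $W^{k_0,2}$-norm is of order $\epsilon^{1/2}$. So your Gronwall bound on $\operatorname{Lip}\Phi_t$ and $\operatorname{Lip}\Phi_t^{-1}$ is uniform only on intervals where the gaps are already known to stay open. That is exactly what you want to prove, so the argument is circular.

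\emph{The paper does not close this either.} Its displayed bound on $\operatorname{Lip}\Phi(t)$ in terms of $\|\nabla^{k_0}v\|_{L^2L^2}$ tacitly uses the same uniformity, so you cannot import it. You need control of $\int_0^t\operatorname{Lip}v(s)\,ds$, or some other mechanism, that survives degenerating gaps.

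\textbf{Step~3 has a slip-specific flaw.} Producing $p\neq q$ with $\Phi_{t^*}(p)=\Phi_{t^*}(q)$ is the no-slip argument, where boundary fluid particles ride on solid material points. Under slip they only stay on $\partial\Omega(t)$. Your own bound on $\Phi_t^{-1}$ then gives
$|\Phi_t^{-1}(\eta(t,x_1))-\Phi_t^{-1}(\eta(t,x_2))|\lesssim|\eta(t,x_1)-\eta(t,x_2)|\to0$.
\begin{itemize}
\item For self-contact, where both touching parts come from the same component of $\partial\Omega_0$, the two preimages merge and injectivity gives no contradiction.
\item The argument does work when the touching parts come from different components of $\partial\Omega_0$ (two bodies, or a body and the wall).
\end{itemize}
For self-contact you must make your ``squeezing'' alternative precise. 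For example, $B_r(p)\cap\Omega_0$ has volume $\gtrsim r^d$ and is mapped measure-preservingly into $B_{Lr}(z)\cap\Omega(t)$. The volume of the latter is much smaller than $r^d$ for small $r$ once the gap is nearly closed.

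\textbf{A smaller point on boundary invariance.} The space-time boundary is only $W^{1,2}$ in time, not $C^1$. Prove that trajectories stay on the boundary by a Gronwall estimate for a level-set function of $\partial\eta(t,Q)$ along trajectories, rather than by appeal to ODEs on a $C^1$ surface.
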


\subsection{\texorpdfstring{Passing to the limit with the regularization \(\kappa \to 0\)}{Passing to the limit with the regularization \textbackslash kappa \textbackslash to 0}}\label{passing-to-the-limit-with-the-regularization-kappa-to-0}

We now reveal the dependence of \(v,\eta\)  in Theorem \ref{thm:solution-hlimit} on \(\kappa\), so we
write \(v^{(\kappa)},\eta^{(\kappa)}\). Recall that so far we have shown
\(v^{(\kappa)},\eta^{(\kappa)}\) to be a solution of the equation with
\(W^{k_0,2}\) (resp. $W^{k_0+2,2}$)-regularizer depending on \(\kappa>0\).

Note that by Corollary \ref{cor:no-contact-regularization} we know that the deformation
\(\eta^{(\kappa)}\) never reaches a collision, as long as \(\kappa> 0\).
However this is no longer guaranteed after \(\kappa\to 0\). Thus below
in Theorem \ref{thm:full-problem}, to get a limiting weak equation, we take the absence of
collisions in the limit \(\eta\) as an assumption (which is true at
least for short times, see \cite[Cor. 2.19]{benesovaVariationalApproachHyperbolic2023}). We aim to incorporate the possibility
of collisions and description of the corresponding Lagrange multiplier
in a future work.

Until now we have had a regularized initial conditions, so we need to
approximate the initial conditions now. That is, for given initial
conditions
\[\eta_0\in\mathcal E,\quad \eta_*\in W^{1,2}(Q;\mathbb R^d),\quad  v_0\in W^{1,2}(\Omega_0;\mathbb R^d)\]we
approximate it by
\[\eta_0^{(\kappa)}\in \mathcal E\cap W^{k_0+2,2}(Q;\mathbb R^d), \quad \eta_*^{(\kappa)} \in W^{k_0+2,2}(Q;\mathbb R^d), \quad v_0^{(\kappa)}\in W^{k_0,2}(\Omega_0;\mathbb R^d)\]
such that we have \[\begin{aligned}
\eta_0^{(\kappa)} \to \eta_0 \quad\text{in }W^{2,q}(Q;\mathbb R^d),\\ \eta_*^{(\kappa)} \to \eta_* \quad\text{in } W^{1,2}(Q;\mathbb R^d),
\\ v_0^{(\kappa)} \to  v_0 \quad \text{in } W^{1,2}(\Omega_0;\mathbb R^d)
\end{aligned}\qquad \text{as }\kappa\to 0\] and below the \(v^{(\kappa)}\), \(\eta^{(\kappa)}\)
solution will be corresponding to these initial conditions.

\begin{theorem}[Full problem]\label{thm:full-problem}
There exists a subsequence $\kappa\to0$ such that the limit $(\eta,v)$ is a weak solution to the full problem as defined in Definition \ref{def:weak-solution-full}, until the time of the first collision.
\end{theorem}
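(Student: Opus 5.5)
The plan is to pass to the limit $\kappa\to0$ in the equations of Theorem \ref{thm:solution-hlimit}, on a time interval $[0,T_0]$ on which the limit $\eta$ stays away from collisions.

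\emph{Uniform bounds and compactness.} The energy inequality \eqref{eqn:energy-inequality-hlimit} is used first. Since the approximated initial data converge in the stated norms, $E(\eta_0^{(\kappa)})+2\kappa^{a_0}\|\nabla^{k_0+2}\eta_0^{(\kappa)}\|^2$ should stay bounded once the approximation is chosen so that $\kappa^{a_0}\|\nabla^{k_0+2}\eta_0^{(\kappa)}\|^2\to0$; this is where $a_0$ is fixed. Gronwall on the forcing terms then gives $\kappa$-independent bounds on $\sup_t E(\eta^{(\kappa)}(t))$, on $\partial_t\eta^{(\kappa)}$ in $L^\infty L^2$, on $v^{(\kappa)}$ in $L^\infty L^2$, on $\int R$, and, by \eqref{as:R-korn-inequality}, on $\partial_t\eta^{(\kappa)}$ in $L^2W^{1,2}$. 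A Korn inequality on the moving domain, with constants uniform as long as the domains stay uniformly Lipschitz, bounds $v^{(\kappa)}$ in $L^2W^{1,2}$. Assumption \eqref{as:E-bounded-on-w2q} gives a bound on $\eta^{(\kappa)}$ in $L^\infty W^{2,q}$, and \eqref{as:E-lower-bound-det} keeps $\det\nabla\eta^{(\kappa)}\ge\varepsilon_0$. Additionally, $\sqrt\kappa\nabla^{k_0}v^{(\kappa)}$ and $\sqrt\kappa\nabla^{k_0+2}\partial_t\eta^{(\kappa)}$ are bounded in $L^2L^2$, and $\kappa^{a_0/2}\nabla^{k_0+2}\eta^{(\kappa)}$ in $L^\infty L^2$, so all regularizing terms vanish against fixed smooth test functions as $\kappa\to0$. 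Arzel\`a--Ascoli yields $\eta^{(\kappa)}\to\eta$ in $C([0,T];C^{1,\alpha})$, hence $\Omega^{(\kappa)}(t)\to\Omega(t)$ in the sense of Section \ref{sec:convergence-of-domains}, and $v^{(\kappa)}\overset\eta\rightharpoonup v$. The Lemma on convergence of the normal trace preserves the coupling condition, so $v\in L^2W^{1,2}_{\operatorname{div},\eta}$. Taking $T_0$ before the first collision of $\eta$, the uniform convergence makes $\eta^{(\kappa)}$ uniformly away from collision on $[0,T_0]$ for small $\kappa$.

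\emph{Limit in the equations.} The test functions are approximated as in the $\tau$- and $h$-limits. For the coupled equation I will use Proposition \ref{prop:approximation-coupled-test-functions}. For the fluid-only equation I will use Proposition \ref{prop:approximation-fluid-only}(i), which now only needs $W^{2,q}$-convergence of $\eta^{(\kappa)}$, together with Theorem \ref{thm:bogovskii-close-deformations}. The linear terms pass to the limit by weak convergence. The slip term $a\langle(v-\partial_t\eta\circ\eta^{-1})\cdot\tau,\xi\cdot\tau\rangle_{\partial\Omega(t)}$ needs weak convergence of traces: for $v$ it follows from compactness of the trace $W^{1,2}\to L^2(\partial\,\cdot)$, transported to a fixed boundary via $\eta^{(\kappa)}$; for $\partial_t\eta$ it follows from the $L^2W^{1,2}$ bound on $\partial_t\eta^{(\kappa)}$. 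The nonlinear solid term $DE(\eta^{(\kappa)})$ requires strong $W^{2,q}$ convergence. To get it, I will test the coupled equation with $(\eta^{(\kappa)}-\eta)$, localized in time and extended to the fluid by a suitable divergence-free extension, and use the Minty property \eqref{as:E-minty-property}, as in \cite{benesovaVariationalApproachHyperbolic2023}. The term $D_2R$ passes by \eqref{as:R-differentiable}.

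\emph{Main obstacle: the convective term.} The hard step is $\langle v^{(\kappa)},v^{(\kappa)}\cdot\nabla\xi\rangle$, which needs strong $L^2$ convergence of $v^{(\kappa)}$ (or of the global velocity $u^{(\kappa)}$) on moving domains; the uniform estimate on time derivatives is no longer available from the $\kappa$-regularization. I will repeat the Fluid Aubin--Lions argument of Theorem \ref{thm:aubin-lions-fluid}, now with $\kappa$ in place of $h$. The needed dual bound on $\partial_t u^{(\kappa)}$ against divergence-free test functions that are $\delta$-regularized by \eqref{eqn:uh-def} comes from the two weak equations, exactly as in Lemma \ref{lem:fluid-bounds-length-h}. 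The difficulty is the tangential jump of $u^{(\kappa)}$ at the interface, which forces a separate treatment of the fluid-only and coupled components near the boundary. Summing the two equations should yield the identity $\langle u^{(\kappa)},A\,u^{(\kappa)}\rangle\to\langle u,A\,u\rangle$. Letting $\delta\to0$ using the $L^q$-closeness estimate of $(u)_\delta$ then gives the limit equations \eqref{eqn:weak-fluid-only} and \eqref{eqn:weak-coupled}. Finally, the initial data are recovered from the convergence of the approximated initial conditions.
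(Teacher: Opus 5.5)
Your proposal follows the paper's proof: uniform bounds from the energy inequality, approximation of test functions via Propositions~\ref{prop:approximation-coupled-test-functions} and~\ref{prop:approximation-fluid-only}(i), strong $L^2$ convergence of $v^{(\kappa)}$ by an Aubin--Lions argument on the moving domains, and the Minty property \eqref{as:E-minty-property} tested with $\eta^{(\kappa)}-\eta$ (extended divergence-free into the fluid) to upgrade to strong $W^{2,q}$ convergence, after which the regularizing terms are discarded. The only difference is the compactness step: the paper reads off $\bigl|\int_0^T\langle v^{(\kappa)},\partial_t\xi\rangle\,dt\bigr|\le C\|\xi\|_{L^\infty(C^{k_0})}$ from the fluid-only equation alone and cites \cite[Corollary 2.9]{breitCompressibleFluidsInteracting2024}, so rerunning Theorem~\ref{thm:aubin-lions-fluid} with a fluid/coupled splitting at the interface is more than you need, because the cut-off approximations \eqref{eqn:uh-def} vanish near $\partial\eta(t,Q)$ and never see the tangential jump.
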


\begin{proof}
\textit{Fluid-only equation}.

The fluid-only equation \eqref{eqn:fluid-only-eqn-hlimit} now reads as
\begin{equation}\label{eqn:fluid-only-eqn-kappa}
\begin{aligned}\int_0^T &-\rho_f\langle v^{(\kappa)},\partial_t \xi\rangle _{\Omega^{(\kappa)}(t)} + \rho_f \langle v^{(\kappa)},v^{(\kappa)}\cdot \nabla\xi \rangle_{\Omega^{(\kappa)}(t)} + \nu \langle \varepsilon v^{(\kappa)}, \varepsilon \xi\rangle_{\Omega^{(\kappa)}(t)} \\
&+ \kappa\langle \nabla ^{k_0}v^{(\kappa)},\nabla^{k_0}\xi\rangle - a \langle \partial_t \eta \circ \eta^{-1} -v,\xi\rangle \,dt \\
&= \int_0^T \rho_f\langle f,\xi\rangle_{\Omega^{(\kappa)}(t)}\,dt+\rho_f \langle v_0^{(\kappa)}, \xi(0)\rangle_{\Omega^{(\kappa)}(0)} \end{aligned}
\end{equation}
for all
\(\xi\in C^\infty([0,T] \times \overline \Omega^{(\kappa)}(t))\),
\(\xi\cdot n^{(\kappa)}=0\) on \(\partial \Omega^{(\kappa)}(t)\),
\(\operatorname{div}\xi=0\) in \(\Omega^{(\kappa)}(t)\), with
\(\xi(T)=0\).

We further have the uniform bounds on \(v^{(\kappa)}\) in
\(L^2((0,T);W^{1,2}(\Omega^{(\kappa)}(t);\mathbb R^d))\) and a
(\(\kappa\)-independent) bound
\[\sqrt\kappa \|\nabla^{k_0}v^{(\kappa)}\|_{L^2((0,T);L^2(\Omega^{(\kappa)}(t);\mathbb R^d))}\leq C.\]
Thus, for a subsequence \(\kappa\to 0\) we have
\[v^{(\kappa)}\overset\eta\rightharpoonup v \quad \text{in }L^2((0,T);W^{1,2}(\Omega(t);\mathbb R^d)).\]
Moreover, we can read the estimate on \(\partial_t v^{(\kappa)}\) in
distributional sense, as from \eqref{eqn:fluid-only-eqn-kappa} we have
\[\left|\int_0^T \langle v,\partial_t \xi\rangle\,dt \right| \leq C\|\xi\|_{L^\infty(C^{k_0})}.\]
Thus from the Aubin-Lions lemma \cite[Corollary 2.9]{breitCompressibleFluidsInteracting2024} we get the strong
convergence
\begin{equation}\label{eqn:vkappa-strongly-l2}
v^{(\kappa)}\overset\eta\to v \quad\text{in }L^2((0,T);L^2(\Omega(t);\mathbb R^d)).
\end{equation}
We desire to show that the limiting equation holds, in particular
\[\begin{aligned}\int_0^T -\rho_f\langle v,\partial_t \xi\rangle _{\Omega(t)} + \rho_f \langle v,v\cdot \nabla\xi \rangle_{\Omega(t)} + \nu \langle \varepsilon v, \varepsilon \xi\rangle_{\Omega(t)} + = \int_0^T \rho_f\langle f,\xi\rangle_{\Omega(t)}\,dt+\rho_f \langle v_0, \xi(0)\rangle_{\Omega(0)} \end{aligned}
\]
for all
\(\xi\in C^\infty([0,T] \times \overline \Omega(t);\mathbb R^d)\),
\(\xi\cdot n=0\) on \(\partial \Omega(t)\), \(\operatorname{div}\xi=0\)
in \(\Omega(t)\), with \(\xi(T)=0\).

For this, let us now fix a test function for the limit, that is
\(\xi\in C^\infty([0,T] \times \overline \Omega(t));\mathbb R^d\),
\(\xi\cdot n=0\) on \(\partial \Omega(t)\), \(\operatorname{div}\xi=0\)
in \(\Omega(t)\), with \(\xi(T)=0\).

For this \(\xi\) find \(\xi^{(\kappa)}_\delta\) as defined in Proposition \ref{prop:approximation-fluid-only} (i), this \(\xi_\delta^{(\kappa)}\) is now a valid test function
for \eqref{eqn:fluid-only-eqn-kappa}. We thus have
\[\begin{aligned}\int_0^T -\rho_f\langle v^{(\kappa)},\partial_t \xi_\delta^{(\kappa)}\rangle _{\Omega^{(\kappa)}(t)} + \rho_f \langle v^{(\kappa)},v^{(\kappa)}\cdot \nabla\xi_\delta^{(\kappa)} \rangle_{\Omega^{(\kappa)}(t)} + \nu \langle \varepsilon v^{(\kappa)}, \varepsilon \xi_\delta^{(\kappa)}\rangle_{\Omega^{(\kappa)}(t)} \\+ \kappa\langle \nabla ^{k_0}v^{(\kappa)},\nabla^{k_0}\xi_\delta^{(\kappa)}\rangle - a \langle \partial_t \eta \circ \eta^{-1} - v ,\xi \rangle_{\partial \Omega} \,dt = \int_0^T \rho_f\langle f,\xi_\delta^{(\kappa)}\rangle_{\Omega^{(\kappa)}(t)}\,dt+\rho_f \langle v_0^{(\kappa)}, \xi_\delta^{(\kappa)}(0)\rangle_{\Omega^{(\kappa)}(0)} \end{aligned}\]
Now we use the weak convergence of \(v^{(\kappa)}\rightharpoonup v\) strong convergence
\(\xi_\delta^{(\kappa)} \overset\eta\to\xi_\delta\) and the estimate (which follows from \eqref{eqn:fluid-only-eqn-kappa})
\[\begin{aligned}
\int_0^T\kappa|\langle\nabla^{k_0}v^{(\kappa)},\nabla^{k_0}\xi_\delta^{(\kappa)}\rangle_{\Omega^{(\kappa)}(t)}|\,dt \leq \kappa \|\nabla^{k_0}v^{(\kappa)}\|_{L^2((0,T);L^2(\Omega^{(\kappa)}(t);\mathbb R^d)}\|\nabla^{k_0}\xi_\delta^{(\kappa)}\|_{L^2((0,T);L^2(\Omega^{(\kappa)}(t);\mathbb R^d)} \\ \leq \sqrt\kappa C C_\delta\to 0
\end{aligned}\]
with \(\kappa\to 0\) and \(\delta>0\) fixed. So that after passing to
\(\kappa \to 0\) we have for all \(\delta>0\) (passing to the
limit in
\(\int_0^T \langle v , v\cdot \nabla\xi_\delta \rangle \,dt\) is
due to the strong convergence \eqref{eqn:vkappa-strongly-l2})

\[\begin{aligned}\int_0^T -\rho_f\langle v,\partial_t \xi_\delta\rangle _{\Omega(t)} + \rho_f \langle v,v\cdot \nabla\xi_\delta \rangle_{\Omega(t)} + \nu \langle \varepsilon v, \varepsilon \xi_\delta\rangle_{\Omega(t)} \,dt=\int_0^T \rho_f\langle f,\xi_\delta\rangle_{\Omega(t)}\,dt+\rho_f \langle v_0, \xi_\delta(0)\rangle_{\Omega(0)}. \end{aligned}\]
Since then we can, as before, pass to \(\delta \to 0\) and see that
we have the desired limiting equation
\[\begin{aligned}\int_0^T -\rho_f\langle v,\partial_t \xi\rangle _{\Omega(t)} + \rho_f \langle v,v\cdot \nabla\xi \rangle_{\Omega(t)} + \nu \langle \varepsilon v, \varepsilon \xi\rangle_{\Omega(t)} \,dt = \int_0^T \rho_f\langle f,\xi\rangle_{\Omega(t)}\,dt+\rho_f \langle v_0, \xi(0)\rangle_{\Omega(0)} \end{aligned}.\]
By a density argument we can see that this continues to hold for
\(\xi \in W^{1,2}((0,T); L^2(\Omega(t);\mathbb R^d))\cap L^2((0,T);W^{1,2}_n(\Omega(t);\mathbb R^d))\).

\noindent \textit{Coupled equation.}

The coupled equation \eqref{eqn:coupled-eqn-hlimit} is now \[\begin{aligned} 
\int_0^T &DE(\eta^{(\kappa)})\langle\phi^{(\kappa)}\rangle+D_2 R\left(\eta^{(\kappa)}, \partial_t\eta^{(\kappa)} \right) \langle \phi^{(\kappa)} \rangle  + 2\kappa^{a_0}\langle \nabla^{k_0+2}\eta^{(\kappa)}, \nabla^{k_0+2}\phi^{(\kappa)} \rangle 
\\&+ 2\kappa\left\langle \nabla^{k_0+2}\partial_t\eta^{(\kappa)} , \nabla^{k_0+2}\phi^{(\kappa)} \right\rangle 
-\rho_s\left\langle \partial_{t}\eta^{(\kappa)}, \partial_t \phi^{(\kappa)}\right\rangle-\rho_s\left\langle f\circ\eta^{(\kappa)},\phi^{(\kappa)} \right\rangle
\\
&+\nu \langle\varepsilon v^{(\kappa)}, \varepsilon \xi\rangle_{\Omega^{(\kappa)}(t)} + \kappa\langle\nabla^{k_0} v^{(\kappa)}, \nabla^{k_0}\xi\rangle_{\Omega^{(\kappa)}(t)}-\rho_f \langle  \partial_t v^{(\kappa)},\xi\rangle_{\Omega(t)} + \rho_f\langle v^{(\kappa)} , v^{(\kappa)}\cdot \nabla \xi \rangle_{\Omega(t)} 
\\ &- \rho_f\langle f,\xi\rangle_{\Omega^{(\kappa)}(t)}\,dt=0
\end{aligned}\] for all
\(\xi\in C^\infty([0,T]\times \overline \Omega^{(\kappa)}(t);\mathbb R^d)\),
\(\operatorname {div} \xi =0\) in \(\Omega^{(\kappa)}(t)\) and
\(\phi^{(\kappa)}\in W^{k_0+2,2}(Q;\mathbb R^d)\) with
\(\phi^{(\kappa)} = \xi\circ\eta^{(\kappa)}\) on \(Q\).

We shall now pass to the limit \(\kappa\to 0\). We have estimates by the energy inequality \eqref{eqn:energy-inequality-hlimit} which now read as
 \[\begin{aligned} 
E(\eta^{(\kappa)}(t))&+2\kappa^{a_0}\|\nabla^{k_0+2}\eta^{(\kappa)}(t)\|^2+ \frac{\rho_f}{2} \|v^{(\kappa)}(t)\|^2_{\Omega(t)}+ \frac{\rho_s}{2}\| \partial_t\eta^{(\kappa)}(t) \|^2  
\\ &+ \int_0^t 2R(\eta^{(\kappa)},\partial_t\eta^{(\kappa)})  + 2\kappa\|\nabla^{k_0+2}\partial_t\eta^{(\kappa)}\|^2 + \|\nabla^{k_0}v^{(\kappa)}\|^2 \,dt
\\
&\leq E(\eta^{(\kappa)}(0))+2\kappa^{a_0}\|\nabla^{k_0+2}\eta^{(\kappa)}(0)\|^2+ \frac{\rho_f}{2}\|v^{(\kappa)}(0) \|^2_{\Omega_0}+\frac{\rho_s}{2}\| \partial_t\eta^{(\kappa)}(0) \|^2 \,dt 
\\ 
&\quad\ +\int_0^t\rho_s\left\langle f\circ\eta^{(\kappa)},\partial_t\eta^{(\kappa)} \right\rangle
+ \rho_f\langle f,v^{(\kappa)}\rangle_{\Omega(t)}\,dt
\end{aligned}\] to obtain weak convergences

\begin{align*}
\eta^{(\kappa)}&\overset\ast\rightharpoonup \eta &\quad\text{in }&L^\infty((0,T);W^{2,q}(Q;\mathbb R^d)), \\
\partial_t\eta^{(\kappa)}&\rightharpoonup\partial_t\eta &\quad\text{in }&L^2((0,T)W^{1,2}(Q;\mathbb R^d)),\\
v^{(\kappa)} &\overset\eta\rightharpoonup v &\quad \text{in }&L^2((0,T);W^{1,2},(\Omega(t);\mathbb R^d))
\intertext{ as well as the strong convergence}
\eta^{(\kappa)}&\to \eta &\quad\text{in }&C^0((0,T);C^{1,\alpha}(Q;\mathbb R^d))
\end{align*}
by the Aubin-Lions theorem.

Next we want to employ the Minty property \eqref{as:E-minty-property}, in order to improve the convergence of $\eta^{(\kappa)}$ to strong convergence.

As in Lemma \ref{lem:ext-normal-tangent}, we can pick a set $Q_\delta$ on which $\eta^{(\kappa)}$ can be extended injectively independently of $\kappa$. Now by the uniform convergence $\eta^{(\kappa)} \to \eta$, for $\kappa$ large enough, we can pick a cutoff function $\psi \in C^\infty((0,T) \times \Omega)$ such that $\operatorname {supp} \psi \subset \eta^{(\kappa)}(t,Q_\delta)$ for all $t$ and $\kappa$, while at the same time $\psi = 1$ on $\eta^{(\kappa)}(t,Q)$. Define $\eta_{\varepsilon_\kappa}$ as a mollification of $\eta$ with a smooth kernel of size $\delta_\kappa \to 0$ for $\kappa \to 0$ and
\begin{align*}
 \tilde{\xi}^{(\kappa)} := \psi (x - \eta_{\delta_\kappa} \circ (\eta^{(\kappa)})^{-1}) = \psi (\eta^{(\kappa)} - \eta_{\delta_\kappa}) \circ (\eta^{(\kappa)})^{-1}
\end{align*}
where we continue the function by zero where it is not defined. Then since both $\eta^{(\kappa)} \to \eta$ and $\eta_{\delta_\kappa} \to \eta$ in $C^{1,\alpha}(Q)$ we know that $\tilde{\xi}^{(\kappa)} \to 0$ in $C^{1,\alpha}(\Omega)$. By choosing $\delta_\kappa$ we can also guarantee that $\|\tilde{\xi}^{(\kappa)}\|_{L^2W^{k_0,2}} =o(\kappa^{-1})$.

Now take $b \in C_c^\infty(\Omega)$ to be a bump function with $\int_\Omega b dx = 1$ and support a sufficient distance away from $\eta^{(\kappa)}(t,Q)$ for any $\kappa$ large enough, as in the proof of the universal Bogovskii operator Theorem~\ref{thm:bogovskii-close-deformations}.\footnote{Such a $b$ always exists, since we assume the set $\eta^{(\kappa)}(t,Q)$ always has uniformly bounded distance to $\partial \Omega$.} Then applying this operator we obtain.
\begin{align*}
 \xi^{(\kappa)} := \tilde{\xi}^{(\kappa)} - \mathcal{B}\left(\operatorname{div} \tilde{\xi}^{(\kappa)} - \lambda(t) b \right)
\end{align*}
where $\lambda(t) := \int_{\Omega^{(\kappa)}(t)} \operatorname{div}\tilde{\xi}^{(\kappa)} dx$. Per construction now $\xi^{(\kappa)} = \tilde{\xi}^{(\kappa)}$ on $\eta^{(\kappa)}(Q)$ and 

$\operatorname{div} \xi^{(\kappa)} =0 $ on $\Omega^{(\kappa)}(t)$. In other words, the pair $(\phi^{(\kappa)},\xi^{(\kappa)}) := (\eta^{(\kappa)}-\eta,\xi^{(\kappa)})$ is an admissible pair of test functions for the coupled equation. We also note that by the properties of the Bogovskii operator $\xi^{(\kappa)}$ converges to $0$ in the same spaces as $\tilde{\xi}^{(\kappa)}$.

With this we now finally consider \eqref{as:E-minty-property} and estimate
\begin{align*}
 0 \leq \int_0^T (DE(\eta^{(\kappa)})-DE(\eta))\langle\eta^{(\kappa)} -\eta\rangle dt = \int_0^T DE(\eta^{(\kappa)})\langle\eta^{(\kappa)} -\eta\rangle - DE(\eta)\langle\eta^{(\kappa)} -\eta\rangle dt.
\end{align*}
The second term converges to zero by the weak convergence of $\eta^{(\kappa)}$. For the first term we now use the equation and note that by the coupled equation
\begin{align*}
 - \int_0^T DE(\eta^{(\kappa)})\langle\eta^{(\kappa)} -\eta\rangle = \int_0^T D_2 R\left(\eta^{(\kappa)}, \partial_t\eta^{(\kappa)} \right) \langle \phi^{(\kappa)} \rangle  + 2\kappa^{a_0}\langle \nabla^{k_0+2}\eta^{(\kappa)}, \nabla^{k_0+2}\phi^{(\kappa)} \rangle +
\\ 2\kappa\left\langle \nabla^{k_0+2}\partial_t\eta^{(\kappa)} , \nabla^{k_0+2}\phi^{(\kappa)} \right\rangle
-\rho_s\left\langle \partial_{t}\eta^{(\kappa)}, \partial_t \phi^{(\kappa)}\right\rangle-\rho_s\left\langle f\circ\eta^{(\kappa)},\phi^{(\kappa)} \right\rangle
\\
+\nu \langle\varepsilon v^{(\kappa)}, \varepsilon \xi^{(\kappa)}\rangle_{\Omega^{(\kappa)}(t)} + \kappa\langle\nabla^{k_0} v^{(\kappa)}, \nabla^{k_0}\xi^{(\kappa)}\rangle_{\Omega^{(\kappa)}(t)}-\rho_f \langle  \partial_t v^{(\kappa)},\xi^{(\kappa)}\rangle_{\Omega(t)} + \rho_f\langle v^{(\kappa)} , v^{(\kappa)}\cdot \nabla \xi^{(\kappa)} \rangle_{\Omega(t)} \\ - \rho_f\langle f,\xi^{(\kappa)}\rangle_{\Omega^{(\kappa)}(t)}\,dt
\end{align*}
Now all terms on the right hand side are a product of a bounded quantity and a test function converging to zero.
%\noteMalte[inline]{I am torn here, if I should list all the convergences. Most of them are obvious and this would probably add at least another page to the proof.}
So
by the Minty property \eqref{as:E-minty-property} that
\begin{align*}
\eta^{(\kappa)}(t)&\to \eta(t)\quad &\text{in } &W^{2,q}(Q;\mathbb R^d) \text{ for a.a. }t\in (0,T)
\intertext{and, as above, we have}
v^{(\kappa)}&\overset\eta\to v \quad&\text{in }&L^2((0,T);L^2(\Omega(t);\mathbb R^d)).
\end{align*}
Passing now to the limit in the coupled equation, we obtain that
\[\begin{aligned}
\int_0^T\kappa^{a_0}|\langle\nabla^{k_0+2}\eta^{(\kappa)}, \nabla^{k_0+2}\phi^{(\kappa)} \rangle| \,dt &\leq \kappa^{a_0}\|\nabla^{k_0+2}\eta^{(\kappa)}\|_{L^\infty L^2} \|\nabla^{k_0+2}\phi^{(\kappa)}\|_{L^1L^2} \leq \kappa^{a_0}C, 
\\
\int_0^T \kappa\left\langle \nabla^{k_0+2}\partial_t\eta^{(\kappa)} , \nabla^{k_0+2}\phi^{(\kappa)} \right\rangle \,dt &\leq \kappa\|\nabla^{k_0+2}\partial_t\eta^{(\kappa)}\|_{L^2 L^2} \|\nabla^{k_0+2}\phi^{(\kappa)}\|_{L^2L^2} \leq \kappa C,
\end{aligned}\] 
so that these regularizing terms vanish as \(\kappa\to 0\).

We have thus shown enough to pass to the limit \(\kappa \to 0\) and
solve the limit problem

\[\begin{aligned} 
\int_0^T DE(\eta)\langle\phi\rangle+D_2 R\left(\eta, \partial_t\eta \right) \langle \phi \rangle
-\rho_s\left\langle \partial_t\eta, \partial_t \phi\right\rangle-\rho_s\left\langle f\circ\eta,\phi \right\rangle
\\
+\nu \langle\varepsilon v, \varepsilon \xi\rangle_{\Omega(t)} -\rho_f \left\langle  \partial_t v,\xi\rangle_{\Omega(t)} + \rho_f\langle v , v\cdot \nabla \xi \right\rangle_{\Omega(t)} - \rho_f\langle f,\xi\rangle_{\Omega(t)}\,dt=0. 
\end{aligned}\]
\end{proof}
 
\subsection*{Acknowledgements}

\noindent
The authors acknowledge the support of the ERC-CZ grant LL2105. S. S. further acknowledges the support of the VR-Grant 2022-03862 by the Swedish Research Council. A.\v C. and M.K. further acknowledge the support of the Czech Science Foundation (GAČR) grant No. 23-04766S, OP JAK grant FerrMion and Charles University grant PRIMUS/24/SCI/020 (M.K. only) and Research Centre program No. UNCE/24/SCI/005. A. \v C. further acknowledges the support of UKRI Frontier Research Guarantee (ERC guarantee) grant EP/Z000297/1 (ERC CONCENTRATE).

%\noteMalte[inline]{Please check your acknowledgements and affiliation. (e.g. Warwick)}
 
\bibliographystyle{alpha}
\bibliography{biblio}
\end{document}